\documentclass[11pt]{article}

\usepackage[margin=1in]{geometry}
\usepackage[T1]{fontenc}
\usepackage[utf8]{inputenc}
\usepackage{lmodern}
\usepackage{amsmath,amssymb,amsfonts,amsthm,mathtools}
\usepackage{mathrsfs}
\usepackage{enumitem}
\usepackage{booktabs}
\usepackage{tabularx}
\usepackage{microtype}
\usepackage[colorlinks=true,linkcolor=blue,citecolor=blue,urlcolor=blue]{hyperref}
\numberwithin{equation}{section}

\newtheorem{theorem}{Theorem}[section]
\newtheorem{proposition}[theorem]{Proposition}
\newtheorem{lemma}[theorem]{Lemma}
\newtheorem{corollary}[theorem]{Corollary}
\newtheorem{definition}[theorem]{Definition}
\newtheorem{assumption}[theorem]{Assumption}
\newtheorem{problem}[theorem]{Problem}

\newtheorem{remark}[theorem]{Remark}
\usepackage[capitalize,nameinlink]{cleveref}

\crefname{theorem}{Theorem}{Theorems}
\Crefname{theorem}{Theorem}{Theorems}
\crefname{proposition}{Proposition}{Propositions}
\Crefname{proposition}{Proposition}{Propositions}
\crefname{lemma}{Lemma}{Lemmas}
\Crefname{lemma}{Lemma}{Lemmas}
\crefname{corollary}{Corollary}{Corollaries}
\Crefname{corollary}{Corollary}{Corollaries}
\crefname{definition}{Definition}{Definitions}
\Crefname{definition}{Definition}{Definitions}
\crefname{assumption}{Assumption}{Assumptions}
\Crefname{assumption}{Assumption}{Assumptions}
\crefname{problem}{Problem}{Problems}
\Crefname{problem}{Problem}{Problems}
\crefname{principle}{Principle}{Principles}
\Crefname{principle}{Principle}{Principles}
\crefname{remark}{Remark}{Remarks}
\Crefname{remark}{Remark}{Remarks}

\DeclareMathOperator{\dist}{dist}
\DeclareMathOperator{\Range}{Range}
\DeclareMathOperator{\Ker}{Ker}
\DeclareMathOperator{\coker}{coker}
\DeclareMathOperator{\Cost}{Cost}
\DeclareMathOperator{\tr}{tr}
\DeclareMathOperator{\diag}{diag}

\newcommand{\R}{\mathbb R}
\newcommand{\N}{\mathbb N}
\newcommand{\eps}{\varepsilon}
\newcommand{\loc}{\mathrm{loc}}
\newcommand{\reg}{\mathrm{reg}}
\newcommand{\old}{\mathrm{old}}
\newcommand{\str}{\mathrm{str}}
\newcommand{\har}{\mathrm{har}}
\newcommand{\trc}{\mathrm{tr}}
\newcommand{\prep}{\mathrm{prep}}

\newcommand{\rel}{\mathrm{rel}}
\newcommand{\NS}{\mathrm{NS}}
\newcommand{\VD}{\mathrm{VD}}

\newcommand{\dx}{\,dx}

\newcommand{\dxdt}{\,dx\,dt}
\newcommand{\nabh}{\nabla_h}
\newcommand{\divh}{\nabla_h\cdot}
\newcommand{\norm}[2]{\left\|#1\right\|_{#2}}

\newcommand{\ip}[2]{\left\langle #1,#2\right\rangle}
\newcommand{\calA}{\mathcal A}
\newcommand{\calC}{\mathcal C}
\newcommand{\calD}{\mathcal D}
\newcommand{\calE}{\mathcal E}
\newcommand{\calH}{\mathcal H}
\newcommand{\calL}{\mathcal L}
\newcommand{\calM}{\mathcal M}
\newcommand{\calO}{\mathcal O}
\newcommand{\calQ}{\mathcal Q}
\newcommand{\calR}{\mathcal R}

\newcommand{\calU}{\mathcal U}
\newcommand{\calV}{\mathcal V}
\newcommand{\calX}{\mathcal X}
\newcommand{\calZ}{\mathcal Z}
\newcommand{\frakD}{\mathfrak D}
\newcommand{\frakS}{\mathfrak S}

\newcommand{\Obs}{\operatorname{Obs}}

\title{\textbf{Schur Visibility and Anti-Phantom Reduction\\
		in One-Component Navier--Stokes Degeneration}}

\author{Runlong Yu\\
	The University of Alabama, Tuscaloosa, AL, USA\\
	\texttt{ryu5@ua.edu}}
\date{}

\begin{document}

\maketitle

\begin{abstract}
We study the finite-scale one-component degeneration problem for suitable weak solutions of the three-dimensional incompressible Navier--Stokes equations under a scale-invariant bound and smallness of the vertical component. Qualitative compactness gives convergence, in the harmonic-pressure quotient, toward the strict two-and-a-half-dimensional boundary, but it does not provide a quantitative rate. This paper proves, in an explicitly abstract trace-obstruction skeleton associated with the old observable closure, that the standard old observable package is insufficient to force a logarithmic or power selected-trace rate. The negative result is an envelope/skeleton theorem, not a Navier--Stokes counterexample. After excluding elementary high-frequency escape by parabolic trace drop and fixed-window analytic obstruction by finite-dimensional Lojasiewicz control, the remaining obstruction is an all-order finite-mode flat branch. We identify the Navier--Stokes-specific mechanism needed to control this branch: strict Schur trace-projectability may fail, but the resulting defect can be visible through the relaxed vertical-pressure channel. In active finite-window models, strict Schur phantoms are relaxed-visible. The final theorem is a conditional dichotomy: either relaxed anti-phantom closure holds and yields conditional logarithmic strict-shadow selection, or there exists an NS-realizable, cleaned, relaxed-invisible, unaligned left-singular cascade.
\end{abstract}

\noindent\textbf{Keywords.} Navier--Stokes equations; suitable weak solutions; one-component regularity; strict 2.5D degeneration; old observables; Schur obstruction; vertical pressure; trace cost; vertical duality.

\medskip

\noindent\textbf{2020 Mathematics Subject Classification.} 35Q30; 35B65; 35B45; 76D05.

\tableofcontents

\section{Introduction and theorem status}

\subsection{Problem and compactness boundary}
Let
\[
  Q_r(z_0)=B_r(x_0)\times(t_0-r^2,t_0), \qquad z_0=(x_0,t_0),
\]
and write \(Q_r=Q_r(0,0)\). We consider suitable weak solutions \((u,p)\) of
\begin{equation}\label{eq:NS}
  \partial_t u-\Delta u+(u\cdot\nabla)u+\nabla p=0,\qquad \nabla\cdot u=0.
\end{equation}
in \(Q_1\). The weak-solution framework used here is the local suitable-weak theory descending from Leray--Hopf weak solutions and the Scheffer--Caffarelli--Kohn--Nirenberg partial regularity program \cite{Leray1934,Hopf1951,Scheffer1976,Scheffer1977,CKN1982}.  We also use the later local compactness, pressure, and \(\varepsilon\)-regularity refinements developed in, among others, \cite{Struwe1988,Lin1998,LadyzhenskayaSeregin1999,Seregin2007Local,Seregin2015,Vasseur2007,GustafsonKangTsai2007,GuevaraPhuc2017,AlbrittonBarkerPrange2023}. The scale-invariant regime is
\[
  \Phi(1)\le M,\qquad C_3(1)=\delta\ll1,
\]
where \(C_3\) measures the vertical component \(u_3\). If \(\delta\to0\) under the fixed bound \(\Phi(1)\le M\), compactness forces subsequential convergence, modulo harmonic pressures, toward the strict two-and-a-half-dimensional boundary
\[
  V=(V_h,0),\qquad \nabla_h\cdot V_h=0,
\]
with pressure satisfying \(\partial_3Q=0\). The rate problem asks for more: whether the data
\[
  \Phi(1)\le M,
  \qquad C_3(1)=\delta
\]
force a quantitative approximation
\[
  X^{\har}_\theta(u,p;M)\le F_{M,\theta}(\delta)
\]
with \(F_{M,\theta}\) logarithmic or algebraic as \(\delta\downarrow0\).

This question is adjacent to, but not the same as, the classical Prodi--Serrin continuation theory and related pressure/Morrey/initial-time regularity mechanisms \cite{Prodi1959,Serrin1962,Serrin1963,KozonoSohr1997,SereginSverak2002,EscauriazaSereginSverak2003,Seregin2007Morrey,JiaSverak2014}.  It is also informed by one-component, one-gradient, and anisotropic regularity criteria \cite{PenelPokorny2004,KukavicaZiane2006,KukavicaZiane2007,ZhouPokorny2009,ZhouPokorny2010,CaoTiti2011,CheminZhang2016,CheminZhangZhang2017,KukavicaRusinZiane2017,HanLeiLiZhao2019,KangNguyen2023}.  The finite-dimensional obstruction language below is closer to an analytic-geometric reduction than to a direct PDE regularity proof, and is therefore naturally compared with singular-set and approximation techniques such as \cite{ChoeLewis2000,DuzaarMingione2004}.

The paper treats this rate question as an observability problem. The central message is
\[
\boxed{\text{old observables do not force a logarithmic or power selected-trace rate,}}
\]
and
\[
\boxed{\text{the missing structure is Navier--Stokes-specific vertical-pressure observability.}}
\]
The phrase ``old observables'' refers to the local quantities produced by the standard compactness and preparation arguments: scale-invariant energy-pressure quantities, harmonic-pressure quotient distances, covariance stresses, unresolved variance, prepared residuals, good-time trace functionals, finite-window compatibility quotients, and trace-cost duality.  The pressure and harmonic-gauge part is aligned with the local pressure literature \cite{SohrWahl1986,Wolf2017,Yu2026HarmonicPressure}, while the covariance/coarse-graining language is informed by commutator and energy-flux methods such as \cite{ConstantinETiti1994}.

\subsection{Theorem status and non-claims}
The statements in the paper have different logical status. The following table is meant to prevent overclaiming.
\begin{center}
\begin{tabularx}{0.96\textwidth}{@{}p{0.48\textwidth}X@{}}
\toprule
Statement & Status \\
\midrule
Qualitative strict-shadow compactness & standard compactness statement recalled here \\
Old-observable no-rate theorem & proved abstract trace-skeleton/envelope-insufficiency theorem \\
Trace-drop removes high-frequency escape & proved under sharp admissible-time intersection \\
Fixed-window analytic obstruction control & proved finite-dimensional analytic theorem \\
All-order non-summable finite-stage model & proved model theorem \\
Active finite-window Schur relaxed visibility & proved periodic finite-window theorem \\
Localized relaxed visibility & proved under explicit perturbative or controlled finite-window hypotheses \\
Final anti-phantom dichotomy & conditional reduction and theorem target \\
Unconditional one-component regularity & not claimed \\
Navier--Stokes counterexample & not claimed \\
\bottomrule
\end{tabularx}
\end{center}

The main negative result is an insufficiency theorem for an envelope strictly larger than the true Navier--Stokes image. It does not construct a singular Navier--Stokes solution. The final logarithmic regularity-radius conclusion is conditional on prepared comparison, sharp finite-window reduction, and vertical-duality or relaxed anti-phantom closure.

The word \emph{proved} is used below only for statements whose hypotheses explicitly include the structural inputs needed in the proof.  Statements involving localization, moving windows, combined observability, NS residual representation, or anti-phantom closure are formulated as conditional reductions unless those hypotheses are part of the theorem itself.

\subsection{Main theorems}
The paper is organized around four statements.

\begin{theorem}[Old-observable no-rate theorem]
For every sequence \(a_n\downarrow0\), the abstract old trace-obstruction skeleton associated with the old-observable closure contains a branch with selected-time trace distance \(m_n=\frac12 a_n^2\), while all fixed finite-window residuals, covariance defects, variance quantities, pressure-gauge observables, good-time observables, and prepared residuals are controlled or vanish. Consequently, from the old observable package alone, without a uniform finite-window inverse, a quantitative compactness modulus, trace-drop intersection, or vertical-duality/relaxed anti-phantom input, no universal logarithmic or power selected-trace rate follows.
\end{theorem}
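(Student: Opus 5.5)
The plan is to build the branch explicitly inside the abstract trace-obstruction skeleton. The skeleton is only an envelope: it records the old observables as functionals on a finite-window state space together with a selected-time trace functional. It does not require the states to come from Navier--Stokes solutions, so a construction there suffices. The conclusion should be read as an insufficiency statement, not as a Navier--Stokes counterexample.

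\textbf{Step 1: the model branch.} Fix the strict 2.5D shadow \(V=(V_h,0)\), \(\partial_3Q=0\), as base point. Pick a finite-mode direction \(e\) in the skeleton with two properties:
\begin{itemize}
\item every old observable map (fixed-window residuals, covariance stress, unresolved variance, harmonic-pressure gauge, good-time functionals, prepared residuals) is flat at \(V\) along \(e\) to all orders, or at least to second order with a quadratic remainder;
\item the selected-time trace functional has a nondegenerate quadratic response along \(e\).
\end{itemize}
Then set \(W_n=V+a_n e\) and normalize \(e\) so that the trace distance equals \(m_n=\tfrac12 a_n^2\) exactly. The factor \(\tfrac12\) is the second-order Taylor coefficient.

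\textbf{Step 2: the old observables are controlled.} Verify each observable item by item:
\begin{itemize}
\item Divergence-freeness and \(\partial_3\)-independence of the pressure part make the pressure-gauge and compatibility quotients vanish identically along the branch.
\item Covariance and variance defects are quadratic in the unresolved part, so they are \(O(a_n^2)\), or zero if \(e\) is chosen in the resolved finite-mode space.
\item The prepared residuals and good-time observables are handled through the closure's own definitions: they see only the projected finite-window data, and \(e\) lies in the kernel of that projection.
\item The vertical component data \(C_3\) along the branch is \(\delta_n\lesssim a_n^{k}\) for every \(k\), or can be decoupled from \(a_n\) entirely, since the skeleton treats \(\delta\) as an independent parameter.
\end{itemize}

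\textbf{Step 3: no rate follows.} Argue by contradiction. Suppose the old package implied \(m\le F(\delta)\) with \(F\) logarithmic or a power. Since \(a_n\downarrow0\) is arbitrary, choose \(a_n\) decaying slowly relative to \(\delta_n\), for example \(\delta_n=\exp(-\exp(1/a_n))\). Then \(\tfrac12 a_n^2\le F(\delta_n)\) fails for large \(n\), which is the contradiction.

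\textbf{Step 4: the excluded inputs are genuinely missing.} Check that the construction violates exactly the listed extra inputs:
\begin{itemize}
\item There is no uniform finite-window inverse, because the trace is not controlled linearly by the residual along \(e\).
\item There is no quantitative compactness modulus.
\item The trace-drop intersection fails on the branch.
\item The branch is invisible to the relaxed vertical-pressure channel.
\end{itemize}

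The main obstacle is Step 1: finding \(e\) on which all old observables are simultaneously flat while the trace is not. This amounts to exhibiting a common kernel of the linearized old-observable map, together with higher-order vanishing, that is transverse to the trace functional. I would first try a single horizontal mode \(e=(w_h(x_3),0)\) with \(\nabla_h\cdot w_h=0\), depending on \(x_3\) only through a profile that is invisible to the harmonic-pressure quotient but shifts the selected-time trace. If the prepared residuals prove to be non-flat there, I would fall back to an all-order flat construction using an \(x_3\)-dependent phase \(e^{-1/s^2}\)-type profile inside a Lojasiewicz-violating family.
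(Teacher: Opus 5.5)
Your Step~1 has a genuine gap, and it cannot be fixed with a fixed finite-mode direction $e$. In the skeleton the strict class coincides with the zero set of the quadratic compatibility map; the paper uses $\frakS_{\mathrm{tr}}=\{0\}$ and $\calC(x)_j=\lambda_jx_j^2$. Along a fixed line, $a\mapsto\calC(V+ae)$ is a polynomial of degree two, which leaves two cases.
\begin{itemize}
\item If it is flat to all orders, it vanishes identically. Then $V+ae$ is itself strict and $m_n=0$.
\item If it has a nonzero quadratic part, the window containing $e$ sees a compatibility residual of size $\sim a_n^2$, the same order as $m_n$. This violates the requirements that fixed-window defects vanish and that the full residual be $o(a_n^2)$. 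It also violates the residual hierarchy O4 once you send $\delta_n\to0$ as fast as your Step~3 requires, so the claimed decoupling of $\delta$ in Step~2 fails.
\end{itemize}
Your sentence that $e$ ``lies in the kernel of that projection'' is inconsistent with $e$ being a fixed finite mode: $P_\Lambda(a_ne)=a_ne\neq0$ for every window $\Lambda$ containing $e$. More generally, the paper's fixed-window Lojasiewicz proposition shows that any branch confined to a fixed analytic window obeys $\dist(x,\frakS_\Lambda)\le C_\Lambda\norm{\calC_\Lambda(x)}{Y_\Lambda}^{\alpha_\Lambda}$, which is a power rate. Your $e^{-1/s^2}$ fallback does not escape this, because a spatial profile leaves the finite-window map polynomial in the amplitude. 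Your Step~4 claims also fail for such a branch. A fixed mode is trivially trace-tight, so trace-drop is not violated. A ``uniform finite-window inverse'' is vacuous inside a single window.

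The missing idea is high-frequency escape in an infinite-dimensional skeleton whose visibility degenerates. The paper takes $H=Y=\ell^2(\N)$, strict class $\{0\}$, and $\calC(x)_j=\lambda_jx_j^2$ with $\lambda_j\downarrow0$. The branch is the time-independent $U_n=a_ne_{j_n}$, with $j_n\to\infty$ chosen so that $\lambda_{j_n}a_n^2\le a_n^4$. Pressure, covariance, variance, prepared residual, vertical defect and horizontal defect are all set to zero. Your $O(a_n^2)$ variance would not count as controlled, since $\kappa$ enters $m$ directly and O6 demands $\int\phi\kappa\le C\ell^2$. This construction then gives each required property:
\begin{itemize}
\item For each fixed $\Lambda$, eventually $P_\Lambda U_n=0$ and $P_\Lambda g_n=0$.
\item The full residual $g_n=\lambda_{j_n}a_n^2e_{j_n}$ has norm at most $a_n^4$.
\item The only strict competitor is $0$, so $m_n=\frac12a_n^2$. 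The $\frac12$ is simply the one in the trace energy with $\norm{e_{j_n}}{H}=1$, not a Taylor coefficient.
\item With $\ell_n=a_n^{4/\mu}$ and $\delta_n$ chosen so that $\ell_n^{-N}\delta_n^b\le a_n^4$, one gets $\eta_n\le2a_n^4$, hence $m_n/\eta_n\to\infty$.
\end{itemize}
The moving mode is exactly what lets the residual be $o(a_n^2)$ without contradicting fixed-window analyticity. It is also what violates the excluded inputs: $\lambda_{j_n}\to0$ rules out any uniform finite-window inverse, and the undamped high mode violates parabolic trace-drop.
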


\begin{theorem}[Obstruction separation]
After parabolic trace-drop and sharp admissible-time intersection, pure high-frequency escape is excluded. After restriction to any fixed analytic finite window, finite-dimensional Lojasiewicz control excludes arbitrary-slow convergence inside that fixed window. The remaining abstract obstruction is an all-order finite-mode flat branch with potentially non-summable finite-stage exactification constants.
\end{theorem}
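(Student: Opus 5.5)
The statement splits into three sub-claims, and I will prove them in sequence, each inside the abstract trace-obstruction skeleton (not for actual Navier--Stokes solutions). Throughout, a branch is a sequence of skeleton states whose old observables vanish, and whose selected-time trace distance \(m_n\) to the strict boundary tends to zero at some prescribed slow rate. The plan is to show that every such branch must be of one of three types (high-frequency, fixed-window analytic, or all-order flat finite-mode) and that the first two cannot occur.

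\emph{Step 1: high-frequency escape.} Decompose the deviation from the strict shadow into a frequency-localized part below a cutoff \(N\) and a remainder above \(N\). Parabolic smoothing makes the high part decay on the interior sub-cylinder like \(e^{-cN^2 s}\) times the scale-invariant bound \(M\). Trace drop then says the selected-time trace of the high part is dominated by its space-time \(L^2\) average over a time set of positive measure. The sharp admissible-time intersection is needed precisely here: the good times of the old-observable package must intersect the trace-drop time set in a set of measure bounded below uniformly in \(n\). On that intersection the high-frequency trace is controlled by the old observables, which vanish along the branch. So a branch with \(m_n\) bounded below relative to those observables cannot carry its trace mass at frequencies \(N_n\to\infty\), and pure high-frequency escape is excluded.

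\emph{Step 2: fixed analytic window.} Now assume the branch stays in a fixed finite window \(W\), a finite-dimensional analytic manifold of modes. On \(W\), the compatibility residual \(R\) restricted to \(W\) is real-analytic and vanishes exactly on the strict boundary. The finite-dimensional Łojasiewicz inequality then gives \(\dist(x,Z)\le C\,|R(x)|^{\alpha}\) near the zero set \(Z\), with \(\alpha=\alpha(W)>0\). Since the selected-trace distance is comparable to \(\dist(\cdot,Z)\) inside \(W\), we get \(m_n\lesssim_W(\text{residual})^{\alpha}\). Hence convergence within \(W\) is at least power-rate in the residual, and arbitrarily slow convergence is impossible inside any fixed window.

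\emph{Step 3: the residual class, and the main obstacle.} A surviving branch must therefore leave every fixed window while staying at bounded frequency. So its modes populate windows \(W_k\) of growing dimension but bounded frequency, and Step 2 holds on each \(W_k\) only with exponents \(\alpha_k\) and constants \(C_k\) that degenerate. I will isolate this as the \emph{all-order flat branch}: for each \(k\) there are perturbations tangent to \(Z\) to order \(k\), with exactification constants \(C_k\) such that \(\sum_k C_k^{-1}\) (or the analogous product) diverges. I expect the main obstacle to be making this classification exhaustive. Concretely, I must show that any non-high-frequency, non-fixed-window branch has a subsequence realizable as such a finite-stage cascade. My first attempt will be a diagonal extraction: pick a nested exhaustion \(W_1\subset W_2\subset\cdots\), measure the flatness order of the branch relative to each \(W_k\) by the first nonvanishing Taylor coefficient of \(R\), and use the compactness statement recalled earlier to pass to limits stage by stage. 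This gives the obstruction only in the abstract skeleton; non-summability of the constants is asserted to be possible, not shown to occur for Navier--Stokes.
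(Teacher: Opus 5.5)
There is a genuine gap in Step 1, and Step 3 omits what the third claim actually needs. Your Step 1 relies on a mechanism that fails. You use trace-drop as an upper bound for the selected-time high-frequency trace (``dominated by its space-time $L^2$ average''). You then claim that on the good-time intersection this trace is ``controlled by the old observables, which vanish along the branch.'' The old observables cannot do this. In the arbitrary-slow branch $U_n=a_ne_{j_n}$ of \cref{thm:arbitrary-slow-expanded}, every old observable vanishes identically while all of $m_n=\tfrac12a_n^2$ sits in high modes. In any case, compactness-level control is not quantitative at the scale $m_n\gg\eta_n$. Trace-drop is also a forward statement: high-frequency mass makes the energy \emph{lower} at later times, so it gives no upper bound on the selected trace itself.

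The missing idea is variational. Normalize $W_n=(U_n-V_n)/m_n^{1/2}$ at the near-minimizing selected time $s_n$. Suppose $\int\phi|P_{\rho_k}W_{n_k}(s_{n_k})|^2\dx\ge\varepsilon_0$ along some $\rho_k\downarrow0$. Then trace-drop gives later times at which the selected energy is at most $(1-c_1\varepsilon_0)m_n+o(m_n)$. Sharp intersection supplies one such time $t_n$ that is an admissible comparison time, at which the same competitor $V_n$ may be used, and with $\int\phi\kappa_n(t_n)\dx=o(m_n)$. Since $m_n$ is an infimum over admissible times, $m_n\le(1-c_1\varepsilon_0)m_n+o(m_n)$, a contradiction, and this gives trace tightness of $W_n(s_n)$. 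This is why the intersection hypothesis must include the competitor and variance conditions, not just a measure lower bound. Your Step 2 is fine and is the paper's \L{}ojasiewicz argument (with exponent $2\alpha$, since $m_n$ is a squared distance).

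Your Step 3 does not prove the third claim. Exhaustiveness needs no diagonal extraction or compactness. For the formal expansion $V^\eta=V+\eta R_1+\eta^2R_2+\cdots$, either some projected coefficient $\calO_{k,\Lambda}=P_\Lambda\sum_{i+j=k}B(R_i,R_j)$ is nonzero, which is a visible finite-order obstruction handled by Step 2, or all of them vanish for every fixed $\Lambda$ and $k$, which is the definition of an all-order finite-mode flat branch. What the statement requires, and what you omit, is evidence that this residual case actually survives: that all-order finite-stage exactifiability does not produce a strict curve.

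The paper shows this with an explicit model. Take $H=\R e_1\oplus\ell^2$ and $\frakS_K=\{z_k=\Gamma_ka^k,\ 2\le k\le K\}$ with $\Gamma_k=\exp(k^2)$. Then $\sum_k\Gamma_k^2|a|^{2k}=\infty$ for $a\ne0$, so the full strict set is $\{0\}$. The branch $U_n=a_ne_1$ has $m_n=\tfrac12a_n^2$. Yet for each fixed $K$ the point $(a_n,\Gamma_2a_n^2,\dots,\Gamma_Ka_n^K,0,\dots)\in\frakS_K$ lies within $o(a_n)$ of $U_n$. The branch stays in a single mode and every fixed window sees it as exactifiable. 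The non-summability lives in the exactifying corrections, not in the branch spreading over windows with degenerating \L{}ojasiewicz constants. Your condition ``$\sum_kC_k^{-1}$ diverges'' is also the wrong one: the relevant failure is the absence of a summable majorant, e.g.\ $\sum_k\Gamma_k\eta^k=\infty$ for every $\eta>0$.
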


\begin{theorem}[Schur-to-relaxed visibility]
In a clean active periodic finite-window model, let \(\Phi_\Lambda=\Ker A_\Lambda^*\) be the strict trace-invisible Schur cokernel and let
\[
  J^{\rel}_\Lambda s=\sum_{\sigma\in\Lambda}\mu_\sigma s_\sigma w_\sigma,
  \qquad
  \mu_\sigma=\frac{|\sigma_h|^2+\sigma_3^2}{|\sigma_h|^2}.
\]
If \(\Lambda\) is active, meaning \(|\sigma_h|\ne0\) and \(|\sigma_3|\ne0\) for every output mode, then
\[
  J^{\rel}_\Lambda s=0\quad\Longrightarrow\quad s=0.
\]
Thus a strict-invisible Schur defect need not be a true Navier--Stokes phantom; in active finite windows it is relaxed-visible through vertical pressure.
\end{theorem}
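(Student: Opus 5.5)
The implication \(J^{\rel}_\Lambda s=0\Rightarrow s=0\) will come from two facts: the output vectors \(w_\sigma\) attached to distinct modes of \(\Lambda\) are linearly independent, and every weight \(\mu_\sigma\) is a finite nonzero number on an active window. Since the model is a clean active periodic finite window, I take the relaxed vertical-pressure output in Fourier variables. Each output mode \(\sigma\in\Lambda\) then contributes one fixed profile \(w_\sigma\). This profile is the Fourier mode \(e^{i\sigma\cdot x}\), possibly times a fixed nonzero polarization vector, read through the vertical-pressure channel. The strict Schur operator \(A_\Lambda\) only sees the trace-projectable part, and that part is weighted by the horizontal symbol \(|\sigma_h|^2\). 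The relaxed channel \(\partial_3 Q\) differs from it by a ratio of full to horizontal Laplacian symbols. This is exactly where \(\mu_\sigma=|\sigma|^2/|\sigma_h|^2\) comes from. The first step is to record this derivation, so that \(J^{\rel}_\Lambda\) is identified as the diagonal multiplier \(\diag(\mu_\sigma)\) composed with the synthesis map \(s\mapsto\sum s_\sigma w_\sigma\).

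The second step is the injectivity of the synthesis map. Cleanness means that the distinct \(\sigma\in\Lambda\) are distinct frequencies, with no identifications of \(\sigma\) with \(-\sigma\) beyond the standard real-form pairing. Under that reading the \(w_\sigma\) are mutually \(L^2\)-orthogonal on the periodic cell. If \(J^{\rel}_\Lambda s=0\), pairing with \(\overline{w_\tau}\) gives \(\mu_\tau s_\tau\|w_\tau\|^2=0\) for each \(\tau\in\Lambda\). If the model uses real cosine/sine pairs, I would instead pair with the conjugate pair and obtain the same diagonal conclusion.

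The third step uses activity. Since \(|\sigma_h|\ne0\), the weight \(\mu_\sigma\) is finite. Since \(|\sigma_3|\ne0\), we have \(\mu_\sigma>1\), and in any case \(\mu_\sigma\ge1>0\). So \(\mu_\tau\ne0\), and because \(w_\tau\ne0\) we get \(s_\tau=0\) for every \(\tau\), hence \(s=0\). Applying this to \(s\in\Phi_\Lambda=\Ker A_\Lambda^*\) gives the interpretive conclusion: any nonzero strict-invisible Schur defect has \(J^{\rel}_\Lambda s\ne0\), so it is relaxed-visible. The role of \(\sigma_3\ne0\) deserves a remark. It guarantees that the relaxed channel genuinely differs from the strict one, with \(\mu_\sigma\ne1\), so that \(\Ker A_\Lambda^*\) can be nontrivial while \(J^{\rel}_\Lambda\) stays injective. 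It also excludes purely horizontal modes, on which \(\partial_3\) annihilates the vertical-pressure response, so that \(w_\sigma\) would vanish and visibility would fail.

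The main obstacle is the first step: verifying that in the clean model the output profiles \(w_\sigma\) are nonzero and independent. I have to rule out the possibility that the vertical-pressure reading collapses two distinct modes, for example \(\sigma\) and its reflection \((\sigma_h,-\sigma_3)\), onto a common profile. I would handle this by including the phase \(e^{i\sigma_3x_3}\) in \(w_\sigma\), so that reflected modes stay orthogonal. If the model instead uses \(x_3\)-parity-symmetrized profiles, I would use cleanness to exclude reflected pairs from \(\Lambda\), or treat each reflected pair as a \(2\times2\) block and check that its determinant, a multiple of \(\mu_\sigma^2\), is nonzero.
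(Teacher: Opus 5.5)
Your proof is correct and is essentially the paper's: in output-mode coordinates \(J^{\rel}_\Lambda\) is \(\diag(\mu_\sigma)\) with every \(\mu_\sigma\ge1\) on an active window, so it is injective on \(\Phi_\Lambda\subset Y_\Lambda\). The paper simply takes the \(w_\sigma\) to be a basis, which your Fourier-orthogonality step justifies, and it also records the two-sided bound \(\min_\sigma|\mu_\sigma|\,|s|\le|J^{\rel}_\Lambda s|\le\max_\sigma|\mu_\sigma|\,|s|\). Your motivational derivation of \(\mu_\sigma\) is off, although this does not affect the proof since \(\mu_\sigma\) is given in the statement: the paper obtains \(\mu_\sigma=\Theta_\sigma/\Gamma_\sigma\), the vertical-pressure multiplier \(m_\sigma/(K_\sigma^2+m_\sigma^2)\) divided by the vertical-lift Schur normalization \(m_\sigma K_\sigma^2/(K_\sigma^2+m_\sigma^2)^2\), not from a weighting inside \(A_\Lambda\), and it uses \(\sigma_3\ne0\) only to make this quotient legitimate, which has no connection to whether \(\Ker A_\Lambda^*\) is nontrivial.
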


\begin{theorem}[Final conditional dichotomy]
After preparation, good-time selection, finite-window reduction, strict--Schur obstruction removal, harmonic-pressure cleaning, localized coercivity, homogeneous-tail testing, and NS-realizability filtering, exactly one of the following remains. Either the surviving NS-derived residuals satisfy relaxed anti-phantom closure, which yields controlled trace-cost exactification and conditional logarithmic strict-shadow selection; or there exists an NS-realizable, cleaned, relaxed-invisible, unaligned left-singular cascade. This cascade is the final obstruction isolated by the present route.
\end{theorem}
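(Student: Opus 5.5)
The dichotomy is a reduction theorem, so I will prove it as a sequential filtration ending in a logical alternative. I take as given the outputs of the earlier pipeline: the preparation and good-time selection steps, the Old-observable no-rate theorem, the Obstruction separation theorem, and the Schur-to-relaxed visibility theorem. The plan is to track a hypothetical sequence of residuals \((u_n,p_n)\) with \(C_3(1)=\delta_n\downarrow0\) and selected-trace distance \(m_n\) decaying slower than any logarithmic modulus. I then peel off each obstruction class in turn, in the order listed in the statement.

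First, preparation and good-time selection replace \((u_n,p_n)\) by prepared residuals on a sharp admissible-time set. Trace-drop intersection from the Obstruction separation theorem then excludes pure high-frequency escape, so the defect concentrates on a finite window \(\Lambda_n\). If the windows stay in a fixed analytic window, the finite-dimensional Lojasiewicz control gives a power rate, which contradicts slow decay. So the surviving branch is all-order flat with \(\Lambda_n\) growing.

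On each \(\Lambda_n\) I decompose the defect into a component in \(\Range A_{\Lambda_n}\) and a component in the strict Schur cokernel \(\Phi_{\Lambda_n}=\Ker A_{\Lambda_n}^*\). The range component is removed by the strict trace-cost inverse. For the cokernel component, I clean harmonic pressure and split the modes into active ones (\(\sigma_h\ne0\), \(\sigma_3\ne0\)) and inactive ones. Inactive modes with \(\sigma_3=0\) are strict 2.5D and are absorbed into the shadow. Modes with \(\sigma_h=0\) are controlled by the vertical component, since they are bounded by \(\delta\). On active modes, the Schur-to-relaxed visibility theorem shows that \(J^{\rel}_{\Lambda_n}\) is injective. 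Localized coercivity and homogeneous-tail testing transfer this from the periodic model to the localized residual.

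The dichotomy then comes from the behaviour of these inverses. Define the relaxed anti-phantom constant \(\kappa_n=\inf\{\|J^{\rel}_{\Lambda_n}s\|:\|s\|=1,\ s\text{ NS-realizable, cleaned}\}\) together with the trace-cost exactification constants. There are two cases.

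\begin{itemize}
\item If \(\kappa_n^{-1}\) and the exactification constants grow at most polynomially in \(|\Lambda_n|\), then relaxed anti-phantom closure holds. Combining this with the window-size versus \(\delta\) relation from trace drop gives \(m_n\lesssim(\log(1/\delta_n))^{-c}\). This is the conditional logarithmic strict-shadow selection.
\item If these quantities are not polynomially bounded, I extract unit vectors \(s_n\) with \(\|J^{\rel}s_n\|\to0\) faster than any such rate. By construction these vectors are realizable, cleaned, and relaxed-invisible. They must be left-singular and unaligned, because aligned or right-singular directions would have been removed by the strict inverse or by the active-mode injectivity. This sequence is the cascade.
\end{itemize}

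The two alternatives are mutually exclusive because the cascade exactly negates the closure bound.

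The main obstacle is the localization step. The finite-window visibility is only a periodic statement with non-uniform constants, and passing to localized, NS-realizable residuals without losing injectivity is exactly what the paper calls conditional. My first attempt would be a perturbative Schur-complement argument that treats cutoff commutators as small in the prepared norm. The weakness is that the constants become uniform only under the explicit controlled-window hypotheses, so the theorem will inherit these hypotheses as stated conditions rather than prove them.
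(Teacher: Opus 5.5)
Your dichotomy is drawn on the growth of operator constants ($\kappa_n^{-1}$ and the trace-cost amplification). The cascade in the statement, however, is defined by a \emph{residual-excited} invisible direction: normalized $y_n$ with $\mathfrak M_n\bigl(\|A_n^*y_n\|+\|J_n^{\rel}P_ny_n\|\bigr)\to0$ \emph{and} $|\langle g_n,y_n\rangle|\gtrsim r_n$. Non-polynomial growth of your constants does not produce such a sequence.

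If only $\sigma_{\min}^+(A_n)$ collapses, you get $y_n$ with $A_n^*y_n$ small. Nothing makes $J_n^{\rel}P_ny_n$ small as well, so your ``extraction'' of relaxed-invisible unit vectors fails in that case. Even when both channels degenerate, the residual may be aligned with the small singular directions. Then the trace cost is still $O(r_n^2)$, which is exactly the content of Lemma~\ref{lem:singular-values-harmless-corrected}.

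Your claim that the extracted vectors ``must be left-singular and unaligned because aligned directions would have been removed'' has no support. Alignment is a property of the pair $(g_n,y_n)$, not of $y_n$ alone, and your $s_n$ never involves $g_n$. Neither the strict inverse nor active-mode injectivity removes aligned directions. So the configuration ``constants blow up, residual aligned'' lies in neither of your alternatives, and your exhaustiveness and mutual-exclusivity claims both break down.

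The missing ingredient is the NS residual representation. Via the vertical momentum equation, $\langle g_n,y\rangle$ splits into three pieces: a trace-visible part bounded by $Cr_n\|A_n^*y\|$, a relaxed vertical-pressure part bounded by $Cr_n\|J_n^{\rel}P_ny\|$, and an $o(r_n)\|y\|$ remainder.

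With this, the paper's argument is short. Suppose the relaxed anti-phantom estimate fails at the branch-native scale, and normalize the offending $y_n$. If either channel stayed non-negligible, it would control the pairing. So both must vanish (relative to $\mathfrak M_n$) while $|\langle g_n,y_n\rangle|\gtrsim r_n$, which is Alternative II verbatim. Thus Alternative I is literally the negation of II. The strict estimate follows when the cleaned relaxed kernel is trivial, and $g_n\in\Range A_n$ with $\Cost^{\trc}\le Cr_n^2$ is then Lemma~\ref{lem:trace-cost-duality}.

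The logarithmic selection is not produced inside the dichotomy. It is a separate conditional corollary, coming from the prepared-comparison bound $C_M\ell^a+C_M\ell^{-N}\exp(C\ell^{-N})\delta^b$ with $\ell^{-N}\sim|\log\delta|$. Trace-drop supplies no ``window-size versus $\delta$'' relation.

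Two smaller inaccuracies:
\begin{itemize}
\item Modes with $\sigma_h=0$ are quotiented out as gauge/harmonic-pressure modes; they are not bounded by $\delta$.
\item Transferring visibility to the localized setting is the commutator-stability step. Localized coercivity concerns the forcing-to-vertical lift, not visibility.
\end{itemize}
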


\subsection{Main contributions}
The paper has five contributions.
First, it isolates the old observable package and formalizes it as a defect-shadow envelope. Second, it proves that the associated abstract trace-obstruction skeleton permits arbitrary-slow selected-trace convergence, so old observables alone cannot give a logarithmic or power rate. Third, it separates false obstructions: parabolic trace-drop removes high-frequency escape, while finite-dimensional analytic geometry removes fixed-window arbitrary-slow behavior. Fourth, it identifies Schur trace-projectability as the remaining strict trace-cost obstruction and shows that active Schur defects are relaxed-visible through vertical pressure. Fifth, it gives a final anti-phantom dichotomy reducing the rate problem to the exclusion or construction of an NS-realizable, cleaned, relaxed-invisible, unaligned left-singular cascade.

\subsection{Logical spine and organization}
The dependency chain is
\[
\begin{array}{c}
\text{small }u_3+\text{scale bound}
\Rightarrow \text{strict compactness}
\Rightarrow \text{old envelope}
\Rightarrow \text{no-rate branch},\\[0.25em]
\text{trace-drop + finite-window tests}
\Rightarrow \text{finite-mode flat branch}
\Rightarrow \text{vertical lift / Schur visibility},\\[0.25em]
\text{vertical-pressure observability}
\Rightarrow \text{final anti-phantom dichotomy}.
\end{array}
\]
Section 2 fixes the compactness boundary. Section 3 defines the old observables and the selected-time target. Sections 4 and 5 prove the old-envelope no-rate theorem. Section 6 separates the false escape mechanisms. Section 7 gives trace-cost duality. Section 8 introduces Navier--Stokes realizability and vertical lift cost. Section 9 formulates reduced vertical duality and Schur trace-projectability. Section 10 proves Schur-to-relaxed visibility in active finite windows. Section 11 records the localization, gauge, NS-realizability, and moving-window filters. Section 12 states the final anti-phantom dichotomy. Section 13 gives the conditional logarithmic consequence. Section 14 summarizes the theorem targets. The appendices contain the longer Fourier, localization, and moving-window calculations.

\section{Preliminaries and the compactness boundary}\label{sec:preliminaries}

\subsection{Suitable weak solutions}

We use the standard local class.  A pair \((u,p)\) is a suitable weak solution of \eqref{eq:NS} in a parabolic cylinder \(Q\) if
\[
u\in L^\infty_tL^2_x(Q)\cap L^2_tH^1_x(Q),
\qquad
p\in L^{3/2}(Q),
\]
\((u,p)\) solves \eqref{eq:NS} distributionally, \(\nabla\cdot u=0\), and for every nonnegative \(\varphi\in C_c^\infty(Q)\),
\begin{align}\label{eq:local-energy}
\int |u(t)|^2\varphi(t)\dx
&+2\int_{-\infty}^t\int |\nabla u|^2\varphi\dx\,ds
\nonumber\\
&\le
\int_{-\infty}^t\int |u|^2(\partial_s\varphi+\Delta\varphi)\dx\,ds
+
\int_{-\infty}^t\int (|u|^2+2p)u\cdot\nabla\varphi\dx\,ds
\end{align}
for a.e. \(t\).  This is the standard suitable-weak class used in the CKN theory and its later refinements \cite{CKN1982,Lin1998,Seregin2015,Vasseur2007}. The pressure is determined only up to functions of time.  Locally, after pressure decomposition, it is also natural to quotient by spatially harmonic functions \cite{SohrWahl1986,Wolf2017,Yu2026HarmonicPressure}.

\subsection{Harmonic pressure gauge}

For a cylinder \(Q_\rho=B_\rho\times I_\rho\), define
\[
\calH(Q_\rho)=\{h\in L^{3/2}(Q_\rho):\Delta h(\cdot,t)=0\text{ in }B_\rho\text{ for a.e. }t\}.
\]
We write \([p]_{\calH}=p+\calH\).  The harmonic-pressure excess relative to the strict class is
\begin{equation}\label{eq:harmonic-excess}
\calX^{\har}_\rho(u,p;M)=
\inf_{(V,Q)\in\frakS_M(Q_\rho)}\inf_{h\in\calH(Q_\rho)}
\left[
\rho^{-2}\int_{Q_\rho}|u-V|^3\dxdt
+
\rho^{-2}\int_{Q_\rho}|p-Q-h|^{3/2}\dxdt
\right].
\end{equation}
This is the correct local topology for the pressure: the Calderon--Zygmund part is tied to the velocity, while the spatially harmonic part is not controlled by the same local formula; compare the local pressure decompositions in \cite{SohrWahl1986,Wolf2017,Seregin2015} and the harmonic-pressure formulation in \cite{Yu2026HarmonicPressure}.

\subsection{Strict two-and-a-half-dimensional shadows}

A strict 2.5D shadow is a pair \((V,Q)\) with
\[
V=(V_h,0),
\qquad \divh V_h=0,
\qquad \partial_3Q=0,
\]
solving
\begin{equation}\label{eq:strict-system}
\partial_tV_h-\Delta V_h+\nabh\cdot(V_h\otimes V_h)+\nabh Q=0.
\end{equation}
We denote by \(\frakS_M(Q_\rho)\) the class of such shadows in \(Q_\rho\) with a fixed scale-invariant bound depending on \(M\).

The vertical pressure condition \(\partial_3Q=0\) produces a nonlinear compatibility constraint.  Formally, taking horizontal divergence in \eqref{eq:strict-system} gives
\[
-\Delta_h Q=\partial_a\partial_b(V_aV_b),
\qquad a,b\in\{1,2\}.
\]
Thus, modulo horizontal harmonic issues and after fixing the horizontal pressure gauge,
\[
Q=-\Delta_h^{-1}\partial_a\partial_b(V_aV_b).
\]
The condition \(\partial_3Q=0\) is equivalent to
\begin{equation}\label{eq:compatibility-map}
\calC(V):=\nabh\partial_3\Delta_h^{-1}\partial_a\partial_b(V_aV_b)=0.
\end{equation}
We also write
\begin{equation}\label{eq:B-bilinear}
B(A,B)=\nabh\partial_3\Delta_h^{-1}\partial_a\partial_b(A_aB_b),
\qquad
\calC(V)=B(V,V).
\end{equation}
At a base shadow \(V\), the linearized compatibility map is
\begin{equation}\label{eq:linearized-compatibility}
D\calC_V[W]
=
\nabh\partial_3\Delta_h^{-1}\partial_a\partial_b(V_aW_b+W_aV_b).
\end{equation}
The quotient
\[
\Obs_V:=\coker D\calC_V
\]
is the formal obstruction space.  A central difficulty is that
\[
W\in\Ker D\calC_V
\]
does not by itself imply that \(W\) is tangent to an actual curve in \(\calC^{-1}(0)\).  This is the source of finite-order and flat-branch obstruction phenomena.

\subsection{Qualitative compactness versus rate}

The standard compactness mechanism gives the following qualitative implication, using local energy compactness, pressure decomposition, and the Aubin--Lions--Simon compactness principle \cite{Simon1986,CKN1982,Seregin2015}.

\begin{proposition}[Qualitative strict-shadow compactness]\label{prop:qual-compactness}
Fix \(M\ge1\) and \(0<\theta<1/2\).  There exists a nondecreasing modulus \(\omega_{M,\theta}\) with \(\omega_{M,\theta}(s)\to0\) as \(s\downarrow0\) such that every suitable weak solution in \(Q_1\) satisfying \(\Phi(1)\le M\) obeys
\begin{equation}\label{eq:qual-compactness}
\calX^{\har}_{\theta}(u,p;M)
\le
\omega_{M,\theta}(C_3(1)).
\end{equation}
\end{proposition}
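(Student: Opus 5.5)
The natural route is a contradiction–compactness argument. We first reduce the existence of a modulus to a sequential statement. Define
\[
\omega_{M,\theta}(s):=\sup\{\calX^{\har}_\theta(u,p;M): (u,p)\text{ suitable in }Q_1,\ \Phi(1)\le M,\ C_3(1)\le s\}.
\]
This function is nondecreasing by construction. It is finite because \(\calX^{\har}_\theta\) is bounded by a constant depending on \(M,\theta\): take \((V,Q)=(0,0)\) and \(h=0\), and use \(\Phi(1)\le M\) to bound \(\|u\|_{L^3(Q_\theta)}\) and \(\|p\|_{L^{3/2}(Q_\theta)}\). The estimate \eqref{eq:qual-compactness} then holds trivially, and the only content is \(\omega_{M,\theta}(s)\to0\). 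Suppose not. Then there are \(\eps_0>0\) and suitable weak solutions \((u_n,p_n)\) with \(\Phi_n(1)\le M\), \(C_3^{(n)}(1)=\delta_n\to0\), and \(\calX^{\har}_\theta(u_n,p_n;M)\ge\eps_0\). We will derive a contradiction by producing a strict shadow \((V,Q)\in\frakS_M(Q_\theta)\) and harmonic corrections \(h_n\in\calH(Q_\theta)\) with \(u_n\to V\) in \(L^3(Q_\theta)\) and \(p_n-Q-h_n\to0\) in \(L^{3/2}(Q_\theta)\).

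\textbf{Compactness step.} Work on an intermediate cylinder \(Q_{\theta'}\) with \(\theta<\theta'<1/2\). On \(Q_{\theta'}\), split the pressure in the usual local way, \(p_n=p_n^{(1)}+h_n\), where
\[
p_n^{(1)}=\mathcal R_i\mathcal R_j(\chi u_{n,i}u_{n,j}),
\]
\(\chi\) is a cutoff equal to \(1\) on \(B_{\theta'}\), and \(h_n\) is harmonic in \(B_{\theta'}\) for a.e.\ \(t\). The bound \(\Phi\le M\) gives uniform \(L^\infty L^2\cap L^2H^1\) control of \(u_n\) and \(L^{3/2}\) control of both pieces. The equation bounds \(\partial_tu_n\) in a negative Sobolev space. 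Aubin--Lions--Simon then gives a subsequence with \(u_n\to u\) strongly in \(L^3(Q_{\theta'})\); for this we interpolate the strong \(L^2\) convergence against the \(L^{10/3}\) bound. Calderón--Zygmund theory then gives \(p_n^{(1)}\to p^{(1)}\) in \(L^{3/2}\). The harmonic parts are never passed to the limit: they are exactly what the quotient by \(\calH\) discards. The limit \((u,p^{(1)})\) solves NS modulo a harmonic pressure. Since \(C_3\) dominates a scale-invariant norm of \(u_{n,3}\) (say an \(L^3\) or \(L^{10/3}\) quantity), and that norm converges, we get \(u_3\equiv0\).

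\textbf{Identification of the limit as a strict shadow.} With \(u_3=0\), incompressibility gives \(\divh u_h=0\). The vertical momentum equation reduces to \(\partial_3 p=0\) in the sense of distributions for the limiting pressure, and we take \(Q:=p^{(1)}+\tilde h\), where \(\tilde h\) is a harmonic correction absorbing the harmonic-gauge ambiguity. The horizontal equations then become \eqref{eq:strict-system}. The scale-invariant bound passes to the limit by lower semicontinuity, so \((V,Q)=(u_h,0;Q)\in\frakS_M(Q_\theta)\). Plugging this pair into \eqref{eq:harmonic-excess} with the harmonic corrections \(h_n-\tilde h\) gives \(\calX^{\har}_\theta(u_n,p_n;M)\to0\), which contradicts \(\eps_0>0\).

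\textbf{Main obstacle: the vertical pressure condition.} From the limit equation \(\partial_3 p^{(1)}+\partial_3(\text{harmonic part})=0\), one cannot immediately read off \(\partial_3 Q=0\) for a representative lying in the gauge class, because \(\partial_3\) of a time-dependent harmonic limit need not vanish. Controlling \(h_n\) is also delicate, since it is only bounded in \(L^{3/2}\), with no time regularity. I would handle this as follows. The harmonic parts are bounded in \(L^{3/2}_tL^\infty_{\loc}\) by the mean-value property, so they converge weakly. Since \(\nabla p_n\) equals \(-(\partial_t-\Delta)u_n-\nabla\cdot(u_n\otimes u_n)\), we can pass to the limit in \(\nabla p_n\) distributionally, and the third component of this limit vanishes because \(u_3\equiv 0\). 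So the full weak limit \(p\) satisfies \(\partial_3p=0\). Taking \(Q:=p\), which differs from \(p^{(1)}\) only by the weak limit of the \(h_n\) and is therefore a legitimate harmonic gauge, keeps the strong convergence modulo \(\calH\).

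A remaining technical point is whether the fixed scale-invariant bound defining \(\frakS_M\) is inherited by the limit. I would arrange this by choosing the constant in the definition of \(\frakS_M\) as a fixed multiple of \(M\), so that it survives lower semicontinuity.
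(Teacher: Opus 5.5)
Your proposal is correct and follows the paper's own contradiction--compactness sketch: Aubin--Lions--Simon gives strong $L^3$ convergence, the local pressure splits into a Calder\'on--Zygmund part that converges strongly plus a harmonic part absorbed by the $\calH$-quotient, and passing to the limit in the vertical equation gives $\partial_3Q=0$; your choice of $Q$ as the full weak pressure limit, with harmonic corrections $h_n-h_\infty$, makes explicit a gauge point the paper only glosses. One small fix: the strong convergence of $\mathcal R_i\mathcal R_j(\chi u_{n,i}u_{n,j})$ needs $u_n\to u$ in $L^3$ on $\operatorname{supp}\chi$, which is strictly larger than $B_{\theta'}$, so run Aubin--Lions on a slightly larger cylinder (or in $L^3_{\loc}(Q_1)$, as the paper does) rather than only on $Q_{\theta'}$.
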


\begin{proof}[Sketch]
Assume otherwise.  Then there are suitable weak solutions \((u^{(n)},p^{(n)})\) with \(\Phi^{(n)}(1)\le M\), \(C_3^{(n)}(1)\to0\), and \(\calX^{\har}_{\theta}(u^{(n)},p^{(n)};M)\ge\eta_0>0\).  The local energy bound gives
\[
 u^{(n)} \text{ bounded in } L_t^\infty L_x^2(Q_\sigma)\cap L_t^2H_x^1(Q_\sigma)
 \qquad (\sigma<1).
\]
The equation gives a uniform bound for \(\partial_tu^{(n)}\) in a fixed negative Sobolev space, after localizing in \(Q_\sigma\), because
\[
 \partial_tu^{(n)}=
 \Delta u^{(n)}-\operatorname{div}(u^{(n)}\otimes u^{(n)})-\nabla p^{(n)}
\]
with the three terms bounded respectively by the energy and pressure bounds.  Aubin--Lions--Simon compactness \cite{Simon1986} therefore gives, after passing to a subsequence,
\[
 u^{(n)}\to V \quad\text{strongly in }L^3_{\loc}(Q_1),
\]
and the pressure representatives, after subtracting time-dependent constants, converge weakly in \(L^{3/2}_{\loc}\). Since \(u_3^{(n)}\to0\) in \(L^3\), the limit has the form \(V=(V_h,0)\), and incompressibility gives \(\divh V_h=0\).

For the pressure, use the standard local decomposition on a smaller cylinder \cite{CKN1982,SohrWahl1986,Wolf2017,Seregin2015}: the Calderon--Zygmund part generated by \(u^{(n)}\otimes u^{(n)}\) converges strongly in \(L^{3/2}\) because \(u^{(n)}\to V\) strongly in \(L^3\), while the remaining spatially harmonic part is compact in the interior after quotienting by \(\calH\).  Hence the pressure converges in the harmonic-pressure quotient. Passing to the limit in the horizontal equations gives \eqref{eq:strict-system}; passing to the limit in the vertical equation gives \(\partial_3Q=0\).  Thus the limit is an admissible strict shadow in the quotient topology, contradicting the lower bound on \(\calX^{\har}_\theta\).
\end{proof}

\begin{remark}
Proposition~\ref{prop:qual-compactness} is intentionally qualitative.  The compactness argument does not identify a power or logarithmic decay rate for \(\omega_{M,\theta}\).  The rest of the paper asks which additional structures could produce such a rate.
\end{remark}

\section{Old observables and the selected-trace target}\label{sec:atlas}

We now organize the quantities visible in the one-component degeneration problem.  An observable is any object naturally defined from the solution, the equation, the pressure gauge, a coarse-graining operation, a strict-shadow comparison, a finite-window projection, or a selected-time minimization.  The first layers are classical in partial regularity and pressure theory \cite{CKN1982,Lin1998,Seregin2015,Wolf2017}; the component and anisotropic layers are motivated by one-component regularity criteria \cite{KukavicaZiane2006,CaoTiti2011,CheminZhang2016,CheminZhangZhang2017,HanLeiLiZhao2019,KangNguyen2023}; the rate/modulus layer is closest in spirit to quantitative regularity work such as \cite{BarkerPrange2021}.

\subsection{Levels of observability}

We distinguish four levels.

\begin{description}[leftmargin=2.6cm,style=nextline]
\item[Level 0: native observables.]  These are directly defined from \((u,p)\): the cylinders \(Q_r\), the scale-invariant quantities \(A,E,C,D,C_3\), the component quantities \(u_h,u_3\), and vorticity components.

\item[Level 1: equation-derived observables.]  These come from the equations: the pressure Poisson law, incompressibility \(\divh u_h=-\partial_3u_3\), and the vertical momentum equation.

\item[Level 2: operation-derived observables.]  These arise after smoothing, projection, pressure decomposition, horizontal Helmholtz correction, and covariance expansion.  Examples are \(U^\ell\), \(P^\ell\), \(\tau^\ell\), \(\kappa^\ell\), \(G^\ell\), and \(\partial_3P^\ell\).

\item[Level 3: selection and obstruction observables.]  These are defined only after strict-shadow comparison, selected-time minimization, blow-up, or finite-window projection.  Examples include \(m_\ell\), normalized trace directions \(W_n\), finite-window obstruction spaces, and trace-cost quotients.
\end{description}

\subsection{Twelve observable families}

For later reference, we compress the observable atlas into twelve families.

\begin{center}
\begin{tabularx}{\textwidth}{@{}cX@{}}
\toprule
Family & Representative observables \\
\midrule
\(\calO_1\) & Scale observables: \(z_0,r,Q_r(z_0)\), parabolic scale trees. \\
\(\calO_2\) & Energy-pressure observables: \(A,E,C,D,C_3,\Phi,\Psi\). \\
\(\calO_3\) & Component/anisotropic observables: \(u_h,u_3,\nabh u_h,\partial_3u_h,\nabh u_3,\partial_3u_3,\omega_h,\omega_3\). \\
\(\calO_4\) & Pressure-gauge observables: \([p]_{\calH}\), Calderon--Zygmund pressure, horizontal pressure part, vertical pressure remainder, harmonic oscillation, \(\calX^{\har}\). \\
\(\calO_5\) & Limiting/shadow observables: strict class \(\frakS_M\), \((V,Q)\), strict decay, \(\calC(V)\), \(D\calC_V\), \(B(\cdot,\cdot)\). \\
\(\calO_6\) & Coarse-preparation observables: smoothing \(S_\ell u\), prepared field \(U^\ell\), horizontal correction, divergence defect, smoothing tails. \\
\(\calO_7\) & Covariance/defect observables: Reynolds stress \(\tau^\ell\), unresolved variance \(\kappa^\ell\), gradient variance, prepared residual \(G^\ell\), vertical pressure defect \(\calV^\ell\). \\
\(\calO_8\) & Local energy observables: full local energy defect, horizontal defect measure \(\mu_h^\ell\), cutoff errors, small-component errors. \\
\(\calO_9\) & Good-time and trace observables: good-time sets \(G_\ell\), thick good times \(G^\sharp_\ell\), selected trace energy \(E^\ell_\phi\), sharp distance \(m_\ell\), normalized trace directions. \\
\(\calO_{10}\) & Obstruction observables: \(\Obs_V\), finite-order obstructions \(\calO_k\), finite-window projections \(P_\Lambda\calO_k\), flatness order. \\
\(\calO_{11}\) & Trace-cost/duality observables: active trace map \(A\), spaces \(H,Y\), residual \(g\), cost \(\Cost^{\trc}(g)\), adjoint \(A^*\), vertical-duality ratio. \\
\(\calO_{12}\) & Rate/modulus observables: compactness modulus \(\omega_{M,\theta}\), best modulus \(\Omega^{\mathrm{best}}_{M,\theta}\), power and logarithmic profiles. \\
\bottomrule
\end{tabularx}
\end{center}

The total old observable collection is denoted
\[
\calO_{\mathrm{total}}=\bigcup_{j=1}^{12}\calO_j.
\]
The key point is that \(\calO_{\mathrm{total}}\) is descriptive.  It does not assert that the selected trace distance is quantitatively controlled by these quantities.

\subsection{The selected-time target}

The main Level 3 target is the covariance-calibrated trace distance.  Given a prepared horizontal field \(U^\ell\), unresolved variance \(\kappa^\ell\), nonnegative cutoff \(\phi\), and strict shadow \((V,Q)\), set
\begin{equation}\label{eq:selected-trace-energy}
E^\ell_\phi(s;U^\ell,V)
=
\frac12\int \phi |U^\ell(s)-V(s)|^2\dx
+
\int \phi\kappa^\ell(s)\dx.
\end{equation}
For a good-time set \(G_\ell\), define
\begin{equation}\label{eq:m-ell}
m_\ell=
\inf_{s\in G_\ell}\inf_{(V,Q)\in\frakS_M}
E^\ell_\phi(s;U^\ell,V).
\end{equation}
The central quantitative question is whether the old observables force a subcritical bound of the schematic form
\begin{equation}\label{eq:subcritical-selection-target}
m_\ell
\le
C_M\ell^\mu+C_M\ell^{-N}\delta^b.
\end{equation}
The no-rate theorem below shows that such a bound is not a formal consequence of the abstract old trace skeleton alone.

\section{The old-observable envelope}\label{sec:old-envelope}

We now formalize the ``old world'' as an envelope.  It is deliberately larger than the true image of Navier--Stokes under coarse-graining.  Its purpose is diagnostic: if a rate fails in this larger world, then any true Navier--Stokes rate must use structure beyond this envelope.  The envelope viewpoint combines the scale-invariant CKN quantities \cite{CKN1982}, local pressure quotienting \cite{SohrWahl1986,Wolf2017,Yu2026HarmonicPressure}, and covariance/commutator bookkeeping familiar from coarse-grained energy balances \cite{ConstantinETiti1994}.

Fix once and for all nested cylinders
\[
Q_{\mathrm{sh}}\Subset Q_{\mathrm{prep}}\Subset Q_1,
\]
a nonnegative spatial cutoff \(\phi\), and parameters
\[
M\ge1,
\qquad 0<\theta<1/2,
\qquad 0<\ell<\ell_0,
\qquad 0<\delta\le1.
\]
Here \(\ell\) represents a coarse-graining scale and \(\delta\) represents \(C_3(1)\).

\begin{definition}[Old defect-shadow object]\label{def:old-object}
An old defect-shadow object is a tuple
\begin{equation}\label{eq:old-object-tuple}
\calU^\ell=(U^\ell,[P^\ell],\tau^\ell,\kappa^\ell,G^\ell,\calV^\ell,\mu_h^\ell),
\end{equation}
where
\begin{itemize}[leftmargin=2em]
\item \(U^\ell=(U_h^\ell,0)\) is a horizontal vector field;
\item \([P^\ell]\) is a pressure class modulo \(\calH\);
\item \(\tau^\ell\) is a horizontal Reynolds covariance stress;
\item \(\kappa^\ell\) is unresolved variance;
\item \(G^\ell\) is the prepared non-pressure residual;
\item \(\calV^\ell\) is the vertical pressure compatibility defect, usually \(\partial_3P^\ell\) in a negative norm;
\item \(\mu_h^\ell\) is a horizontal energy or Onsager defect measure.
\end{itemize}
\end{definition}

\begin{definition}[Minimal old-observable envelope]\label{def:minimal-old-envelope}
We say
\[
\calU^\ell\in\frakD^{\old,0}_{M,\theta}(\ell,
\delta)
\]
if the following axioms hold.

\begin{description}[leftmargin=1.2cm,labelsep=0.5cm]
\item[O0. Pressure gauge.]  Pressure is observed only through the quotient \([P^\ell]=P^\ell+\calH\).  All pressure distances are taken after minimizing over spatially harmonic functions.

\item[O1. Scale-invariant size.]  There is \(C_M\) such that
\[
\norm{U^\ell}{L^\infty_tL^2_x(Q_{\prep})}
+
\norm{\nabla U^\ell}{L^2(Q_{\prep})}
+
\norm{U^\ell}{L^3(Q_{\prep})}
+
\inf_{h\in\calH}\norm{P^\ell-h}{L^{3/2}(Q_{\prep})}
\le C_M.
\]

\item[O2. Horizontal solenoidal preparation.]  We have
\[
U^\ell=(U_h^\ell,0),
\qquad
\divh U_h^\ell=0.
\]

\item[O3. Prepared covariance-form equation.]  In distributions on \(Q_{\prep}\),
\begin{equation}\label{eq:prepared-equation}
\partial_tU_h^\ell-\Delta U_h^\ell
+\nabh\cdot(U_h^\ell\otimes U_h^\ell)
+\nabh P^\ell
=
-\nabh\cdot\tau^\ell+G^\ell.
\end{equation}
Equivalently, the covariance stress is placed on the left-hand flux and \(G^\ell\) is the genuinely small residual.

\item[O4. Residual hierarchy.]  There are fixed exponents \(a_0,N,b>0\) such that
\begin{equation}\label{eq:residual-hierarchy}
\norm{\tau^\ell}{L^{3/2}(Q_{\prep})}
\le C_M\ell^{a_0},
\qquad
\norm{G^\ell}{Z'(Q_{\prep})}
\le C_M\ell^{-N}\delta^b,
\qquad
\norm{\calV^\ell}{Y'(Q_{\prep})}
\le C_M\ell^{-N}\delta^b.
\end{equation}
The spaces \(Z'\) and \(Y'\) are fixed negative norms compatible with local energy and pressure-compatibility pairings.

\item[O5. Positive covariance.]  For a.e. \((x,t)\), \(\tau^\ell(x,t)\) is a nonnegative horizontal \(2\times2\) matrix and
\begin{equation}\label{eq:kappa-trace}
\kappa^\ell=\frac12\tr\tau^\ell\ge0.
\end{equation}

\item[O6. Good-time variance supply.]  On a fixed interval \(I_-\Subset(-1,0)\),
\begin{equation}\label{eq:variance-average}
\int_{I_-}\int \phi\kappa^\ell\dxdt\le C_M\ell^2.
\end{equation}
The good-time set
\begin{equation}\label{eq:good-time-set}
G_\ell=
\left\{s\in I_-:\int\phi\kappa^\ell(x,s)\dx\le C_G\ell^2\right\}
\end{equation}
satisfies \(|G_\ell|\ge c_G>0\).

\item[O7. Weak horizontal defect balance.]  There are terms \(D_h^\ell\ge0\), \(E^\ell_{\mathrm{cut}}\), and \(E^\ell_{\delta}\) such that, distributionally in time,
\begin{equation}\label{eq:variance-balance}
\frac{d}{dt}\int\phi\kappa^\ell
+
\int\phi D_h^\ell
+
\int\phi\,d\mu_h^\ell
=
-\int\phi\tau^\ell:\nabh U_h^\ell
+E^\ell_{\mathrm{cut}}+E^\ell_{\delta},
\end{equation}
with
\begin{equation}\label{eq:variance-errors}
\int|E^\ell_{\mathrm{cut}}|\le C_M\ell^{a_0},
\qquad
\int|E^\ell_{\delta}|\le C_M\ell^{-N}\delta^b,
\qquad
\int\phi\,d(\mu_h^\ell)_-\le C_M\ell^{-N}\delta^b.
\end{equation}

\item[O8. Rough harmonic strict-shadow compactness.]  There is a compactness modulus \(\Omega_M(s)\to0\) as \(s\downarrow0\) such that
\begin{equation}\label{eq:rough-shadow-modulus}
\dist_{\har}\bigl((U^\ell,P^\ell),\frakS_M(Q_{\mathrm{sh}})\bigr)
\le
\Omega_M(\ell^{a_0}+\ell^{-N}\delta^b).
\end{equation}
This is only a qualitative modulus, not a power or logarithmic estimate.

\item[O9. Finite-window compatibility observables.]  For every finite frequency window \(\Lambda\), there are finite-dimensional spaces \(H_\Lambda,Y_\Lambda\), a finite-dimensional active trace-defect map
\[
A_\Lambda:H_\Lambda\to Y_\Lambda,
\]
and projected compatibility residuals in \(Y_\Lambda\).  The constants of all finite-dimensional constructions may depend on \(\Lambda\).  No uniform-in-\(\Lambda\) inverse or Lojasiewicz constant is assumed.

\item[O10. Selected trace distance.]  The quantity
\begin{equation}\label{eq:old-selected-distance}
m^\ell_{\old}
=
\inf_{s\in G_\ell}\inf_{(V,Q)\in\frakS_M}
\left[
\frac12\int\phi|U^\ell(s)-V(s)|^2\dx+
\int\phi\kappa^\ell(s)\dx
\right]
\end{equation}
is defined.  The envelope does \emph{not} assume a subcritical estimate for \(m^\ell_{\old}\).
\end{description}
\end{definition}

\subsection{Enhanced envelopes}

The following nested variants will be useful.
\begin{align*}
\frakD^{\old,\sharp}_{M,\theta}(\ell,\delta)&\subset\frakD^{\old,0}_{M,\theta}(\ell,\delta),\\
\frakD^{\old,\trc}_{M,\theta}(\ell,\delta)&\subset\frakD^{\old,\sharp}_{M,\theta}(\ell,\delta),\\
\frakD^{\old,\mathrm{fw}}_{M,\theta}(\ell,\delta)&\subset\frakD^{\old,\trc}_{M,\theta}(\ell,\delta),\\
\frakD^{\old+\VD}_{M,\theta}(\ell,\delta)&\subset\frakD^{\old,\mathrm{fw}}_{M,\theta}(\ell,\delta).
\end{align*}
Here \(\frakD^{\old,\sharp}\) includes thick good times, \(\frakD^{\old,\trc}\) includes trace tightness for failed normalized branches, \(\frakD^{\old,\mathrm{fw}}\) includes fixed-window exactifiability with constants depending on the window, and \(\frakD^{\old+\VD}\) includes the vertical-duality estimate introduced in \cref{sec:trace-cost}.

The true Navier--Stokes image, if all required preparation steps are performed, should satisfy
\begin{equation}\label{eq:envelope-inclusion}
\frakD^{\NS}_{M,\theta}(\ell,
\delta)
\subseteq
\frakD^{\old,0}_{M,\theta}(\ell,
\delta),
\end{equation}
but it may be a much smaller subset.  The main problem is to identify which additional \(\NS\)-specific constraints cut down the old envelope.

\subsection{Explicitly excluded mechanisms}

The minimal old envelope does not include any of the following.
\begin{enumerate}[label=(E\arabic*)]
\item \textbf{Vertical-duality active-residual estimate:}
\[
|\ip{g}{y}|
\le r\norm{A^*y}{H}.
\]
\item \textbf{Navier--Stokes realizability:} existence of a genuine suitable weak solution \((u,p)\) whose coarse-graining produces \(\calU^\ell\).
\item \textbf{Vertical jet realizability:} existence of vertical perturbation jets \(z_1,z_2,\ldots\) solving the full asymptotic expansion of the three-dimensional equations.
\item \textbf{Uniform finite-window inverse:} bounds on \(A_\Lambda^{-1}\) independent of \(\Lambda\).
\item \textbf{Lojasiewicz metric regularity:}
\[
\dist(x,\calC^{-1}(0))\le C\norm{\calC(x)}{Y}^\alpha
\]
with constants uniform across the relevant windows and branches.
\item \textbf{Subcritical strict-shadow selection:}
\[
m^\ell_{\old}\le C_M\ell^\mu+C_M\ell^{-N}\delta^b.
\]
\end{enumerate}
These exclusions are essential.  The next section shows that, once they are excluded, arbitrary-slow trace convergence is possible in an abstract trace-obstruction skeleton associated with the old envelope.

\section{The no-rate theorem in the abstract old trace skeleton}\label{sec:no-rate}

This section gives the first pressure test.  We construct an abstract trace branch satisfying the old finite-window and trace-level observable conditions, but whose selected-time trace distance tends to zero at an arbitrarily slow rate.  The model is not a Navier--Stokes counterexample and is not claimed to solve the full prepared covariance equation in the genuine PDE class.  It is a diagnostic statement about what remains possible after one forgets all mechanisms beyond the old observables.

\subsection{The minimal trace-obstruction skeleton}

Let
\[
H=\ell^2(\N)
\]
be the selected-time trace Hilbert space, with canonical orthonormal basis \((e_j)_{j\ge1}\).  Let the strict trace class be
\[
\frakS_{\mathrm{tr}}=\{0\}\subset H.
\]
This models a neighborhood of a singular strict-shadow base after compressing the strict trace set to its local zero fiber.

Let the compatibility defect space also be
\[
Y=\ell^2(\N).
\]
Choose a positive sequence \((\lambda_j)_{j\ge1}\) with
\[
0<\lambda_j\downarrow0,
\]
for instance \(\lambda_j=\exp(-\exp j)\).  Define the quadratic compatibility map
\begin{equation}\label{eq:l2-compatibility-expanded}
\calC:H\to Y,
\qquad
\calC(x)_j=\lambda_jx_j^2.
\end{equation}
It is a toy version of the strict pressure-compatibility map
\[
\calC(V)=B(V,V),
\]
but with degenerating visibility along high trace directions.

Let \(P_\Lambda\) denote the projection onto \(\operatorname{span}\{e_1,\ldots,e_\Lambda\}\).  A fixed finite-window observer sees only \(P_\Lambda x\) and \(P_\Lambda\calC(x)\).

\begin{remark}[Skeleton versus PDE envelope]\label{rem:skeleton-vs-pde-envelope}
The space above is the \emph{minimal trace-obstruction skeleton}.  It keeps the logical information used by fixed finite-window trace tests, compatibility residuals, and qualitative compactness, but it does not include a genuine parabolic equation, a pressure decomposition in physical space, or a true Navier--Stokes realization map.  Thus the theorem below proves an insufficiency statement for the old-observable closure at trace level.  Any genuine PDE rate must use some additional structure, such as trace-drop, uniform moving-window control, vertical duality, or relaxed anti-phantom closure.
\end{remark}

\subsection{The arbitrary-slow branch}

Fix a desired trace amplitude sequence
\[
a_n\downarrow0.
\]
Let \(0<\mu<1/6\), and let \(N,b>0\) be the residual-loss exponents in the old envelope.  Define the desired subcritical scale
\begin{equation}\label{eq:eta-n-def}
\eta_n=\ell_n^\mu+\ell_n^{-N}\delta_n^b.
\end{equation}
To force failure of subcritical selection, choose \(\ell_n\) and \(\delta_n\) so that
\begin{equation}\label{eq:eta-small-an}
\eta_n=o(a_n^2).
\end{equation}
For example, set
\[
\ell_n=a_n^{4/\mu}
\]
and then choose \(\delta_n\downarrow0\) sufficiently fast that
\[
\ell_n^{-N}\delta_n^b\le a_n^4.
\]
Then
\[
\eta_n\le 2a_n^4=o(a_n^2).
\]
Because \(\lambda_j\to0\), one can choose \(j_n\to\infty\) so large that
\begin{equation}\label{eq:jn-choice-expanded}
\lambda_{j_n}a_n^2\le a_n^4.
\end{equation}
Set
\begin{equation}\label{eq:Un-high-mode}
U_n(t)=a_ne_{j_n},
\qquad t\in I_-.
\end{equation}
The pressure, covariance, vertical residual, and horizontal defect are set to zero:
\begin{equation}\label{eq:zero-defects-model}
[P_n]=0,
\qquad
\tau_n=0,
\qquad
\kappa_n=0,
\qquad
G_n=0,
\qquad
\calV_n=0,
\qquad
\mu_{h,n}=0.
\end{equation}
The only nonzero obstruction is the abstract compatibility residual
\[
g_n=\calC(U_n)=\lambda_{j_n}a_n^2e_{j_n}.
\]
Thus
\begin{equation}\label{eq:gn-small-expanded}
\norm{g_n}{Y}=\lambda_{j_n}a_n^2\le a_n^4.
\end{equation}

\begin{theorem}[Arbitrary-slow branch in the abstract old trace skeleton]\label{thm:arbitrary-slow-expanded}
For every sequence \(a_n\downarrow0\) there exist \(\ell_n\downarrow0\), \(\delta_n\downarrow0\), and an abstract old trace-skeleton branch of the form \eqref{eq:Un-high-mode}--\eqref{eq:zero-defects-model} such that
\begin{enumerate}[label=(\roman*)]
\item \(U_n\to0\) in the space-time compactness topology;
\item all fixed finite-window compatibility defects vanish eventually;
\item the full compatibility residual satisfies \(\norm{g_n}{Y}\le a_n^4\), and hence is much smaller than the selected trace scale;
\item covariance, variance, good-time, pressure-gauge, prepared-residual, and horizontal-defect observables are perfectly controlled;
\item the selected-time trace distance is
\begin{equation}\label{eq:mn-an-expanded}
m_n=\frac12a_n^2.
\end{equation}
\end{enumerate}
Consequently, the abstract trace-obstruction skeleton associated with the old-observable closure permits arbitrary-slow selected-time trace convergence.
\end{theorem}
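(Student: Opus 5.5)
The plan is to verify the five items directly from the explicit construction \eqref{eq:eta-n-def}--\eqref{eq:zero-defects-model}. The only choices needed are the parameters, already fixed before the statement. First set \(\ell_n=a_n^{4/\mu}\), which tends to \(0\) because \(a_n\downarrow0\); passing to a subsequence or replacing with a running minimum makes it monotone. Next choose \(\delta_n\downarrow0\) with \(\ell_n^{-N}\delta_n^b\le a_n^4\); for example take \(\delta_n=\min_{k\le n}(a_k^4\ell_k^N)^{1/b}\). Finally choose a strictly increasing \(j_n\) with \(\lambda_{j_n}\le a_n^2\). This is possible since \(\lambda_j\downarrow0\), and it gives \eqref{eq:jn-choice-expanded}. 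Then \(\eta_n\le 2a_n^4=o(a_n^2)\).

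Next I would check the items in turn.

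For (i), the sequence \(U_n=a_ne_{j_n}\) is constant in time. Its norm satisfies \(\norm{U_n}{H}=a_n\to0\), so \(U_n\to0\) strongly, uniformly on \(I_-\). This holds in any space-time topology dominated by \(L^\infty_tH\), and in particular it matches the qualitative modulus O8 with \(\Omega_M\) taken as any modulus with \(\Omega_M(s)\ge\) the trace distance along the branch.

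For (ii), fix \(\Lambda\). Because \(j_n\to\infty\), we have \(j_n>\Lambda\) for large \(n\). Then \(P_\Lambda U_n=0\) and \(P_\Lambda\calC(U_n)=\lambda_{j_n}a_n^2P_\Lambda e_{j_n}=0\).

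Item (iii) is exactly \eqref{eq:gn-small-expanded}.

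For (iv), all defects vanish by \eqref{eq:zero-defects-model}, so every axiom holds trivially with zero error:
\begin{itemize}
\item O4 holds with zero left-hand sides;
\item O5 holds since \(\tau_n=0\) is nonnegative and \(\kappa_n=\frac12\tr\tau_n=0\);
\item in O6, \(\int\phi\kappa_n=0\), so \(G_\ell=I_-\) has full measure \(|I_-|\ge c_G\);
\item O7 reduces to \(0=0\) with vanishing errors;
\item O0 is satisfied, since pressure is observed only through its harmonic class, which is \(0\).
\end{itemize}
Here I must be explicit that O1–O3 are read at trace level, per \cref{rem:skeleton-vs-pde-envelope}. The prepared equation is not claimed in the PDE class. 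The skeleton records only the trace variable, the compatibility residual, and the listed observables, and the size bound O1 holds since \(a_n\le a_1\).

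For (v), compute \(m_n\) from \eqref{eq:old-selected-distance} with \(\frakS_{\mathrm{tr}}=\{0\}\) and \(\kappa_n=0\). The trace energy at every good time is \(\frac12\norm{U_n(s)-0}{H}^2=\frac12a_n^2\). The infimum over \(s\in G_\ell\) and over the single strict element is therefore \(\frac12a_n^2\).

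The consequence then follows by comparing scales. Since \(a_n\) is arbitrary, \(m_n=\frac12a_n^2\) can decay slower than any logarithm or power of \(\delta_n\) or \(\ell_n\). At the same time the subcritical right-hand side \(\eta_n\) is \(o(a_n^2)\), so \eqref{eq:subcritical-selection-target} fails. Moreover, any bound of the form \(m\le F(\delta)\) with a fixed logarithmic or power profile \(F\) is violated by choosing \(a_n\) with \(a_n^2\gg F(\delta_n)\). This can be arranged by choosing \(\delta_n\) even smaller, which only helps the residual estimates.

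The main obstacle is not computational. It is making precise which axioms are being verified at trace level, namely O1–O3 and O8. This has to be done so that ``branch in the abstract old trace skeleton'' is an honest statement. I would handle it by declaring the skeleton axioms to be exactly O0, O4–O7 and O9–O10 restricted to \(H,Y\), together with the qualitative modulus. I would then note that the finite-window constants of O9 are allowed to depend on \(\Lambda\); this is where the lack of a uniform inverse, exclusion (E4), enters.
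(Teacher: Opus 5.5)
Your proposal is correct and follows the paper's proof essentially step for step. The paper uses the same parameter choices and the same eventual vanishing of $P_\Lambda U_n$ and $P_\Lambda g_n$ once $j_n>\Lambda$, verifies the covariance, variance, good-time and pressure-gauge axioms trivially from the zero defects in the same way, and makes the same direct comparison of $m_n=\frac12 a_n^2$ against $\eta_n\le 2a_n^4$. There are two small differences. The paper keeps O3 by declaring the skeleton's evolution operator to be zero, so that $\partial_t U_n=0$ satisfies the prepared equation, rather than dropping O1--O3 from the skeleton axioms. Also, your aside that $m_n$ can decay slower than every power of $\ell_n$ fails for your own choice $\ell_n=a_n^{4/\mu}$, since then $m_n=\frac12\ell_n^{\mu/2}$; your $\delta$-profile argument, which is the one that matters for the rate question, is fine.
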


\begin{proof}
The size bound is immediate from \(\norm{U_n(t)}{H}=a_n\to0\).  The pressure-gauge condition is trivial because \([P_n]=0\).  In the abstract trace model we take the evolution operator to be zero, so \(\partial_tU_n=0\); with \(G_n=0\) and \(\tau_n=0\), the prepared covariance-form equation is satisfied exactly.  This is intentionally weaker than a genuine parabolic equation.  The purpose is to test what the abstract old trace skeleton permits before adding parabolic trace-drop and genuine PDE realizability.

Since \(\tau_n=0\) and \(\kappa_n=0\), the positive covariance and unresolved variance axioms hold.  The good-time set and thick good-time set are both all of \(I_-\).  The localized variance balance reduces to \(d\kappa_n/dt=0\), with \(\mu_{h,n}=E_{\mathrm{cut},n}=E_{\delta,n}=0\).

The strict trace class is \(\frakS_{\mathrm{tr}}=\{0\}\), so
\[
\dist(U_n(t),\frakS_{\mathrm{tr}})=\norm{U_n(t)}{H}=a_n\to0.
\]
Thus qualitative compactness holds, but the modulus is the arbitrary sequence \(a_n\).

For a fixed finite window \(\Lambda\), eventually \(j_n>\Lambda\), and therefore
\[
P_\Lambda U_n=0,
\qquad
P_\Lambda g_n=0.
\]
Hence every fixed finite-window obstruction test sees nothing.  In particular, every fixed finite-window trace-cost problem is trivially solvable with zero cost.

Finally, for every \(s\in I_-\) the only strict competitor is \(0\).  Therefore
\[
\inf_{s\in I_-}\inf_{V\in\frakS_{\mathrm{tr}}}
\left[\frac12\norm{U_n(s)-V}{H}^2+\kappa_n(s)\right]
=\frac12a_n^2.
\]
Because \(\eta_n\le2a_n^4\),
\[
\frac{m_n}{\eta_n}\ge \frac{1}{4a_n^2}\to\infty.
\]
Thus the subcritical selection estimate \(m_n\lesssim \ell_n^\mu+\ell_n^{-N}\delta_n^b\) fails dramatically.
\end{proof}

\begin{corollary}[No universal old-observable rate]\label{cor:no-old-rate-expanded}
No universal logarithmic or power bound for \(m_n\) can be derived from compactness-level projection, fixed finite-window exactifiability, covariance positivity, prepared residual smallness, and good-time variance control alone.  The conclusion is an insufficiency statement for the abstract old trace skeleton, not a construction of a Navier--Stokes branch.
\end{corollary}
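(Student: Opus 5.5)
The plan is to derive Corollary~\ref{cor:no-old-rate-expanded} by contradiction from Theorem~\ref{thm:arbitrary-slow-expanded}. Suppose some universal profile \(F\), logarithmic or power, controlled \(m\) using only the listed old inputs. Concretely, suppose that for every skeleton branch satisfying compactness-level projection, fixed finite-window exactifiability, covariance positivity, prepared residual smallness and good-time variance control with parameters \((\ell,\delta)\), one had
\[
m\le F(\ell,\delta),
\]
where \(F\) is one of \(C_M\ell^\mu+C_M\ell^{-N}\delta^b\), \(C(\log(1/\eta))^{-\beta}\), or \(C\eta^\gamma\), with \(\eta=\ell^\mu+\ell^{-N}\delta^b\). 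The key point is that in every case \(F\to0\) at a rate determined entirely by \((\ell,\delta)\) through a fixed function. I will write this as \(F\le\Psi(\eta)\) for some fixed modulus \(\Psi\) with \(\Psi(0^+)=0\), possibly after enlarging \(F\) and using that \(\ell,\delta\le1\).

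Next I choose the amplitude sequence adversarially against \(\Psi\). Since the theorem accepts any \(a_n\downarrow0\) and then produces \(\ell_n,\delta_n\) with \(\eta_n\le2a_n^4\), it is convenient to compare \(a_n^2\) with \(\Psi(2a_n^4)\). For power and logarithmic \(\Psi\) it suffices to take \(a_n=1/n\) (or any sequence) and, if needed, shrink \(\ell_n,\delta_n\) further so that \(\Psi(\eta_n)\le a_n^4\). This is allowed because the proof of the theorem only needs \(\ell_n,\delta_n\) sufficiently small. Concretely, for the logarithmic profile I choose \(\eta_n\le\exp(-a_n^{-4/\beta})\). Items (iii) and (iv) of the theorem guarantee that all listed old inputs hold for the branch. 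Item (ii), together with the zero-cost solvability noted in the proof, gives fixed finite-window exactifiability. Item (i) gives compactness-level projection, and by the conclusion of the proof it does so with modulus exactly \(a_n\). Then item (v) yields \(m_n=\tfrac12a_n^2>a_n^4\ge\Psi(\eta_n)\) for large \(n\), which contradicts the assumed bound.

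Finally I record the interpretive clause. The branch lives in the abstract skeleton of Remark~\ref{rem:skeleton-vs-pde-envelope}, with zero evolution operator and no realization map to a suitable weak solution. So the contradiction only shows that the implication ``old inputs \(\Rightarrow\) rate'' is not formally valid. It does not exhibit a Navier--Stokes branch, since the excluded mechanisms (E1)--(E6), in particular realizability (E2), are never used or verified.

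The main obstacle is making ``universal rate derived from the old inputs alone'' precise enough for the contradiction to bite. In particular, the argument must allow the constants \(C_M\) and the exponents to be arbitrary but fixed before the branch is chosen. It must also confirm that the freedom to send \(\delta_n\) and \(\ell_n\) to zero faster does not break any axiom. I would handle this by quantifying the claimed bound as \(\exists\Psi\ \forall\) branches, so that the branch can be chosen after \(\Psi\). I would then check that shrinking \(\ell_n,\delta_n\) only strengthens the inequalities (O4), (O6) and (O7), all of which hold with zero left-hand sides here.
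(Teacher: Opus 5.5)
Your argument is correct and is essentially the paper's: the corollary is read off directly from Theorem~\ref{thm:arbitrary-slow-expanded}. The proof of that theorem gives $m_n=\tfrac12 a_n^2$ with $a_n$ arbitrary, all listed old observables trivially satisfied, and $m_n/\eta_n\ge 1/(4a_n^2)\to\infty$.

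Your explicit $\exists\Psi\,\forall$ quantification and your further shrinking of $\ell_n,\delta_n$ are a legitimate sharpening. The shrinking is genuinely needed, because the paper's example choice $\ell_n=a_n^{4/\mu}$ forces $\eta_n\ge a_n^4$, and that alone does not beat logarithmic or small-exponent power profiles in $\eta$. The sharpening is valid because compactness is used only qualitatively, with the branch-dependent modulus $a_n$, exactly as in the paper.
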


\begin{remark}[What the model violates]
The construction violates several stronger structures.  First, it violates parabolic trace-drop: the high mode \(e_{j_n}\) remains time-independent instead of being heat-damped.  Second, it exploits loss of uniform frequency observability through \(\lambda_{j_n}\to0\).  Third, if \(g_n\) lies in a phantom cokernel direction, then it violates vertical duality: for \(y_n=e_{j_n}\), one could have \(A^*y_n=0\) but
\[
\ip{g_n}{y_n}=\lambda_{j_n}a_n^2>0.
\]
A vertical-duality estimate would force this pairing to vanish.  Thus vertical duality is precisely the condition forbidding active residuals from living in phantom quotient directions.
\end{remark}

\section{Obstruction separation: trace-drop, finite windows, and flat branches}

\label{sec:trace-drop-test}

The previous model survives because it stores mass in high modes without paying any parabolic price.  This section formalizes the first candidate mechanism that kills that naive construction.  The underlying intuition is the parabolic smoothing and trace-decay behavior encoded in the heat operator and analytic semigroup theory \cite{Amann1995,Lunardi1995}.

\subsection{Trace-drop envelope}

Let \(J_\rho\) be a low-frequency projection and set \(P_\rho=I-J_\rho\).  Consider a failed sharp branch
\[
U_n=V_n+\eps_nW_n,
\qquad
\eps_n=m_n^{1/2},
\]
selected at times \(s_n\).  The following axiom abstracts the high-frequency trace-drop produced by heat smoothing.

\begin{assumption}[High-frequency trace-drop]\label{ass:TD}
There are constants \(c_0,c_1>0\) such that whenever, for some \(\rho>0\) and \(\varepsilon_0>0\),
\begin{equation}\label{eq:hf-tail-lower}
\int \phi |P_\rho W_n(s_n)|^2\dx\ge \varepsilon_0,
\end{equation}
there exists a measurable set
\[
T_{n,\rho}\subset (s_n,s_n+c_0\rho^2)
\]
with \(|T_{n,\rho}|\ge c_1\rho^2\) such that, for \(t\in T_{n,\rho}\),
\begin{equation}\label{eq:TD-drop-W}
\frac12\int\phi|W_n(t)|^2\dx
\le
\frac12\int\phi|W_n(s_n)|^2\dx
-c_1\varepsilon_0+o_\rho(1)+o_n(1).
\end{equation}
\end{assumption}

Multiplying \eqref{eq:TD-drop-W} by \(m_n\), one obtains the corresponding selected-trace energy drop:
\begin{equation}\label{eq:TD-drop-U}
\frac12\int\phi|U_n(t)-V_n(t)|^2\dx
\le
\frac12\int\phi|U_n(s_n)-V_n(s_n)|^2\dx
-c_1\varepsilon_0m_n+o(m_n).
\end{equation}

By itself, trace-drop only says that high-frequency energy drops at some future times.  To contradict sharp minimality, those drop times must be admissible comparison times and must still have small variance.

\begin{assumption}[Sharp admissible-time intersection]\label{ass:sharp-intersection}
For every trace-drop set \(T_{n,\rho}\) arising from Assumption~\ref{ass:TD}, there exists a time \(t_n\in T_{n,\rho}\) such that
\begin{enumerate}[label=(\roman*)]
\item \(t_n\) belongs to the admissible sharp comparison set;
\item the same strict shadow \(V_n\), or an admissible strict competitor with the same normalized first variation, may be used at \(t_n\);
\item \(\int\phi\kappa_n(t_n)\dx=o(m_n)\).
\end{enumerate}
\end{assumption}

In concrete arguments, item (iii) is obtained by requiring \(s_n\) to lie in a thick good-time set.  If
\[
\frac1{\rho^2}\int_{s_n}^{s_n+c\rho^2}\int\phi\kappa_n\dxdt\le C\ell_n^2
\]
and \(m_n\gg\ell_n^\mu\) with \(\mu<2\), then \(\ell_n^2=o(m_n)\), and one can select a time in the drop interval with negligible variance.  Assumption~\ref{ass:sharp-intersection} is the nontrivial statement that such a time also belongs to the admissible comparison set.

\subsection{Trace-drop implies trace tightness when admissible times intersect}

\begin{proposition}[Trace-drop plus sharp intersection gives trace tightness]\label{prop:TD-tightness}
Suppose a failed branch satisfies
\[
m_n\gg\eta_n,
\qquad
\eta_n=\ell_n^\mu+\ell_n^{-N}\delta_n^b,
\qquad
0<\mu<1/6,
\]
and satisfies Assumptions~\ref{ass:TD} and \ref{ass:sharp-intersection}.  Then
\begin{equation}\label{eq:trace-tightness-conclusion}
\lim_{\rho\downarrow0}\limsup_{n\to\infty}
\int\phi|W_n(s_n)-J_\rho W_n(s_n)|^2\dx=0.
\end{equation}
\end{proposition}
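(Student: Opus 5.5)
We argue by contradiction against the sharp minimality encoded in the definition of \(m_n\). Since \(P_\rho=I-J_\rho\), the quantity in \eqref{eq:trace-tightness-conclusion} is exactly \(\int\phi|P_\rho W_n(s_n)|^2\dx\). Suppose the conclusion fails. Then there exist \(\varepsilon_0>0\), a sequence \(\rho_k\downarrow0\), and for each \(k\) infinitely many \(n\) such that
\[
\int\phi|P_{\rho_k}W_n(s_n)|^2\dx\ge\varepsilon_0 .
\]
This is precisely the lower bound \eqref{eq:hf-tail-lower}, so Assumption~\ref{ass:TD} applies. It gives a drop set \(T_{n,\rho_k}\subset(s_n,s_n+c_0\rho_k^2)\) of measure at least \(c_1\rho_k^2\) on which \eqref{eq:TD-drop-W} holds. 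Multiplying by \(m_n=\eps_n^2\) yields the trace-energy drop \eqref{eq:TD-drop-U}. The error there is \(m_n\bigl(o_{\rho_k}(1)+o_n(1)\bigr)\).

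The next step is to make this drop visible in the selected functional. By Assumption~\ref{ass:sharp-intersection}, we pick \(t_n\in T_{n,\rho_k}\) with three properties: \(t_n\) is an admissible comparison time, the shadow \(V_n\) (or an admissible competitor with the same normalized first variation) is allowed at \(t_n\), and \(\int\phi\kappa_n(t_n)\dx=o(m_n)\). If a competitor \(\tilde V_n\neq V_n\) is used, I would expand \(|U_n-\tilde V_n|^2\) around \(V_n\). Equality of the normalized first variations should mean that the difference changes the trace energy only by \(o(m_n)\). Evaluating the energy \eqref{eq:selected-trace-energy} at \(t_n\) then gives
\[
E_\phi(t_n;U_n,V_n)\le \tfrac12\int\phi|U_n(s_n)-V_n(s_n)|^2\dx-c_1\varepsilon_0m_n+m_n\bigl(o_{\rho_k}(1)+o_n(1)\bigr)+o(m_n).
\]
Since \(\kappa_n\ge0\), the first term on the right is at most \(E_\phi(s_n;U_n,V_n)\). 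We can take this to equal \(m_n\) up to \(o(m_n)\), because \(s_n\) and \(V_n\) are chosen as (near-)minimizers. To see that the variance at the selection time is negligible, use the thick good-time bound together with \(m_n\gg\eta_n\ge\ell_n^\mu\) and \(\mu<1/6<2\). This gives \(\ell_n^2=o(m_n)\).

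To conclude, we fix \(k\) large enough that \(o_{\rho_k}(1)\le c_1\varepsilon_0/4\), and then let \(n\to\infty\) along the bad subsequence. This gives \(E_\phi(t_n)\le m_n-\tfrac12c_1\varepsilon_0m_n\). Since \(t_n\) is admissible, this contradicts \(m_n\le E_\phi(t_n)\), which follows from the definition \eqref{eq:m-ell} as an infimum over admissible times and shadows. Hence the \(\limsup\) tends to \(0\) as \(\rho\downarrow0\).

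The main obstacle is the order of limits between \(\rho\) and \(n\). The error \(o_\rho(1)+o_n(1)\) must be uniform enough that \(\rho\) can be fixed first and \(n\) sent to infinity afterwards. A second delicate point is bookkeeping the competitor substitution in Assumption~\ref{ass:sharp-intersection}(ii) so that it costs only \(o(m_n)\), not \(O(m_n)\). I would first try to obtain this by writing \(\tilde V_n=V_n+o(\eps_n)\) in the trace norm, which is what ``same normalized first variation'' should encode. If that is unavailable, I would simply use \(V_n\) itself, which part (ii) permits.
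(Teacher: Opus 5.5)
Your proof is correct and follows the paper's own argument: negate the conclusion, apply Assumption~\ref{ass:TD} at a bad radius, use Assumption~\ref{ass:sharp-intersection} to pick an admissible drop time with negligible variance, and contradict the minimality of $m_n$. Where you differ, you are more careful. You fix $\rho_k$ and then send $n\to\infty$, which keeps the $o_\rho(1)\,m_n$ error separate, whereas the paper takes a diagonal pair $(\rho_k,n_k)$ and folds it into a single $o(m_{n_k})$.
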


\begin{proof}
Assume not.  Then there are \(\varepsilon_0>0\), radii \(\rho_k\downarrow0\), and indices \(n_k\to\infty\) such that
\[
\int\phi|P_{\rho_k}W_{n_k}(s_{n_k})|^2\dx\ge\varepsilon_0
\qquad\text{for every }k.
\]
Fix \(k\) and apply Assumption~\ref{ass:TD} to the branch index \(n_k\) and the radius \(\rho_k\).  This gives a drop set \(T_{n_k,\rho_k}\) on which \eqref{eq:TD-drop-U} holds.  By Assumption~\ref{ass:sharp-intersection}, choose \(t_{n_k}\in T_{n_k,\rho_k}\) that is an admissible comparison time and satisfies
\[
\int\phi\kappa_{n_k}(t_{n_k})\dx=o(m_{n_k}).
\]
Using the same strict competitor, or the admissible competitor supplied by the sharp intersection assumption, at \(t_{n_k}\), we get
\[
m_{n_k}
\le
\frac12\int\phi|U_{n_k}(t_{n_k})-V_{n_k}(t_{n_k})|^2\dx
+\int\phi\kappa_{n_k}(t_{n_k})\dx.
\]
The trace-drop estimate gives
\[
m_{n_k}\le m_{n_k}-c_1\varepsilon_0m_{n_k}+o(m_{n_k}),
\]
which is impossible for large \(k\). Therefore the high-frequency tail must vanish in the sense of \eqref{eq:trace-tightness-conclusion}.
\end{proof}

\begin{remark}[What trace-drop does and does not do]
Trace-drop plus admissible-time intersection kills the high-frequency escape model \(U_n=a_ne_{j_n}\).  It does not by itself prove subcritical selection.  Once selected traces are tight, the remaining problem is a finite-window or moving-window singular compatibility problem.
\end{remark}

\label{sec:finite-window-test}

After trace tightness, the next question is whether a fixed finite-dimensional compatibility window can still support arbitrary-slow selected trace convergence.  In the analytic case, it cannot.

\subsection{Fixed-window Lojasiewicz control}

Fix a finite window \(\Lambda\).  Let
\[
H_\Lambda\simeq\R^d,
\qquad
Y_\Lambda\simeq\R^m,
\]
and let
\[
\calC_\Lambda:H_\Lambda\to Y_\Lambda
\]
be the finite-window compatibility map.  Its zero set is
\[
\frakS_\Lambda=\{x\in H_\Lambda:\calC_\Lambda(x)=0\}.
\]
In the Navier--Stokes shadow problem, \(\calC_\Lambda\) is the finite-window projection of the quadratic pressure-compatibility map
\[
\calC(V)=\nabh\partial_3\Delta_h^{-1}\partial_a\partial_b(V_aV_b).
\]
Thus \(\calC_\Lambda\) is analytic, and in model settings polynomial; this is the finite-dimensional setting in which \L{}ojasiewicz-type analytic inequalities are available \cite{BierstoneMilman1988,Kurdyka1998,KrantzParks2002}.

\begin{proposition}[Fixed-window analytic obstruction]\label{prop:fixed-window-lojasiewicz}
Let \(\calC_\Lambda:H_\Lambda\to Y_\Lambda\) be real analytic, and let \(K_\Lambda\subset H_\Lambda\) be compact.  Then there exist constants \(C_\Lambda>0\) and \(\alpha_\Lambda>0\) such that
\begin{equation}\label{eq:lojasiewicz-window}
\dist(x,\frakS_\Lambda)
\le
C_\Lambda\norm{\calC_\Lambda(x)}{Y_\Lambda}^{\alpha_\Lambda}
\qquad
\text{for all }x\in K_\Lambda.
\end{equation}
Consequently, if \(x_n\in K_\Lambda\) and \(\norm{\calC_\Lambda(x_n)}{Y_\Lambda}\le\rho_n\), then
\[
\dist(x_n,\frakS_\Lambda)\le C_\Lambda\rho_n^{\alpha_\Lambda}.
\]
\end{proposition}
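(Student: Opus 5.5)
The plan is to reduce \eqref{eq:lojasiewicz-window} to the classical \L{}ojasiewicz inequality comparing two real-analytic (subanalytic) functions on a compact set. I would set
\[
f(x)=\norm{\calC_\Lambda(x)}{Y_\Lambda}^2,\qquad g(x)=\dist(x,\frakS_\Lambda)
\]
on a compact neighborhood of \(K_\Lambda\) in \(H_\Lambda\). The function \(f\) is real analytic and nonnegative, with zero set exactly \(\frakS_\Lambda\). The zero set \(\frakS_\Lambda\) is a closed analytic subset. Hence \(g\) is continuous and globally subanalytic on bounded sets, and \(g^{-1}(0)=\frakS_\Lambda\) as well; this uses the standard fact that the distance to a subanalytic set is subanalytic \cite{BierstoneMilman1988}.

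The key step is the \L{}ojasiewicz comparison theorem for continuous subanalytic functions on a compact set \cite{BierstoneMilman1988,KrantzParks2002}. It says that if \(f,g\) are such functions with \(f^{-1}(0)\subset g^{-1}(0)\), then there exist \(C,N\) with \(|g|^N\le C|f|\) on the compact set. Applying this to the pair above gives
\[
\dist(x,\frakS_\Lambda)^N\le C\norm{\calC_\Lambda(x)}{Y_\Lambda}^2.
\]
Setting \(\alpha_\Lambda=2/N\) and \(C_\Lambda=C^{1/N}\) yields \eqref{eq:lojasiewicz-window}.

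Two degenerate cases need separate handling.
\begin{itemize}
\item If \(\frakS_\Lambda\cap K_\Lambda\) is empty, then there is no zero set to compare against. In that case \(f\) is bounded below by a positive constant on \(K_\Lambda\) by compactness and continuity, while \(g\) is bounded above. So the inequality holds with \(\alpha_\Lambda=1\) and a large constant.
\item If \(\frakS_\Lambda\) itself is empty, the same compactness argument applies, with the convention that \(\dist(x,\emptyset)\) is replaced by a fixed bound.
\end{itemize}
The consequence for sequences is then immediate. For \(x_n\in K_\Lambda\) with \(\norm{\calC_\Lambda(x_n)}{Y_\Lambda}\le\rho_n\), monotonicity of \(t\mapsto t^{\alpha_\Lambda}\) gives \(\dist(x_n,\frakS_\Lambda)\le C_\Lambda\rho_n^{\alpha_\Lambda}\).

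The main obstacle is the justification that \(g\) fits the comparison theorem. Two facts are needed: that \(g\) is subanalytic, and that the zero sets match. If one prefers to avoid the subanalytic machinery, I would instead argue by contradiction via the curve selection lemma. Suppose there were \(x_k\in K_\Lambda\) with \(g(x_k)^k> k f(x_k)\). The set \(\{(x,s,t): g(x)=s,\ f(x)=t\}\) is subanalytic, and its image in \(\R^2\) near \((0,0)\) is a semianalytic germ. Its boundary branches are then given by Puiseux series \(t\sim c s^{q}\) with \(q\) rational and positive. This gives the exponent, but the reduction is really the same theorem. So I would simply cite it, noting that the constants \(C_\Lambda,\alpha_\Lambda\) depend on \(\Lambda\) and \(K_\Lambda\), with no uniformity claimed, consistent with axiom O9.
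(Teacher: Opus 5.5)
Your proposal is correct and takes the same route as the paper. The paper simply invokes the classical \L{}ojasiewicz inequality for the analytic function \(x\mapsto\norm{\calC_\Lambda(x)}{Y_\Lambda}^2\), with constants depending on \(\Lambda\) and \(K_\Lambda\). You make that citation explicit through the comparison form with \(g=\dist(\cdot,\frakS_\Lambda)\), and you correctly work on a compact neighborhood of \(K_\Lambda\) rather than of the possibly unbounded zero set.

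One small caveat: if \(\frakS_\Lambda=\emptyset\) the inequality is literally false with \(\dist(x,\emptyset)=+\infty\), so your ``convention'' changes the statement rather than proving it. This case never arises in the paper's setting, since the quadratic compatibility map satisfies \(\calC_\Lambda(0)=0\).
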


\begin{proof}
This is the classical finite-dimensional Lojasiewicz inequality for real analytic maps, applied to the analytic function \(x\mapsto\norm{\calC_\Lambda(x)}{Y_\Lambda}^2\) on a compact neighborhood of its zero set.  The constants may depend on \(\Lambda\) and on \(K_\Lambda\).
\end{proof}

Thus fixed finite-dimensional singularity may worsen the exponent, but it does not permit arbitrary-slow convergence.  For example, \(\calC(x)=x^k\) gives \(\dist(x,\{0\})=|\calC(x)|^{1/k}\).

\subsection{Why analyticity matters}

If the compatibility map were only \(C^\infty\), arbitrary-slow behavior could occur even in dimension one.  Let
\[
\calC(x)=
\begin{cases}
\exp(-1/x^2),& x>0,\\
0,& x\le0.
\end{cases}
\]
Then the zero set is \((-
\infty,0]\), and for \(x_n=a_n>0\),
\[
\dist(x_n,\calC^{-1}(0))=a_n,
\qquad
\calC(x_n)=\exp(-1/a_n^2),
\]
which is smaller than every power of \(a_n\).  The strict pressure-compatibility map is quadratic/analytic, so this smooth-flat pathology is not expected in a fixed window.

\subsection{Finite-window obstruction dichotomy}

Let a trace-tight failed branch be expanded formally as
\[
V^\eta=V+\eta R_1+\eta^2R_2+\eta^3R_3+\cdots.
\]
For the quadratic compatibility form
\[
B(A,B)=\nabh\partial_3\Delta_h^{-1}\partial_a\partial_b(A_aB_b),
\]
the projected obstruction hierarchy in a fixed window is
\begin{align*}
\calO_{2,\Lambda}&=P_\Lambda B(R_1,R_1),\\
\calO_{3,\Lambda}&=P_\Lambda\bigl(2B(R_1,R_2)\bigr),\\
\calO_{4,\Lambda}&=P_\Lambda\bigl(2B(R_1,R_3)+B(R_2,R_2)\bigr),
\end{align*}
and generally
\begin{equation}\label{eq:obstruction-hierarchy}
\calO_{k,\Lambda}=P_\Lambda\sum_{i+j=k}B(R_i,R_j).
\end{equation}
If some \(\calO_{k,\Lambda}\ne0\), then a finite-order visible obstruction appears and the fixed-window analytic mechanism yields a finite-power improvement.  If all \(\calO_{k,\Lambda}=0\) for every fixed \(\Lambda\) and every finite \(k\), the branch is finite-mode flat to all orders.  The remaining obstruction is no longer ordinary finite-dimensional singularity; it is all-order flatness or moving-window exactification.

\label{sec:all-order-flat}

Trace-drop removes high-frequency escape, and fixed-window analyticity removes ordinary finite-dimensional no-rate behavior.  Yet one more abstract obstruction remains: every finite stage may be exactifiable, but the finite-stage constants may have no summable majorant.

\subsection{A non-summable finite-stage model}

Let
\[
H=\R e_1\oplus\ell^2(\{e_2,e_3,\ldots\}),
\]
and write
\[
x=(a,z_2,z_3,\ldots).
\]
Choose positive constants \(\Gamma_k\to\infty\) growing very fast, for example \(\Gamma_k=\exp(k^2)\).  For each finite stage \(K\), define
\begin{equation}\label{eq:finite-stage-SK}
\frakS_K=\{(a,z_2,\ldots,z_K):z_k=\Gamma_ka^k,
\ 2\le k\le K\}.
\end{equation}
The full strict set is the set of all \((a,z_2,z_3,\ldots)\in H\) satisfying
\begin{equation}\label{eq:full-strict-nonsummable}
z_k=\Gamma_ka^k,
\qquad k\ge2.
\end{equation}
If \(\Gamma_k=\exp(k^2)\), then for every \(a\ne0\),
\[
\sum_{k\ge2}\Gamma_k^2|a|^{2k}=\infty.
\]
Thus the full strict set is only
\[
\frakS=\{0\}.
\]
Nevertheless, every finite-stage strict set \(\frakS_K\) is nontrivial.

\begin{theorem}[All-order finite-stage exactification does not imply global selection]\label{thm:nonsummable-flat}
For any sequence \(a_n\downarrow0\), the branch
\[
U_n=a_ne_1
\]
is trace-tight and has selected trace distance
\[
m_n=\frac12a_n^2
\]
from the full strict set \(\frakS=\{0\}\).  However, for every fixed finite stage \(K\), there exists \(\widetilde U_{n,K}\in\frakS_K\) such that
\[
\norm{\widetilde U_{n,K}-U_n}{H}=o(a_n).
\]
Thus every fixed finite stage declares the branch exactifiable to a lower order than the main trace distance, while the full strict curve does not exist for any \(a\ne0\).
\end{theorem}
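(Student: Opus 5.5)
The plan is to verify the three assertions directly in the explicit model \eqref{eq:finite-stage-SK}--\eqref{eq:full-strict-nonsummable}, treating the branch \(U_n=a_ne_1\) as time-independent with zero covariance, zero variance and zero residuals. This is exactly as in the proof of \cref{thm:arbitrary-slow-expanded}, so the old-envelope bookkeeping (good times equal to all of \(I_-\), \(\kappa_n=0\), trivial pressure gauge) carries over verbatim. Finite-stage elements are regarded as elements of \(H\) by zero extension in the coordinates \(z_k\), \(k>K\).

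First I check that \(\frakS=\{0\}\). If \(a\neq0\), then \(\Gamma_k|a|^k=\exp(k^2+k\log|a|)\to\infty\) as \(k\to\infty\). Hence the sequence \((\Gamma_ka^k)_{k\ge2}\) is not even bounded, so it is not in \(\ell^2\), and no point of \(H\) with first coordinate \(a\ne0\) satisfies \eqref{eq:full-strict-nonsummable}. If \(a=0\), the constraints force \(z_k=0\) for all \(k\). Consequently, for every selection time \(s\) the only strict competitor is \(0\), and since \(\kappa_n=0\),
\[
m_n=\inf_{s}\tfrac12\norm{U_n(s)-0}{H}^2=\tfrac12a_n^2 .
\]

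Second, trace tightness. Every \(U_n\) lies in the fixed one-dimensional mode \(\R e_1\). For any low-frequency projection \(J_\rho\) whose range contains \(e_1\), and in particular for all \(\rho\) small enough in the natural ordering where \(e_1\) is the lowest mode, the normalized direction \(W_n=U_n/\norm{U_n}{H}=e_1\) has \(W_n-J_\rho W_n=0\). Thus \eqref{eq:trace-tightness-conclusion} holds trivially, which is in sharp contrast with the escaping mode of \cref{thm:arbitrary-slow-expanded}.

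Third, finite-stage exactification. For fixed \(K\) I set
\[
\widetilde U_{n,K}=(a_n,\Gamma_2a_n^2,\ldots,\Gamma_Ka_n^K,0,0,\ldots)\in\frakS_K .
\]
Then, for \(a_n\le1\),
\[
\norm{\widetilde U_{n,K}-U_n}{H}^2=\sum_{k=2}^K\Gamma_k^2a_n^{2k}\le\Bigl(\sum_{k=2}^K\Gamma_k^2\Bigr)a_n^4,
\]
so the distance is at most \(C_Ka_n^2=o(a_n)\) with \(C_K\) depending only on \(K\). Since \(C_K\to\infty\) super-exponentially, these approximations admit no summable majorant, which is precisely why they do not pass to the limit \(K\to\infty\).

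The only delicate point is interpretive rather than computational. One must make precise that ``exactifiable at stage \(K\)'' means closeness to \(\frakS_K\) in the truncated sense above, while the selected trace distance is measured against the full strict set \(\frakS\). I would state this convention explicitly and add a brief remark on why the two notions separate: the stage-\(K\) competitors \(\widetilde U_{n,K}\) do not converge in \(H\) as \(K\to\infty\), because their norms diverge for fixed \(a_n\ne0\).
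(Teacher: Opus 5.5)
Your proposal is correct and follows the paper's proof essentially step for step. Trace tightness comes from confinement to $\R e_1$, and $m_n=\frac12 a_n^2$ follows from $\frakS=\{0\}$. You use the same truncated competitor $\widetilde U_{n,K}$, with $\norm{\widetilde U_{n,K}-U_n}{H}^2=\sum_{k=2}^K\Gamma_k^2a_n^{2k}$. The extras are your explicit bound $C_Ka_n^2$ (from $a_n^{2k}\le a_n^4$ when $a_n\le1$) and your check that $\Gamma_k|a|^k\to\infty$; these only spell out what the paper states more tersely.
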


\begin{proof}
The branch is contained in the one-dimensional span of \(e_1\), so trace tightness is immediate.  Since the full strict set is \(\{0\}\),
\[
\dist(U_n,\frakS)=a_n,
\qquad
m_n=\frac12a_n^2.
\]
For fixed \(K\), define
\[
\widetilde U_{n,K}=
(a_n,\Gamma_2a_n^2,\Gamma_3a_n^3,\ldots,\Gamma_Ka_n^K,0,0,\ldots).
\]
Then \(\widetilde U_{n,K}\in\frakS_K\), and
\[
\norm{\widetilde U_{n,K}-U_n}{H}^2
=
\sum_{k=2}^K\Gamma_k^2a_n^{2k}.
\]
For fixed \(K\), each term is \(o(a_n^2)\), hence the whole sum is \(o(a_n^2)\).  This proves the claim.
\end{proof}

\begin{remark}[Meaning of the model]
The constants \(\Gamma_k\) represent finite-stage right-inverse, Newton, or trace-lifting constants, as in finite-dimensional bifurcation/Newton schemes \cite{Kielhofer2012}.  The theorem shows that the existence of all finite jets does not imply an actual strict curve unless one has a summable majorant or a different mechanism, such as trace-cost exactification or vertical duality.  It is an abstract model of the finite-mode flat surviving branch.
\end{remark}

\section{Trace-cost duality and active residual quotients}\label{sec:trace-cost}

The preceding all-order model shows that strong all-frequency exactification may be too much to ask.  Trace-cost duality is the weaker, selected-time substitute.

\subsection{Finite-dimensional trace cost}

Let \(H\) and \(Y\) be finite-dimensional Hilbert spaces and let
\[
A:H\to Y
\]
be linear.  For \(g\in Y\), define
\begin{equation}\label{eq:trace-cost-def}
\Cost_A^{\trc}(g)=\inf\{\norm{\xi}{H}^2:A\xi=-g\},
\end{equation}
with the convention \(\Cost_A^{\trc}(g)=+\infty\) if \(-g\notin\Range A\).

\begin{lemma}[Trace-cost duality]\label{lem:trace-cost-duality}
For \(g\in Y\), the following are equivalent:
\begin{enumerate}[label=(\roman*)]
\item \(\Cost_A^{\trc}(g)\le r^2\);
\item for every \(y\in Y\),
\begin{equation}\label{eq:dual-bound}
|\ip{g}{y}_Y|\le r\norm{A^*y}{H}.
\end{equation}
\end{enumerate}
Equivalently,
\begin{equation}\label{eq:cost-dual-equality}
\sqrt{\Cost_A^{\trc}(g)}=
\sup_{A^*y\ne0}\frac{|\ip{g}{y}_Y|}{\norm{A^*y}{H}},
\end{equation}
with the value \(+\infty\) if \(g\notin\Range A\).
\end{lemma}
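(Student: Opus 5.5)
The plan is to reduce everything to the minimum-norm solution of \(A\xi=-g\), using the orthogonal decomposition \(H=\Ker A\oplus\Range A^*\) and \(Y=\Range A\oplus\Ker A^*\), which hold because both spaces are finite-dimensional.

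The infinite case comes first. If \(g\notin\Range A\), then \(g\) has a nonzero component \(y_0\) in \((\Range A)^\perp=\Ker A^*\). For this \(y_0\) we have \(A^*y_0=0\) while \(\ip{g}{y_0}=\norm{y_0}{}^2>0\). So (ii) fails for every finite \(r\), and the cost is \(+\infty\). For the supremum formula with value \(+\infty\), I would read the supremum as ranging over all \(y\), with \(|c|/0=+\infty\) whenever \(c\ne0\); under the literal restriction \(A^*y\ne0\) the supremum is finite. I will state this convention explicitly, since it is exactly the phantom-direction phenomenon of the earlier remark.

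Now assume \(g\in\Range A\). The affine set \(\{\xi:A\xi=-g\}\) is nonempty and closed, so it has a unique minimum-norm element \(\xi_*\). This element lies in \((\Ker A)^\perp=\Range A^*\), so \(\xi_*=A^*\eta\) for some \(\eta\in Y\), and \(\Cost_A^{\trc}(g)=\norm{\xi_*}{H}^2\).

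For (i)\(\Rightarrow\)(ii), take any \(y\). Then \(\ip{g}{y}=-\ip{A\xi_*}{y}=-\ip{\xi_*}{A^*y}\), and Cauchy--Schwarz gives \(|\ip{g}{y}|\le\norm{\xi_*}{}\norm{A^*y}{}\le r\norm{A^*y}{}\). The same computation shows that the supremum in \eqref{eq:cost-dual-equality} is at most \(\norm{\xi_*}{}\).

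For (ii)\(\Rightarrow\)(i) and equality in the supremum, I would test with \(y=-\eta\). This gives \(\ip{g}{-\eta}=\ip{A\xi_*}{\eta}=\ip{\xi_*}{A^*\eta}=\norm{\xi_*}{}^2\) and \(\norm{A^*y}{}=\norm{\xi_*}{}\). When \(\xi_*\ne0\), the ratio equals \(\norm{\xi_*}{}\), so (ii) forces \(\norm{\xi_*}{}\le r\). When \(\xi_*=0\) we have \(g=0\), and both sides vanish.

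The only delicate point is the bookkeeping of the \(+\infty\) convention in \eqref{eq:cost-dual-equality} together with the case \(g=0\). The rest is standard least-norm Hilbert duality.
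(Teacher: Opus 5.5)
Your proof is the paper's own argument. The paper also obtains $g\in\Range A$ from the vanishing of $\ip{g}{y}$ on $\Ker A^*$, takes the minimal-norm solution $\xi_0\in\Range A^*$, and writes $\xi_0=A^*y_0$. It then tests the dual bound against $y_0$ to get $\norm{\xi_0}{H}^2\le r\norm{\xi_0}{H}$; your $-\eta$ is the same test vector up to sign.

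One side remark in your proposal is wrong. You say that under the literal restriction $A^*y\ne0$ the supremum in \eqref{eq:cost-dual-equality} is finite when $g\notin\Range A$. This is false whenever $A\ne0$. Let $y_0\ne0$ be the component of $g$ in $\Ker A^*$, and let $y_1$ be any vector with $A^*y_1\ne0$. Then $y_t=y_1+ty_0$ satisfies $A^*y_t=A^*y_1\ne0$, while $\ip{g}{y_t}=\ip{g}{y_1}+t\norm{y_0}{Y}^2$. So the ratio is unbounded in $t$, and the literal supremum is already $+\infty$.

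Your reinterpretation (supremum over all $y$ with $|c|/0=+\infty$) is therefore harmless but unnecessary. The clause ``with the value $+\infty$'' only matters in the degenerate case $A=0$, where the index set $\{A^*y\ne0\}$ is empty.

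This same perturbation argument is what makes the paper's terse ``optimizing over $r$'' legitimate. For $A\ne0$, the dual bound restricted to $A^*y\ne0$ already forces $g\perp\Ker A^*$. It is worth including if you want the supremum identity fully justified under the literal reading.
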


\begin{proof}
If \(A\xi=-g\), then
\[
|\ip{g}{y}_Y|=|\ip{-A\xi}{y}_Y|=|\ip{\xi}{A^*y}_H|
\le\norm{\xi}{H}\norm{A^*y}{H}.
\]
Taking the infimum over admissible \(\xi\) gives one implication.  Conversely, if \eqref{eq:dual-bound} holds, then \(\ip{g}{y}=0\) for all \(y\in\Ker A^*\), so \(g\in(\Ker A^*)^\perp=\Range A\).  Let \(\xi_0\) be the minimal-norm solution of \(A\xi=-g\).  Since \(\xi_0\in\Range A^*\), choose \(y_0\) with \(A^*y_0=\xi_0\).  Then
\[
\norm{\xi_0}{H}^2
=|\ip{\xi_0}{A^*y_0}_H|
=|\ip{-A\xi_0}{y_0}_Y|
=|\ip{g}{y_0}_Y|
\le r\norm{A^*y_0}{H}=r\norm{\xi_0}{H}.
\]
Thus \(\norm{\xi_0}{H}\le r\), and \(\Cost_A^{\trc}(g)\le r^2\).  The equality follows by optimizing over \(r\).
\end{proof}

\subsection{Vertical-duality as anti-phantom quotient control}

For a finite window, an active residual \(g_\Lambda\in Y_\Lambda\) defines an obstruction class
\[
[g_\Lambda]\in Y_\Lambda/\Range A_\Lambda.
\]
The trace cost is finite exactly when this class vanishes, and it is small exactly when \(g_\Lambda\) has a small preimage under \(A_\Lambda\).  By Lemma~\ref{lem:trace-cost-duality}, this is equivalent to
\begin{equation}\label{eq:active-residual-pairing}
|\ip{g_\Lambda}{y}|
\le r_\Lambda\norm{A_\Lambda^*y}{H_\Lambda}.
\end{equation}
Thus small singular values of \(A_\Lambda\) are harmless if the actual residual has proportionally small pairing with the corresponding left singular directions.

\begin{assumption}[Vertical-duality active-residual estimate]\label{ass:VD}
For every finite-window failed-selection branch and every active residual \(g_{a,n}(\eta)\in Y_{a,n,\eta}\), there is a branch-native scale
\begin{equation}\label{eq:branch-native-scale}
r_{a,n}(\eta)=C_a\eta^{K+1}+C_a\rho_n\eps_n^{-p_a}\eta
\end{equation}
such that
\begin{equation}\label{eq:VD-estimate}
|\ip{g_{a,n}(\eta)}{y}|
\le
r_{a,n}(\eta)\norm{A_{a,n,\eta}^*y}{H_{a,n,\eta}}
\end{equation}
for every \(y\in Y_{a,n,\eta}\).
\end{assumption}

By Lemma~\ref{lem:trace-cost-duality}, Assumption~\ref{ass:VD} yields
\[
\Cost^{\trc}_{A_{a,n,\eta}}(g_{a,n}(\eta))\le r_{a,n}(\eta)^2.
\]
Hence vertical duality provides exactly the selected-time correction that the all-order non-summable model lacks.

\section{Navier--Stokes realizability and vertical lift cost}

This section starts the Navier--Stokes-specific part of the paper. The old-envelope no-rate model and the finite-stage flat model are larger than the true Navier--Stokes image. To decide whether the surviving branch is relevant to Navier--Stokes, one must ask whether the horizontal formal branch can be lifted to a full three-dimensional deformation with a small vertical component.

\subsection{The Navier--Stokes deformation complex}

Let \((V,Q)\) be a strict shadow:
\[
V=(V_h,0),
\qquad
\divh V_h=0,
\qquad
\partial_3Q=0,
\]
\[
\partial_tV_h-\Delta V_h+\nabh\cdot(V_h\otimes V_h)+\nabh Q=0.
\]
Consider a formal full Navier--Stokes deformation
\begin{align}
 u_h^\eps&=V_h+\sum_{k\ge1}\eps^k r_k,\label{eq:uh-expansion}\\
 u_3^\eps&=\sum_{k\ge1}\eps^k z_k,\label{eq:u3-expansion}\\
 p^\eps&=Q+\sum_{k\ge1}\eps^k\pi_k.\label{eq:p-expansion}
\end{align}
The incompressibility condition gives, at each order,
\begin{equation}\label{eq:jet-divergence}
\divh r_k+\partial_3z_k=0.
\end{equation}
Thus a horizontal strict jet with nonzero horizontal divergence must be compensated by a vertical jet.  If the required \(z_k\) is large in the vertical budget norm, the horizontal branch is not realizable by a small-\(u_3\) degeneration.

Define
\[
L_Vz=\partial_tz-\Delta z+V_h\cdot\nabh z,
\]
and
\[
L_V^hr=\partial_tr-\Delta r+V_h\cdot\nabh r+r\cdot\nabh V_h.
\]
The vertical momentum equation gives
\begin{equation}\label{eq:vertical-jet-equation}
L_Vz_k+\partial_3\pi_k
=
-\sum_{\substack{i+j=k\\ i,j\ge1}}
\bigl(r_i\cdot\nabh z_j+z_i\partial_3z_j\bigr).
\end{equation}
The horizontal momentum equation gives
\begin{equation}\label{eq:horizontal-jet-equation}
L_V^hr_k+\nabh\pi_k+z_k\partial_3V_h
=
-\sum_{\substack{i+j=k\\ i,j\ge1}}
\bigl(r_i\cdot\nabh r_j+z_i\partial_3r_j\bigr).
\end{equation}
The strict-shadow hierarchy is the special slice in which \(z_k=0\), \(\partial_3\pi_k=0\), and \(\divh r_k=0\).  A formal strict branch may therefore be larger than the class of branches realizable by full Navier--Stokes deformations.

\subsection{Vertical lift cost}

The strict compatibility obstruction at order \(k\) is a horizontal pressure obstruction.  In the full three-dimensional system, it need not vanish; it may instead be absorbed by \(z_k\) through \eqref{eq:vertical-jet-equation}.  This motivates the following definition.

\begin{definition}[Finite-order vertical lift cost]\label{def:VCost}
Fix a strict base \((V,Q)\) and horizontal jets \(\calR_{\le k}=(r_1,\ldots,r_k)\).  The order-\(k\) vertical lift cost is
\begin{equation}\label{eq:VCost-def}
\mathrm{VCost}_{V,k}(\calR_{\le k})
=
\inf
\sum_{j=1}^k\norm{z_j}{\calZ}^{\alpha_j},
\end{equation}
where the infimum is taken over vertical jets \((z_1,\ldots,z_k)\) and pressure jets \((\pi_1,\ldots,\pi_k)\) satisfying \eqref{eq:jet-divergence}, \eqref{eq:vertical-jet-equation}, and \eqref{eq:horizontal-jet-equation} for \(1\le j\le k\).  Here \(\calZ\) is a vertical budget norm compatible with the one-component quantity \(C_3\).
\end{definition}

If \(\mathrm{VCost}_{V,k}=+\infty\), the horizontal jets cannot be realized even formally to order \(k\).  If the costs grow too fast to admit a summable majorant, then all finite jets may exist without a genuine small-vertical-component family.

\begin{definition}[NS-realizable horizontal jet branch]\label{def:NS-realizable-branch}
An infinite horizontal jet branch \((r_k)_{k\ge1}\) is \(\NS\)-realizable with vertical budget \(\delta\) at amplitude \(\eps\) if there exist vertical jets \((z_k)_{k\ge1}\) and pressure jets \((\pi_k)_{k\ge1}\) solving the jet equations to all orders such that
\[
\sum_{k\ge1}\eps^k\norm{z_k}{\calZ}<\infty
\]
and the resulting vertical component
\[
u_3^\eps=\sum_{k\ge1}\eps^kz_k
\]
satisfies
\[
\norm{u_3^\eps}{L^3}^3\le\delta.
\]
\end{definition}

\subsection{Zero-shadow quadratic calculation}

The first nontrivial test occurs at second order.  Let \(V=0\), \(Q=0\), and let the first horizontal jet be \(r_1=W\).  For a strict first tangent one has
\[
\divh W=0,
\qquad
z_1=0,
\qquad
\partial_3\pi_1=0.
\]
At second order, the pressure generated by the horizontal quadratic interaction satisfies formally
\begin{equation}\label{eq:pi2-poisson}
-\Delta \pi_2=\partial_a\partial_b(W_aW_b)
\end{equation}
(up to the chosen pressure gauge and lower-order corrections).  The strict 2.5D condition would require \(\partial_3\pi_2=0\), equivalently the quadratic strict obstruction
\[
B(W,W)=\nabh\partial_3\Delta_h^{-1}\partial_a\partial_b(W_aW_b)
\]
to vanish in the relevant quotient.  Full Navier--Stokes allows instead
\begin{equation}\label{eq:z2-lift-zero}
(\partial_t-\Delta)z_2+\partial_3\pi_2=0.
\end{equation}
Thus the quadratic strict obstruction is converted into the cost of solving \eqref{eq:z2-lift-zero} with small \(z_2\).

A model second-order vertical cost is therefore
\begin{equation}\label{eq:VCost2}
\mathrm{VCost}_2(W)=
\inf\{\norm{z_2}{\calZ}: (\partial_t-\Delta)z_2=-\partial_3\pi_2[W,W]\}.
\end{equation}
If \(B(W,W)\ne0\) and \(\mathrm{VCost}_2(W)\) is bounded below at the sharp scale, then the direction cannot occur in a degeneration with sufficiently small \(u_3\).  If \(B(W,W)=0\), the obstruction must be sought at higher order.

\subsection{Symbol-level conclusion}
In the periodic zero-shadow test, the strict quadratic obstruction and the full vertical lift have different symbols. For a horizontal frequency \(K\), vertical frequency \(m\), and horizontal trace normalized by \(\|W\|_{L^2}\sim1\), the strict obstruction has size
\[
  \|B(W,W)\|_{L^2}\sim mK,
\]
whereas the canonical second vertical lift satisfies
\[
  \|z_2\|_{L^2}\sim \frac{mK^2}{(K^2+m^2)^2}.
\]
Thus a strict pressure obstruction is not automatically an NS-realizability obstruction. It becomes one only if the associated vertical lift cost is too large for the available \(C_3\)-budget. The detailed Fourier computation is given in \Cref{app:vertical-lift}.

\section{Reduced vertical duality and Schur trace-projectability}

The preceding section shows that raw strict pressure obstruction and full vertical lift cost are not identical. This section formulates the correct reduced target. Vertical duality is not a raw factorization on all quotient directions; it is a residual-level anti-phantom statement after all finite-order visible obstructions have been removed.

\subsection{A periodic zero-shadow finite-window factorization test}\label{subsec:raw-factorization-obstruction}

The preceding discussion also identifies a trap.  One should not require vertical-duality factorization on the raw zero-shadow second-order quotient.  In that raw quotient the trace-defect map is zero, while the vertical adjoint lifting is generally nonzero.

Work again on \(\mathbb T^3\) in a fixed finite Fourier window and at the zero strict base \((V,Q)=(0,0)\).  The raw second-order strict quotient is generated by
\[
g=B(W,W)=\nabla_h\partial_3(-\Delta_h)^{-1}F(W).
\]
For \(y\in Y_{\mathrm{raw}}\), the strict-to-vertical adjoint lifting is, on active modes,
\begin{equation}\label{eq:Browsymbol}
\widehat{B_{\mathrm{raw}}^*y}(\xi)
=
\frac{|\xi|^2}{|\xi_h|^2}
\,i\xi_h\cdot\widehat y(\xi),
\qquad
\xi_h\ne0.
\end{equation}
On modes with \(\xi_3=0\), the vertical obstruction is absent and one may set \(B_{\mathrm{raw}}^*y=0\).

By contrast, the raw trace-defect map at the zero shadow is
\[
A_{\mathrm{raw}}=D\calC_0.
\]
Since \(\calC(V)=B(V,V)\) is quadratic,
\begin{equation}\label{eq:Araw-zero}
A_{\mathrm{raw}}=D\calC_0=0,
\qquad
A_{\mathrm{raw}}^*=0.
\end{equation}
Therefore an exact raw factorization
\[
B_{\mathrm{raw}}^*=M A_{\mathrm{raw}}^*
\]
would force \(B_{\mathrm{raw}}^*=0\), which is false in general by \eqref{eq:Browsymbol}.  We record the conclusion explicitly.

\begin{proposition}[Raw zero-shadow factorization obstruction]\label{prop:raw-zero-factorization-obstruction}
In the periodic zero-shadow finite-window model, the raw second-order trace-defect map satisfies \(A_{\mathrm{raw}}=0\), while the strict-to-vertical adjoint lifting \(B_{\mathrm{raw}}^*\) is generally nonzero and has Fourier symbol \eqref{eq:Browsymbol}.  Hence no exact factorization
\[
B_{\mathrm{raw}}^*=M A_{\mathrm{raw}}^*
\]
can hold on the raw second-order quotient.
\end{proposition}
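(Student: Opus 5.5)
The proof has two ingredients: showing that \(A_{\mathrm{raw}}\) vanishes identically, and exhibiting a single admissible \(y\) in a finite window for which \(B_{\mathrm{raw}}^*y\ne0\). Once both are in hand, the factorization is impossible by a one-line contradiction.

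\textbf{Vanishing of \(A_{\mathrm{raw}}\).} This is purely algebraic. The compatibility map is \(\calC(V)=B(V,V)\) with \(B\) bilinear, as in \eqref{eq:B-bilinear}. Its derivative at a base \(V\) is given by \eqref{eq:linearized-compatibility}, \(D\calC_V[W]=B(V,W)+B(W,V)\). Evaluating at \(V=0\) gives \(D\calC_0[W]=0\) for every \(W\) in the window. The projection \(P_\Lambda\) is linear, so the finite-window restriction also vanishes. Hence \(A_{\mathrm{raw}}=0\), and therefore \(A_{\mathrm{raw}}^*=0\) on \(Y_{\mathrm{raw}}\). This establishes \eqref{eq:Araw-zero}.

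\textbf{Nonvanishing of \(B_{\mathrm{raw}}^*\).} Here I would verify the symbol \eqref{eq:Browsymbol} on a single active mode. The vertical lift equation \eqref{eq:z2-lift-zero} couples \(z_2\) to \(\partial_3\pi_2\). Here \(\pi_2\) is determined by the full Poisson equation \eqref{eq:pi2-poisson}, whereas the strict residual \(g=B(W,W)\) is built from the horizontal inverse \((-\Delta_h)^{-1}\). Taking the ratio of the two inverses yields the multiplier \(|\xi|^2/|\xi_h|^2\). The horizontal gradient contributes \(i\xi_h\cdot\hat y\) under the pairing. I would then choose \(\xi\) with \(\xi_h\ne0\) and \(\xi_3\ne0\), which is possible in any window containing an active mode. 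With \(\hat y(\xi)=\xi_h\) (together with the conjugate mode so that \(y\) is real), we get \(\widehat{B^*_{\mathrm{raw}}y}(\xi)=i|\xi|^2\ne0\).

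\textbf{Contradiction.} Suppose \(B_{\mathrm{raw}}^*=MA_{\mathrm{raw}}^*\) for some linear map \(M\). Since \(A_{\mathrm{raw}}^*=0\), this forces \(B_{\mathrm{raw}}^*y=M\cdot 0=0\) for every \(y\), contradicting the mode just exhibited.

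The main obstacle is the second step. The symbol \eqref{eq:Browsymbol} is stated as given, so the real content is pinning down that the lifting is genuinely defined and nonzero on \(Y_{\mathrm{raw}}\), with the convention \(B^*_{\mathrm{raw}}y=0\) on \(\xi_3=0\) modes. To settle this, I would work on the single active Fourier mode explicitly rather than in general, and note that the qualifier "generally nonzero" in the statement is satisfied as soon as the window contains one mode with both horizontal and vertical frequencies nonzero.
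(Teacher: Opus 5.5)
Your proposal is correct and follows essentially the same route as the paper. You get \(A_{\mathrm{raw}}=D\calC_0=0\) from bilinearity of \(B\), show \(B_{\mathrm{raw}}^*\ne0\) through the defining pairing identity on an active mode with \(i\xi_h\cdot\widehat y(\xi)\ne0\) (your explicit choice \(\widehat y(\xi)=\xi_h\), giving \(\widehat{B_{\mathrm{raw}}^*y}(\xi)=i|\xi|^2\), is a concrete instance of this), and conclude with the one-line contradiction \(B_{\mathrm{raw}}^*=M\cdot 0\).
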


\begin{proof}
The identity \(A_{\mathrm{raw}}=0\) follows from the quadratic nature of \(\calC(V)=B(V,V)\), hence \(D\calC_0=0\).  Formula \eqref{eq:Browsymbol} follows from the equality
\[
\ip{\nabla_h\partial_3(-\Delta_h)^{-1}F}{y}
=
\ip{\partial_3(-\Delta)^{-1}F}{B_{\mathrm{raw}}^*y}
\]
on active Fourier modes.  Choosing a dual vector \(y\) with \(i\xi_h\cdot\widehat y(\xi)\ne0\) on an active mode gives \(B_{\mathrm{raw}}^*y\ne0\).  Since \(A_{\mathrm{raw}}^*=0\), no factorization through \(A_{\mathrm{raw}}^*\) can hold on the raw quotient.
\end{proof}

This failure is not a failure of the vertical-duality route.  It says that raw second-order zero-shadow obstructions are visible finite-order obstructions.  They should be handled by the finite-order obstruction and Lojasiewicz selection mechanism, not by vertical duality.  Vertical duality is meant only for the genuinely surviving active quotient after all visible finite-order obstruction directions have been removed.

We therefore refine the quotient before asking for any vertical-duality statement.

\subsection{Reduced active quotients after finite-order obstruction removal}\label{subsec:reduced-active-quotient}

Let \((V,Q)\in\frakS_M\) be a strict base and let
\[
V_\eta^{(K)}=V+\eta R_1+\eta^2R_2+\cdots+\eta^K R_K
\]
be a finite-stage formal strict branch.  Expanding the strict compatibility map gives
\begin{equation}\label{eq:compat-expansion-K}
\calC(V_\eta^{(K)})=
\sum_{j=1}^{K}\eta^j\calO_j+
\eta^{K+1}\calR_{K+1}(\eta).
\end{equation}
For a zero strict base, \(\calC(V)=B(V,V)\), so
\[
\calO_1=0,
\qquad
\calO_2=B(R_1,R_1),
\]
\[
\calO_3=B(R_1,R_2)+B(R_2,R_1),
\]
and, in general,
\begin{equation}\label{eq:Ok-zero-shadow}
\calO_k=\sum_{i+j=k}B(R_i,R_j).
\end{equation}
At a nonzero base the same construction applies, but the first coefficient is
\(D\calC_V[R_1]\) and the later coefficients contain the corresponding moving-base lower-order terms.

Fix a finite window \(\Lambda\), and let \(Y_\Lambda\) be the active compatibility quotient.  Write
\[
\calO_{j,\Lambda}=P_\Lambda\calO_j\in Y_\Lambda.
\]
The visible obstruction subspace up to order \(K\) is
\begin{equation}\label{eq:YvisK}
Y_{\mathrm{vis}}^{(K)}
:=
\operatorname{span}\{\calO_{1,\Lambda},\ldots,\calO_{K,\Lambda}\}
\subset Y_\Lambda.
\end{equation}
These are precisely the directions that have already appeared at finite order.  If one of them is nonzero, it is a finite-order visible obstruction and should be handled by finite-dimensional analytic geometry and the selection argument.  It should not be assigned to vertical duality.

The reduced active quotient is
\begin{equation}\label{eq:YredK}
Y_{\mathrm{red}}^{(K)}:=Y_\Lambda/Y_{\mathrm{vis}}^{(K)},
\qquad
\Pi_{\mathrm{red}}^{(K)}:Y_\Lambda\to Y_{\mathrm{red}}^{(K)}.
\end{equation}
Equivalently, its dual is the annihilator
\begin{equation}\label{eq:YredK-dual}
(Y_{\mathrm{red}}^{(K)})^*
=
\{y\in Y_\Lambda^*:
\ip{\calO_{j,\Lambda}}{y}=0,
\ 1\le j\le K\}.
\end{equation}
Thus, for every \(y\in (Y_{\mathrm{red}}^{(K)})^*\), the low-order pairings vanish and
\begin{equation}\label{eq:low-order-annihilation}
\ip{\calC(V_\eta^{(K)})}{y}
=
\eta^{K+1}\ip{\calR_{K+1}(\eta)}{y}.
\end{equation}
This is the algebraic reason why vertical duality is meaningful only after finite-order obstruction directions have been removed.

We define the reduced branch-native residual by
\begin{equation}\label{eq:gK-def}
g_K(\eta)
:=
\Pi_{\mathrm{red}}^{(K)}
\left[\eta^{-(K+1)}\calC(V_\eta^{(K)})\right]
\in Y_{\mathrm{red}}^{(K)}.
\end{equation}
This is the surviving active residual.  It is not a raw obstruction coefficient; it is what remains after quotienting out the visible finite-order directions.

We also define the reduced trace-defect map.  Let \(H_K\) be the selected-time trace correction space, and let
\[
\calL_K:H_K\to X_\Lambda
\]
be a finite-stage trace-lifting map.  For \(\xi\in H_K\), set \(h=\calL_K\xi\) and consider
\[
V_{\eta,\xi}^{(K)}=V_\eta^{(K)}+\eta^{K+1}h.
\]
Then
\begin{equation}\label{eq:AK-def}
A_K\xi
:=
\Pi_{\mathrm{red}}^{(K)}
D\calC_{V_\eta^{(K)}}[\calL_K\xi]
\in Y_{\mathrm{red}}^{(K)}.
\end{equation}
The adjoint
\[
A_K^*:(Y_{\mathrm{red}}^{(K)})^*\to H_K
\]
measures which reduced dual quotient directions are visible at the selected trace.  If \(A_K^*y=0\), then \(y\) is a surviving trace-invisible dual direction.

The vertical adjoint lifting on the reduced quotient is a map
\[
B_K^*:(Y_{\mathrm{red}}^{(K)})^*\to\calV_K
\]
chosen so that the reduced residual pairing has the representation
\begin{equation}\label{eq:reduced-residual-representation}
\ip{g_K(\eta)}{y}
=
\ip{\calV_{\NS,K}(\eta)}{B_K^*y}
+
\operatorname{Comm}_K(y).
\end{equation}
Here \(\calV_{\NS,K}(\eta)\) is the \(K\)-stage vertical momentum residual generated by the Navier--Stokes branch, while \(\operatorname{Comm}_K\) collects cutoff, projection, moving-base, and gauge remainders.

The correct reduced vertical-duality target is therefore
\begin{equation}\label{eq:reduced-VD-target}
|\ip{g_K(\eta)}{y}|
\le
r_K(\eta)\norm{A_K^*y}{H_K}
\qquad
\forall y\in (Y_{\mathrm{red}}^{(K)})^*.
\end{equation}
A sufficient operator-level statement is a finite-stage factorization
\begin{equation}\label{eq:BK-factor-sufficient}
B_K^*y=M_KA_K^*y+R_K^*y,
\end{equation}
with
\[
\left|\ip{\calV_{\NS,K}(\eta)}{R_K^*y}\right|
+|\operatorname{Comm}_K(y)|
\le
r_K(\eta)\norm{A_K^*y}{H_K},
\]
and \(\norm{\calV_{\NS,K}(\eta)}{\calV_K'}\le r_K(\eta)\).  Then \eqref{eq:reduced-residual-representation} implies \eqref{eq:reduced-VD-target}.  By the trace-cost lemma, \eqref{eq:reduced-VD-target} gives
\begin{equation}\label{eq:reduced-trace-cost-consequence}
g_K(\eta)\in\Range A_K,
\qquad
\Cost^{\trc}(g_K(\eta))\le C r_K(\eta)^2.
\end{equation}
Thus the surviving active residual can be removed by a selected-time correction of small trace cost.

\subsection{Recursive obstruction removal and reduced vertical duality}\label{subsec:recursive-reduced-vd}

The preceding construction yields a general finite-stage scheme.  Let \(V_\eta^{(K)}\) be a \(K\)-stage formal branch and let \(\calO_1,\ldots,\calO_K\) be the obstruction coefficients in \eqref{eq:compat-expansion-K}.  At each stage exactly one of the following alternatives occurs.

\begin{proposition}[Recursive obstruction-removal alternative]\label{prop:recursive-obstruction-removal}
Fix a finite window \(\Lambda\) and a finite stage \(K\).  For the branch \(V_\eta^{(K)}\), either:
\begin{enumerate}[label=(\roman*)]
\item there exists \(1\le j\le K\) such that \(\calO_{j,\Lambda}\ne0\) in the active quotient; in this case a finite-order visible obstruction is present and finite-dimensional analytic control gives a finite-power selection improvement at this stage; or
\item all visible obstruction coefficients up to order \(K\) vanish in the active quotient, and the only object left for trace-cost analysis is the reduced residual \(g_K(\eta)\in Y_{\mathrm{red}}^{(K)}\).
\end{enumerate}
In the second case, if the reduced VD estimate \eqref{eq:reduced-VD-target} holds, then \(g_K(\eta)\) admits a trace correction with cost bounded by \(C r_K(\eta)^2\).
\end{proposition}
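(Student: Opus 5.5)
The alternative itself is a tautological case split: either some visible coefficient \(\calO_{j,\Lambda}\), \(1\le j\le K\), is nonzero in the active quotient \(Y_\Lambda\), or all of them vanish. The content of the proposition is therefore in the two consequences, and I would treat them in order.

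\emph{Case (i).} Let \(j_0\) be the smallest index with \(\calO_{j_0,\Lambda}\ne0\). By \eqref{eq:compat-expansion-K}, the projected compatibility defect along the branch satisfies
\[
P_\Lambda\calC(V_\eta^{(K)})=\eta^{j_0}\calO_{j_0,\Lambda}+O(\eta^{j_0+1}).
\]
So the projected defect is bounded below by \(c\,\eta^{j_0}\) for small \(\eta\). The branch stays in a compact set \(K_\Lambda\subset H_\Lambda\), because \(\Lambda\) is fixed and the jets are finite. The finite-window map \(\calC_\Lambda\) is the projection of the quadratic form \(B\), hence polynomial. I would therefore invoke Proposition~\ref{prop:fixed-window-lojasiewicz} on \(K_\Lambda\) to get
\[
\dist(x,\frakS_\Lambda)\le C_\Lambda\norm{\calC_\Lambda(x)}{Y_\Lambda}^{\alpha_\Lambda}
\qquad\text{for } x\in K_\Lambda.
\]
A nonzero visible coefficient at finite order means the selected trace distance and the compatibility defect are comparable up to a fixed power. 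This is exactly the ``finite-power improvement'' claimed, in the sense of the finite-window obstruction dichotomy after \eqref{eq:obstruction-hierarchy}: arbitrary-slow behaviour is excluded with exponent depending on \(\alpha_\Lambda\) and \(j_0\). Since the statement only asserts that finite-dimensional analytic control applies, I would not optimize constants. I would just record that \(C_\Lambda,\alpha_\Lambda\) depend on \(\Lambda\) and \(K\), consistent with axiom O9.

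\emph{Case (ii).} All \(\calO_{j,\Lambda}\) vanish, so \(Y_{\mathrm{vis}}^{(K)}=\{0\}\) and \(\Pi_{\mathrm{red}}^{(K)}\) is the identity on \(Y_\Lambda\). By \eqref{eq:low-order-annihilation}, now valid for every \(y\), we get
\[
P_\Lambda\calC(V_\eta^{(K)})=\eta^{K+1}\calR_{K+1}(\eta).
\]
The only remaining compatibility information is therefore \(g_K(\eta)\) as defined in \eqref{eq:gK-def}, which is the second claim. For the final claim I would apply Lemma~\ref{lem:trace-cost-duality} to the finite-dimensional linear map \(A_K:H_K\to Y_{\mathrm{red}}^{(K)}\) defined in \eqref{eq:AK-def}, with \(g=g_K(\eta)\) and \(r=r_K(\eta)\). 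The reduced VD estimate \eqref{eq:reduced-VD-target} is precisely condition (ii) of the lemma. The lemma then gives \(g_K(\eta)\in\Range A_K\) and a minimal-norm \(\xi_0\) with \(A_K\xi_0=-g_K(\eta)\) and \(\norm{\xi_0}{H_K}\le r_K(\eta)\), so \(\Cost^{\trc}(g_K(\eta))\le r_K(\eta)^2\). This gives the constant \(C=1\) in the reduced quotient. A constant \(C\) appears only if one transfers the cost back through the lift \(\calL_K\) or renormalizes the quotient norm on \(Y_{\mathrm{red}}^{(K)}\); that requires the equivalence of the quotient norm with the norm induced on the annihilator, which holds in finite dimensions.

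The main obstacle is case (i): making the ``finite-power improvement'' precise. Lojasiewicz controls distance to \(\frakS_\Lambda\) by the defect, whereas here the visible coefficient gives a \emph{lower} bound on the defect along the branch. To combine the two, I would argue by selection: a competitor at distance \(\lesssim\) (defect)\(^{\alpha_\Lambda}\) exists, which bounds \(m\) by a power of \(\eta\) and of the residual scale \(\eta_n\). I expect to state the conclusion at that qualitative finite-power level, with \(\Lambda\)-dependent constants, and not to claim uniformity.
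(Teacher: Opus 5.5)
Your proposal is correct and follows essentially the same route as the paper: a tautological case split on whether \(Y_{\mathrm{vis}}^{(K)}\) is nontrivial, the fixed-window Lojasiewicz inequality (Proposition~\ref{prop:fixed-window-lojasiewicz}) in case (i), and in case (ii) the annihilation identity \eqref{eq:low-order-annihilation} followed by Lemma~\ref{lem:trace-cost-duality} applied to \(A_K:H_K\to Y_{\mathrm{red}}^{(K)}\). Your additional remarks are accurate refinements of the paper's equally schematic argument: that \(Y_{\mathrm{vis}}^{(K)}=\{0\}\) and \(\Pi_{\mathrm{red}}^{(K)}\) is the identity in case (ii), that one may take \(C=1\), and that case (i) yields only a finite-power relation rather than a fully quantified selection improvement.
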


\begin{proof}
The alternative is tautological from the definition of the span \(Y_{\mathrm{vis}}^{(K)}\).  If a nonzero \(\calO_{j,\Lambda}\) appears, then a finite-dimensional analytic Lojasiewicz inequality on the fixed window gives a finite-power relation between compatibility residual and distance to the finite-window zero set.  This is precisely the finite-order obstruction branch.  If no such coefficient appears, then the annihilator formula \eqref{eq:YredK-dual} gives \eqref{eq:low-order-annihilation}, and the surviving residual is exactly \(g_K\).  Finally, the trace-cost consequence follows from the finite-dimensional duality lemma applied to \(A_K:H_K\to Y_{\mathrm{red}}^{(K)}\).
\end{proof}

Passing \(K\to\infty\), a genuinely surviving finite-mode flat branch is one for which
\[
\calO_j=0
\qquad
\text{for every finite }j
\]
in every fixed active window.  Then either the formal strict series converges and gives an exact strict curve, or the formal series is non-summable and one must use finite-stage trace-cost exactification.  Reduced vertical duality is the mechanism that makes this finite-stage exactification possible without a global all-frequency inverse.

\subsection{Finite-window residual anti-phantom criterion}\label{subsec:residual-antiphantom-criterion}

There is one final correction to the formulation.  The operator-level factorization \(B_K^*=M_KA_K^*\) is generally too strong.  The exact finite-dimensional condition needed for vertical duality is a residual-level range inclusion.

Let \(H,Y,\calV\) be finite-dimensional Hilbert spaces, with
\[
A:H\to Y,
\qquad
B^*:Y^*\to\calV.
\]
Let \(B:\calV^*\to Y\) denote the adjoint map, so that
\[
\ip{B\calV}{y}_Y=\ip{\calV}{B^*y}_{\calV',\calV}.
\]
In the clean periodic model, the residual representation has no commutator, and hence
\begin{equation}\label{eq:gBcalV}
g=B\calV.
\end{equation}
The reduced VD inequality
\begin{equation}\label{eq:abstract-reduced-VD}
|\ip{g}{y}|
\le r\norm{A^*y}{H}
\qquad
\forall y\in Y^*
\end{equation}
is equivalent to
\begin{equation}\label{eq:range-inclusion}
g\in\Range A
\qquad\text{and}\qquad
\inf_{A\xi=g}\norm{\xi}{H}\le r.
\end{equation}
Indeed, \((\Range A)^\perp=\Ker A^*\), so \eqref{eq:abstract-reduced-VD} first says that \(g\) has no component in \(\Ker A^*\), and then bounds the minimal preimage.

\begin{proposition}[Finite-window residual anti-phantom criterion]\label{prop:finite-window-antiphantom}
Assume the clean finite-dimensional residual representation \(g=B\calV\).  Then \eqref{eq:abstract-reduced-VD} holds if and only if
\begin{equation}\label{eq:range-criterion-BV}
B\calV\in\Range A
\end{equation}
and
\begin{equation}\label{eq:pseudoinverse-bound}
\norm{A^\dagger B\calV}{H}\le r,
\end{equation}
where \(A^\dagger\) is the minimal-norm right inverse on \(\Range A\).  Equivalently,
\[
P_{\Ker A^*}B\calV=0
\]
and the minimal preimage is bounded by \(r\).
\end{proposition}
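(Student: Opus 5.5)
The plan is to reduce the statement to \cref{lem:trace-cost-duality}, applied with \(g=B\calV\). The only extra ingredient is a sign/orientation bookkeeping step and the standard finite-dimensional identification of the minimal-norm preimage with the Moore--Penrose pseudoinverse. Throughout, identify \(Y^*\) with \(Y\) via the Hilbert structure, so that \(A^*:Y\to H\) and the pairing \(\ip{g}{y}\) is the inner product on \(Y\).

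\textbf{Step 1 (reduction to trace cost).} Since the inequality \eqref{eq:abstract-reduced-VD} is invariant under \(g\mapsto -g\), and \(\Range A\) is a linear subspace, \cref{lem:trace-cost-duality} with \(g=B\calV\) shows that \eqref{eq:abstract-reduced-VD} is equivalent to
\[
\Cost_A^{\trc}(B\calV)\le r^2.
\]
The definition of \(\Cost_A^{\trc}\) then gives this as \(B\calV\in\Range A\) together with \(\inf\{\norm{\xi}{H}:A\xi=B\calV\}\le r\). The infimum is a minimum: the preimage set is a closed affine subspace of the finite-dimensional space \(H\).

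\textbf{Step 2 (pseudoinverse).} For \(B\calV\in\Range A\), the affine set \(\{\xi:A\xi=B\calV\}\) equals \(\xi_0+\Ker A\). Its unique minimal-norm element is the one lying in \((\Ker A)^\perp=\Range A^*\). This element is, by definition, \(A^\dagger B\calV\), with \(A^\dagger\) the minimal-norm right inverse on \(\Range A\). Hence the infimum equals \(\norm{A^\dagger B\calV}{H}\), which yields \eqref{eq:range-criterion-BV}--\eqref{eq:pseudoinverse-bound}.

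\textbf{Step 3 (kernel formulation).} Finite dimensionality gives \((\Range A)^\perp=\Ker A^*\) with \(\Range A\) closed. Therefore \(B\calV\in\Range A\) if and only if \(P_{\Ker A^*}B\calV=0\), which is the final equivalent formulation.

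The only delicate point is the identification of \(Y^*\) with \(Y\) and of the pairing \(\ip{B\calV}{y}_Y=\ip{\calV}{B^*y}\) used in \eqref{eq:gBcalV}. Once that identification is fixed, no analytic input beyond \cref{lem:trace-cost-duality} is needed. In particular the map \(B\) plays no role besides producing the vector \(g\).
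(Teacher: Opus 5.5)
Your proposal is correct and takes essentially the paper's approach. The paper re-runs the argument of Lemma~\ref{lem:trace-cost-duality} inline with \(g=B\calV\): annihilation of \(\Ker A^*\) gives \(g\in\Range A\), the norm of the functional \(A^*y\mapsto\ip{g}{y}\) gives the minimal-preimage bound, and Cauchy--Schwarz gives the converse. You instead cite the lemma directly, and you correctly handle the sign via \(\xi\mapsto-\xi\), the identification of the minimal preimage with \(A^\dagger B\calV\), and the reformulation \(P_{\Ker A^*}B\calV=0\).
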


\begin{proof}
If \eqref{eq:abstract-reduced-VD} holds and \(y\in\Ker A^*\), then \(\ip{g}{y}=0\).  Hence \(g\perp\Ker A^*=(\Range A)^\perp\), so \(g\in\Range A\).  The inequality then gives the operator norm of the linear functional \(A^*y\mapsto \ip{g}{y}\), which is exactly the minimal-norm preimage bound.  Conversely, if \(g=A\xi\) with \(\norm{\xi}{H}\le r\), then
\[
|\ip{g}{y}|=|\ip{\xi}{A^*y}|
\le r\norm{A^*y}{H}.
\]
Taking the minimal such \(\xi\) gives \eqref{eq:pseudoinverse-bound}.
\end{proof}

Thus vertical duality is not an automatic property of the strict quotient geometry.  It is a property of the Navier--Stokes-derived residual.  Strong operator factorization would say
\[
\Range B\subseteq\Range A
\]
for all possible vertical residuals.  This is usually too strong.  What is needed is only
\begin{equation}\label{eq:NS-derived-range-inclusion}
B_K\calV_{\NS,K}(\eta)
\in\Range A_K
\end{equation}
with a small minimal preimage.  In dual language,
\begin{equation}\label{eq:phantom-projection-zero}
P_{\Ker A_K^*}g_K(\eta)=0.
\end{equation}
The updated theorem target is therefore the following range-inclusion form of reduced vertical duality.

\begin{problem}[NS-derived residual range inclusion]\label{prob:NS-derived-range-inclusion}
For every finite stage \(K\), finite window \(\Lambda\), and \(\NS\)-derived surviving branch, prove
\[
g_K(\eta)\in\Range A_K
\]
and
\[
\inf_{A_K\xi=-g_K(\eta)}\norm{\xi}{H_K}
\le r_K(\eta).
\]
Equivalently, prove
\[
|\ip{g_K(\eta)}{y}|
\le r_K(\eta)\norm{A_K^*y}{H_K}
\qquad
\forall y\in(Y_{\mathrm{red}}^{(K)})^*.
\]
This is reduced vertical duality stated as a residual range-inclusion theorem rather than as an overly strong operator factorization theorem.
\end{problem}

The natural primal explanation is finite-stage \(\NS\)-realizability.  If the branch admits a full Navier--Stokes deformation tail
\[
u^\eta=V_\eta^{(K)}+\eta^{K+1}h_{\mathrm{tail}}+\cdots
\]
whose equation is exact to order \(K+1\), then the selected trace
\[
\xi_{\mathrm{tail}}=h_{\mathrm{tail}}(s_*)
\]
should satisfy
\[
A_K\xi_{\mathrm{tail}}=-g_K(\eta).
\]
This would imply \(g_K(\eta)\in\Range A_K\) directly.  In this sense, residual-level vertical duality is the finite-stage shadow of full Navier--Stokes realizability.

\subsection{Vertical trace-projectability from finite-stage NS realizability}\label{subsec:trace-projectability-from-realizability}

The range-inclusion formulation clarifies one further point.  Finite-stage Navier--Stokes realizability does not, by itself, immediately imply
\[
        g_K(\eta)\in \Range A_K .
\]
It gives a larger balance involving both a horizontal trace correction and a vertical lift.  This distinction is important, because the strict trace-cost method can only use the horizontal trace-defect range, whereas a full three-dimensional Navier--Stokes correction also contains a vertical component.

Fix a finite stage and finite window, and write
\[
        Y_K=Y_{\mathrm{red}}^{(K)},
        \qquad H_K=\text{selected-time trace correction space},
        \qquad Z_K=\text{vertical lift space}.
\]
The reduced active residual is
\[
        g_K(\eta)=
        \Pi_{\mathrm{red}}^{(K)}\eta^{-(K+1)}\calC(V_\eta^{(K)})
        \in Y_K,
\]
and the strict trace-defect map is
\[
        A_K:H_K\to Y_K.
\]
A full Navier--Stokes finite-stage tail also contains a vertical variable
\[
        z\in Z_K.
\]
Its contribution to the reduced strict quotient will be denoted by
\[
        B_K:Z_K\to Y_K.
\]
Thus the finite-stage full Navier--Stokes balance has the schematic form
\begin{equation}\label{eq:finite-stage-balance-A-B}
        g_K+A_K\xi+B_Kz+\mathcal N_K(\xi,z)=0,
\end{equation}
where \(\mathcal N_K\) denotes higher-order or branch-native truncation terms.

Equation \eqref{eq:finite-stage-balance-A-B} does not imply \(g_K\in\Range A_K\) unless the vertical contribution is itself trace-projectable.  Let
\[
        P_K^\perp:Y_K\to(\Range A_K)^\perp=\Ker A_K^*
\]
be the phantom projection.  Projecting \eqref{eq:finite-stage-balance-A-B} gives
\[
        P_K^\perp g_K+P_K^\perp B_Kz+P_K^\perp\mathcal N_K(\xi,z)=0.
\]
Since \(P_K^\perp A_K\xi=0\), strict trace corrections cannot affect this equation.  Therefore the actual obstruction is the vertical trace-projectability defect
\begin{equation}\label{eq:vertical-trace-projectability-defect}
        \mathfrak p_K(z):=P_K^\perp B_Kz
        \in (\Range A_K)^\perp.
\end{equation}
If \(\mathfrak p_K(z)\ne0\), the vertical lift creates a strict quotient component invisible to selected-time trace correction.

\begin{proposition}[Trace-projectable NS realizability implies residual range inclusion]\label{prop:trace-projectable-realizability}
Assume that there exist \(\xi\in H_K\), \(z\in Z_K\), and \(\mathcal N_K\in Y_K\) such that
\[
        g_K+A_K\xi+B_Kz+\mathcal N_K=0.
\]
Assume further that
\[
        B_Kz+\mathcal N_K\in\Range A_K.
\]
Then \(g_K\in\Range A_K\).  More precisely, if \(A_K\zeta=B_Kz+\mathcal N_K\), then
\[
        g_K=-A_K(\xi+\zeta),
\]
and hence
\[
        \Cost^{\trc}(g_K)
        \le \norm{\xi+\zeta}{H_K}^2.
\]
\end{proposition}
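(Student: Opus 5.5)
This is a finite-dimensional linear-algebra statement, so the plan is to substitute the assumed trace-projectability directly into the balance identity and then apply the definition of trace cost, \eqref{eq:trace-cost-def}. No analytic input is needed. The only thing to watch is the sign convention: the cost is defined through \(A\xi=-g\), and the chosen preimage must match that convention.

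First, I would use the hypothesis \(B_Kz+\mathcal N_K\in\Range A_K\) to fix some \(\zeta\in H_K\) with \(A_K\zeta=B_Kz+\mathcal N_K\). Since \(H_K\) and \(Y_K\) are finite-dimensional, \(\Range A_K\) is closed and such a \(\zeta\) exists. If one wants the sharpest bound later, one can take the minimal-norm choice \(\zeta=A_K^\dagger(B_Kz+\mathcal N_K)\), as in Proposition~\ref{prop:finite-window-antiphantom}. Substituting into the balance identity \(g_K+A_K\xi+B_Kz+\mathcal N_K=0\) and using linearity of \(A_K\) gives
\[
g_K+A_K\xi+A_K\zeta=0,
\qquad\text{that is,}\qquad
g_K=-A_K(\xi+\zeta).
\]
This shows \(g_K\in\Range A_K\).

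For the cost bound, note that \(\xi+\zeta\) is admissible in the infimum defining \(\Cost^{\trc}_{A_K}(g_K)\), because \(A_K(\xi+\zeta)=-g_K\) is exactly the normalization in \eqref{eq:trace-cost-def}. Hence \(\Cost^{\trc}(g_K)\le\norm{\xi+\zeta}{H_K}^2\).

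As a consistency check, I would also record the dual form via Lemma~\ref{lem:trace-cost-duality}. For every \(y\), the pairing satisfies \(|\ip{g_K}{y}|\le\norm{\xi+\zeta}{H_K}\norm{A_K^*y}{H_K}\), so the reduced vertical-duality inequality \eqref{eq:reduced-VD-target} holds with \(r_K=\norm{\xi+\zeta}{H_K}\). Equivalently, the phantom projection \(P_K^\perp g_K\) vanishes. This is the projected identity \(P_K^\perp g_K=-P_K^\perp(B_Kz+\mathcal N_K)\) combined with the hypothesis that the right side has zero projection.

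There is no real obstacle in the proof itself. The only point needing care is that the hypothesis is imposed on the combination \(B_Kz+\mathcal N_K\), not on \(B_Kz\) alone. I would remark that this is exactly what the argument uses, and that quantitative smallness of the cost then reduces to bounding \(\norm{\xi}{H_K}\) and the minimal preimage \(\norm{A_K^\dagger(B_Kz+\mathcal N_K)}{H_K}\). That second quantity is where the Schur trace-projectability analysis of the following sections must enter.
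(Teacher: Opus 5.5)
Your proposal is correct and follows the paper's proof: substitute $A_K\zeta=B_Kz+\mathcal N_K$ into the balance identity to get $g_K=-A_K(\xi+\zeta)$, then read off the cost bound from the definition of trace cost. Your sign bookkeeping is slightly cleaner than the paper's: under the convention $A\xi=-g$ the admissible element is $\xi+\zeta$, whereas the paper names $-(\xi+\zeta)$, though the norm is the same. Also, the appeal to closedness of $\Range A_K$ is unnecessary, since $\zeta$ exists by the very meaning of the hypothesis.
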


\begin{proof}
The proof is immediate from the balance equation.  Substituting \(B_Kz+\mathcal N_K=A_K\zeta\) gives
\[
        g_K+A_K\xi+A_K\zeta=0.
\]
Thus \(g_K=-A_K(\xi+\zeta)\), and \(-(\xi+\zeta)\) is an admissible trace preimage of \(g_K\).  Taking its squared norm gives the trace-cost bound.
\end{proof}

The dual version is the reduced vertical-duality inequality
\begin{equation}\label{eq:vertical-trace-projectability-dual}
        |\ip{B_Kz+\mathcal N_K}{y}|
        \le r_K(\eta)\norm{A_K^*y}{H_K}
        \qquad \forall y\in Y_K^*.
\end{equation}
Thus the genuinely missing mechanism is not merely the existence of a vertical lift.  It is the assertion that the vertical lift can be traded for a strict selected-time trace correction at small trace cost.  We call this \emph{vertical trace-projectability}.

\begin{problem}[Vertical trace-projectability of NS-realizable tails]\label{prob:vertical-trace-projectability}
For every finite window and every finite stage, prove that the finite-stage vertical lift contribution of an \(\NS\)-derived surviving branch satisfies
\[
        B_Kz_K+\mathcal N_K\in\Range A_K
\]
up to an error whose trace cost is bounded by the branch-native scale.  Equivalently, prove the dual estimate \eqref{eq:vertical-trace-projectability-dual}.
\end{problem}

\subsection{Periodic finite-window trace-projectability and the Schur phantom map}\label{subsec:periodic-schur-map}

The preceding discussion gives a concrete finite-dimensional test.  In a clean periodic finite-window model all spaces are finite-dimensional.  Let
\[
        A:H\to Y,
        \qquad B:Z\to Y,
\]
where \(H\) is the trace correction space, \(Z\) is the vertical lift space, and \(Y\) is the reduced active quotient.  The vertical trace-projectability condition is
\[
        B(Z^{\NS})\subseteq\Range A,
\]
where \(Z^{\NS}\subseteq Z\) is the subspace of vertical responses that actually arise from finite-stage Navier--Stokes forcing.

In a periodic zero-shadow Stokes symbol calculation, the homogeneous tail system
\[
        i\xi_h\cdot h+i\xi_3z=0,
\]
\[
        (i\omega+|\xi|^2)z+i\xi_3q=0,
        \qquad
        (i\omega+|\xi|^2)h+i\xi_hq=0
\]
has no nonzero solution for \(|\xi|\ne0\).  Thus the relevant vertical space is not a homogeneous kernel.  It is the image of the lower-order forcing through the full Stokes response.  We write this as
\[
        z=C_K\beta,
\]
where \(\beta\) denotes the finite-stage forcing parameters and
\[
        C_K:\mathcal F_K\to Z_K
\]
is the finite-window vertical Stokes response operator.

Consequently the object to test is the finite-dimensional Schur phantom map
\begin{equation}\label{eq:Schur-phantom-map}
        \mathfrak S_K
        :=P_{\Ker A_K^*}B_KC_K:
        \mathcal F_K\to (\Range A_K)^\perp.
\end{equation}
Here \(P_{\Ker A_K^*}\) is the orthogonal projection onto the phantom cokernel.

\begin{proposition}[Schur range criterion and trace-cost bound]\label{prop:Schur-criterion}
In the periodic finite-window reduced model, the Schur cancellation condition
\[
        \mathfrak S_K\beta=0
\]
for every \(\NS\)-derived forcing parameter \(\beta\) is equivalent to the \emph{range-inclusion} part of vertical trace-projectability:
\[
        B_KC_K\beta\in\Range A_K.
\]
Equivalently,
\[
        P_{\Ker A_K^*}B_KC_K\beta=0.
\]
The full reduced vertical-duality estimate additionally requires a quantitative right-inverse bound:
\[
        \norm{A_K^\dagger B_KC_K\beta}{H_K}
        \le r_K(\eta),
\]
up to the branch-native error scale.  Equivalently, the complete trace-cost statement is
\[
        |\ip{B_KC_K\beta}{y}|
        \le r_K(\eta)\norm{A_K^*y}{H_K}
        \qquad \forall y\in Y_K^*.
\]
Thus Schur cancellation removes the phantom projection, while reduced vertical duality also controls the minimal trace preimage.
\end{proposition}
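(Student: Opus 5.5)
The statement is finite-dimensional linear algebra: the orthogonal decomposition \(Y_K=\Range A_K\oplus\Ker A_K^*\) combined with \cref{lem:trace-cost-duality}. I will prove the two parts separately.

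For the range-inclusion part, I start from the fact that \(H_K\) and \(Y_K\) are finite-dimensional Hilbert spaces, so \(\Range A_K\) is closed and \((\Range A_K)^\perp=\Ker A_K^*\). Fix an \(\NS\)-derived forcing parameter \(\beta\) and set \(w=B_KC_K\beta\in Y_K\). Write \(w=P_{\Range A_K}w+P_{\Ker A_K^*}w\). Then \(w\in\Range A_K\) holds exactly when \(P_{\Ker A_K^*}w=0\), and by definition \eqref{eq:Schur-phantom-map} this says \(\mathfrak S_K\beta=0\). Quantifying over all \(\NS\)-derived \(\beta\) gives the claimed equivalence, together with its restatement as vanishing of the phantom projection.

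For the quantitative part, assume \(w\in\Range A_K\). Let \(A_K^\dagger\) denote the Moore--Penrose pseudoinverse restricted to \(\Range A_K\). The vector \(A_K^\dagger w\) is the unique preimage of \(w\) lying in \((\Ker A_K)^\perp=\Range A_K^*\), and it has minimal norm among all preimages. This is the same minimal-norm argument used in the proof of \cref{lem:trace-cost-duality}. Applying that lemma with \(g=-w\) (the sign is harmless because the pairing is taken in absolute value) gives
\[
\norm{A_K^\dagger w}{H_K}^2=\Cost^{\trc}_{A_K}(-w),
\]
and also shows that \(\Cost^{\trc}_{A_K}(-w)\le r_K(\eta)^2\) holds if and only if
\[
|\ip{w}{y}|\le r_K(\eta)\norm{A_K^*y}{H_K}\qquad\text{for all }y\in Y_K^*.
\]
This is the dual estimate \eqref{eq:vertical-trace-projectability-dual} with \(\mathcal N_K=0\). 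Equivalently, one can invoke \cref{prop:finite-window-antiphantom} with \(B\calV\) replaced by \(B_KC_K\beta\).

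The dual inequality already forces range inclusion: taking \(y\in\Ker A_K^*\) gives \(\ip{w}{y}=0\). So the full estimate implies Schur cancellation, but the converse fails without the pseudoinverse bound. This is the sense of the closing sentence of the statement.

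The only delicate point is the phrase \emph{up to the branch-native error scale}. If a truncation term \(\mathcal N_K\) is present, the argument must be run for \(w+\mathcal N_K\) rather than \(w\). In that case one needs \(P_{\Ker A_K^*}\mathcal N_K=0\), or else the phantom component of \(\mathcal N_K\) must be absorbed into the stated tolerance. I would treat the clean periodic model, where \(\mathcal N_K\) vanishes at this order, and note that with an error term one instead obtains \(|\ip{w}{y}|\le r_K(\eta)\norm{A_K^*y}{H_K}+\norm{\mathcal N_K}{Y_K}\norm{y}{Y_K^*}\).
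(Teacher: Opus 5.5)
Your argument is correct and follows the paper's route: the paper gives no separate proof of this proposition, treating it as the specialization of \cref{lem:trace-cost-duality} and \cref{prop:finite-window-antiphantom} to $g=B_KC_K\beta$, which is exactly your orthogonal splitting $Y_K=\Range A_K\oplus\Ker A_K^*$ combined with minimal-norm-preimage duality. Your handling of the truncation term $\mathcal N_K$ is a reasonable reading of the phrase ``up to the branch-native error scale,'' which the paper leaves unquantified.
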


This reformulation is useful because \(A_K\), \(B_K\), and \(C_K\) are matrices in a fixed finite Fourier window.  The vague phrase ``prove VD'' is thus reduced to two concrete tasks: computing or annihilating the Schur map \eqref{eq:Schur-phantom-map}, and proving the corresponding Moore--Penrose right-inverse bound on the NS-derived residual.

\subsection{Does finite-mode flatness force Schur cancellation?}\label{subsec:flatness-vs-schur}

The preceding calculation raises a natural question.  If a branch is already finite-mode flat in the strict compatibility hierarchy, must it satisfy the Schur cancellation condition
\[
        P_{\Ker A_K^*}B_KC_KF_K=0?
\]
In general the answer is no without additional structure.

Strict finite-mode flatness controls the coefficients in the strict compatibility expansion
\[
        \calC(V_\eta)
        =\sum_{j\ge1}\eta^j\calO_j^{\str},
\]
where, in the zero-shadow quadratic case,
\begin{equation}\label{eq:strict-obstruction-coefficients}
        \calO_k^{\str}
        =\sum_{i+j=k}B(R_i,R_j).
\end{equation}
It requires
\[
        P_\Lambda\calO_k^{\str}=0
        \qquad \text{for every fixed }\Lambda\text{ and every }k.
\]
Schur cancellation controls a different object:
\begin{equation}\label{eq:Schur-obstruction-coeff}
        \calO_K^{\mathrm{Schur}}
        :=P_{\Ker A_K^*}B_KC_KF_K.
\end{equation}
This object measures whether the full Navier--Stokes vertical response is trace-projectable in the reduced strict quotient.  It is a range-alignment condition, not merely the vanishing of a strict compatibility coefficient.

A simple finite-dimensional example makes the logical independence clear.  Let
\[
        Y=\R^2,
        \qquad H=\R,
        \qquad Ah=(h,0).
\]
Then \(\Range A=\operatorname{span}\{(1,0)\}\) and \(\Ker A^*=\operatorname{span}\{(0,1)\}\).  Suppose all strict obstruction coefficients vanish:
\[
        \calO_j^{\str}=0
        \qquad \forall j.
\]
But take a vertical Schur contribution
\[
        BCF=(0,1).
\]
Then
\[
        P_{\Ker A^*}BCF=(0,1)\ne0.
\]
Thus strict flatness alone does not imply Schur flatness.

There are, however, three mechanisms by which strict flatness could imply Schur cancellation in special situations.

\begin{enumerate}[label=(\alph*)]
\item \textbf{Schur response equals a next strict obstruction modulo trace range.}  If one proves
\[
        B_KC_KF_K
        \equiv \calO_{K+1}^{\str}
        \pmod{\Range A_K},
\]
then all-order strict flatness implies trace-projectability.

\item \textbf{Surviving-branch equations impose amplitude ratios.}  In the two-mode model, Schur cancellation requires the amplitude ratio \eqref{eq:amplitude-ratio-schur}.  If the equations \(\calO_j^{\str}=0\) force that ratio, then Schur cancellation follows.

\item \textbf{The Schur defect is itself a visible obstruction.}  If \(P_{\Ker A_K^*}B_KC_KF_K\ne0\), it may have to be added to the visible obstruction space and removed before the next reduced quotient is formed.
\end{enumerate}

This suggests a refinement of the obstruction hierarchy.  In addition to strict compatibility obstructions \(\calO_j^{\str}\), one should track Schur trace-projectability obstructions
\[
        \calO_j^{\mathrm{Schur}}
        =P_{\Ker A_j^*}B_jC_jF_j.
\]
A genuinely surviving branch should be both strict-flat and Schur-flat:
\begin{equation}\label{eq:strict--Schur-flat}
        \calO_j^{\str}=0,
        \qquad
        \calO_j^{\mathrm{Schur}}=0,
        \qquad
        \forall j.
\end{equation}

\begin{problem}[Strict--Schur obstruction hierarchy]\label{prob:strict--Schur-hierarchy}
Develop a two-layer obstruction hierarchy in which finite-order strict compatibility obstructions and finite-order Schur trace-projectability obstructions are removed recursively.  Prove that a failed branch either exhibits a visible obstruction of one of these two types, or else is strict--Schur flat to all finite orders and must be treated by a genuinely \(\NS\)-specific realizability principle or by a relaxed-shadow comparison framework.
\end{problem}

\subsection{Strict--Schur two-layer obstruction hierarchy}\label{subsec:strict--Schur-two-layer}

The preceding subsection shows that strict finite-mode flatness is not the same as trace-projectability of the full Navier--Stokes vertical response.  We therefore refine the obstruction hierarchy by separating two layers:
\[
        \text{strict compatibility obstructions}
        \qquad\text{and}\qquad
        \text{Schur trace-projectability obstructions}.
\]
A genuinely surviving branch should be flat with respect to both layers.

The first layer is the strict compatibility hierarchy.  For a formal strict branch
\[
        V_\eta^{(K)}=V+\eta R_1+\eta^2R_2+\cdots+\eta^KR_K,
\]
write
\begin{equation}\label{eq:strict-expansion-v6}
        \calC(V_\eta^{(K)})
        =\sum_{j=1}^{K}\eta^j\calO_j^{\str}
        +\eta^{K+1}\calR_{K+1}^{\str}(\eta).
\end{equation}
In the zero-shadow quadratic model,
\[
        \calO_1^{\str}=0,
        \qquad
        \calO_2^{\str}=B(R_1,R_1),
\]
\[
        \calO_3^{\str}=B(R_1,R_2)+B(R_2,R_1),
\]
\[
        \calO_4^{\str}=B(R_1,R_3)+B(R_3,R_1)+B(R_2,R_2),
\]
and, in general,
\begin{equation}\label{eq:Ok-str-general}
        \calO_k^{\str}=\sum_{i+j=k}B(R_i,R_j).
\end{equation}
This layer detects failure of the strict pressure condition \(\partial_3Q=0\) inside the strict shadow geometry.  A nonzero \(\calO_k^{\str}\) is a finite-order visible strict obstruction and should be handled by finite-window analytic geometry rather than by vertical duality.

The second layer measures whether the full Navier--Stokes vertical response is projectable onto the selected-time strict trace-defect range.  Fix the reduced quotient at stage \(K\):
\[
        Y_K=Y_{\rm red}^{(K)},
        \qquad
        A_K:H_K\to Y_K.
\]
Let
\[
        B_K:Z_K\to Y_K
\]
be the map carrying a finite-stage vertical lift contribution into the reduced strict quotient, and let
\[
        C_K:\mathcal F_K\to Z_K
\]
be the finite-window vertical Stokes response generated by the lower-order Navier--Stokes forcing package \(F_K\in\mathcal F_K\).  Denote the phantom projection by
\[
        P_K^\perp:Y_K\to (\Range A_K)^\perp=\Ker A_K^*.
\]
We define the stage-\(K\) Schur trace-projectability obstruction by
\begin{equation}\label{eq:Schur-obstruction-v6}
        \boxed{\;
        \calO_K^{\rm Schur}:=P_K^\perp B_KC_KF_K.
        \;}
\end{equation}
Thus \(\calO_K^{\rm Schur}\) is the trace-invisible component of the vertical lift contribution.  It vanishes precisely when the vertical response can be traded for selected-time strict trace correction at the reduced quotient level.

Accordingly, the visible obstruction space should be enlarged from the strict span to the strict--Schur span:
\begin{equation}\label{eq:Yvis-str-Schur}
        \boxed{\;
        Y_{\rm vis}^{(K)}
        =\operatorname{span}\{\calO_j^{\str},\calO_j^{\rm Schur}:1\le j\le K\}.
        \;}
\end{equation}
The reduced quotient is then
\[
        Y_{\rm red}^{(K)}=Y_\Lambda/Y_{\rm vis}^{(K)},
\]
and its dual is the annihilator
\begin{equation}\label{eq:Yred-dual-str-Schur}
        (Y_{\rm red}^{(K)})^*
        =\{y\in Y_\Lambda^*:
        \ip{\calO_j^{\str}}{y}=0,
        \ \ip{\calO_j^{\rm Schur}}{y}=0,
        \ 1\le j\le K\}.
\end{equation}

\begin{definition}[Strict-flat, Schur-flat, and NS-surviving flat branches]\label{def:strict--Schur-flat}
A formal branch is called \emph{strict-flat} if \(\calO_K^{\str}=0\) for every finite stage \(K\).  It is called \emph{Schur-flat} if \(\calO_K^{\rm Schur}=0\) for every finite stage \(K\).  It is called \emph{NS-surviving flat} if
\begin{equation}\label{eq:NS-surviving-flat}
        \calO_K^{\str}=0,
        \qquad
        \calO_K^{\rm Schur}=0,
        \qquad
        \forall K.
\end{equation}
\end{definition}

\begin{problem}[Strict--Schur obstruction dichotomy]\label{prob:strict--Schur-dichotomy}
Develop a recursive finite-window theory proving that a trace-tight failed branch satisfies one of the following alternatives:
\begin{enumerate}[label=(\roman*)]
\item a finite-order strict obstruction \(\calO_K^{\str}\ne0\) is visible and gives finite-power selection improvement;
\item a finite-order Schur obstruction \(\calO_K^{\rm Schur}\ne0\) is visible, either strictly or relaxedly, and gives the corresponding finite-power improvement or relaxed error control;
\item the branch is strict--Schur flat to all finite orders, in which case the remaining obstruction is a genuinely \(\NS\)-specific realizability or true-phantom question.
\end{enumerate}
\end{problem}

This reformulation changes the role of vertical duality.  Vertical duality is not a raw compatibility identity.  It is the assertion that, after all visible strict and Schur obstructions have been removed, the remaining \(\NS\)-derived residual has no true phantom projection, or at least has trace cost below the branch-native scale.

\subsection{Visibility of Schur obstructions}\label{subsec:Schur-visibility}

A nonzero Schur obstruction does not automatically produce a finite-power strict selection improvement.  It first says only that strict trace-projectability fails:
\[
        \calO_K^{\rm Schur}=P_K^\perp B_KC_KF_K\ne0.
\]
Since \(\calO_K^{\rm Schur}\in\Phi_K\), where
\[
        \Phi_K:=(\Range A_K)^\perp=\Ker A_K^*,
\]
it cannot be removed by an ordinary strict trace correction through \(A_K\).  It may nevertheless be visible through another variational mechanism.  We therefore introduce a Schur visibility decomposition.

Let \(N_K\in H_K\) denote the sharp selected-trace normal coming from the normalized selected-time difference.  A Schur vector \(s\in\Phi_K\) is called \emph{strict-visible} if there exists an admissible strict bridge
\[
        L_K^{\str}:\Phi_K\to H_K^{\rm adm}
\]
such that \(\xi_s=L_K^{\str}s\) is an admissible strict trace variation and
\begin{equation}\label{eq:strict-visible-Schur}
        \ip{N_K}{\xi_s}_{H_K}\ge c|s|_{\Phi_K}.
\end{equation}
Then the selected energy has the expansion
\[
        E_{\rm sel}(\xi_s)-E_{\rm sel}(0)
        =-\eta^{K+1}\ip{N_K}{\xi_s}
        +O(\eta^{2K+2}|\xi_s|^2),
\]
and \eqref{eq:strict-visible-Schur} yields a finite-power strict descent.

A Schur vector \(s\in\Phi_K\) is called \emph{relaxed-visible} if it is seen by the vertical residual allowed in the relaxed comparison class.  Concretely, there is a relaxed observation map
\begin{equation}\label{eq:Jrel-general}
        J_K^{\rm rel}:\Phi_K\to\mathcal W_K
\end{equation}
such that \(J_K^{\rm rel}s\) is the vertical pressure residual or vertical momentum residual retained by the relaxed shadow comparison, for example
\[
        J_K^{\rm rel}s\sim \partial_3\pi_s.
\]
If
\begin{equation}\label{eq:rel-visible-lower}
        |J_K^{\rm rel}s|_{\mathcal W_K}\ge c|s|_{\Phi_K},
\end{equation}
then relaxed comparison controls its contribution by the smallness of the vertical component:
\begin{equation}\label{eq:rel-pairing-general}
        \left|\int u_3 J_K^{\rm rel}s\right|
        \le \norm{u_3}{L^3}\norm{J_K^{\rm rel}s}{L^{3/2}}
        \lesssim \delta^{1/3}|s|_{\Phi_K}
\end{equation}
in a fixed finite window.

Finally, a nonzero \(s\in\Phi_K\) is a \emph{true phantom Schur obstruction} if it is neither strict-visible nor relaxed-visible.  Formally, if
\[
        \mathfrak V_K^{\rm Schur}(s)
        :=(\mathfrak V_K^{\str}(s),\mathfrak V_K^{\rm rel}(s))
\]
is the combined visibility map, then
\begin{equation}\label{eq:true-phantom-space}
        \Phi_K^{\rm ph}
        :=\Ker\mathfrak V_K^{\str}\cap\Ker\mathfrak V_K^{\rm rel}
\end{equation}
is the true phantom space.

This gives the refined decomposition
\begin{equation}\label{eq:Schur-visible-decomposition}
        \calO_K^{\rm Schur}
        =\calO_K^{\rm Schur,str}
        +\calO_K^{\rm Schur,rel}
        +\calO_K^{\rm Schur,ph},
\end{equation}
where the three components are strict-visible, relaxed-visible, and true-phantom, respectively.

\begin{problem}[Schur visibility dichotomy]\label{prob:Schur-visibility-dichotomy}
For each finite stage \(K\), prove a decomposition of the Schur obstruction of the form \eqref{eq:Schur-visible-decomposition} such that:
\begin{enumerate}[label=(\roman*)]
\item if \(\calO_K^{\rm Schur,str}\ne0\), then there is a finite-power strict selected-time descent;
\item if \(\calO_K^{\rm Schur,rel}\ne0\), then strict trace-cost exactification may fail, but relaxed comparison controls the contribution by \(C\delta^{1/3}|\calO_K^{\rm Schur,rel}|\);
\item if \(\calO_K^{\rm Schur,ph}\ne0\), the present strict and relaxed observables still miss a true phantom component.
\end{enumerate}
The refined anti-phantom target is therefore
\begin{equation}\label{eq:refined-VD-phantom}
        \boxed{\;
        P_{\Phi_K^{\rm ph}}B_KC_KF_K=0
        \quad\text{for \(\NS\)-derived surviving branches.}
        \;}
\end{equation}
\end{problem}

The strict route corresponds to the stronger requirement \(P_{\Phi_K}B_KC_KF_K=0\).  The relaxed route needs only \eqref{eq:refined-VD-phantom}, since relaxed-visible Schur residuals are kept in the comparison and paired with \(u_3\).

\section{Schur-to-relaxed visibility}

This section proves the main positive finite-window observation: a strict Schur phantom is not necessarily a relaxed phantom. In active periodic finite windows, the relaxed vertical-pressure observation is diagonal with nonzero multipliers.

\subsection{Relaxed visibility of the two-mode Schur defect}\label{subsec:two-mode-relaxed-visibility}

We now compute the relaxed visibility map in the two-mode model of \cref{subsec:two-mode-forcing-coefficients}.  The purpose is to decide whether the first nontrivial Schur defect is a true phantom or merely a strict-trace phantom that relaxed comparison can see.

Let the two output modes be
\[
        \sigma_1=p+q,
        \qquad
        \sigma_2=p+r,
\]
and set
\[
        K_i=|\sigma_{i,h}|,
        \qquad
        m_i=|\sigma_{i,3}|.
\]
The quadratic horizontal pressure forcing coefficients are
\begin{equation}\label{eq:beta1-beta2-rel}
        \beta_1
        =2A_pA_q\frac{(p_h\times q_h)^2}{|p_h||q_h|},
        \qquad
        \beta_2
        =2A_pA_r\frac{(p_h\times r_h)^2}{|p_h||r_h|}.
\end{equation}
For each mode,
\[
        -\Delta\pi_i^{\rm full}=\beta_i e^{i\sigma_i\cdot x},
\]
so
\[
        \widehat{\pi_i^{\rm full}}
        =\frac{\beta_i}{K_i^2+m_i^2}.
\]
Therefore the vertical pressure residual coefficient is
\begin{equation}\label{eq:Theta-beta}
        \widehat{\partial_3\pi_i^{\rm full}}
        =i\sigma_{i,3}\frac{\beta_i}{K_i^2+m_i^2},
        \qquad
        |\widehat{\partial_3\pi_i^{\rm full}}|
        =\Theta_i|\beta_i|,
\end{equation}
where
\begin{equation}\label{eq:Theta-def}
        \Theta_i:=\frac{m_i}{K_i^2+m_i^2}.
\end{equation}
This number is nonzero whenever the vertical frequency is active, \(m_i\ne0\).

In the strict trace-projectability test, the vertical lift-to-quotient response coefficient has the form
\[
        b_i=\Gamma_i\beta_i,
\]
where, in the static model,
\begin{equation}\label{eq:Gamma-def-rel}
        \Gamma_i=\frac{m_iK_i^2}{(K_i^2+m_i^2)^2}.
\end{equation}
Let
\[
        \Range A=\operatorname{span}\{a\},
        \qquad
        a=(a_1,a_2)\in\mathbb C^2.
\]
For \(b=(b_1,b_2)\), the two-mode Schur defect is
\begin{equation}\label{eq:s-two-mode-rel}
        s=P_{a^\perp}b
        =b-\frac{\ip{b}{a}}{|a|^2}a
        =(s_1,s_2).
\end{equation}
It is nonzero exactly when
\begin{equation}\label{eq:det-Schur-rel}
        a_1b_2-a_2b_1\ne0.
\end{equation}
This is strict trace-projectability failure.

The relaxed route asks a different question: does \(s\) correspond to a vertical pressure residual \(\partial_3\pi_s\)?  Since \(s_i\) is normalized in the vertical lift-to-quotient coefficient, the underlying pressure-forcing coefficient is \(s_i/\Gamma_i\).  Hence the relaxed vertical pressure residual coefficient is
\[
        \Theta_i\frac{s_i}{\Gamma_i}.
\]
Thus the relaxed observation map on this two-mode Schur space is diagonal:
\begin{equation}\label{eq:Jrel-two-mode}
        \boxed{\;
        J^{\rm rel}s=(\mu_1s_1,\mu_2s_2),
        \qquad
        \mu_i:=\frac{\Theta_i}{\Gamma_i}.
        \;}
\end{equation}
Using \eqref{eq:Theta-def} and \eqref{eq:Gamma-def-rel},
\begin{equation}\label{eq:mu-i}
        \boxed{\;
        \mu_i=\frac{K_i^2+m_i^2}{K_i^2}.
        \;}
\end{equation}
Therefore, on active modes with \(K_i\ne0\) and \(m_i\ne0\), we have \(\mu_i\ne0\).

\begin{proposition}[Two-mode Schur defects are relaxed-visible in the active periodic model]\label{prop:two-mode-relaxed-visible}
In the clean periodic two-mode model, assume both output modes are active:
\[
        K_i=|\sigma_{i,h}|\ne0,
        \qquad
        m_i=|\sigma_{i,3}|\ne0,
        \qquad i=1,2.
\]
Then the relaxed observation map \(J^{\rm rel}\) defined by \eqref{eq:Jrel-two-mode} is injective on the two-mode Schur space.  Consequently,
\begin{equation}\label{eq:Jrel-injective}
        J^{\rm rel}s=0
        \quad\Longleftrightarrow\quad
        s=0.
\end{equation}
In particular, every nonzero two-mode Schur defect is relaxed-visible.
\end{proposition}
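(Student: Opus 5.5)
The plan is to use the explicit diagonal form of \(J^{\rm rel}\) in \eqref{eq:Jrel-two-mode} and check that its two multipliers never vanish on active modes. First I would make the derivation of \eqref{eq:Jrel-two-mode} from the preceding computation explicit. By \eqref{eq:Theta-beta}, the vertical pressure residual of a mode with forcing coefficient \(\beta_i\) has modulus \(\Theta_i|\beta_i|\). By \eqref{eq:Gamma-def-rel}, the vertical lift-to-quotient coefficient is \(b_i=\Gamma_i\beta_i\). The map \(\beta_i\mapsto b_i\) is invertible exactly when \(\Gamma_i\ne0\). Under the activity hypothesis, \(\Gamma_i=m_iK_i^2/(K_i^2+m_i^2)^2\) is strictly positive, since \(m_i>0\), \(K_i>0\), and the denominator is finite and positive. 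Hence the rescaling \(s_i\mapsto s_i/\Gamma_i\) is well defined, and \(J^{\rm rel}s=(\mu_1s_1,\mu_2s_2)\) with \(\mu_i=\Theta_i/\Gamma_i\).

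Next I would simplify the ratio:
\[
\mu_i=\frac{m_i}{K_i^2+m_i^2}\cdot\frac{(K_i^2+m_i^2)^2}{m_iK_i^2}=\frac{K_i^2+m_i^2}{K_i^2}.
\]
The cancellation of \(m_i\) is legitimate precisely because \(m_i\ne0\). So \(\mu_i\ge1\) for each \(i\), and in particular \(\mu_i\ne0\). The map \(J^{\rm rel}\) is then a diagonal operator on the two-mode Schur space with nonzero diagonal entries. Therefore \(J^{\rm rel}s=0\) forces \(\mu_is_i=0\), hence \(s_i=0\) for \(i=1,2\), which gives \eqref{eq:Jrel-injective}. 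The converse direction is trivial by linearity.

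I would also record the quantitative consequence \(|J^{\rm rel}s|\ge\min_i\mu_i\,|s|\ge|s|\). This is the lower bound \eqref{eq:rel-visible-lower} with \(c=1\), so every nonzero two-mode Schur defect is relaxed-visible in the sense of \cref{subsec:Schur-visibility}.

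The only delicate point I anticipate is bookkeeping rather than analysis. One must check that the Schur space, the orthogonal complement \(a^\perp\) in \eqref{eq:s-two-mode-rel}, sits inside the coordinate space \(\mathbb C^2\) on which the diagonal formula is valid. One must also check that the identification of \(s_i\) with a normalized lift coefficient is coordinatewise, so that the rescaling by \(1/\Gamma_i\) commutes with the projection. Since \(J^{\rm rel}\) is defined on all of \(\mathbb C^2\) and injective there, its restriction to the subspace \(a^\perp\) is injective, and no further argument is needed. If \(a^\perp\) were trivial (i.e. \(a=0\)), the claim would be vacuous, so I would note that case separately.
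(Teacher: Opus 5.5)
Your proposal is correct and is essentially the paper's proof: \(J^{\rm rel}\) is the diagonal matrix \(\diag(\mu_1,\mu_2)\) with \(\mu_i=(K_i^2+m_i^2)/K_i^2\ne0\) on active modes, so it is invertible on \(\mathbb C^2\) and hence injective on the Schur subspace; your extra bound \(|J^{\rm rel}s|\ge|s|\) in the coefficient \(\ell^2\)-norm is a correct bonus. One harmless slip: \(a^\perp\) is never trivial in \(\mathbb C^2\). If \(a=0\), then \(a^\perp=\mathbb C^2\) and \eqref{eq:s-two-mode-rel} should be read as \(s=b\); your restriction argument already covers that case.
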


\begin{proof}
The map \(J^{\rm rel}\) is represented by the diagonal matrix \(\diag(\mu_1,\mu_2)\).  By \eqref{eq:mu-i}, each \(\mu_i\) is nonzero under the active-mode assumptions.  Hence the diagonal matrix is invertible on \(\mathbb C^2\), proving \eqref{eq:Jrel-injective}.
\end{proof}

The corresponding relaxed energy contribution has the form
\begin{equation}\label{eq:Schur-rel-energy-term}
        \mathcal I_s
        =\eta^{K+1}\int_Q u_3\,J^{\rm rel}s\,\dxdt.
\end{equation}
By H\"older's inequality,
\begin{equation}\label{eq:Schur-rel-Holder}
        |\mathcal I_s|
        \le \eta^{K+1}\norm{u_3}{L^3(Q)}
        \norm{J^{\rm rel}s}{L^{3/2}(Q)}.
\end{equation}
If \(\norm{u_3}{L^3(Q)}^3\le\delta\), then \(\norm{u_3}{L^3(Q)}\le\delta^{1/3}\).  Since all norms are equivalent in a fixed finite window and \(J^{\rm rel}\) is an injective finite-dimensional multiplier,
\begin{equation}\label{eq:Schur-rel-bound}
        \boxed{\;
        |\mathcal I_s|
        \le C_\Lambda\eta^{K+1}\delta^{1/3}|s|.
        \;}
\end{equation}
Thus the strict route sees a projectability failure, while the relaxed route sees a controllable vertical-pressure error.

There are only standard symbol-level degeneracies.  If \(m_i=0\), then \(\partial_3\pi_i=0\), and the mode is not a genuine vertical pressure obstruction.  If \(K_i=0\), the strict horizontal inverse \((-\Delta_h)^{-1}\) is outside the active quotient and belongs to the harmonic or gauge sector.  Mode collisions or cutoff commutators may create approximate cancellations in localized models, but they are not clean periodic active-mode phantoms.

\begin{corollary}[Strict Schur phantom versus relaxed visibility]\label{cor:strict-phantom-rel-visible}
In the active periodic two-mode model, a nonzero Schur defect is strict-invisible by construction but relaxed-visible.  Therefore it is not a true phantom in the relaxed sense.  The strict trace-cost route requires the cancellation \(s=0\), while the relaxed route only needs to estimate the vertical pressure pairing \eqref{eq:Schur-rel-energy-term}, which is bounded by \eqref{eq:Schur-rel-bound}.
\end{corollary}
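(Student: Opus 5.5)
The statement to prove is Corollary~\ref{cor:strict-phantom-rel-visible}. It has two parts: a nonzero two-mode Schur defect \(s\) is strict-invisible, and it is relaxed-visible with a controlled pairing.

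\emph{Strict invisibility.} By \eqref{eq:s-two-mode-rel}, \(s=P_{a^\perp}b\) lies in \((\Range A)^\perp=\Ker A^*=\Phi\). Since \(P_{\Ker A^*}A\xi=0\) for every \(\xi\), no strict trace correction through \(A\) changes the phantom component. This is the projected balance argument preceding \eqref{eq:vertical-trace-projectability-defect}. By Proposition~\ref{prop:finite-window-antiphantom}, the reduced vertical-duality inequality forces \(P_{\Ker A^*}g=0\). So the strict trace-cost route succeeds only if \(s=0\), which is equivalent to the failure of \eqref{eq:det-Schur-rel}.

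\emph{Relaxed visibility.} I would invoke Proposition~\ref{prop:two-mode-relaxed-visible}. Under the active-mode hypotheses \(K_i\ne0\), \(m_i\ne0\), the map \(J^{\rm rel}=\diag(\mu_1,\mu_2)\) has \(\mu_i=(K_i^2+m_i^2)/K_i^2\ge1\). Hence \(|J^{\rm rel}s|\ge|s|\), which is the lower bound \eqref{eq:rel-visible-lower} with \(c=1\). Therefore \(s\notin\Ker\mathfrak V^{\rm rel}\), and so \(s\notin\Phi^{\rm ph}\) by \eqref{eq:true-phantom-space}; that is, \(s\) is not a true phantom. For the quantitative part, I would apply H\"older as in \eqref{eq:Schur-rel-Holder}, use \(\norm{u_3}{L^3}\le\delta^{1/3}\), and bound \(\norm{J^{\rm rel}s}{L^{3/2}(Q)}\le C_\Lambda\max_i\mu_i|s|\). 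The latter holds because \(J^{\rm rel}s\) is a trigonometric polynomial on two fixed modes, and all norms on that two-dimensional space are equivalent. This gives \eqref{eq:Schur-rel-bound}.

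\emph{Main obstacle.} The only delicate point is interpretive. The identification of the relaxed observation with \(\Theta_i s_i/\Gamma_i\) rests on normalizing \(s_i\) as a vertical-lift coefficient. This requires \(\Gamma_i\ne0\), which again follows from activity of the modes. I would state this explicitly. Apart from that, the corollary is a direct combination of the orthogonal-projection definition of \(s\) and the injectivity of the diagonal multiplier.
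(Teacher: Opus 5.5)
Your proposal is correct and follows essentially the paper's own reasoning. Strict invisibility is immediate from \(s=P_{a^\perp}b\in(\Range A)^\perp=\Ker A^*\); relaxed visibility is the invertibility of \(\diag(\mu_1,\mu_2)\) from Proposition~\ref{prop:two-mode-relaxed-visible}; and the pairing bound is the H\"older estimate \eqref{eq:Schur-rel-Holder} combined with finite-window norm equivalence. Your explicit constant \(c=1\) from \(\mu_i\ge1\) is a minor sharpening of the paper's wording, as is your remark that the upper bound \eqref{eq:Schur-rel-bound} uses boundedness (\(\max_i\mu_i<\infty\)) rather than injectivity of \(J^{\rm rel}\).
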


The calculation strongly suggests the next finite-window theorem: Schur defects should be relaxed-visible modulo horizontal gauge modes, vertical-zero modes, and localized commutator errors.  In such a theorem, the strict route would require full Schur cancellation, while the relaxed route would require only the absence of genuinely relaxed-true phantom directions.

\subsection{Finite-window active Schur visibility}\label{subsec:finite-window-active-schur-visibility}

We now pass from the two-mode calculation to an arbitrary clean periodic finite-window model.  The point is simple but important: once the Schur defect is expressed in output-mode coordinates, the relaxed vertical-pressure observation is diagonal on active Fourier modes.  Hence no nonzero active Schur defect can be relaxed-invisible.

Work on the periodic box \(\mathbb T^3\).  Let \(\Lambda\subset \mathbb Z^3\setminus\{0\}\) be a finite symmetric set of output Fourier modes.  We write
\[
        \sigma=(\sigma_h,\sigma_3),
        \qquad
        K_\sigma=|\sigma_h|,
        \qquad
        m_\sigma=|\sigma_3|.
\]
We call \(\Lambda\) active if
\[
        K_\sigma\neq0,
        \qquad
        m_\sigma\neq0
        \qquad\text{for every }\sigma\in\Lambda.
\]
The condition \(K_\sigma\neq0\) removes horizontal gauge and horizontal-harmonic modes, while \(m_\sigma\neq0\) removes vertically constant modes, which do not produce a genuine vertical-pressure residual.

Let \(Y_\Lambda\) be the finite-dimensional reduced Schur quotient spanned by the output modes in \(\Lambda\).  We write a Schur vector as
\[
        s=\sum_{\sigma\in\Lambda}s_\sigma q_\sigma,
\]
where \(q_\sigma\) is the normalized Schur quotient basis vector at output mode \(\sigma\).  Let
\[
        \Phi_\Lambda=(\Range A_\Lambda)^\perp
        =\Ker A_\Lambda^*
        \subset Y_\Lambda
\]
be the strict trace-invisible Schur cokernel.  Thus \(s\in\Phi_\Lambda\) is invisible to strict selected-time trace corrections.

For an output mode \(\sigma\), let \(\beta_\sigma\) denote the scalar horizontal pressure-forcing coefficient at that mode.  In the static periodic model,
\[
        -\Delta \pi_\sigma^{\mathrm{full}}
        =\beta_\sigma e^{i\sigma\cdot x}.
\]
Hence
\[
        \widehat{\pi_\sigma^{\mathrm{full}}}
        =\frac{\beta_\sigma}{K_\sigma^2+m_\sigma^2},
\]
and the vertical pressure residual has coefficient
\[
        \widehat{\partial_3\pi_\sigma^{\mathrm{full}}}
        =i\sigma_3\frac{\beta_\sigma}{K_\sigma^2+m_\sigma^2}.
\]
Set
\[
        \Theta_\sigma
        =\frac{m_\sigma}{K_\sigma^2+m_\sigma^2}.
\]
The finite-stage vertical lift-to-quotient coefficient has the form
\[
        b_\sigma=\Gamma_\sigma\beta_\sigma,
        \qquad
        \Gamma_\sigma
        =\frac{m_\sigma K_\sigma^2}{(K_\sigma^2+m_\sigma^2)^2},
\]
up to harmless nonzero phase factors depending on the Fourier convention.  We normalize the Schur coefficient by
\[
        s_\sigma=b_\sigma.
\]
Therefore the relaxed vertical-pressure coefficient associated with \(s_\sigma\) is
\[
        \Theta_\sigma\frac{s_\sigma}{\Gamma_\sigma}.
\]
Define the relaxed observation map
\[
        J_\Lambda^{\mathrm{rel}}:\Phi_\Lambda\to W_\Lambda
\]
by
\begin{equation}\label{eq:finite-window-Jrel-diagonal}
        J_\Lambda^{\mathrm{rel}}s
        =\sum_{\sigma\in\Lambda}\mu_\sigma s_\sigma w_\sigma,
        \qquad
        \mu_\sigma
        =\frac{\Theta_\sigma}{\Gamma_\sigma}
        =\frac{K_\sigma^2+m_\sigma^2}{K_\sigma^2},
\end{equation}
where \(w_\sigma\) is the corresponding vertical-pressure residual basis vector.

\begin{theorem}[Finite-window active Schur visibility]\label{thm:finite-window-active-Schur-visibility}
Let \(\Lambda\) be a finite active periodic Fourier window.  Then
\[
        J_\Lambda^{\mathrm{rel}}s=0
        \qquad\Longrightarrow\qquad
        s=0
\]
for every \(s\in\Phi_\Lambda\).  More quantitatively, for every fixed norm on the finite-dimensional spaces \(Y_\Lambda\) and \(W_\Lambda\), there exist constants
\[
        0<c_\Lambda\le C_\Lambda<\infty
\]
such that
\[
        c_\Lambda |s|_{Y_\Lambda}
        \le
        |J_\Lambda^{\mathrm{rel}}s|_{W_\Lambda}
        \le
        C_\Lambda |s|_{Y_\Lambda}
        \qquad
        \text{for all }s\in\Phi_\Lambda.
\]
In particular, the active periodic finite-window model contains no nonzero relaxed-invisible Schur phantom.
\end{theorem}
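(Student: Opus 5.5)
The plan is to reduce everything to the diagonal structure of \(J_\Lambda^{\mathrm{rel}}\) in output-mode coordinates, exactly as in Proposition~\ref{prop:two-mode-relaxed-visible}, and then use finite-dimensionality for the quantitative bounds. First I will regard \(J_\Lambda^{\mathrm{rel}}\) as defined by \eqref{eq:finite-window-Jrel-diagonal} on all of \(Y_\Lambda\), not only on \(\Phi_\Lambda\). In the bases \((q_\sigma)_{\sigma\in\Lambda}\) of \(Y_\Lambda\) and \((w_\sigma)_{\sigma\in\Lambda}\) of \(W_\Lambda\) it is then the diagonal matrix \(\diag(\mu_\sigma)_{\sigma\in\Lambda}\). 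Next I will use the explicit multiplier \(\mu_\sigma=\Theta_\sigma/\Gamma_\sigma=(K_\sigma^2+m_\sigma^2)/K_\sigma^2\). This quotient is well defined precisely because activity gives \(K_\sigma\ne0\) and \(m_\sigma\ne0\), so \(\Gamma_\sigma\ne0\). It also satisfies \(\mu_\sigma\ge1\). Any harmless phase factors from the Fourier convention have modulus one and can be absorbed into \(w_\sigma\).

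With this in hand, injectivity is immediate. If \(J_\Lambda^{\mathrm{rel}}s=0\) then \(\mu_\sigma s_\sigma=0\) for each \(\sigma\), since the \(w_\sigma\) are independent, being distinct Fourier modes. Hence \(s_\sigma=0\) for all \(\sigma\), and so \(s=0\). This holds on all of \(Y_\Lambda\), and therefore in particular on the subspace \(\Phi_\Lambda=\Ker A_\Lambda^*\). For the two-sided bound, I will first work with the coordinate \(\ell^2\) norms. There one gets explicit constants \(c=\min_\sigma\mu_\sigma\ge1\) and \(C=\max_\sigma\mu_\sigma\). The maximum is finite because \(\Lambda\) is finite, and for integer modes with \(K_\sigma\ge1\) it is at most \(1+\max_\sigma m_\sigma^2\). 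For arbitrary fixed norms on \(Y_\Lambda\) and \(W_\Lambda\), I will invoke equivalence of norms in finite dimensions; this produces \(\Lambda\)-dependent constants \(0<c_\Lambda\le C_\Lambda<\infty\). Restricting the inequality to \(\Phi_\Lambda\) gives the statement. The final sentence then follows by definition of a relaxed-invisible phantom as a nonzero element of the kernel of the relaxed visibility map, as in \eqref{eq:true-phantom-space}.

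The main obstacle is not the linear algebra but justifying that the normalization \(s_\sigma=b_\sigma=\Gamma_\sigma\beta_\sigma\) is genuinely mode-diagonal on the reduced quotient. One must check that distinct output modes \(\sigma\) in the clean periodic model produce independent basis vectors \(q_\sigma\) and \(w_\sigma\), with no mode collisions. One must also check that the passage \(s\mapsto\beta=s/\Gamma\mapsto\Theta\beta\) does not mix modes. I would first argue this from the fact that, on \(\mathbb T^3\) with a static model, \(-\Delta\), \(\partial_3\) and \((-\Delta_h)^{-1}\) are Fourier multipliers, so every map involved preserves each exponential \(e^{i\sigma\cdot x}\). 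The symmetric pairs \(\pm\sigma\) must also be treated consistently, since \(\mu_{-\sigma}=\mu_\sigma\), so that reality conditions are respected. Localized or moving-window versions, where commutators break diagonality, are explicitly outside this theorem.
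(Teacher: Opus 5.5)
Your proposal is correct and is essentially the paper's proof: represent \(J_\Lambda^{\mathrm{rel}}\) in output-mode coordinates as the diagonal matrix with entries \(\mu_\sigma=\Theta_\sigma/\Gamma_\sigma=(K_\sigma^2+m_\sigma^2)/K_\sigma^2\neq0\) (well defined because activity makes \(\Gamma_\sigma\neq0\)), read off the \(\ell^2\) bounds with constants \(\min_\sigma|\mu_\sigma|\) and \(\max_\sigma|\mu_\sigma|\), and pass to arbitrary norms by finite-dimensional norm equivalence. Your extras are harmless refinements: the observation \(\mu_\sigma\ge1\), injectivity on all of \(Y_\Lambda\) before restricting to \(\Phi_\Lambda\), and the concern about mode-diagonality; the paper simply builds diagonality into the definition of \(J_\Lambda^{\mathrm{rel}}\) and treats mode collisions in the remark following the theorem.
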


\begin{proof}
Because \(\Lambda\) is active, for every \(\sigma\in\Lambda\) we have
\[
        K_\sigma\neq0,
        \qquad
        m_\sigma\neq0.
\]
Thus
\[
        \Theta_\sigma
        =\frac{m_\sigma}{K_\sigma^2+m_\sigma^2}
        \neq0,
\]
and
\[
        \Gamma_\sigma
        =\frac{m_\sigma K_\sigma^2}{(K_\sigma^2+m_\sigma^2)^2}
        \neq0.
\]
Consequently
\[
        \mu_\sigma
        =\frac{\Theta_\sigma}{\Gamma_\sigma}
        =\frac{K_\sigma^2+m_\sigma^2}{K_\sigma^2}
        \neq0.
\]
In output-mode coordinates, \(J_\Lambda^{\mathrm{rel}}\) is therefore represented by a diagonal matrix with nonzero diagonal entries \(\mu_\sigma\).  Hence it is injective on \(\Phi_\Lambda\).

Since \(\Lambda\) is finite, the diagonal multipliers have positive minimum and finite maximum:
\[
        0<\min_{\sigma\in\Lambda}|\mu_\sigma|
        \le
        \max_{\sigma\in\Lambda}|\mu_\sigma|
        <\infty.
\]
With the coefficient \(\ell^2\)-norm, this gives immediately
\[
        \min_{\sigma\in\Lambda}|\mu_\sigma|\,|s|_{\ell^2}
        \le
        |J_\Lambda^{\mathrm{rel}}s|_{\ell^2}
        \le
        \max_{\sigma\in\Lambda}|\mu_\sigma|\,|s|_{\ell^2}.
\]
All norms on a finite-dimensional space are equivalent, so the same estimate holds for any fixed finite-window norms after changing the constants.  This proves the theorem.
\end{proof}

\begin{remark}[Mode collisions]\label{rem:mode-collisions-finite-window}
The theorem is stated in output-mode coordinates.  If several lower-order interaction paths produce the same output mode \(\sigma\), their forcing coefficients must first be summed into the single output coefficient \(\beta_\sigma\).  A collision may cancel the forcing before a Schur defect is formed, but it cannot create a nonzero output Schur vector \(s\) with \(J_\Lambda^{\mathrm{rel}}s=0\).  Thus mode collision is a possible degeneracy at the forcing-parameter level, not a relaxed-invisible Schur defect at the output-quotient level.
\end{remark}

\begin{remark}[Degenerate sectors]\label{rem:degenerate-sectors-finite-window}
The active hypothesis is essential.  If \(m_\sigma=0\), then \(\partial_3\pi_\sigma=0\), so the mode is not a genuine vertical-pressure obstruction.  If \(K_\sigma=0\), then the strict horizontal inverse \(( -\Delta_h)^{-1}\) is outside the active quotient and belongs to the horizontal gauge or harmonic-pressure sector.  These two sectors must be removed before one formulates the active Schur visibility problem.
\end{remark}

\begin{corollary}[No active finite-window true phantom]\label{cor:no-active-finite-window-true-phantom}
In the clean periodic finite-window model, after quotienting out horizontal gauge modes and vertical-zero modes, the true phantom space is trivial:
\[
        \Phi_\Lambda^{\mathrm{ph}}
        =\Ker J_\Lambda^{\mathrm{rel}}\cap\Phi_\Lambda
        =\{0\}.
\]
Thus any nonzero active Schur defect is strict-invisible but relaxed-visible.
\end{corollary}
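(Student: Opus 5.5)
The corollary is essentially a reformulation of Theorem~\ref{thm:finite-window-active-Schur-visibility} in the language of the true phantom space \eqref{eq:true-phantom-space}. I would first fix the identification. In the clean periodic finite-window model, the strict visibility component of a Schur vector $s\in\Phi_\Lambda$ vanishes identically, because $\Phi_\Lambda=\Ker A_\Lambda^*=(\Range A_\Lambda)^\perp$ is by definition invisible to strict selected-time trace corrections through $A_\Lambda$. Hence $\Ker\mathfrak V^{\str}_\Lambda\cap\Phi_\Lambda=\Phi_\Lambda$. The relaxed visibility map on $\Phi_\Lambda$ is $J^{\mathrm{rel}}_\Lambda$ from \eqref{eq:finite-window-Jrel-diagonal}. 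Substituting both facts into \eqref{eq:true-phantom-space} gives $\Phi^{\mathrm{ph}}_\Lambda=\Ker J^{\mathrm{rel}}_\Lambda\cap\Phi_\Lambda$, which is the first equality claimed.

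Next I would justify that, after quotienting out the horizontal gauge modes ($K_\sigma=0$) and the vertical-zero modes ($m_\sigma=0$), the output window is active in the sense of \cref{subsec:finite-window-active-schur-visibility}. Remark~\ref{rem:degenerate-sectors-finite-window} explains that these are exactly the two sectors to remove. Since the reduced Schur quotient $Y_\Lambda$ is spanned by output modes, the quotient is $Y_{\Lambda'}$ with $\Lambda'=\{\sigma\in\Lambda:K_\sigma\ne0,\ m_\sigma\ne0\}$, and $\Lambda'$ is still symmetric because $K_{-\sigma}=K_\sigma$ and $m_{-\sigma}=m_\sigma$. The one point needing care is that the diagonal structure of $J^{\mathrm{rel}}$ survives the quotient. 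I would check this by noting that both the Schur basis $q_\sigma$ and the residual basis $w_\sigma$ are indexed by output modes, so restricting to $\Lambda'$ is a coordinate projection and commutes with the diagonal map. Mode collisions are absorbed into $\beta_\sigma$, as in Remark~\ref{rem:mode-collisions-finite-window}.

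Applying Theorem~\ref{thm:finite-window-active-Schur-visibility} on $\Lambda'$ then gives $J^{\mathrm{rel}}_{\Lambda'}s=0\Rightarrow s=0$ for $s\in\Phi_{\Lambda'}$, so $\Phi^{\mathrm{ph}}=\{0\}$. The final sentence follows directly. A nonzero active Schur defect lies in $\Phi_{\Lambda'}$, so it is strict-invisible. By the quantitative lower bound $|J^{\mathrm{rel}}s|\ge c_{\Lambda'}|s|$ it satisfies \eqref{eq:rel-visible-lower}, so it is relaxed-visible.

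I expect the main obstacle to be the first step, namely making precise that strict visibility is trivial on $\Phi_\Lambda$ in this model. The admissible strict bridge $L^{\str}_K$ of \eqref{eq:strict-visible-Schur} is not pinned down concretely. I would resolve this by working with the convention stated in the corollary's context, that "strict-invisible" means membership in $\Ker A^*_\Lambda$. Either way the intersection in \eqref{eq:true-phantom-space} is contained in $\Ker J^{\mathrm{rel}}_\Lambda\cap\Phi_\Lambda$, which is trivial, so the conclusion $\Phi^{\mathrm{ph}}_\Lambda=\{0\}$ holds regardless.
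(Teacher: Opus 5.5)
Your proposal is correct and follows the paper's argument. The corollary is recorded without a separate proof because it is just Theorem~\ref{thm:finite-window-active-Schur-visibility} applied once the gauge and vertical-zero sectors are removed: $J^{\mathrm{rel}}_\Lambda$ is diagonal with multipliers $\mu_\sigma=(K_\sigma^2+m_\sigma^2)/K_\sigma^2\neq0$, and $\Phi^{\mathrm{ph}}_\Lambda$ is read as $\Ker J^{\mathrm{rel}}_\Lambda\cap\Phi_\Lambda$, exactly as you do. The care you spend on how $\Ker A_\Lambda^*$ descends to the active quotient is not needed: $\diag(\mu_\sigma)$ is injective on the whole active coefficient space, so any Schur vector with nonzero active component is relaxed-visible regardless of that bookkeeping.
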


\subsection{Kernel classification before active quotienting}\label{subsec:kernel-classification-before-active-quotienting}

The active finite-window theorem shows that no nonzero Schur vector supported on active output modes can be relaxed-invisible.  We now identify what is lost before one passes to the active quotient.  This is important because apparent kernels of the relaxed vertical-pressure map may appear at the forcing-parameter level or in degenerate Fourier sectors, but these kernels are not true active Schur phantoms.

Let \(\Lambda\subset \mathbb Z^3\setminus\{0\}\) be a finite periodic output window.  For
\[
        \sigma=(\sigma_h,\sigma_3),
        \qquad
        K_\sigma=|\sigma_h|,
        \qquad
        m_\sigma=|\sigma_3|,
\]
decompose
\[
        \Lambda
        =\Lambda_{\mathrm{act}}
        \cup
        \Lambda_{v0}
        \cup
        \Lambda_{h0},
\]
where
\[
        \Lambda_{\mathrm{act}}
        =\{\sigma\in\Lambda:K_\sigma\neq0,
        \ m_\sigma\neq0\},
\]
\[
        \Lambda_{v0}
        =\{\sigma\in\Lambda:m_\sigma=0\},
\]
and
\[
        \Lambda_{h0}
        =\{\sigma\in\Lambda:K_\sigma=0,
        \ m_\sigma\neq0\}.
\]
The active sector is the genuine Schur-visibility sector.  The vertical-zero sector has no vertical pressure residual.  The horizontal-zero sector lies outside the strict horizontal pressure quotient because the strict operator contains \(( -\Delta_h)^{-1}\).

Let \(\mathcal B_\Lambda\) be the finite-dimensional space of pressure-forcing output coefficients
\[
        \beta=(\beta_\sigma)_{\sigma\in\Lambda}.
\]
The relaxed vertical-pressure observation at the output-coefficient level is
\[
        \mathcal J_\Lambda^{\mathrm{rel}}:\mathcal B_\Lambda\to W_\Lambda,
        \qquad
        (\mathcal J_\Lambda^{\mathrm{rel}}\beta)_\sigma
        =\frac{i\sigma_3}{|\sigma|^2}\beta_\sigma.
\]
Since \(\sigma\neq0\), the multiplier \(i\sigma_3/|\sigma|^2\) vanishes exactly when \(\sigma_3=0\).  Hence
\[
        \Ker \mathcal J_\Lambda^{\mathrm{rel}}
        =\mathcal B_{\Lambda_{v0}},
\]
where
\[
        \mathcal B_{\Lambda_{v0}}
        =\{\beta\in\mathcal B_\Lambda:
        \beta_\sigma=0
        \text{ for every }\sigma\notin\Lambda_{v0}\}.
\]

The strict horizontal pressure-obstruction multiplier is, schematically,
\[
        \mathcal J_\Lambda^{\mathrm{str}}\beta
        =\nabla_h\partial_3(-\Delta_h)^{-1}
        \sum_{\sigma\in\Lambda}\beta_\sigma e^{i\sigma\cdot x}.
\]
At a mode \(\sigma\), this has symbol
\[
        -\frac{\sigma_h\sigma_3}{K_\sigma^2}\beta_\sigma,
\]
which is defined only when \(K_\sigma\neq0\).  Therefore \(\Lambda_{h0}\) is not an active Schur sector; it is a horizontal gauge or harmonic-pressure sector for the strict quotient.  Moreover, if \(m_\sigma=0\), then the strict obstruction also vanishes because of the factor \(\sigma_3\).  Thus \(\Lambda_{v0}\) is a vertically constant sector, not a genuine vertical-pressure obstruction sector.

Now suppose finite-stage Navier--Stokes forcing parameters \(F\in\mathcal F_K\) generate output coefficients through a linear finite-window interaction map
\[
        R_K:\mathcal F_K\to\mathcal B_\Lambda,
        \qquad
        \beta=R_KF.
\]
The relaxed observation at the forcing-parameter level is \(\mathcal J_\Lambda^{\mathrm{rel}}R_KF\).  Consequently,
\[
        F\in\Ker(\mathcal J_\Lambda^{\mathrm{rel}}R_K)
        \qquad\Longleftrightarrow\qquad
        R_KF\in \mathcal B_{\Lambda_{v0}}.
\]
Equivalently, the active part of the output coefficient vanishes:
\[
        P_{\Lambda_{\mathrm{act}}}R_KF=0.
\]
This means that forcing-level mode collision or cancellation can eliminate the active output before a Schur vector is formed.  It does not produce a nonzero active Schur vector \(s\) satisfying \(J_\Lambda^{\mathrm{rel}}s=0\).

\begin{proposition}[Kernel classification before active quotienting]\label{prop:kernel-classification-pre-active}
In the clean periodic finite-window model, all apparent relaxed-invisible directions before active quotienting fall into one of the following classes.
\begin{enumerate}[label=(\roman*)]
\item \emph{Vertical-zero sector.}  If \(m_\sigma=0\), then
\[
        \partial_3\pi_\sigma=0.
\]
The mode has no relaxed vertical-pressure residual and also no strict vertical pressure obstruction.  It is not a genuine Schur phantom.

\item \emph{Horizontal gauge sector.}  If \(K_\sigma=0\), then the strict horizontal inverse \(( -\Delta_h)^{-1}\) is not defined on that mode.  Such modes belong to the horizontal gauge or harmonic-pressure quotient sector and must be removed before the active Schur quotient is formed.

\item \emph{Forcing-level collision.}  If several lower-order interactions produce the same output mode, their coefficients may cancel:
\[
        \sum_{\alpha\to\sigma}\beta_\alpha=0.
\]
This places the forcing parameter in the kernel of the forcing-to-output map \(R_K\), or at least kills its active output.  It does not create a nonzero output Schur vector in the kernel of \(J_\Lambda^{\mathrm{rel}}\).

\item \emph{Active sector.}  On the active sector
\[
        K_\sigma\neq0,
        \qquad
        m_\sigma\neq0,
\]
the relaxed Schur observation is diagonal with nonzero multiplier.  Hence no nonzero active Schur vector is relaxed-invisible.
\end{enumerate}
Therefore, after quotienting out horizontal gauge modes, vertical-zero modes, and forcing-level cancellations, one has
\[
        \Ker J_{\Lambda,\mathrm{act}}^{\mathrm{rel}}
        =\{0\}.
\]
In particular, the clean periodic finite-window model contains no nonzero relaxed-invisible true Schur phantom.
\end{proposition}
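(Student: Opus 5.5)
The proof is a mode-by-mode case analysis over the partition \(\Lambda=\Lambda_{\mathrm{act}}\cup\Lambda_{v0}\cup\Lambda_{h0}\), followed by Theorem~\ref{thm:finite-window-active-Schur-visibility} on the active quotient. The partition is disjoint and exhaustive, since \(\sigma\ne0\) excludes \(K_\sigma=m_\sigma=0\). I would first work at the output-coefficient level with the diagonal map \((\mathcal J^{\mathrm{rel}}_\Lambda\beta)_\sigma=i\sigma_3|\sigma|^{-2}\beta_\sigma\). Because this map is diagonal in the Fourier basis, its kernel is computed coordinatewise: \(\beta\in\Ker\mathcal J^{\mathrm{rel}}_\Lambda\) if and only if \(\beta_\sigma=0\) whenever \(\sigma_3\ne0\). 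Hence \(\Ker\mathcal J^{\mathrm{rel}}_\Lambda=\mathcal B_{\Lambda_{v0}}\), exactly as recorded before the statement. This already contains every apparent relaxed-invisible output direction, and it remains to sort these directions into classes (i)--(iv).

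For (i), I take \(\sigma\in\Lambda_{v0}\). The formula \(\widehat{\partial_3\pi_\sigma}=i\sigma_3\beta_\sigma/|\sigma|^2\) vanishes, and the strict symbol \(-\sigma_h\sigma_3K_\sigma^{-2}\beta_\sigma\) also vanishes because of the factor \(\sigma_3\), whenever it is defined. So such a mode produces neither a strict nor a relaxed obstruction, and it contributes nothing to any Schur vector \(B_KC_KF_K\), since \(\Gamma_\sigma=0\) there. For (ii), I take \(\sigma\in\Lambda_{h0}\). The strict multiplier requires \((-\Delta_h)^{-1}\) at \(K_\sigma=0\), which is undefined, so these modes lie outside the domain of the strict quotient \(Y_\Lambda\). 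They are assigned by definition to the harmonic/gauge sector \(\calH\) of \cref{sec:preliminaries} and quotiented out before \(\Phi_\Lambda\) is formed; this is a statement about the construction, not an estimate. For (iii), I factor the forcing-level observation as \(\mathcal J^{\mathrm{rel}}_\Lambda R_K\). Then \(F\) lies in its kernel exactly when \(R_KF\in\mathcal B_{\Lambda_{v0}}\), that is, when \(P_{\Lambda_{\mathrm{act}}}R_KF=0\). In that case the active output, and therefore the Schur vector \(s_\sigma=\Gamma_\sigma\beta_\sigma\) on active modes, vanishes. Such an \(F\) therefore produces no nonzero active Schur vector, as in Remark~\ref{rem:mode-collisions-finite-window}.

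For (iv), I restrict to \(\Lambda_{\mathrm{act}}\) and apply Theorem~\ref{thm:finite-window-active-Schur-visibility} with \(\Lambda\) replaced by \(\Lambda_{\mathrm{act}}\); that window is active by definition. This gives injectivity of \(J^{\mathrm{rel}}_{\Lambda,\mathrm{act}}\) through the nonzero multipliers \(\mu_\sigma=(K_\sigma^2+m_\sigma^2)/K_\sigma^2\). Combining the four cases, every direction in \(\Ker\mathcal J^{\mathrm{rel}}_\Lambda R_K\), or in the kernel before quotienting, is supported on \(\Lambda_{v0}\cup\Lambda_{h0}\) or has vanishing active output. Quotienting by these sectors and by the forcing-level kernel then leaves \(\Ker J^{\mathrm{rel}}_{\Lambda,\mathrm{act}}=\{0\}\), which is the final claim and agrees with Corollary~\ref{cor:no-active-finite-window-true-phantom}.

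The main obstacle is making the quotienting compatible with the Schur construction. The projection \(P_{\Ker A^*}\) onto the phantom cokernel need not commute with the sector decomposition, since \(\Range A\) could mix active and degenerate modes. I would handle this by performing the gauge and vertical-zero quotients first, so that \(Y_\Lambda\) is spanned only by the active \(q_\sigma\). Then \(A_\Lambda\), \(\Phi_\Lambda\) and \(J^{\mathrm{rel}}\) are all defined on the active space, and injectivity of a diagonal map restricted to the subspace \(\Phi_\Lambda\) is automatic. A second check is that \(s_\sigma=b_\sigma\) is indeed the output-coordinate representation on \(\Phi_\Lambda\), which is the normalization fixed before the statement.
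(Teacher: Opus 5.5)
Your proposal is correct and follows the paper's proof essentially step for step. The diagonal output-level map gives $\Ker\mathcal J^{\mathrm{rel}}_\Lambda=\mathcal B_{\Lambda_{v0}}$; the vertical-zero and horizontal-gauge sectors are handled through the factors $\sigma_3$ and $K_\sigma^{-2}$ in the symbols; collisions are handled by factoring through $R_K$; and the active sector is handled by $\mu_\sigma=(K_\sigma^2+m_\sigma^2)/K_\sigma^2\neq0$. Your extra remark about quotienting before forming $\Ker A_\Lambda^*$ is a sensible consistency check that the paper leaves implicit.

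One small precision: $R_KF\in\mathcal B_{\Lambda_{v0}}$ implies $P_{\Lambda_{\mathrm{act}}}R_KF=0$ but is not equivalent to it, because the raw multiplier $i\sigma_3/|\sigma|^2$ is nonzero on $\Lambda_{h0}$. Since only the forward implication is used, nothing breaks.
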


\begin{proof}
At the output-coefficient level, the relaxed vertical-pressure map is diagonal:
\[
        (\mathcal J_\Lambda^{\mathrm{rel}}\beta)_\sigma
        =\frac{i\sigma_3}{|\sigma|^2}\beta_\sigma.
\]
Because \(\sigma\neq0\), this multiplier is zero precisely when \(\sigma_3=0\).  Hence
\[
        \Ker \mathcal J_\Lambda^{\mathrm{rel}}
        =\mathcal B_{\Lambda_{v0}}.
\]
This proves the vertical-zero classification.

For the strict obstruction, the horizontal pressure quotient contains the multiplier
\[
        \nabla_h\partial_3(-\Delta_h)^{-1}.
\]
At mode \(\sigma\), this is proportional to
\[
        \frac{\sigma_h\sigma_3}{K_\sigma^2}.
\]
Thus \(K_\sigma=0\) is not a regular active mode of the strict horizontal quotient.  Such a mode is assigned to the gauge or harmonic-pressure sector.  If \(m_\sigma=0\), the factor \(\sigma_3\) vanishes, so the mode is vertically constant and carries no vertical pressure obstruction.

Now let \(R_K:\mathcal F_K\to\mathcal B_\Lambda\) be the forcing-to-output map.  If \(F\in\Ker(\mathcal J_\Lambda^{\mathrm{rel}}R_K)\), then \(R_KF\in\mathcal B_{\Lambda_{v0}}\).  In particular, its active output vanishes:
\[
        P_{\Lambda_{\mathrm{act}}}R_KF=0.
\]
Thus any collision at the forcing-parameter level occurs before a nonzero active Schur vector is produced.

Finally, on \(\Lambda_{\mathrm{act}}\), both \(K_\sigma\) and \(m_\sigma\) are nonzero.  The Schur-normalized relaxed observation has multiplier
\[
        \mu_\sigma
        =\frac{K_\sigma^2+m_\sigma^2}{K_\sigma^2},
\]
which is nonzero for every active mode.  Therefore the active relaxed observation is injective.  Combining the four cases gives the classification.
\end{proof}

\begin{corollary}[No clean periodic pre-quotient true phantom]\label{cor:no-clean-periodic-prequotient-true-phantom}
Let \(s\) be a Schur defect produced in a clean periodic finite-window model.  If \(s\) has a nonzero active component, then
\[
        J_\Lambda^{\mathrm{rel}}s\neq0.
\]
If \(J_\Lambda^{\mathrm{rel}}s=0\), then \(s\) is supported only in degenerate sectors or comes from a forcing-level cancellation before active Schur projection.  Hence it is not a nonzero active relaxed-invisible Schur phantom.
\end{corollary}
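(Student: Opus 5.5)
The statement to prove is Corollary~\ref{cor:no-clean-periodic-prequotient-true-phantom}. I would derive it directly from Proposition~\ref{prop:kernel-classification-pre-active} and Theorem~\ref{thm:finite-window-active-Schur-visibility}. The first step is to split a Schur defect \(s\) produced in the clean periodic model by the sector decomposition \(\Lambda=\Lambda_{\mathrm{act}}\cup\Lambda_{v0}\cup\Lambda_{h0}\):
\[
s=s_{\mathrm{act}}+s_{v0}+s_{h0},\qquad s_{\mathrm{act}}=\sum_{\sigma\in\Lambda_{\mathrm{act}}}s_\sigma q_\sigma .
\]
The relaxed observation \(J^{\rm rel}_\Lambda\) is diagonal in output-mode coordinates. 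Its \(\sigma\)-component therefore depends only on \(s_\sigma\), so
\[
P_{\Lambda_{\mathrm{act}}}J^{\rm rel}_\Lambda s=J^{\rm rel}_{\Lambda_{\mathrm{act}}}s_{\mathrm{act}} .
\]
This reduces the first claim to the active sector.

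For the first claim, suppose \(s_{\mathrm{act}}\neq0\). Theorem~\ref{thm:finite-window-active-Schur-visibility}, applied to the active window \(\Lambda_{\mathrm{act}}\), gives
\[
|J^{\rm rel}_{\Lambda_{\mathrm{act}}}s_{\mathrm{act}}|\ge c_{\Lambda_{\mathrm{act}}}|s_{\mathrm{act}}|>0,
\]
because every multiplier \(\mu_\sigma=(K_\sigma^2+m_\sigma^2)/K_\sigma^2\) is nonzero there. The projection of \(J^{\rm rel}_\Lambda s\) onto active modes is therefore nonzero, and hence \(J^{\rm rel}_\Lambda s\neq0\).

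For the converse, suppose \(J^{\rm rel}_\Lambda s=0\). By the previous step \(s_{\mathrm{act}}=0\), so \(s\) is supported in \(\Lambda_{v0}\cup\Lambda_{h0}\). I would then invoke items (i) and (ii) of Proposition~\ref{prop:kernel-classification-pre-active}:
\begin{itemize}
\item \(\Lambda_{v0}\) modes carry no vertical pressure residual and no strict obstruction;
\item \(\Lambda_{h0}\) modes lie in the horizontal gauge or harmonic-pressure sector and are excluded from the active quotient.
\end{itemize}
If instead \(s\) is viewed at the forcing level, \(s\) is the image of some \(F\) through \(R_K\). Then \(F\in\Ker(\mathcal J^{\rm rel}_\Lambda R_K)\), which by item (iii) means \(P_{\Lambda_{\mathrm{act}}}R_KF=0\). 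This is exactly a forcing-level cancellation occurring before active Schur projection. In every case \(s\) is not a nonzero active relaxed-invisible phantom, which proves the second claim.

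The main obstacle is the bookkeeping that makes the decomposition legitimate. One has to check that the Schur basis \(q_\sigma\) and the residual basis \(w_\sigma\) are indexed by the same output modes, so that "diagonal" holds across sectors and not merely within \(\Lambda_{\mathrm{act}}\). One also has to check that \(\Phi_\Lambda=\Ker A^*_\Lambda\) is compatible with restricting to the active coordinates. I would handle this by working throughout in the coefficient \(\ell^2\)-norm, as in the proof of Theorem~\ref{thm:finite-window-active-Schur-visibility}. The relevant injectivity bound only needs the diagonal structure, not orthogonality of \(\Phi_\Lambda\) to the degenerate sectors. The degenerate sectors then enter only through the definitional classification of Proposition~\ref{prop:kernel-classification-pre-active}.
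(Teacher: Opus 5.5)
Your proposal is correct and is essentially the paper's route: the corollary is read off from Proposition~\ref{prop:kernel-classification-pre-active}, with the diagonal structure and the nonzero active multipliers $\mu_\sigma$ forcing $s_{\mathrm{act}}=0$ whenever $J^{\mathrm{rel}}_\Lambda s=0$, and the degenerate sectors handled by items (i)--(iii). Your forcing-level paragraph is unnecessary, because $s_{\mathrm{act}}=0$ already places $s$ in the degenerate sectors; it is also slightly off, since it conflates the Schur vector $s=P_{\Ker A_K^*}B_KC_KF$ with the raw output $R_KF$, so $J^{\mathrm{rel}}_\Lambda s=0$ does not by itself give $F\in\Ker(\mathcal J^{\mathrm{rel}}_\Lambda R_K)$.
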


\begin{remark}[Meaning for the search for failure mechanisms]\label{rem:failure-search-finite-window}
This classification eliminates the most elementary source of a true phantom.  A failure of relaxed anti-phantom closure cannot be obtained merely by choosing active periodic Fourier modes.  It must use one of the structures absent from the clean periodic finite-window model: localized cutoff commutators, boundary or harmonic-gauge effects, moving-window degeneration, or a genuinely Navier--Stokes-realizable cascade whose active output remains invisible after localization.
\end{remark}

\section{Localized, cleaned, and NS-realizable phantom filters}

The active periodic theorem removes the clean finite-window phantom. A genuine obstruction must therefore pass several additional filters: localization, harmonic-pressure cleaning, finite-stage NS realizability, vertical budget admissibility, and moving-window growth. This section states these filters in the order in which they are used. Longer growth estimates are collected in \Cref{app:moving-window}.

\subsection{Localized commutator stability of relaxed Schur visibility}\label{subsec:localized-commutator-stability}

We now pass from the clean periodic active Fourier model to a localized finite-window model.  The purpose is not yet to prove the full localized Navier--Stokes anti-phantom theorem.  The purpose is to isolate the exact role of cutoff errors.  The conclusion is that localization does not create a true phantom as long as the localized commutator is perturbative.  If the commutator is not perturbative, the remaining obstruction is finite-dimensional and must be tested separately.  The local pressure and gauge issues here are consistent with \cite{SohrWahl1986,Wolf2017,Seregin2015}, while the parabolic localization language is informed by standard semigroup regularity theory \cite{Amann1995,Lunardi1995}.

Let \(\Lambda\subset\mathbb Z^3\) be a finite active Fourier window.  Thus every output mode
\[
        \sigma=(\sigma_h,\sigma_3)\in\Lambda
\]
satisfies
\[
        |\sigma_h|\neq0,
        \qquad
        |\sigma_3|\neq0.
\]
Let \(Y_\Lambda\) be the finite-dimensional Schur quotient supported on \(\Lambda\), and let
\[
        \Phi_\Lambda=(\Range A_\Lambda)^\perp=\Ker A_\Lambda^*
        \subset Y_\Lambda
\]
be the strict trace-invisible Schur cokernel.

In the clean periodic model, the relaxed observation map is diagonal:
\[
        J_\Lambda^{\mathrm{rel}}s
        =\sum_{\sigma\in\Lambda}\mu_\sigma s_\sigma w_\sigma,
        \qquad
        \mu_\sigma=\frac{|\sigma_h|^2+\sigma_3^2}{|\sigma_h|^2}.
\]
Since \(\Lambda\) is active, \(\mu_\sigma\neq0\) for every \(\sigma\in\Lambda\).  Hence there exists \(\gamma_\Lambda>0\) such that
\begin{equation}\label{eq:clean-active-visibility-lower}
        |J_\Lambda^{\mathrm{rel}}s|_{W_\Lambda}
        \ge
        \gamma_\Lambda |s|_{Y_\Lambda}
        \qquad
        \text{for every }s\in\Phi_\Lambda.
\end{equation}

Let \(\chi\in C_c^\infty(\mathbb R^3)\) be a cutoff satisfying \(0\le\chi\le1\), and let
\[
        \chi_R(x)=\chi(x/R)
\]
be a slowly varying localization at length scale \(R\gg1\).  Let \(P_\Lambda\) denote the finite-window projection.  Define the localized relaxed observation by
\begin{equation}\label{eq:localized-Jrel-def}
        J_{\Lambda,R}^{\mathrm{rel}}
        :=P_\Lambda\chi_RJ^{\mathrm{rel}}\chi_RP_\Lambda,
\end{equation}
where \(J^{\mathrm{rel}}\) is the full relaxed vertical-pressure observation operator.  On the finite window we write
\begin{equation}\label{eq:localized-Jrel-comm-decomp}
        J_{\Lambda,R}^{\mathrm{rel}}
        =J_\Lambda^{\mathrm{rel}}+\mathcal C_{\Lambda,R},
\end{equation}
where
\begin{equation}\label{eq:commutator-error-def}
        \mathcal C_{\Lambda,R}
        :=P_\Lambda\chi_RJ^{\mathrm{rel}}\chi_RP_\Lambda
        -P_\Lambda J^{\mathrm{rel}}P_\Lambda
\end{equation}
is the localized commutator and leakage error.

\begin{lemma}[Finite-window localized leakage bound with an explicit hypothesis]\label{lem:finite-window-commutator-bound}
Let
\[
        \mathcal C_{\Lambda,R}
        =P_\Lambda\chi_RJ^{\mathrm{rel}}\chi_RP_\Lambda
        -P_\Lambda J^{\mathrm{rel}}P_\Lambda.
\]
The following estimate is valid under the finite-window leakage hypothesis
\begin{equation}\label{eq:finite-window-multiplication-leakage-hypothesis}
        \|P_\Lambda(\chi_R^2-1)P_\Lambda\|_{Y_\Lambda\to W_\Lambda}
        \le C_\Lambda R^{-1}.
\end{equation}
Under \eqref{eq:finite-window-multiplication-leakage-hypothesis}, for each fixed active finite window \(\Lambda\), there exists a constant \(C_\Lambda\), depending on \(\Lambda\) and on finitely many seminorms of \(\chi\), such that
\[
        |\mathcal C_{\Lambda,R}s|_{W_\Lambda}
        \le
        \frac{C_\Lambda}{R}|s|_{Y_\Lambda}
        \qquad
        \text{for every }s\in Y_\Lambda.
\]
Without \eqref{eq:finite-window-multiplication-leakage-hypothesis}, one only has the finite-dimensional bound
\[
        |\mathcal C_{\Lambda,\chi}s|_{W_\Lambda}
        \le
        \varepsilon_{\Lambda,\chi}|s|_{Y_\Lambda},
\]
where \(\varepsilon_{\Lambda,\chi}\) is the operator norm of the full localized leakage operator.
\end{lemma}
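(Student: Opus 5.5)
The plan is to split the localized commutator into a near-diagonal leakage term and a genuine commutator term. Write \(\chi_R J^{\mathrm{rel}}\chi_R = \chi_R^2 J^{\mathrm{rel}} + \chi_R[J^{\mathrm{rel}},\chi_R]\). Then
\[
        \mathcal C_{\Lambda,R}
        =P_\Lambda(\chi_R^2-1)J^{\mathrm{rel}}P_\Lambda
        +P_\Lambda\chi_R[J^{\mathrm{rel}},\chi_R]P_\Lambda .
\]
The two pieces are handled separately.

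For the first piece, note that on the active finite window \(J^{\mathrm{rel}}P_\Lambda=P_\Lambda J^{\mathrm{rel}}P_\Lambda=J^{\mathrm{rel}}_\Lambda\). Indeed, \(J^{\mathrm{rel}}\) is a Fourier multiplier, diagonal in output modes with entries \(\mu_\sigma\) as in Theorem~\ref{thm:finite-window-active-Schur-visibility}, so it preserves the span of the modes in \(\Lambda\). The first piece is therefore \(P_\Lambda(\chi_R^2-1)P_\Lambda\,J^{\mathrm{rel}}_\Lambda\). By hypothesis \eqref{eq:finite-window-multiplication-leakage-hypothesis} its norm is at most \(C_\Lambda R^{-1}\max_{\sigma\in\Lambda}|\mu_\sigma|\). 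This maximum is finite because \(\Lambda\) is finite and active, so \(K_\sigma\ne0\). One point must be addressed here: the hypothesis is stated as a map \(Y_\Lambda\to W_\Lambda\), while it is applied after \(J^{\mathrm{rel}}_\Lambda\). I would identify \(Y_\Lambda\) and \(W_\Lambda\) through the mode bases \(q_\sigma\leftrightarrow w_\sigma\), and use norm equivalence on finite-dimensional spaces to absorb the identification into \(C_\Lambda\).

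For the commutator piece, I would use the symbol calculus of the multiplier \(\mu(\xi)=|\xi|^2/|\xi_h|^2\). This symbol is smooth and homogeneous of degree zero away from \(\{\xi_h=0\}\). After \(P_\Lambda\) it is applied only to frequencies that lie a fixed distance (depending on \(\Lambda\)) from that singular set. Insert a smooth frequency cutoff \(\psi_\Lambda\) equal to one near \(\Lambda\) and supported away from \(\xi_h=0\). This replaces \(J^{\mathrm{rel}}\) by a multiplier \(\tilde\mu\) with bounded derivatives, at the cost of a remainder that involves \(\chi_R\) acting on window modes. Since \(\widehat{\chi_R}\) concentrates at frequency scale \(R^{-1}\), that remainder is \(O(R^{-\infty})\) for large \(R\). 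The standard first-order commutator estimate \(\|[\tilde\mu(D),\chi_R]\|\le C\|\nabla\tilde\mu\|_\infty\|\nabla\chi_R\|_{\infty}\lesssim R^{-1}\), proved by the Coifman--Meyer or Taylor-expansion kernel argument, then gives the bound \(C_\Lambda R^{-1}\). The constant depends on finitely many seminorms of \(\chi\). Composing with \(P_\Lambda\) and \(\chi_R\), and converting between norms on finite windows, finishes the estimate. The unconditional fallback statement is immediate: \(\mathcal C_{\Lambda,\chi}\) is a linear map between finite-dimensional spaces, so it is bounded by its operator norm \(\varepsilon_{\Lambda,\chi}\).

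The main obstacle is the commutator piece. In the periodic setting a cutoff \(\chi_R\) with \(R\gg1\) is not literally compactly supported on \(\mathbb T^3\), and multiplication by \(\chi_R\) does not preserve the window. The \(R^{-\infty}\) cross-window leakage must therefore be checked honestly, or else folded into the same leakage hypothesis. If the symbol-calculus route is too delicate, my fallback is to strengthen the stated hypothesis so that it also covers \(P_\Lambda\chi_R[J^{\mathrm{rel}},\chi_R]P_\Lambda\). This is consistent with the lemma being stated explicitly as conditional.
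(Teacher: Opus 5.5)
Your proposal follows the paper's proof essentially step for step. The paper uses the same splitting \(\mathcal C_{\Lambda,R}=P_\Lambda\chi_R[J^{\mathrm{rel}},\chi_R]P_\Lambda+P_\Lambda(\chi_R^2-1)J^{\mathrm{rel}}P_\Lambda\). It bounds the commutator term by the first-order pseudodifferential expansion, using \(\nabla\chi_R=O(R^{-1})\) and bounded symbol derivatives on the active window. It charges the multiplication-leakage term entirely to the hypothesis, and it gets the fallback bound from finite-dimensionality.

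Your additions only make explicit what the paper leaves implicit: you pull out \(J^{\mathrm{rel}}_\Lambda\) through its diagonal structure, identify \(Y_\Lambda\) with \(W_\Lambda\), and cut the symbol off near \(\xi_h=0\). One caveat applies. Your \(O(R^{-\infty})\) claim for the off-window remainder holds only if \(J^{\mathrm{rel}}\) is taken with the horizontal gauge cone removed, because \(|\xi|^2/|\xi_h|^2\) is not even locally integrable near \(\xi_h=0\). The paper tacitly makes the same assumption.
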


\begin{proof}
Decompose, on the fixed finite window,
\[
P_\Lambda\chi_RJ^{\mathrm{rel}}\chi_RP_\Lambda-P_\Lambda J^{\mathrm{rel}}P_\Lambda
=
P_\Lambda\chi_R[J^{\mathrm{rel}},\chi_R]P_\Lambda
+P_\Lambda(\chi_R^2-1)J^{\mathrm{rel}}P_\Lambda.
\]
The commutator term is \(O(R^{-1})\): since
\[
        \nabla^j\chi_R=R^{-j}(\nabla^j\chi)(x/R),
\]
the pseudodifferential commutator expansion for the multiplier \(J^{\mathrm{rel}}\) gives
\[
        [J^{\mathrm{rel}},\chi_R]
        =\operatorname{Op}\bigl(i\partial_\xi m(\xi)\cdot\nabla\chi_R\bigr)
        +O(R^{-2}),
\]
with bounded symbol derivatives on the fixed active window.  The second term is the multiplication leakage caused by inserting \(\chi_R\) on both sides.  It is not controlled by \(\nabla\chi_R=O(R^{-1})\) alone; it is exactly the additional hypothesis \eqref{eq:finite-window-multiplication-leakage-hypothesis}.  Combining the commutator estimate and this leakage hypothesis gives the stated \(R^{-1}\) bound.  The general statement follows by taking the operator norm of \(\mathcal C_{\Lambda,\chi}\).
\end{proof}

\begin{theorem}[Localized commutator stability]\label{thm:localized-commutator-stability}
Let \(\Lambda\) be a finite active Fourier window.  Suppose the clean relaxed observation satisfies
\[
        |J_\Lambda^{\mathrm{rel}}s|_{W_\Lambda}
        \ge
        \gamma_\Lambda |s|_{Y_\Lambda}
        \qquad
        \text{for all }s\in\Phi_\Lambda.
\]
If the localized commutator satisfies
\begin{equation}\label{eq:commutator-smallness-assumption}
        |\mathcal C_{\Lambda,R}|_{Y_\Lambda\to W_\Lambda}
        \le
        \frac12\gamma_\Lambda,
\end{equation}
then
\[
        |J_{\Lambda,R}^{\mathrm{rel}}s|_{W_\Lambda}
        \ge
        \frac12\gamma_\Lambda |s|_{Y_\Lambda}
        \qquad
        \text{for every }s\in\Phi_\Lambda.
\]
In particular,
\[
        J_{\Lambda,R}^{\mathrm{rel}}s=0
        \qquad\Longrightarrow\qquad
        s=0
\]
on the localized active Schur cokernel.  Thus perturbative localization does not create a relaxed-invisible true Schur phantom.
\end{theorem}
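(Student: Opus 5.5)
The plan is a direct perturbation argument built on the decomposition \eqref{eq:localized-Jrel-comm-decomp}. Since the lower bound \eqref{eq:clean-active-visibility-lower} is available on \(\Phi_\Lambda\) (from \cref{thm:finite-window-active-Schur-visibility}, or simply as the stated hypothesis), no further structure of \(J^{\mathrm{rel}}\) or of the cutoff is needed. Everything takes place in the fixed finite-dimensional spaces \(Y_\Lambda\) and \(W_\Lambda\). The norm \(|\mathcal C_{\Lambda,R}|_{Y_\Lambda\to W_\Lambda}\) is the ordinary operator norm for the chosen finite-window norms, so it is finite and satisfies \(|\mathcal C_{\Lambda,R}s|_{W_\Lambda}\le|\mathcal C_{\Lambda,R}|_{Y_\Lambda\to W_\Lambda}|s|_{Y_\Lambda}\) for every \(s\in Y_\Lambda\), and in particular for every \(s\in\Phi_\Lambda\subset Y_\Lambda\).

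Fix \(s\in\Phi_\Lambda\). By \eqref{eq:localized-Jrel-comm-decomp}, \(J^{\mathrm{rel}}_{\Lambda,R}s=J^{\mathrm{rel}}_\Lambda s+\mathcal C_{\Lambda,R}s\). The reverse triangle inequality in \(W_\Lambda\) then gives
\[
|J^{\mathrm{rel}}_{\Lambda,R}s|_{W_\Lambda}\ge|J^{\mathrm{rel}}_\Lambda s|_{W_\Lambda}-|\mathcal C_{\Lambda,R}s|_{W_\Lambda}\ge\gamma_\Lambda|s|_{Y_\Lambda}-\tfrac12\gamma_\Lambda|s|_{Y_\Lambda}=\tfrac12\gamma_\Lambda|s|_{Y_\Lambda}.
\]
Here the clean lower bound is used in the second step, and the smallness hypothesis \eqref{eq:commutator-smallness-assumption} bounds the commutator term. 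Injectivity follows at once: if \(J^{\mathrm{rel}}_{\Lambda,R}s=0\), then \(\tfrac12\gamma_\Lambda|s|_{Y_\Lambda}\le0\), so \(s=0\), because \(\gamma_\Lambda>0\) and \(|\cdot|_{Y_\Lambda}\) is a norm.

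The only point needing care is consistency of domains. Both the clean and the localized operators must be evaluated on the same \(s\in\Phi_\Lambda\). The decomposition \eqref{eq:localized-Jrel-comm-decomp} must hold as an identity of maps \(Y_\Lambda\to W_\Lambda\), with the operator \(P_\Lambda J^{\mathrm{rel}}P_\Lambda\) in \eqref{eq:commutator-error-def} restricting on \(\Phi_\Lambda\) to the diagonal map \(J^{\mathrm{rel}}_\Lambda\). I would state explicitly that \(\Phi_\Lambda\) is the unlocalized cokernel \(\Ker A_\Lambda^*\), so the theorem says nothing about a possibly different localized trace map. I would also add a concluding remark connecting the result to \cref{lem:finite-window-commutator-bound}. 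Under the leakage hypothesis \eqref{eq:finite-window-multiplication-leakage-hypothesis}, one has \(|\mathcal C_{\Lambda,R}|\le C_\Lambda/R\), so \eqref{eq:commutator-smallness-assumption} holds for all \(R\ge 2C_\Lambda/\gamma_\Lambda\). This gives the quantitative form of ``perturbative localization'' and identifies the non-perturbative regime as the one left to be tested separately.
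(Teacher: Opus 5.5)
Your proposal is correct and is essentially the paper's proof. You write \(J^{\mathrm{rel}}_{\Lambda,R}s=J^{\mathrm{rel}}_\Lambda s+\mathcal C_{\Lambda,R}s\) for \(s\in\Phi_\Lambda\), apply the reverse triangle inequality, and combine the clean lower bound with the commutator smallness hypothesis to get the \(\tfrac12\gamma_\Lambda\) bound, from which injectivity follows. Your closing remark that \(R\ge 2C_\Lambda/\gamma_\Lambda\) suffices under the leakage hypothesis is exactly the paper's \cref{cor:large-localization-scale}.
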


\begin{proof}
For \(s\in\Phi_\Lambda\), use the decomposition
\[
        J_{\Lambda,R}^{\mathrm{rel}}s
        =J_\Lambda^{\mathrm{rel}}s+\mathcal C_{\Lambda,R}s.
\]
Then
\[
        |J_{\Lambda,R}^{\mathrm{rel}}s|_{W_\Lambda}
        \ge
        |J_\Lambda^{\mathrm{rel}}s|_{W_\Lambda}
        -|\mathcal C_{\Lambda,R}s|_{W_\Lambda}.
\]
By clean active visibility,
\[
        |J_\Lambda^{\mathrm{rel}}s|_{W_\Lambda}
        \ge
        \gamma_\Lambda |s|_{Y_\Lambda}.
\]
By the commutator smallness assumption,
\[
        |\mathcal C_{\Lambda,R}s|_{W_\Lambda}
        \le
        \frac12\gamma_\Lambda |s|_{Y_\Lambda}.
\]
Therefore
\[
        |J_{\Lambda,R}^{\mathrm{rel}}s|_{W_\Lambda}
        \ge
        \frac12\gamma_\Lambda |s|_{Y_\Lambda}.
\]
This proves the claim.
\end{proof}

\begin{corollary}[Large localization scale under leakage control]\label{cor:large-localization-scale}
For every fixed active finite window \(\Lambda\), suppose the localized cutoffs satisfy the leakage hypothesis \eqref{eq:finite-window-multiplication-leakage-hypothesis}.  Then there exists \(R_\Lambda<\infty\) such that for every \(R\ge R_\Lambda\),
\[
        J_{\Lambda,R}^{\mathrm{rel}}s=0
        \qquad\Longrightarrow\qquad
        s=0
\]
on \(\Phi_\Lambda\).  In particular, sufficiently slow localization preserves active Schur visibility whenever the multiplication leakage is perturbative.
\end{corollary}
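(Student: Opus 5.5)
The corollary follows by combining Lemma~\ref{lem:finite-window-commutator-bound} with Theorem~\ref{thm:localized-commutator-stability}. Fix an active finite window \(\Lambda\). Theorem~\ref{thm:finite-window-active-Schur-visibility} supplies the clean lower bound \eqref{eq:clean-active-visibility-lower}. There we may take explicitly
\[
\gamma_\Lambda=\min_{\sigma\in\Lambda}|\mu_\sigma|>0
\]
in coefficient norms, and transfer this to the fixed norms on \(Y_\Lambda,W_\Lambda\) by finite-dimensional norm equivalence. This constant depends only on \(\Lambda\), not on \(R\).

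Next, under the leakage hypothesis \eqref{eq:finite-window-multiplication-leakage-hypothesis}, Lemma~\ref{lem:finite-window-commutator-bound} gives
\[
|\mathcal C_{\Lambda,R}|_{Y_\Lambda\to W_\Lambda}\le C_\Lambda R^{-1}.
\]
The constant \(C_\Lambda\) depends only on \(\Lambda\) and finitely many seminorms of \(\chi\). Set
\[
R_\Lambda:=\max\{1,\,2C_\Lambda/\gamma_\Lambda\}.
\]
For every \(R\ge R_\Lambda\) the smallness condition \eqref{eq:commutator-smallness-assumption} then holds. Theorem~\ref{thm:localized-commutator-stability} yields
\[
|J^{\mathrm{rel}}_{\Lambda,R}s|\ge\tfrac12\gamma_\Lambda|s|\quad\text{on }\Phi_\Lambda,
\]
and hence injectivity: \(J^{\mathrm{rel}}_{\Lambda,R}s=0\) forces \(s=0\).

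The main point to check is uniformity. The constant in Lemma~\ref{lem:finite-window-commutator-bound} must be independent of \(R\), and the norms used for \(\mathcal C_{\Lambda,R}\) must match those in which \(\gamma_\Lambda\) is measured. Both \(\gamma_\Lambda\) and \(C_\Lambda\) depend only on \(\Lambda\) and on \(\chi\) through the fixed profile, never on \(R\), so the threshold \(R_\Lambda\) is finite. That is all the corollary asserts; no uniformity in \(\Lambda\) is claimed.
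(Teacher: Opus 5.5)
Your proof is correct and takes essentially the same route as the paper. Lemma~\ref{lem:finite-window-commutator-bound} gives $|\mathcal C_{\Lambda,R}|\le C_\Lambda/R$, choosing $R_\Lambda\ge 2C_\Lambda/\gamma_\Lambda$ brings the commutator below $\tfrac12\gamma_\Lambda$, and Theorem~\ref{thm:localized-commutator-stability} then gives injectivity on $\Phi_\Lambda$; your remarks identifying $\gamma_\Lambda$ through norm equivalence and noting that $C_\Lambda$ does not depend on $R$ make explicit what the paper leaves implicit.
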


\begin{proof}
Under \eqref{eq:finite-window-multiplication-leakage-hypothesis}, Lemma~\ref{lem:finite-window-commutator-bound} gives
\[
        |\mathcal C_{\Lambda,R}|_{Y_\Lambda\to W_\Lambda}
        \le
        \frac{C_\Lambda}{R}.
\]
Choose
\[
        R_\Lambda\ge \frac{2C_\Lambda}{\gamma_\Lambda}.
\]
Then \(|\mathcal C_{\Lambda,R}|\le\frac12\gamma_\Lambda\) for all \(R\ge R_\Lambda\).  The localized stability theorem applies.
\end{proof}

\begin{proposition}[Fredholm alternative for nonperturbative localization]\label{prop:fredholm-nonperturbative-localization}
For a fixed cutoff \(\chi\), if
\[
        |\mathcal C_{\Lambda,\chi}|_{Y_\Lambda\to W_\Lambda}
\]
is not small compared with \(\gamma_\Lambda\), then the possible relaxed-invisible directions form the finite-dimensional kernel
\[
        \mathcal K_{\Lambda,\chi}^{\mathrm{loc}}
        =\{s\in\Phi_\Lambda:
        J_{\Lambda,\chi}^{\mathrm{rel}}s=0\}.
\]
Every candidate localized true phantom must lie in this finite-dimensional space.  Moreover, if
\[
        \mathcal K_{\Lambda,\chi}^{\mathrm{loc}}=\{0\},
\]
then localized relaxed Schur visibility still holds, with a possibly worse constant:
\[
        |J_{\Lambda,\chi}^{\mathrm{rel}}s|_{W_\Lambda}
        \ge
        c_{\Lambda,\chi}|s|_{Y_\Lambda}
        \qquad
        \text{for every }s\in\Phi_\Lambda.
\]
\end{proposition}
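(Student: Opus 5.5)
The proposition has two parts, and both reduce to finite-dimensional linear algebra on the fixed window, since \(\Phi_\Lambda\) and \(W_\Lambda\) are finite-dimensional and \(\chi\) is fixed. Write \(J_{\Lambda,\chi}^{\mathrm{rel}}=P_\Lambda\chi J^{\mathrm{rel}}\chi P_\Lambda\), restricted to \(\Phi_\Lambda\). By \eqref{eq:localized-Jrel-comm-decomp} this equals \(J_\Lambda^{\mathrm{rel}}+\mathcal C_{\Lambda,\chi}\), and it is a linear map between finite-dimensional spaces.

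\emph{First part.} The set \(\mathcal K_{\Lambda,\chi}^{\mathrm{loc}}\) is the kernel of a linear map restricted to the subspace \(\Phi_\Lambda\subset Y_\Lambda\). It is therefore a linear subspace of dimension at most \(\dim\Phi_\Lambda\le\dim Y_\Lambda<\infty\). By definition (cf.\ \eqref{eq:true-phantom-space}), a localized true phantom \(s\in\Phi_\Lambda\) is strict-invisible, because it lies in \(\Ker A_\Lambda^*\). It is also relaxed-invisible, meaning \(J_{\Lambda,\chi}^{\mathrm{rel}}s=0\). So every such candidate lies in \(\mathcal K_{\Lambda,\chi}^{\mathrm{loc}}\).

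The "Fredholm alternative" wording should be read like this. When the smallness \eqref{eq:commutator-smallness-assumption} fails, Theorem~\ref{thm:localized-commutator-stability} no longer rules out a kernel. The obstruction is then confined to this finite-dimensional set, which one can test directly, for example by computing the matrix of \(J_{\Lambda,\chi}^{\mathrm{rel}}\) in the basis \((q_\sigma)\), \((w_\sigma)\).

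\emph{Second part.} Assume \(\mathcal K_{\Lambda,\chi}^{\mathrm{loc}}=\{0\}\). Define \(c_{\Lambda,\chi}:=\min\{|J_{\Lambda,\chi}^{\mathrm{rel}}s|_{W_\Lambda}: s\in\Phi_\Lambda,\ |s|_{Y_\Lambda}=1\}\). The unit sphere of \(\Phi_\Lambda\) is compact, since the space is finite-dimensional. The function \(s\mapsto|J_{\Lambda,\chi}^{\mathrm{rel}}s|\) is continuous, since linear maps on finite-dimensional spaces are bounded. Hence the minimum is attained at some \(s_*\). Injectivity gives \(J_{\Lambda,\chi}^{\mathrm{rel}}s_*\neq0\), so \(c_{\Lambda,\chi}>0\). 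The claimed lower bound then follows by homogeneity.

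Alternatively, one can take \(c_{\Lambda,\chi}\) to be the smallest singular value of \(J_{\Lambda,\chi}^{\mathrm{rel}}|_{\Phi_\Lambda}\) with respect to the chosen inner products. Norm equivalence, as in the proof of Theorem~\ref{thm:finite-window-active-Schur-visibility}, transfers the bound to any fixed norms.

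The only point that needs care is that \(c_{\Lambda,\chi}\) carries no quantitative relation to \(\gamma_\Lambda\). It comes from compactness rather than perturbation, which is why it is described as "possibly worse". Beyond that there is no substantive obstacle. I would also note that the argument never uses the leakage hypothesis \eqref{eq:finite-window-multiplication-leakage-hypothesis}. It needs only that \(\chi\) is fixed, so that \(\mathcal C_{\Lambda,\chi}\) is a well-defined finite matrix with operator norm \(\varepsilon_{\Lambda,\chi}\) as in Lemma~\ref{lem:finite-window-commutator-bound}.
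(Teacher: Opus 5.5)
Your proposal is correct and follows the paper's proof essentially verbatim: finite dimensionality of \(\Phi_\Lambda\) makes \(\mathcal K_{\Lambda,\chi}^{\mathrm{loc}}\) a finite-dimensional kernel. When that kernel is trivial, the continuous map \(s\mapsto|J_{\Lambda,\chi}^{\mathrm{rel}}s|_{W_\Lambda}\) attains a positive minimum on the compact unit sphere of \(\Phi_\Lambda\), and homogeneity then gives \(c_{\Lambda,\chi}\); your extra remarks (the singular-value description of \(c_{\Lambda,\chi}\), that the containment of candidate phantoms is definitional, and that the leakage hypothesis is not used) are accurate supplements rather than a different route.
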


\begin{proof}
The space \(\Phi_\Lambda\) is finite-dimensional and \(J_{\Lambda,\chi}^{\mathrm{rel}}\) is a linear map.  Hence its kernel is finite-dimensional.  If the kernel is trivial, then the function
\[
        s\mapsto |J_{\Lambda,\chi}^{\mathrm{rel}}s|_{W_\Lambda}
\]
is positive on the compact unit sphere
\[
        \{s\in\Phi_\Lambda:|s|_{Y_\Lambda}=1\}.
\]
Its minimum is therefore positive.  This gives the asserted lower bound.
\end{proof}

\begin{remark}[Interpretation]\label{rem:localized-commutator-interpretation}
The localized commutator analysis gives a clean dichotomy.  If the cutoff is slowly varying relative to the active finite window, then the periodic visibility estimate is stable and no true phantom appears.  If the cutoff is not perturbative, then localization can only produce a finite-dimensional kernel.  Such a kernel is not yet a Navier--Stokes counterexample.  It is merely a candidate space that must pass the next filters: harmonic-pressure gauge removal and NS-realizability.
\end{remark}

\subsection{Harmonic-pressure gauge leakage and local quotient cleaning}\label{subsec:harmonic-gauge-cleaning}

We now remove a second source of false phantom directions: harmonic-pressure gauge leakage, a local pressure feature already visible in the pressure decomposition theory \cite{SohrWahl1986,Wolf2017} and emphasized in the finite-scale harmonic-pressure formulation \cite{Yu2026HarmonicPressure}.  In a localized cylinder the pressure is not determined solely by the local quadratic source.  It is determined only modulo spatially harmonic functions.  Consequently, a vertical-pressure signal of the form \(\partial_3h\), with \(h\) spatially harmonic in the observation cylinder, should not be counted as an active Schur residual.  It belongs to the local pressure gauge sector.

Let
\[
        Q_{\mathrm{sh}}\Subset Q_{\mathrm{prep}}
\]
be the shadow and preparation cylinders.  Recall the harmonic pressure space
\[
        \mathcal H(Q_{\mathrm{sh}})
        =\{h\in L^{3/2}(Q_{\mathrm{sh}}):
        \Delta h(\cdot,t)=0
        \text{ in }B_{\mathrm{sh}}
        \text{ for a.e. }t\}.
\]
Pressure is observed through the quotient
\[
        [p]_{\mathcal H}=p+\mathcal H(Q_{\mathrm{sh}}).
\]
Thus a pressure representative may be changed by
\[
        p\mapsto p+h,
        \qquad
        h\in\mathcal H(Q_{\mathrm{sh}}),
\]
without changing the harmonic-pressure excess.

Let \(Y_{\Lambda,\chi}\) be a localized finite-window Schur quotient, and let
\[
        \Phi_{\Lambda,\chi}
        =\Ker A_{\Lambda,\chi}^*
        \subset Y_{\Lambda,\chi}
\]
be the strict trace-invisible Schur cokernel.  For \(s\in\Phi_{\Lambda,\chi}\), let \(\pi_s\) be a localized pressure representative associated with the Schur residual.  The raw relaxed vertical-pressure observation is
\[
        J_{\Lambda,\chi}^{\mathrm{rel}}s
        =\partial_3\pi_s
\]
projected to the chosen finite observation space.

This raw map is not yet the correct active observation, because
\[
        \partial_3(\pi_s+h)
        =\partial_3\pi_s+\partial_3h,
        \qquad
        h\in\mathcal H(Q_{\mathrm{sh}}).
\]
The term \(\partial_3h\) is harmonic-pressure gauge leakage.  Define the harmonic leakage space
\[
        \mathcal G_{\Lambda,\chi}^{\mathrm{har}}
        =
        \left\{
        s\in Y_{\Lambda,\chi}:
        J_{\Lambda,\chi}^{\mathrm{rel}}s
        \in
        \partial_3\mathcal H(Q_{\mathrm{sh}})
        \text{ after finite-window projection}
        \right\}.
\]
Similarly, define the vertical-zero sector
\[
        \mathcal G_{\Lambda,\chi}^{v0}
        =
        \left\{
        s\in Y_{\Lambda,\chi}:
        \partial_3\pi_s=0
        \text{ in the active observation space}
        \right\}.
\]
The total gauge-null sector is
\[
        \mathcal G_{\Lambda,\chi}
        =\mathcal G_{\Lambda,\chi}^{\mathrm{har}}
        +\mathcal G_{\Lambda,\chi}^{v0}.
\]

We define the cleaned localized Schur quotient by
\[
        \widehat Y_{\Lambda,\chi}
        =Y_{\Lambda,\chi}/\mathcal G_{\Lambda,\chi},
\]
and the cleaned strict phantom cokernel by
\[
        \widehat\Phi_{\Lambda,\chi}
        =\Phi_{\Lambda,\chi}/(\Phi_{\Lambda,\chi}\cap\mathcal G_{\Lambda,\chi}).
\]
For \(s\in\Phi_{\Lambda,\chi}\), write its class in the cleaned quotient as
\[
        \widehat s=[s]\in\widehat\Phi_{\Lambda,\chi}.
\]

Let \(\mathcal W_{\Lambda,\chi}^{\mathrm{har}}\) denote the finite-window projection of \(\partial_3\mathcal H(Q_{\mathrm{sh}})\) inside \(W_{\Lambda,\chi}\).  Let
\[
        \Pi_{\mathrm{har}}:
        W_{\Lambda,\chi}\to
        \mathcal W_{\Lambda,\chi}^{\mathrm{har}}
\]
denote a fixed finite-dimensional projection onto this harmonic leakage component, and let
\[
        \Pi_{\mathrm{act}}=I-\Pi_{\mathrm{har}}
\]
be the corresponding active projection.  Define the cleaned relaxed observation map
\[
        \widehat J_{\Lambda,\chi}^{\mathrm{rel}}
        :\widehat\Phi_{\Lambda,\chi}\to
        \widehat W_{\Lambda,\chi}
\]
by
\[
        \widehat J_{\Lambda,\chi}^{\mathrm{rel}}\widehat s
        =\Pi_{\mathrm{act}}J_{\Lambda,\chi}^{\mathrm{rel}}s.
\]

\begin{lemma}[Well-defined cleaned relaxed observation]\label{lem:well-defined-cleaned-relaxed-observation}
The map \(\widehat J_{\Lambda,\chi}^{\mathrm{rel}}\) is well-defined on the quotient
\[
        \widehat\Phi_{\Lambda,\chi}
        =\Phi_{\Lambda,\chi}/(\Phi_{\Lambda,\chi}\cap\mathcal G_{\Lambda,\chi}).
\]
\end{lemma}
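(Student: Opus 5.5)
The plan is to reduce well-definedness to a vanishing statement on the gauge-null sector. Both \(J_{\Lambda,\chi}^{\mathrm{rel}}\) and \(\Pi_{\mathrm{act}}\) are linear, so the composite \(s\mapsto\Pi_{\mathrm{act}}J_{\Lambda,\chi}^{\mathrm{rel}}s\) is linear on \(\Phi_{\Lambda,\chi}\). It therefore descends to the quotient \(\widehat\Phi_{\Lambda,\chi}=\Phi_{\Lambda,\chi}/(\Phi_{\Lambda,\chi}\cap\mathcal G_{\Lambda,\chi})\) if and only if it vanishes on \(\Phi_{\Lambda,\chi}\cap\mathcal G_{\Lambda,\chi}\). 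Indeed, if \(\widehat s=\widehat{s'}\), then \(s-s'\in\Phi_{\Lambda,\chi}\cap\mathcal G_{\Lambda,\chi}\), and linearity gives \(\Pi_{\mathrm{act}}J^{\mathrm{rel}}_{\Lambda,\chi}s-\Pi_{\mathrm{act}}J^{\mathrm{rel}}_{\Lambda,\chi}s'=\Pi_{\mathrm{act}}J^{\mathrm{rel}}_{\Lambda,\chi}(s-s')\). So it suffices to show \(\Pi_{\mathrm{act}}J^{\mathrm{rel}}_{\Lambda,\chi}g=0\) for every \(g\in\mathcal G_{\Lambda,\chi}\). This is slightly stronger than needed, since it does not use \(g\in\Phi_{\Lambda,\chi}\).

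First I would record the structural facts that make the sets involved linear.
\begin{enumerate}[label=(\alph*)]
\item The raw observation \(J^{\mathrm{rel}}_{\Lambda,\chi}s=\partial_3\pi_s\), followed by finite-window projection, is a linear map defined on all of \(Y_{\Lambda,\chi}\), not only on \(\Phi_{\Lambda,\chi}\). This uses that \(s\mapsto\pi_s\) is linear, up to a harmonic representative.
\item \(\mathcal W^{\mathrm{har}}_{\Lambda,\chi}\) is a linear subspace of \(W_{\Lambda,\chi}\), being the image of the linear space \(\partial_3\mathcal H(Q_{\mathrm{sh}})\) under a linear projection.
\item Consequently \(\mathcal G^{\mathrm{har}}_{\Lambda,\chi}=(J^{\mathrm{rel}}_{\Lambda,\chi})^{-1}(\mathcal W^{\mathrm{har}}_{\Lambda,\chi})\) and \(\mathcal G^{v0}_{\Lambda,\chi}\) are subspaces, being preimages of subspaces under a linear map. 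The latter is read as \(\Ker J^{\mathrm{rel}}_{\Lambda,\chi}\), or at least a subspace of \(\Ker(\Pi_{\mathrm{act}}J^{\mathrm{rel}}_{\Lambda,\chi})\). Hence their sum \(\mathcal G_{\Lambda,\chi}\) and the intersection \(\Phi_{\Lambda,\chi}\cap\mathcal G_{\Lambda,\chi}\) are subspaces, and the quotient makes sense.
\end{enumerate}

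Next, write \(g=g_1+g_2\) with \(g_1\in\mathcal G^{\mathrm{har}}_{\Lambda,\chi}\) and \(g_2\in\mathcal G^{v0}_{\Lambda,\chi}\). The pieces need not lie in \(\Phi_{\Lambda,\chi}\); this is harmless by (a). For \(g_1\), we have \(J^{\mathrm{rel}}_{\Lambda,\chi}g_1\in\mathcal W^{\mathrm{har}}_{\Lambda,\chi}\). Since \(\Pi_{\mathrm{har}}\) is a projection onto \(\mathcal W^{\mathrm{har}}_{\Lambda,\chi}\), it acts as the identity there, so \(\Pi_{\mathrm{act}}=I-\Pi_{\mathrm{har}}\) annihilates \(J^{\mathrm{rel}}_{\Lambda,\chi}g_1\). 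For \(g_2\), the defining condition \(\partial_3\pi_{g_2}=0\) in the active observation space gives \(\Pi_{\mathrm{act}}J^{\mathrm{rel}}_{\Lambda,\chi}g_2=0\). Adding the two pieces gives the claim. As a consistency check, the same argument shows the value of \(\widehat J^{\mathrm{rel}}_{\Lambda,\chi}\widehat s\) is independent of the harmonic representative chosen for \(\pi_s\). Changing \(\pi_s\) by \(h\in\mathcal H(Q_{\mathrm{sh}})\) changes \(J^{\mathrm{rel}}_{\Lambda,\chi}s\) by an element of \(\mathcal W^{\mathrm{har}}_{\Lambda,\chi}\), which \(\Pi_{\mathrm{act}}\) kills.

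The main obstacle is not the algebra but pinning down the definitions precisely enough that the argument is legitimate. Specifically, one must ensure:
\begin{enumerate}[label=(\roman*)]
\item \(\Pi_{\mathrm{har}}\) is a genuine idempotent with range exactly \(\mathcal W^{\mathrm{har}}_{\Lambda,\chi}\);
\item the phrase ``\(\partial_3\pi_s=0\) in the active observation space'' means \(\Pi_{\mathrm{act}}J^{\mathrm{rel}}_{\Lambda,\chi}s=0\), or the stronger \(J^{\mathrm{rel}}_{\Lambda,\chi}s=0\);
\item \(s\mapsto\pi_s\) is chosen linearly modulo \(\mathcal H\).
\end{enumerate}
I would state these three conventions explicitly at the start of the proof, taking the weaker reading in (ii) so the lemma holds under either interpretation. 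Everything after that is finite-dimensional linear algebra.
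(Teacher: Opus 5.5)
Your proposal is correct and follows the same route as the paper. Both arguments reduce well-definedness, by linearity, to showing that \(\Pi_{\mathrm{act}}J^{\mathrm{rel}}_{\Lambda,\chi}\) vanishes on \(\Phi_{\Lambda,\chi}\cap\mathcal G_{\Lambda,\chi}\), and then use that \(\Pi_{\mathrm{act}}=I-\Pi_{\mathrm{har}}\) kills harmonic leakage while the vertical-zero sector is killed by definition. Your version is slightly more careful than the paper's: it splits an element of the sum \(\mathcal G^{\mathrm{har}}_{\Lambda,\chi}+\mathcal G^{v0}_{\Lambda,\chi}\) into its two pieces, where the paper says its image lies in one sector or the other, and it states explicitly the conventions the argument needs (idempotence of \(\Pi_{\mathrm{har}}\), the meaning of ``active observation space'', linearity of \(s\mapsto\pi_s\) modulo \(\mathcal H\)).
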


\begin{proof}
Suppose \(s_1\) and \(s_2\) represent the same class in \(\widehat\Phi_{\Lambda,\chi}\).  Then
\[
        s_1-s_2\in\Phi_{\Lambda,\chi}\cap\mathcal G_{\Lambda,\chi}.
\]
By definition of \(\mathcal G_{\Lambda,\chi}\), the raw relaxed observation of \(s_1-s_2\) belongs either to the harmonic leakage component or to the vertical-zero sector.  Therefore
\[
        \Pi_{\mathrm{act}}J_{\Lambda,\chi}^{\mathrm{rel}}(s_1-s_2)=0.
\]
Hence
\[
        \Pi_{\mathrm{act}}J_{\Lambda,\chi}^{\mathrm{rel}}s_1
        =
        \Pi_{\mathrm{act}}J_{\Lambda,\chi}^{\mathrm{rel}}s_2.
\]
Thus \(\widehat J_{\Lambda,\chi}^{\mathrm{rel}}\) depends only on the cleaned class \(\widehat s\).
\end{proof}

\begin{proposition}[Gauge-cleaning dichotomy]\label{prop:gauge-cleaning-dichotomy}
Let \(s\in\Phi_{\Lambda,\chi}\) satisfy
\[
        J_{\Lambda,\chi}^{\mathrm{rel}}s=0
\]
in the raw localized observation space.  Then exactly one of the following alternatives occurs.
\begin{enumerate}[label=(\roman*)]
\item \emph{Gauge-null alternative.}  The class of \(s\) vanishes in the cleaned quotient:
\[
        \widehat s=0.
\]
Then \(s\) is only a harmonic-pressure gauge leakage, a vertical-zero component, or a combination of the two.

\item \emph{Cleaned phantom alternative.}  The class \(\widehat s\neq0\) and
\[
        \widehat J_{\Lambda,\chi}^{\mathrm{rel}}\widehat s=0.
\]
Then \(\widehat s\) is a genuine localized relaxed-invisible Schur phantom candidate.
\end{enumerate}
\end{proposition}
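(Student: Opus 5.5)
The plan is to read the dichotomy as a case split on whether the class \(\widehat s\) vanishes in \(\widehat\Phi_{\Lambda,\chi}=\Phi_{\Lambda,\chi}/(\Phi_{\Lambda,\chi}\cap\mathcal G_{\Lambda,\chi})\). Exclusivity is then automatic: alternative (i) asserts \(\widehat s=0\) and alternative (ii) asserts \(\widehat s\neq0\), so at most one holds and exactly one of \(\widehat s=0\), \(\widehat s\neq 0\) is true. The content to verify is that each branch has the stated interpretation.

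First, suppose \(\widehat s=0\). Then \(s\in\Phi_{\Lambda,\chi}\cap\mathcal G_{\Lambda,\chi}\). By the definition \(\mathcal G_{\Lambda,\chi}=\mathcal G^{\har}_{\Lambda,\chi}+\mathcal G^{v0}_{\Lambda,\chi}\), we can write \(s=s_{\har}+s_{v0}\). Here \(J^{\rel}_{\Lambda,\chi}s_{\har}\) lies in the finite-window projection of \(\partial_3\calH(Q_{\mathrm{sh}})\), and \(\partial_3\pi_{s_{v0}}=0\) in the active observation space. This is exactly the statement that \(s\) is harmonic-pressure gauge leakage, a vertical-zero component, or a sum of the two. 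No use of the hypothesis \(J^{\rel}_{\Lambda,\chi}s=0\) is needed here beyond consistency. Indeed, since \(s=s_{\har}+s_{v0}\) with \(J^{\rel}s_{v0}=0\) and \(J^{\rel}s_{\har}\in\mathcal W^{\har}\), the raw vanishing simply says the harmonic leakage piece also observes to zero.

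Second, suppose \(\widehat s\neq0\). By Lemma~\ref{lem:well-defined-cleaned-relaxed-observation}, \(\widehat J^{\rel}_{\Lambda,\chi}\widehat s=\Pi_{\mathrm{act}}J^{\rel}_{\Lambda,\chi}s\) is independent of the representative. Since \(J^{\rel}_{\Lambda,\chi}s=0\) by hypothesis and \(\Pi_{\mathrm{act}}=I-\Pi_{\har}\) is linear, we get \(\widehat J^{\rel}_{\Lambda,\chi}\widehat s=\Pi_{\mathrm{act}}0=0\). Hence \(\widehat s\) is a nonzero element of \(\Ker\widehat J^{\rel}_{\Lambda,\chi}\) inside the cleaned strict cokernel. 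It is strict-invisible because \(s\in\Phi_{\Lambda,\chi}=\Ker A^*_{\Lambda,\chi}\), relaxed-invisible after cleaning, and not gauge-null. That is precisely the definition of a cleaned localized relaxed-invisible phantom candidate.

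The only delicate point, which I expect to be the main obstacle, is the bookkeeping in the first branch. One must check that membership in the sum \(\mathcal G^{\har}+\mathcal G^{v0}\) legitimately decomposes \(s\) into pieces of the two types. The sum may not be direct, and the finite-window projection of \(\partial_3\calH\) may overlap the vertical-zero sector. I would handle this by noting that the claimed conclusion ``a combination of the two'' only requires existence of some decomposition, not uniqueness, so non-directness is harmless. I would also note that the whole argument is finite-dimensional linear algebra, with no estimates required.
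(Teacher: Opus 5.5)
Your proposal is correct and follows the paper's proof: both split on whether \(\widehat s\) vanishes in \(\widehat\Phi_{\Lambda,\chi}\), read off alternative (i) from \(s\in\mathcal G_{\Lambda,\chi}\), and obtain alternative (ii) from \(\widehat J^{\rel}_{\Lambda,\chi}\widehat s=\Pi_{\mathrm{act}}J^{\rel}_{\Lambda,\chi}s=\Pi_{\mathrm{act}}0=0\). Both also get exclusivity from \(\widehat s=0\) versus \(\widehat s\neq0\). One aside overreaches: you claim \(J^{\rel}_{\Lambda,\chi}s_{v0}=0\) in the raw space, but the definition of \(\mathcal G^{v0}_{\Lambda,\chi}\) only gives vanishing in the active observation space; nothing in your argument depends on this.
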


\begin{proof}
If \(\widehat s=0\), then \(s\in\mathcal G_{\Lambda,\chi}\), and the first alternative holds.  If \(\widehat s\neq0\), then
\[
        \widehat J_{\Lambda,\chi}^{\mathrm{rel}}\widehat s
        =\Pi_{\mathrm{act}}J_{\Lambda,\chi}^{\mathrm{rel}}s.
\]
Since the raw relaxed observation is zero by assumption,
\[
        J_{\Lambda,\chi}^{\mathrm{rel}}s=0,
\]
we also have
\[
        \Pi_{\mathrm{act}}J_{\Lambda,\chi}^{\mathrm{rel}}s=0.
\]
Thus
\[
        \widehat J_{\Lambda,\chi}^{\mathrm{rel}}\widehat s=0.
\]
This gives the second alternative.  The two alternatives are mutually exclusive because one has \(\widehat s=0\) and the other has \(\widehat s\neq0\).
\end{proof}

\begin{theorem}[Local quotient cleaning]\label{thm:local-quotient-cleaning}
Assume the cleaned relaxed observation has no kernel:
\[
        \Ker \widehat J_{\Lambda,\chi}^{\mathrm{rel}}
        =\{0\}
        \quad
        \text{on }
        \widehat\Phi_{\Lambda,\chi}.
\]
Then every raw localized relaxed-invisible Schur vector is gauge-null:
\[
        J_{\Lambda,\chi}^{\mathrm{rel}}s=0
        \quad\Longrightarrow\quad
        s\in\mathcal G_{\Lambda,\chi}.
\]
Equivalently, after removing harmonic-pressure leakage and vertical-zero modes, there is no localized relaxed-invisible true Schur phantom.

More quantitatively, since \(\widehat\Phi_{\Lambda,\chi}\) is finite-dimensional, kernel triviality is equivalent to the existence of a constant \(c_{\Lambda,\chi}>0\) such that
\[
        |\widehat J_{\Lambda,\chi}^{\mathrm{rel}}\widehat s|_{\widehat W_{\Lambda,\chi}}
        \ge
        c_{\Lambda,\chi}
        |\widehat s|_{\widehat Y_{\Lambda,\chi}}
        \qquad
        \text{for all }
        \widehat s\in\widehat\Phi_{\Lambda,\chi}.
\]
\end{theorem}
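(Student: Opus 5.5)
The argument splits into two parts: the implication for raw relaxed-invisible vectors, which reduces to Proposition~\ref{prop:gauge-cleaning-dichotomy}, and the quantitative equivalence, which is a compactness argument on the unit sphere of a finite-dimensional space, as in Proposition~\ref{prop:fredholm-nonperturbative-localization}.

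For the first part, take \(s\in\Phi_{\Lambda,\chi}\) with \(J_{\Lambda,\chi}^{\mathrm{rel}}s=0\). Lemma~\ref{lem:well-defined-cleaned-relaxed-observation} shows that \(\widehat J_{\Lambda,\chi}^{\mathrm{rel}}\) is well defined, so \(\widehat J_{\Lambda,\chi}^{\mathrm{rel}}\widehat s=\Pi_{\mathrm{act}}J_{\Lambda,\chi}^{\mathrm{rel}}s=0\). Proposition~\ref{prop:gauge-cleaning-dichotomy} gives two alternatives. The cleaned phantom alternative requires \(\widehat s\neq0\) together with \(\widehat J_{\Lambda,\chi}^{\mathrm{rel}}\widehat s=0\), which the kernel hypothesis rules out. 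Hence \(\widehat s=0\), meaning \(s\in\Phi_{\Lambda,\chi}\cap\mathcal G_{\Lambda,\chi}\subset\mathcal G_{\Lambda,\chi}\). The phrase ``equivalently'' in the statement is just a restatement of this conclusion.

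For the quantitative part, I will first fix norms. On \(\widehat Y_{\Lambda,\chi}\) I take the quotient norm, so \(|\widehat s|=\inf_{g}|s+g|\) with the infimum over \(g\in\Phi_{\Lambda,\chi}\cap\mathcal G_{\Lambda,\chi}\). On \(\widehat W_{\Lambda,\chi}\) I take the range of \(\Pi_{\mathrm{act}}\) with the norm inherited from \(W_{\Lambda,\chi}\). Both spaces are finite-dimensional, being quotients or subspaces of finite-window spaces, and \(\widehat J_{\Lambda,\chi}^{\mathrm{rel}}\) is linear, hence continuous.

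\begin{itemize}
\item \textbf{Lower bound implies trivial kernel.} If \(|\widehat J\widehat s|\ge c|\widehat s|\) with \(c>0\), then \(\widehat J\widehat s=0\) forces \(\widehat s=0\).
\item \textbf{Trivial kernel implies lower bound.} The map \(\widehat s\mapsto|\widehat J\widehat s|\) is continuous and strictly positive on the unit sphere of \(\widehat\Phi_{\Lambda,\chi}\). That sphere is compact, so the map attains a positive minimum \(c_{\Lambda,\chi}\). Homogeneity then extends the bound to all \(\widehat s\).
\item \textbf{Degenerate case.} If \(\widehat\Phi_{\Lambda,\chi}=\{0\}\), the inequality holds trivially for any \(c_{\Lambda,\chi}>0\).
\end{itemize}

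I do not expect a genuine obstacle. The only point needing care is that the quotient norm and the choice of projection \(\Pi_{\mathrm{har}}\) are fixed once, so that \(c_{\Lambda,\chi}\) is meaningful. Equivalence of norms in finite dimensions then shows that changing these choices only rescales the constant.
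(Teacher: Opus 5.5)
Your proposal is correct and follows the paper's proof: the qualitative part uses Proposition~\ref{prop:gauge-cleaning-dichotomy} and excludes the cleaned-phantom alternative by the kernel hypothesis, and the quantitative part takes the positive minimum of \(\widehat s\mapsto|\widehat J_{\Lambda,\chi}^{\mathrm{rel}}\widehat s|\) on the compact unit sphere of \(\widehat\Phi_{\Lambda,\chi}\). Your additions are harmless and make the stated equivalence explicit: the trivial converse direction, the degenerate case \(\widehat\Phi_{\Lambda,\chi}=\{0\}\), and the remark that the choice of norms only rescales \(c_{\Lambda,\chi}\). That last remark also covers a small slip: the quotient norm you write, with infimum over \(\Phi_{\Lambda,\chi}\cap\mathcal G_{\Lambda,\chi}\), is really the norm of \(\widehat\Phi_{\Lambda,\chi}\), not the one inherited from \(\widehat Y_{\Lambda,\chi}\).
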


\begin{proof}
Suppose
\[
        J_{\Lambda,\chi}^{\mathrm{rel}}s=0.
\]
By the gauge-cleaning dichotomy, either \(\widehat s=0\), in which case \(s\in\mathcal G_{\Lambda,\chi}\), or \(\widehat s\neq0\) and
\[
        \widehat J_{\Lambda,\chi}^{\mathrm{rel}}\widehat s=0.
\]
The second alternative is impossible because the cleaned observation has trivial kernel.  Therefore \(\widehat s=0\), and \(s\) is gauge-null.

For the quantitative statement, use finite dimensionality.  If the kernel of \(\widehat J_{\Lambda,\chi}^{\mathrm{rel}}\) is trivial, then the continuous function
\[
        \widehat s\mapsto
        |\widehat J_{\Lambda,\chi}^{\mathrm{rel}}\widehat s|
\]
is positive on the compact unit sphere
\[
        \{\widehat s\in\widehat\Phi_{\Lambda,\chi}:
        |\widehat s|_{\widehat Y_{\Lambda,\chi}}=1\}.
\]
Its minimum is a positive constant \(c_{\Lambda,\chi}\), which gives the estimate.
\end{proof}

\begin{definition}[Clean localized true phantom space]\label{def:clean-localized-true-phantom}
The clean localized true phantom space is
\[
        \widehat{\mathcal K}_{\Lambda,\chi}^{\mathrm{ph}}
        =
        \Ker \widehat J_{\Lambda,\chi}^{\mathrm{rel}}
        \cap
        \widehat\Phi_{\Lambda,\chi}.
\]
A nonzero element
\[
        0\neq \widehat s\in
        \widehat{\mathcal K}_{\Lambda,\chi}^{\mathrm{ph}}
\]
is called a clean localized relaxed-invisible Schur phantom.
\end{definition}

\begin{corollary}[Reduction to the cleaned finite-dimensional kernel]\label{cor:reduction-cleaned-finite-kernel}
Every candidate localized true phantom must lie in
\[
        \widehat{\mathcal K}_{\Lambda,\chi}^{\mathrm{ph}}.
\]
If
\[
        \widehat{\mathcal K}_{\Lambda,\chi}^{\mathrm{ph}}=\{0\},
\]
then localization and harmonic-pressure gauge leakage do not create a true relaxed-invisible Schur phantom.
\end{corollary}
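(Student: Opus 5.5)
The corollary is essentially a repackaging of \cref{prop:gauge-cleaning-dichotomy} and \cref{thm:local-quotient-cleaning}, so I would prove it by unwinding the definitions rather than by any new estimate.

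\emph{First claim.} A \emph{candidate localized true phantom} is a nonzero $s\in\Phi_{\Lambda,\chi}$ that is strict-invisible and satisfies $J^{\mathrm{rel}}_{\Lambda,\chi}s=0$ in the raw localized observation space, and which is not pure gauge leakage. Apply \cref{prop:gauge-cleaning-dichotomy} to such an $s$.
\begin{itemize}[leftmargin=2em]
\item In the gauge-null alternative, $\widehat s=0$, so $s\in\mathcal G_{\Lambda,\chi}$. Then $s$ is harmonic leakage and/or a vertical-zero component, and by the classification already established (\cref{prop:kernel-classification-pre-active}(i)--(ii) and the definition of $\mathcal G_{\Lambda,\chi}$) it is not a true phantom. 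It is therefore discarded.
\item In the cleaned alternative, $\widehat s\neq0$ and $\widehat J^{\mathrm{rel}}_{\Lambda,\chi}\widehat s=0$. Since $\widehat s\in\widehat\Phi_{\Lambda,\chi}$, this says exactly that $\widehat s\in\widehat{\mathcal K}^{\mathrm{ph}}_{\Lambda,\chi}$ by \cref{def:clean-localized-true-phantom}.
\end{itemize}
Here \cref{lem:well-defined-cleaned-relaxed-observation} ensures that the membership condition is independent of the chosen representative $s$. This proves the first claim.

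\emph{Second claim.} If $\widehat{\mathcal K}^{\mathrm{ph}}_{\Lambda,\chi}=\{0\}$, then $\Ker\widehat J^{\mathrm{rel}}_{\Lambda,\chi}=\{0\}$ on $\widehat\Phi_{\Lambda,\chi}$. \Cref{thm:local-quotient-cleaning} then gives two conclusions:
\[
J^{\mathrm{rel}}_{\Lambda,\chi}s=0 \quad\Longrightarrow\quad s\in\mathcal G_{\Lambda,\chi},
\]
together with the coercive bound $|\widehat J^{\mathrm{rel}}_{\Lambda,\chi}\widehat s|\ge c_{\Lambda,\chi}|\widehat s|$. Hence localization, including the nonperturbative case of \cref{prop:fredholm-nonperturbative-localization} whose kernel lies inside $\Phi_{\Lambda,\chi}$, together with harmonic leakage, produces no true relaxed-invisible phantom.

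The only point needing care is a precise meaning for ``candidate localized true phantom.'' I would fix it as above: nonzero, strict-invisible, raw relaxed-invisible, and not gauge-null. One should also note that raw relaxed visibility with a nonzero observation already excludes phantom status, so restricting attention to raw-kernel vectors loses nothing. Everything else is tautological given the stated results.
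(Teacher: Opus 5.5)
Your argument is correct and is essentially the paper's: the corollary is stated without a separate proof and is read off from \cref{prop:gauge-cleaning-dichotomy}, \cref{thm:local-quotient-cleaning}, and the identity $\widehat{\mathcal K}^{\mathrm{ph}}_{\Lambda,\chi}=\Ker\widehat J^{\mathrm{rel}}_{\Lambda,\chi}$ on $\widehat\Phi_{\Lambda,\chi}$, which is exactly what you do.

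One small correction concerns your closing remark. A nonzero raw observation does not by itself exclude phantom status, since it could be pure harmonic leakage $\partial_3 h$. The right reason that restricting to raw-kernel vectors loses nothing is a two-case split. Either $\Pi_{\mathrm{act}}J^{\mathrm{rel}}_{\Lambda,\chi}s\neq0$, so $\widehat J^{\mathrm{rel}}_{\Lambda,\chi}\widehat s\neq0$. Or $J^{\mathrm{rel}}_{\Lambda,\chi}s\in\mathcal W^{\mathrm{har}}_{\Lambda,\chi}$, so $s\in\mathcal G^{\mathrm{har}}_{\Lambda,\chi}$ is gauge-null.
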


\begin{remark}[Operational test]\label{rem:operational-test-cleaning}
Given a candidate localized Schur vector \(s\), the cleaning procedure is:
\begin{enumerate}[label=(\roman*)]
\item compute a localized pressure representative \(\pi_s\);
\item compute the raw relaxed residual \(\partial_3\pi_s\);
\item decompose \(\partial_3\pi_s\) into harmonic leakage, vertical-zero part, and active part;
\item discard \(s\) if its active class \(\widehat s\) vanishes;
\item if \(\widehat s\neq0\), test whether
\[
        \widehat J_{\Lambda,\chi}^{\mathrm{rel}}\widehat s=0.
\]
Only such a nonzero cleaned kernel element should be passed to the Navier--Stokes realizability filter.
\end{enumerate}
\end{remark}

\subsection{NS-realizability filter for cleaned localized phantom candidates}\label{subsec:NS-realizability-filter-cleaned-phantoms}

After periodic active visibility, commutator stability, and harmonic-gauge cleaning, the only remaining candidates are nonzero elements of the cleaned localized true phantom space
\[
        \widehat{\mathcal K}_{\Lambda,\chi}^{\mathrm{ph}}
        =
        \Ker \widehat J_{\Lambda,\chi}^{\mathrm{rel}}
        \cap
        \widehat\Phi_{\Lambda,\chi}.
\]
Such a vector is still only a finite-dimensional quotient object.  It is not yet a Navier--Stokes obstruction.  To become dangerous, it must be produced by a finite-stage Navier--Stokes deformation with small vertical budget.

Let \((V,Q)\) be a strict shadow.  A finite-stage full Navier--Stokes jet of order \(K\) is a collection
\[
        \mathfrak J_K
        =(r_1,\ldots,r_K;\ z_1,\ldots,z_K;\ \pi_1,\ldots,\pi_K),
\]
where
\[
        u_h^\varepsilon
        =V_h+\sum_{j=1}^K\varepsilon^j r_j,
        \qquad
        u_3^\varepsilon
        =\sum_{j=1}^K\varepsilon^j z_j,
        \qquad
        p^\varepsilon
        =Q+\sum_{j=1}^K\varepsilon^j\pi_j.
\]
The admissible finite-stage NS jet space is denoted
\[
        \mathcal A_K^{\mathrm{NS}}(V,Q).
\]
It consists of all \(\mathfrak J_K\) satisfying, for \(1\le j\le K\), the finite-stage incompressibility equation
\begin{equation}\label{eq:finite-stage-incompressibility}
        \nabla_h\cdot r_j+\partial_3 z_j=0,
\end{equation}
the vertical momentum jet equation
\begin{equation}\label{eq:vertical-momentum-jet}
        L_V z_j+\partial_3\pi_j
        =F_j^3(\mathfrak J_{<j}),
\end{equation}
and the horizontal momentum jet equation
\begin{equation}\label{eq:horizontal-momentum-jet}
        L_V^h r_j+\nabla_h\pi_j+z_j\partial_3V_h
        =F_j^h(\mathfrak J_{<j}).
\end{equation}
Here \(F_j^3\) and \(F_j^h\) are the lower-order quadratic forcing terms determined by the preceding jets.  The pressure representatives are fixed modulo the local harmonic gauge.

Let
\[
        \mathcal F_{K,\Lambda,\chi}^{\mathrm{NS}}
        :\mathcal A_K^{\mathrm{NS}}(V,Q)
        \longrightarrow
        F_{K,\Lambda,\chi}
\]
be the finite-window forcing package extracted from \(\mathfrak J_K\).  This package contains the lower-order nonlinear interactions that feed the stage-\(K\) Schur response.  Let
\[
        \widehat{\mathcal R}_{K,\Lambda,\chi}^{\mathrm{NS}}
        :\mathcal A_K^{\mathrm{NS}}(V,Q)
        \longrightarrow
        \widehat{\mathcal K}_{\Lambda,\chi}^{\mathrm{ph}}
\]
be the cleaned phantom realization map
\begin{equation}\label{eq:cleaned-phantom-realization-map}
        \widehat{\mathcal R}_{K,\Lambda,\chi}^{\mathrm{NS}}(\mathfrak J_K)
        =
        \widehat P_{\mathrm{ph}}
        \widehat B_{K,\Lambda,\chi}
        \widehat C_{K,\Lambda,\chi}
        \mathcal F_{K,\Lambda,\chi}^{\mathrm{NS}}(\mathfrak J_K),
\end{equation}
where \(\widehat P_{\mathrm{ph}}\) is the projection onto the cleaned true phantom space.

\begin{definition}[Finite-stage NS-realizable cleaned phantom]\label{def:finite-stage-NS-realizable-cleaned-phantom}
A cleaned localized phantom candidate
\[
        0\neq\widehat s\in
        \widehat{\mathcal K}_{\Lambda,\chi}^{\mathrm{ph}}
\]
is called finite-stage NS-realizable if there exists
\[
        \mathfrak J_K\in\mathcal A_K^{\mathrm{NS}}(V,Q)
\]
such that
\[
        \widehat{\mathcal R}_{K,\Lambda,\chi}^{\mathrm{NS}}(\mathfrak J_K)
        =\widehat s.
\]
It is called vertically budget-admissible at amplitude \(\varepsilon\) and vertical budget \(\delta\) if, in addition,
\[
        \left|
        \sum_{j=1}^K\varepsilon^j z_j
        \right|_{L^3}
        \le
        C\delta^{1/3}.
\]
\end{definition}

Define the finite-stage NS lift cost of \(\widehat s\) by
\[
        \operatorname{NSCost}_{K,\Lambda,\chi}(\widehat s;\varepsilon)
        =
        \inf
        \left\{
        \left|
        \sum_{j=1}^K\varepsilon^j z_j
        \right|_{L^3}
        :
        \mathfrak J_K\in\mathcal A_K^{\mathrm{NS}},
        \quad
        \widehat{\mathcal R}_{K,\Lambda,\chi}^{\mathrm{NS}}(\mathfrak J_K)=\widehat s
        \right\}.
\]
If no such jet exists, we set
\[
        \operatorname{NSCost}_{K,\Lambda,\chi}(\widehat s;\varepsilon)=+\infty.
\]

\begin{proposition}[Algebraic NS-realizability filter]\label{prop:algebraic-NS-realizability-filter}
Let
\[
        \widehat s\in
        \widehat{\mathcal K}_{\Lambda,\chi}^{\mathrm{ph}}.
\]
Then \(\widehat s\) is finite-stage NS-realizable only if
\[
        \widehat s
        \in
        \Range \widehat{\mathcal R}_{K,\Lambda,\chi}^{\mathrm{NS}}.
\]
Moreover, \(\widehat s\) is not finite-stage NS-realizable if there exists a dual certificate
\[
        \ell\in
        \left(\widehat{\mathcal K}_{\Lambda,\chi}^{\mathrm{ph}}\right)^*
\]
such that
\[
        \ell\bigl(\widehat{\mathcal R}_{K,\Lambda,\chi}^{\mathrm{NS}}(\mathfrak J_K)\bigr)=0
        \qquad
        \text{for every }
        \mathfrak J_K\in\mathcal A_K^{\mathrm{NS}}(V,Q),
\]
but
\[
        \ell(\widehat s)\neq0.
\]
If the finite-window realization has been reduced to a linear map on the relevant forcing package, this criterion is the usual finite-dimensional annihilator test for range membership.
\end{proposition}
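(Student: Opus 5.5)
The statement is essentially definitional, so I will prove its two assertions directly from Definition~\ref{def:finite-stage-NS-realizable-cleaned-phantom} and the realization map \eqref{eq:cleaned-phantom-realization-map}, followed by a short linear-algebra remark.

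\textbf{Necessity of range membership.} By definition, \(\widehat s\) is finite-stage NS-realizable exactly when some \(\mathfrak J_K\in\mathcal A_K^{\mathrm{NS}}(V,Q)\) satisfies \(\widehat{\mathcal R}_{K,\Lambda,\chi}^{\mathrm{NS}}(\mathfrak J_K)=\widehat s\). This says literally that \(\widehat s\) lies in the set-theoretic image \(\Range\widehat{\mathcal R}_{K,\Lambda,\chi}^{\mathrm{NS}}\). Note that \(\mathcal A_K^{\mathrm{NS}}\) is a nonlinear jet set, because the forcings \(F^3_j,F^h_j\) are quadratic in lower-order jets. So I will read \(\Range\) as an image set, not a subspace.

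\textbf{Dual certificate.} I argue by contradiction. Suppose \(\ell\) annihilates \(\widehat{\mathcal R}(\mathfrak J_K)\) for every admissible jet, yet \(\ell(\widehat s)\ne0\). If \(\widehat s\) were realizable, say \(\widehat s=\widehat{\mathcal R}(\mathfrak J_K)\), then \(\ell(\widehat s)=\ell(\widehat{\mathcal R}(\mathfrak J_K))=0\), which contradicts \(\ell(\widehat s)\ne0\). This uses only that \(\ell\) is a function on \(\widehat{\mathcal K}^{\mathrm{ph}}_{\Lambda,\chi}\); no linearity of \(\widehat{\mathcal R}\) is needed. The argument also shows the certificate kills the linear span of the image, so the test is sound but generally not complete for nonlinear images.

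\textbf{Linear reduction and completeness.} Assume the realization factors through a linear map on the forcing package. That is, \(\widehat{\mathcal R}=\widehat P_{\mathrm{ph}}\widehat B\widehat C\circ\mathcal F^{\mathrm{NS}}\), with \(L:=\widehat P_{\mathrm{ph}}\widehat B\widehat C\) linear on a subspace of attainable packages, so that the image is \(\Range L\), a subspace of the finite-dimensional space \(\widehat{\mathcal K}^{\mathrm{ph}}\). Then I invoke the standard annihilator identity \((\Range L)^{\perp\perp}=\Range L\) in finite dimensions, which is the same fact as \((\Range A)^\perp=\Ker A^*\) used in Lemma~\ref{lem:trace-cost-duality} and Proposition~\ref{prop:finite-window-antiphantom}. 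It gives that \(\widehat s\notin\Range L\) if and only if some \(\ell\in(\Range L)^\perp\) has \(\ell(\widehat s)\ne0\). Concretely, I would take \(\ell=\langle\cdot,P_{(\Range L)^\perp}\widehat s\rangle\).

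\textbf{Main obstacle.} The only delicate point is the last step. I must make the hypothesis explicit that the attainable forcing packages form a linear space, or else pass to their span. Without that, a certificate is still sufficient for non-realizability but no longer necessary, and I will state the completeness claim only under the linear-reduction hypothesis.
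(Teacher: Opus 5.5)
Your proposal is correct and follows the paper's proof essentially line for line: range membership is read off from the definition, the certificate argument is the same one-line contradiction, and the linear case is the annihilator identity \(\Range R=(\Ker R^*)^\perp\). Your extra remark is a useful sharpening of a point the paper leaves implicit: the certificate test is always sound, but it is complete only when the attainable image is a linear subspace.
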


\begin{proof}
The first statement is the definition of finite-stage NS-realizability.  For the dual certificate, if \(\widehat s\) were realizable, then there would be some admissible jet \(\mathfrak J_K\) such that
\[
        \widehat{\mathcal R}_{K,\Lambda,\chi}^{\mathrm{NS}}(\mathfrak J_K)=\widehat s.
\]
Applying \(\ell\) would give \(\ell(\widehat s)=0\), contradicting the assumed strict separation \(\ell(\widehat s)\neq0\).  In the special case where the realization map is linear on a finite-dimensional forcing space, this is exactly the standard identity
\[
        \Range R=(\Ker R^*)^\perp.
\]
\end{proof}

\begin{proposition}[Vertical-budget filter]\label{prop:vertical-budget-filter}
Let
\[
        0\neq\widehat s\in
        \widehat{\mathcal K}_{\Lambda,\chi}^{\mathrm{ph}}.
\]
If
\[
        \operatorname{NSCost}_{K,\Lambda,\chi}(\widehat s;\varepsilon)
        > C\delta^{1/3},
\]
then \(\widehat s\) cannot be realized by a finite-stage Navier--Stokes deformation satisfying
\[
        |u_3^\varepsilon|_{L^3}^3\le\delta.
\]
In particular, a candidate cleaned phantom is harmless for the one-component degeneration problem unless its NS lift cost is compatible with the available \(C_3\)-budget.
\end{proposition}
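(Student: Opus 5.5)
The statement is Proposition~\ref{prop:vertical-budget-filter}. I would prove it by contraposition, directly from the definition of \(\operatorname{NSCost}_{K,\Lambda,\chi}\) as an infimum.

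Suppose \(\widehat s\) is realized by a finite-stage Navier--Stokes deformation with \(|u_3^\varepsilon|_{L^3}^3\le\delta\). By Definition~\ref{def:finite-stage-NS-realizable-cleaned-phantom}, this means there is a jet \(\mathfrak J_K\in\mathcal A_K^{\mathrm{NS}}(V,Q)\) with \(\widehat{\mathcal R}_{K,\Lambda,\chi}^{\mathrm{NS}}(\mathfrak J_K)=\widehat s\), and its vertical component is \(u_3^\varepsilon=\sum_{j\le K}\varepsilon^jz_j\).

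Taking cube roots in the budget gives
\[
|u_3^\varepsilon|_{L^3}\le\delta^{1/3}.
\]
This jet is admissible in the infimum that defines \(\operatorname{NSCost}_{K,\Lambda,\chi}(\widehat s;\varepsilon)\). Hence
\[
\operatorname{NSCost}_{K,\Lambda,\chi}(\widehat s;\varepsilon)\le |u_3^\varepsilon|_{L^3}\le\delta^{1/3}\le C\delta^{1/3},
\]
provided \(C\ge1\). This contradicts the hypothesis \(\operatorname{NSCost}>C\delta^{1/3}\). If instead no realizing jet exists at all, the cost is \(+\infty\) by convention and there is nothing to prove.

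The only point needing care is the normalization of \(C\). The constant in the statement must be the same one as in the budget-admissibility definition, and it must satisfy \(C\ge1\); equivalently, the norm used in \(\operatorname{NSCost}\) must be the \(L^3\) norm on the same localized cylinder as the budget. I would state explicitly that the cost and the budget are measured in the same \(L^3(Q)\) norm, so that the cube-root step is exact.

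The concluding remark in the statement, that the candidate is harmless unless its cost fits the \(C_3\)-budget, is then a restatement. Any NS-realizable phantom relevant to the degeneration problem must be budget-admissible, and therefore must have cost at most \(C\delta^{1/3}\).
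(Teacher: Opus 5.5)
Your argument is correct and matches the paper's proof: a realizing jet with \(|u_3^\varepsilon|_{L^3}^3\le\delta\) is an admissible competitor in the infimum defining \(\operatorname{NSCost}_{K,\Lambda,\chi}(\widehat s;\varepsilon)\), so the cost is at most \(\delta^{1/3}\), which contradicts the strict lower bound. Your explicit requirement that \(C\ge1\), with the cost and the budget measured in the same \(L^3\) norm, is what the paper leaves implicit in the phrase ``up to the fixed normalization constant.''
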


\begin{proof}
If a finite-stage NS deformation realizes \(\widehat s\) and satisfies
\[
        |u_3^\varepsilon|_{L^3}^3\le\delta,
\]
then
\[
        |u_3^\varepsilon|_{L^3}\le\delta^{1/3}.
\]
Since
\[
        u_3^\varepsilon
        =\sum_{j=1}^K\varepsilon^j z_j
\]
at the finite stage, the defining infimum of \(\operatorname{NSCost}_{K,\Lambda,\chi}\) must be bounded by the same vertical budget up to the fixed normalization constant.  This contradicts the assumed strict cost lower bound.
\end{proof}

\begin{definition}[NS-admissible cleaned true phantom]\label{def:NS-admissible-cleaned-true-phantom}
A nonzero cleaned phantom
\[
        \widehat s\in
        \widehat{\mathcal K}_{\Lambda,\chi}^{\mathrm{ph}}
\]
is called NS-admissible at stage \(K\), window \(\Lambda\), cutoff \(\chi\), amplitude \(\varepsilon\), and budget \(\delta\) if
\[
        \widehat s\in
        \Range \widehat{\mathcal R}_{K,\Lambda,\chi}^{\mathrm{NS}}
\]
and
\[
        \operatorname{NSCost}_{K,\Lambda,\chi}(\widehat s;\varepsilon)
        \le
        C\delta^{1/3}.
\]
Only such directions are allowed to pass from the cleaned finite-dimensional phantom test to the cascade-level analysis.
\end{definition}

\begin{theorem}[Finite-stage NS-realizability filter]\label{thm:finite-stage-NS-realizability-filter}
Let
\[
        0\neq \widehat s\in
        \widehat{\mathcal K}_{\Lambda,\chi}^{\mathrm{ph}}
\]
be a cleaned localized relaxed-invisible Schur phantom candidate.  Then exactly one of the following alternatives holds.
\begin{enumerate}[label=(\roman*)]
\item \emph{Algebraic unrealizability.}  There is no admissible finite-stage NS jet producing \(\widehat s\):
\[
        \widehat s\notin
        \Range \widehat{\mathcal R}_{K,\Lambda,\chi}^{\mathrm{NS}}.
\]
Then \(\widehat s\) is a quotient artifact and cannot obstruct Navier--Stokes quantitative one-component regularity.

\item \emph{Budget obstruction.}  The candidate is algebraically realizable but its minimal vertical lift cost exceeds the available one-component budget:
\[
        \operatorname{NSCost}_{K,\Lambda,\chi}(\widehat s;\varepsilon)
        > C\delta^{1/3}.
\]
Then \(\widehat s\) cannot arise along a small-\(u_3\) degeneration.

\item \emph{NS-admissible phantom candidate.}  The candidate is algebraically realizable and vertically budget-admissible:
\[
        \widehat s\in
        \Range \widehat{\mathcal R}_{K,\Lambda,\chi}^{\mathrm{NS}},
        \qquad
        \operatorname{NSCost}_{K,\Lambda,\chi}(\widehat s;\varepsilon)
        \le
        C\delta^{1/3}.
\]
Only in this case does \(\widehat s\) remain a genuine candidate for a true phantom cascade.
\end{enumerate}
\end{theorem}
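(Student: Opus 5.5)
The plan is a case split along two nested binary questions. The aim is to show the three alternatives are exhaustive and mutually exclusive, and to justify the interpretive clauses by invoking Proposition~\ref{prop:algebraic-NS-realizability-filter} and Proposition~\ref{prop:vertical-budget-filter}.

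\emph{Step 1: the realizability split.} Fix the nonzero cleaned candidate \(\widehat s\in\widehat{\mathcal K}_{\Lambda,\chi}^{\mathrm{ph}}\). Either \(\widehat s\notin\Range\widehat{\mathcal R}_{K,\Lambda,\chi}^{\mathrm{NS}}\), or it lies in the range.

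In the first case, Definition~\ref{def:finite-stage-NS-realizable-cleaned-phantom} and Proposition~\ref{prop:algebraic-NS-realizability-filter} show that no admissible jet \(\mathfrak J_K\in\mathcal A_K^{\mathrm{NS}}(V,Q)\) produces \(\widehat s\). Consequently the admissible set in the definition of \(\operatorname{NSCost}_{K,\Lambda,\chi}(\widehat s;\varepsilon)\) is empty, so the cost is \(+\infty\). When the realization map is linear on the forcing package, a dual certificate \(\ell\) exists by \(\Range R=(\Ker R^*)^\perp\). This yields alternative (i). The phrase ``quotient artifact'' I will read as exactly this statement: no finite-stage NS deformation, and hence no \(\NS\)-derived surviving branch at stage \(K\), passes through \(\widehat s\).

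\emph{Step 2: the budget split.} Now suppose \(\widehat s\) is in the range. Then the infimum defining \(\operatorname{NSCost}\) runs over a nonempty set of finite real numbers, so it is a well-defined value in \([0,\infty)\). I then compare this real number with \(C\delta^{1/3}\).

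If it is strictly larger, Proposition~\ref{prop:vertical-budget-filter} applies directly: any realizing deformation with \(|u_3^\varepsilon|_{L^3}^3\le\delta\) would force the cost to be at most \(C\delta^{1/3}\), which is a contradiction. This gives (ii).

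Otherwise the cost is at most \(C\delta^{1/3}\), which is precisely Definition~\ref{def:NS-admissible-cleaned-true-phantom}, giving (iii).

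\emph{Step 3: exclusivity.} Range membership separates (i) from (ii) and (iii). The strict versus non-strict inequality on the same real number separates (ii) from (iii).

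\emph{Main obstacle.} The only delicate point is the normalization constant \(C\). In the budget filter, the constant \(C\) must be the same one that appears in Definition~\ref{def:finite-stage-NS-realizable-cleaned-phantom} and Definition~\ref{def:NS-admissible-cleaned-true-phantom}. Only then is the comparison in Step~2 a genuine trichotomy and not a gap between two constants. I will fix \(C\) once, as the norm-equivalence constant on the fixed finite window, and use it in all three places.

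I will also note explicitly what the statement does not claim. Attainment of the infimum is not needed. The cost is only at the finite stage \(K\). Whether an NS-admissible candidate extends to a genuine cascade is deferred to the later cascade analysis.
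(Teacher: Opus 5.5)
Your proposal is correct and is essentially the paper's own argument. It uses the same two-level case split: first on whether \(\widehat s\in\Range \widehat{\mathcal R}_{K,\Lambda,\chi}^{\mathrm{NS}}\), then on comparing the resulting finite lift cost with \(C\delta^{1/3}\), with exclusivity holding by construction. Your remarks on the dual certificate and on using a single constant \(C\) are harmless refinements; note that no norm equivalence is needed, since the budget and the cost are both measured in \(L^3\) and any \(C\ge1\) works.
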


\begin{proof}
If \(\widehat s\) is not in the range of the NS realization map, the first alternative holds.  If it is in the range, then the lift cost is finite.  If the lift cost exceeds the available vertical budget, the second alternative holds.  Otherwise the lift cost is compatible with the budget and the third alternative holds.  The alternatives are exhaustive and mutually exclusive by construction.
\end{proof}

\begin{remark}[Relation with vertical trace-projectability]\label{rem:NS-filter-trace-projectability}
The NS-realizability filter is weaker than proving vertical trace-projectability.  It only asks whether a cleaned phantom can be produced by the finite-stage NS deformation complex with acceptable vertical cost.  Vertical trace-projectability asks for more: it asks whether the vertical contribution can be traded for a selected-time strict trace correction.  In finite-stage notation, the full balance has the form
\[
        g_K+A_K\xi+B_Kz+N_K(\xi,z)=0.
\]
Strict trace corrections only see \(\Range A_K\).  Thus the dangerous part is
\[
        P_{\Ker A_K^*}(B_Kz+N_K).
\]
If this term vanishes, or has controlled trace cost, then the candidate is not a true obstruction.  If it is nonzero, cleaned, relaxed-invisible, and NS-admissible, it must be passed to the moving-window cascade analysis.
\end{remark}

\begin{corollary}[What remains after the finite-stage NS-realizability filter]\label{cor:what-remains-after-step5}
After active periodic visibility, localized commutator stability, harmonic-gauge cleaning, and the NS-realizability filter, the only remaining obstruction is a sequence
\[
        (K_n,\Lambda_n,\chi_n,\varepsilon_n,\delta_n,\widehat s_n)
\]
such that
\[
        0\neq\widehat s_n
        \in
        \widehat{\mathcal K}_{\Lambda_n,\chi_n}^{\mathrm{ph}},
\]
\[
        \widehat s_n
        \in
        \Range \widehat{\mathcal R}_{K_n,\Lambda_n,\chi_n}^{\mathrm{NS}},
\]
and
\[
        \operatorname{NSCost}_{K_n,\Lambda_n,\chi_n}(\widehat s_n;\varepsilon_n)
        \le
        C\delta_n^{1/3}.
\]
Such a sequence is an NS-admissible true phantom cascade candidate.  It is the first object in the program that could genuinely threaten quantitative one-component regularity.
\end{corollary}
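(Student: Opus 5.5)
The statement is Corollary~\ref{cor:what-remains-after-step5}. I would prove it as a bookkeeping consequence of the four filters already established. The logic is contrapositive: if we start from an arbitrary failed-selection branch and discard every alternative that the earlier results show to be harmless, whatever survives must have the stated form. Fix, for each index \(n\) of the failed branch, the data \((K_n,\Lambda_n,\chi_n,\varepsilon_n,\delta_n)\) produced by the finite-window reduction. Let \(s_n\in\Phi_{\Lambda_n,\chi_n}\) be the associated Schur defect. If \(s_n=0\) for all large \(n\), there is nothing to prove. Otherwise I pass to a subsequence along which the defect is nonzero.

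\textbf{Step 1: active, localized and cleaned filters.} I run the filters in order.

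First, by Theorem~\ref{thm:finite-window-active-Schur-visibility} and Proposition~\ref{prop:kernel-classification-pre-active}, any nonzero active component of \(s_n\) in the clean periodic model is relaxed-visible. Such a component is therefore controlled by the \(\delta^{1/3}\) pairing bound \eqref{eq:Schur-rel-bound} and is not an obstruction.

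Second, in the localized setting I apply Theorem~\ref{thm:localized-commutator-stability} when the commutator is perturbative. Otherwise I apply Proposition~\ref{prop:fredholm-nonperturbative-localization}. Either way, any relaxed-invisible direction lies in the finite-dimensional kernel \(\mathcal K^{\mathrm{loc}}_{\Lambda_n,\chi_n}\).

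Third, Proposition~\ref{prop:gauge-cleaning-dichotomy} and Theorem~\ref{thm:local-quotient-cleaning} split each such raw kernel vector into two cases. Either it is gauge-null (\(\widehat s_n=0\)), and then it is discarded as harmonic leakage or a vertical-zero mode. Or it defines \(0\neq\widehat s_n\in\widehat{\mathcal K}^{\mathrm{ph}}_{\Lambda_n,\chi_n}\). This gives the first membership claimed in the corollary.

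\textbf{Step 2: NS-realizability and budget.} For the surviving \(\widehat s_n\), I invoke the trichotomy of Theorem~\ref{thm:finite-stage-NS-realizability-filter}.

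Alternative (i) is algebraic unrealizability, certified by Proposition~\ref{prop:algebraic-NS-realizability-filter}. It means \(\widehat s_n\) is not produced by any admissible jet in \(\mathcal A^{\mathrm{NS}}_{K_n}\). Hence \(\widehat s_n\) is not the defect of an NS-derived branch, contradicting how it was generated. Alternative (ii) is excluded by Proposition~\ref{prop:vertical-budget-filter}, because the branch comes from a degeneration with \(\norm{u_3}{L^3}^3\le\delta_n\). Only alternative (iii) remains. It gives both \(\widehat s_n\in\Range\widehat{\mathcal R}^{\mathrm{NS}}_{K_n,\Lambda_n,\chi_n}\) and \(\operatorname{NSCost}\le C\delta_n^{1/3}\). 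Collecting the three memberships along the subsequence yields the sequence of the corollary.

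\textbf{Main obstacle.} The delicate point is to show that the defect actually fed into the filters is the NS-realized one. I need \(\widehat s_n\) to equal \(\widehat{\mathcal R}^{\mathrm{NS}}(\mathfrak J_{K_n})\) for the jet coming from the branch itself, so that alternative (i) genuinely contradicts the origin of \(\widehat s_n\) rather than being a mere possibility.

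The first thing I would try is to make this a standing identification. The defect would be defined through the realization map \eqref{eq:cleaned-phantom-realization-map} applied to the branch's own finite-stage jet, with the vertical budget inherited from \(C_3\). The corollary would then be stated as a reduction conditional on this identification, consistent with the paper's non-claim table.

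A secondary issue is uniformity across \(n\). The constants \(\gamma_{\Lambda_n}\), \(c_{\Lambda_n,\chi_n}\) and the leakage bounds depend on the window and cutoff. The corollary therefore only isolates a candidate cascade; it makes no assertion that any such cascade exists.
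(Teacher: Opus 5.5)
Your proposal is correct and follows the paper's argument, which the paper leaves unwritten. The corollary is the chain of active visibility, localized stability or the Fredholm kernel, gauge cleaning (Corollary~\ref{cor:reduction-cleaned-finite-kernel}), and then the trichotomy of Theorem~\ref{thm:finite-stage-NS-realizability-filter} applied window by window, keeping only alternative (iii). The one difference is that the paper simply discards alternative (i) as a quotient artifact that cannot obstruct Navier--Stokes, so the identification of $\widehat s_n$ with the branch's own realized defect that you flag as the main obstacle is not needed.
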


\subsection{Moving-window filters and controlled regimes}
After active visibility, localization stability, harmonic-gauge cleaning, and the NS-realizability filter, a possible failure must move through a sequence of windows
\[
  W_n=(K_n,\Lambda_n,\chi_n)
\]
with deteriorating visibility, trace-cost, localization, or NS-lift constants. The moving-window analysis tests five possible escape mechanisms:
\begin{enumerate}[label=(\roman*)]
\item quotient conditioning collapse;
\item nonperturbative localization;
\item extreme aspect ratio;
\item trace-cost singular collapse;
\item an NS-lift anomaly compatible with the \(C_3\)-budget.
\end{enumerate}
In the active, aspect-controlled, perturbatively localized regime with at most single-exponential trace-cost growth, these losses are polynomial or logarithmically absorbable. Thus a true phantom cascade must leave the controlled regime. The detailed moving-window estimates, including forcing-to-vertical coercivity, homogeneous-tail testing, and trace-cost singular collapse, are recorded in \Cref{app:moving-window}.

\begin{theorem}[Controlled-regime anti-phantom closure]
In a cleaned localized finite-window regime satisfying combined observability, NS residual representation, perturbative localization, harmonic-gauge cleaning, homogeneous-tail observability, and controlled trace-cost growth, every NS-derived surviving residual is either selected-time trace-projectable with trace cost \(O(r_n^2)\), or relaxed-visible through the vertical-pressure channel and controlled by the smallness of \(u_3\). There is no third possibility corresponding to a cleaned, NS-realizable, relaxed-invisible true phantom.
\end{theorem}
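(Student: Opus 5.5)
The plan is to read the controlled-regime theorem as a finite-dimensional trichotomy at each stage, and to rule out the third branch by chaining the filters already proved. Fix an NS-derived surviving residual at stage \(K_n\) in the window \(W_n=(K_n,\Lambda_n,\chi_n)\). The first step splits its phantom part against the strict trace range. By Proposition~\ref{prop:trace-projectable-realizability}, the finite-stage balance \(g_K+A_K\xi+B_Kz+\mathcal N_K=0\) reduces the question to the phantom projection \(P_{\Ker A_K^*}(B_KC_KF_K+\mathcal N_K)\). If this projection vanishes, Proposition~\ref{prop:Schur-criterion} gives range inclusion. Lemma~\ref{lem:trace-cost-duality} then turns the minimal preimage into a trace-cost bound. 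Under the controlled trace-cost growth hypothesis, the Moore--Penrose norm \(\|A_{K_n}^\dagger\|\) grows at most single-exponentially in the stage, so the branch-native scale absorbs it and yields \(\Cost^{\trc}\le Cr_n^2\). This is the first alternative.

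If the phantom projection \(s_n\in\Phi_{\Lambda_n,\chi_n}\) is nonzero, the second step shows it is relaxed-visible. Start from Theorem~\ref{thm:finite-window-active-Schur-visibility}, which gives the clean lower bound \(\gamma_{\Lambda_n}\) with \(\gamma_{\Lambda}=\min_\sigma\mu_\sigma\ge1\) in the coefficient norm, since \(\mu_\sigma=(K_\sigma^2+m_\sigma^2)/K_\sigma^2\ge1\). The aspect-ratio control supplies the upper bound \(\max\mu_\sigma\), together with the polynomial norm-equivalence losses. Next apply the perturbative-localization hypothesis through Lemma~\ref{lem:finite-window-commutator-bound} and Theorem~\ref{thm:localized-commutator-stability}, which keep the bound at \(\tfrac12\gamma_{\Lambda_n}\). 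Harmonic-gauge cleaning via Theorem~\ref{thm:local-quotient-cleaning} then removes \(\mathcal G_{\Lambda,\chi}\). The result is \(|\widehat J^{\rm rel}\widehat s_n|\ge c_n|\widehat s_n|\) with \(c_n\) at worst polynomially degenerate. The Hölder argument \eqref{eq:Schur-rel-Holder}--\eqref{eq:Schur-rel-bound} then controls the pairing with \(u_3\) by \(C_n\eta^{K+1}\delta^{1/3}|s_n|\). This is the second alternative, and it includes the gauge-null case, which contributes nothing.

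To exclude the third alternative, suppose some \(\widehat s_n\ne0\) is cleaned, NS-realizable and relaxed-invisible. Then it lies in \(\widehat{\mathcal K}^{\rm ph}_{\Lambda_n,\chi_n}\). But the cleaned observation is coercive, so this kernel is trivial, which is a contradiction. The NS residual representation \eqref{eq:reduced-residual-representation} is what guarantees that every surviving residual is actually of the form \(B_KC_KF_K\) plus commutator terms. Combined observability and homogeneous-tail observability ensure that the commutator and tail pieces either fall into \(\Range A_K\) with controlled cost or are detected by \(J^{\rm rel}\). Without these two hypotheses, a residual component outside both channels could survive.

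The main obstacle is uniformity along the moving windows \(W_n\): each constant above (\(\gamma_{\Lambda_n}\), the commutator bound \(C_{\Lambda_n}/R\), the cleaning constant \(c_{\Lambda_n,\chi_n}\), and \(\|A^\dagger_{K_n}\|\)) is only finite-dimensional. My first attempt would track each one explicitly as a function of the window size \(|\Lambda_n|\) and the aspect ratio, taking the growth rates from the appendix estimates. I would then check that their product is dominated by a fixed power of \(r_n^{-1}\) or by \(\log(1/\delta_n)\), so that it can be absorbed into the branch-native scale. If the cleaning constant \(c_{\Lambda,\chi}\), which comes only from compactness of the unit sphere, resists quantification, I would replace it with a perturbative bound: treat the harmonic-leakage projection as a small perturbation of the diagonal active symbol and apply the same argument as in Theorem~\ref{thm:localized-commutator-stability}.
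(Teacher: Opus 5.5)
Your first alternative does not follow from the mechanism you use. Controlled growth of \(\|A_{K_n}^\dagger\|\) only gives \(\Cost^{\trc}(g_n)\le\|A_{K_n}^\dagger\|^2\norm{g_n}{}^2\). Nothing in your argument makes \(\norm{g_n}{}\lesssim r_n\|A_{K_n}^\dagger\|^{-1}\); the reduced residual \eqref{eq:gK-def} is normalized by \(\eta^{-(K+1)}\) and need not be small. So ``the branch-native scale absorbs it'' has no content. Proposition~\ref{prop:Schur-criterion} itself lists a bound on the minimal preimage as a separate requirement beyond range inclusion. Where the paper does absorb a single-exponential \(\|A^\dagger\|\), in Theorem~\ref{thm:aspect-controlled-moving-window-exclusion}, it needs a residual of size \(\delta_n^b\) and logarithmically admissible windows, and it gets a bound below the selection gap, not \(Cr_n^2\).

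The missing input is the quantitative NS residual representation of the controlled regime: \(\ip{g_n}{y}=\mathcal T_n(y)+\mathcal R_n(y)\) with \(|\mathcal T_n(y)|\le Cr_n\norm{A_n^*y}{H_n}\) and \(|\mathcal R_n(y)|\le Cr_n\norm{J_n^{\rel}P_ny}{W_n}+o(r_n)\norm{y}{Y_n}\). This is an alignment condition on the residual. Lemma~\ref{lem:trace-cost-duality} turns it into \(\Cost^{\trc}\le Cr_n^2\) however small the singular values of \(A_n\) are, which is the point of the trace-cost singular-collapse test. You use the residual representation only qualitatively, through \eqref{eq:reduced-residual-representation}, which carries no \(r_n\)-bounds.

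Your exclusion of the third alternative has a related problem. For each fixed window, the chain through Theorems~\ref{thm:finite-window-active-Schur-visibility}, \ref{thm:localized-commutator-stability} and \ref{thm:local-quotient-cleaning} at best rules out an \emph{exactly} invisible nonzero \(\widehat s_n\). Its last step only supplies a compactness constant \(c_{\Lambda,\chi}>0\). The claim that \(c_n\) is at worst polynomially degenerate is not derived; this is the obstacle you flag yourself, and your perturbative fallback adds a hypothesis rather than proving one. The phantom the theorem excludes is asymptotic: a normalized \(y_n\) with \(M_n(\norm{A_n^*y_n}{}+\norm{J_n^{\rel}P_ny_n}{})\to0\) and \(|\ip{g_n}{y_n}|\gtrsim r_n\). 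Without uniform constants your argument does not reach it.

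What you are missing is that the combined-observability hypothesis, \(\norm{y}{Y_n}\le M_n(\norm{A_n^*y}{H_n}+\norm{J_n^{\rel}P_ny}{W_n})\) with admissible \(M_n\), \emph{is} the exclusion: it contradicts \(\norm{y_n}{}=1\) directly. With it and the quantitative residual representation, the paper's proof is a few lines:
\begin{enumerate}
\item adding the bounds on \(\mathcal T_n\) and \(\mathcal R_n\) gives the relaxed anti-phantom estimate;
\item combined observability removes the doubly invisible direction;
\item where \(J_n^{\rel}P_ny=0\) forces \(P_ny=0\), the strict estimate plus Lemma~\ref{lem:trace-cost-duality} gives the \(O(r_n^2)\) cost.
\end{enumerate}
Your attempt to rebuild these two hypotheses from the fixed-window results, with its uniformity problem, is exactly what the theorem assumes rather than proves.
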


\begin{proof}
The proof is the combined observability argument. If a normalized dual vector \(y_n\) is invisible to the selected trace map \(A_n^*\), then its uncontrolled component lies in the cleaned Schur phantom quotient. If it is not invisible to the relaxed map \(J_n^{\rel}P_n\), its pairing is controlled by the vertical-pressure term and the smallness of \(u_3\). If it is invisible to both, then it is exactly a cleaned relaxed-invisible left-singular direction. The controlled-regime hypotheses exclude such a direction. Trace-cost duality then gives the range inclusion and the cost bound for the trace-projectable part.
\end{proof}

\section{Final anti-phantom dichotomy}

\subsection{Primitive observables versus compatibility among observables}

The observable atlas already contains the raw ingredients of this mechanism: the vertical momentum residual \(\calM_3\), the trace-cost map \(A_K:H_K\to Y_K\), the phantom quotient \(\ker A_K^*\), the relaxed vertical residual \((0,0,\partial_3\pi)\), and the pairing \(\int u_3\partial_3\pi\).  What is not contained in the old closure logic is the composite map
\begin{equation}\label{eq:composite-Schur-map}
        s
        =P_{\ker A_K^*}B_KC_KF_K
        \quad\longmapsto\quad
        J_K^{\rm rel}s\sim \partial_3\pi_s
        \quad\longmapsto\quad
        \int u_3\,J_K^{\rm rel}s.
\end{equation}
Thus the mechanism is best understood as a visibility transfer.  It transfers a strict-trace-invisible Schur component into a relaxed vertical-pressure component.  The strict route requires the stronger cancellation
\begin{equation}\label{eq:strict-Schur-cancellation-summary}
        P_{\ker A_K^*}B_KC_KF_K=0,
\end{equation}
whereas the relaxed route only needs the non-cancelled part to be observed by \(J_K^{\rm rel}\) and then controlled through the smallness of \(u_3\).

\subsection{The Schur-to-relaxed visibility principle}

The active periodic two-mode calculation in \Cref{subsec:two-mode-relaxed-visibility} shows the model form of this principle.  In the two-mode Schur space, the relaxed observation map is diagonal:
\[
        J^{\rm rel}s=(\mu_1s_1,\mu_2s_2),
        \qquad
        \mu_i=\frac{K_i^2+m_i^2}{K_i^2}.
\]
On active modes, \(K_i\ne0\) and \(m_i\ne0\), so \(J^{\rm rel}\) is injective.  Hence a nonzero Schur defect is not a true phantom in the relaxed sense.  It is strict-invisible but relaxed-visible.  The relaxed energy contribution is
\[
        \eta^{K+1}\int u_3J^{\rm rel}s,
\]
and the basic estimate is
\begin{equation}\label{eq:relaxed-visible-bound-summary}
        \left|\eta^{K+1}\int u_3J^{\rm rel}s\right|
        \le
        C_\Lambda\eta^{K+1}\delta^{1/3}|s|.
\end{equation}
This is exactly the type of estimate unavailable to the strict trace-cost map but natural for the full Navier--Stokes vertical momentum equation.

\subsection{Consequences for the strict and relaxed routes}

The strict route asks for range inclusion of the reduced residual:
\[
        g_K\in \Range A_K,
        \qquad\text{equivalently}\qquad
        P_{\ker A_K^*}g_K=0.
\]
The Schur computation shows that this inclusion is not a generic consequence of the quadratic horizontal pressure forcing.  It is a genuine structural condition.  By contrast, the relaxed route does not need to trade every vertical contribution for a strict selected-time trace correction.  It may keep \(\partial_3\pi\ne0\) and control its contribution through \(u_3\).  Therefore the genuinely dangerous obstruction is not a strict Schur defect itself, but a relaxed-true phantom component:
\begin{equation}\label{eq:true-phantom-summary}
        s\in\ker A_K^*,
        \qquad
        J_K^{\rm rel}s=0,
        \qquad
        s\ne0.
\end{equation}
The evidence from the active periodic two-mode model is that such true phantoms are absent once horizontal gauge modes and vertical-zero modes are removed.  Proving a localized finite-window version of this assertion is a sharper and weaker target than proving strict Schur cancellation.

\subsection{Final anti-phantom dichotomy}\label{subsec:final-anti-phantom-dichotomy}

We now collect the preceding exclusions into a single dichotomy.  This subsection is not a new
independent estimate.  It is the logical closure of the sequence of tests carried out above: active
Schur visibility, perturbative localization, harmonic-gauge cleaning, moving-window control,
forcing-to-vertical coercivity, homogeneous-tail observability, and trace-cost singular alignment.

The conclusion is that the quantitative one-component rate problem has been reduced to one final
object.  Either the surviving residuals satisfy relaxed anti-phantom closure, in which case the
conditional logarithmic route closes, or there exists a genuinely NS-realizable, cleaned,
relaxed-invisible, unaligned left-singular phantom cascade.

\begin{definition}[NS-realizable cleaned relaxed-invisible left-singular cascade]
Let
\[
        \mathfrak W_n=(K_n,\Lambda_n,\chi_n)
\]
be a sequence of cleaned localized finite-stage windows, let
\[
        A_n:H_n\to Y_n
\]
be the selected-time active trace-defect map, let
\[
        g_n\in Y_n
\]
be the NS-derived surviving residual at branch-native scale \(r_n\), and let
\[
        P_n:Y_n\to \Phi_n^{\mathrm{cl}}
\]
be the cleaned phantom projection.  Let
\[
        J_n^{\mathrm{rel}}:\Phi_n^{\mathrm{cl}}\to W_n^{\mathrm{rel}}
\]
be the cleaned relaxed vertical-pressure observation, and let \(\mathfrak M_n\ge1\) be the combined-observability amplification allowed in the relevant moving-window regime.

A sequence \((\mathfrak W_n,g_n,y_n)\) is called an \emph{NS-realizable cleaned relaxed-invisible
left-singular cascade} if
\[
        \norm{y_n}{Y_n}=1,
        \qquad
        \mathfrak M_n\Big(
        \norm{A_n^*y_n}{H_n}
        +
        \norm{J_n^{\mathrm{rel}}P_ny_n}{W_n^{\mathrm{rel}}}
        \Big)\to0,
\]
while
\[
        |\langle g_n,y_n\rangle|\gtrsim r_n,
\]
and the residuals \(g_n\) are genuine NS-derived surviving residuals after all of the following
filters have been imposed:
\[
\text{finite-order strict obstruction removal},
\quad
\text{finite-order Schur obstruction removal},
\]
\[
\text{horizontal gauge, vertical-zero, and harmonic-pressure cleaning},
\]
\[
\text{localized vertical coercivity},
\quad
\text{homogeneous-tail observability},
\]
\[
\text{NS deformation-complex realizability},
\quad
C_3\text{-budget admissibility}.
\]
It is \emph{unaligned} because the residual pairing does not decay at the rate imposed by the
small singular geometry of the trace map.
\end{definition}

\begin{theorem}[Final anti-phantom dichotomy]\label{thm:final-anti-phantom-dichotomy}
Let \((u^{(n)},p^{(n)})\) be a one-component degeneration branch with
\[
        \Phi^{(n)}(1)\le M,
        \qquad
        C_3^{(n)}(1)=\delta_n\to0.
\]
Assume that the branch has passed through the preparation, good-time selection, finite-window
reduction, strict--Schur obstruction removal, harmonic-gauge cleaning, localized vertical coercivity,
homogeneous-tail testing, and NS-realizability filters described above.  Then exactly one of the
following alternatives occurs.

\medskip
\noindent\textbf{Alternative I: relaxed anti-phantom closure.}
There is no normalized dual sequence \(y_n\in Y_n\) satisfying
\[
        \mathfrak M_n\Big(
        \norm{A_n^*y_n}{H_n}
        +
        \norm{J_n^{\mathrm{rel}}P_ny_n}{W_n^{\mathrm{rel}}}
        \Big)\to0,
\]
while \(|\langle g_n,y_n\rangle|\gtrsim r_n\).  In this case every NS-derived surviving residual
satisfies the relaxed anti-phantom estimate
\begin{equation}\label{eq:final-relaxed-anti-phantom-estimate}
        |\langle g_n,y\rangle|
        \le
        C r_n\Big(
        \norm{A_n^*y}{H_n}
        +
        \norm{J_n^{\mathrm{rel}}P_ny}{W_n^{\mathrm{rel}}}
        \Big)
        +o(r_n)\norm{y}{Y_n}
\end{equation}
for all \(y\in Y_n\).  If the relaxed component is retained in the comparison class, the second term
in \eqref{eq:final-relaxed-anti-phantom-estimate} is controlled by the small vertical component.
If, in addition, the cleaned relaxed kernel is trivial on the surviving quotient, then the strict
trace-cost estimate follows:
\begin{equation}\label{eq:final-strict-trace-cost-estimate}
        |\langle g_n,y\rangle|
        \le
        C r_n\norm{A_n^*y}{H_n},
        \qquad y\in Y_n,
\end{equation}
and therefore, by finite-dimensional trace-cost duality,
\[
        g_n\in \Range A_n,
        \qquad
        \Cost^{\mathrm{tr}}_{A_n}(g_n)\le C r_n^2.
\]

\medskip
\noindent\textbf{Alternative II: true phantom cascade.}
There exists an NS-realizable cleaned relaxed-invisible left-singular cascade
\((\mathfrak W_n,g_n,y_n)\).  Equivalently,
\[
        \norm{y_n}{Y_n}=1,
        \qquad
        \mathfrak M_n\Big(
        \norm{A_n^*y_n}{H_n}
        +
        \norm{J_n^{\mathrm{rel}}P_ny_n}{W_n^{\mathrm{rel}}}
        \Big)\to0,
\]
but
\[
        |\langle g_n,y_n\rangle|\gtrsim r_n.
\]
This is the only remaining mechanism, in the present framework, capable of defeating the
trace-cost closure needed for a logarithmic selected-trace rate.
\end{theorem}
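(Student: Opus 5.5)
The argument is a compactness/contradiction argument in the finite-dimensional dual spaces \(Y_n\), followed by the trace-cost duality of Lemma~\ref{lem:trace-cost-duality}. The dichotomy itself is logical: Alternative II is, by the definition of an NS-realizable cleaned relaxed-invisible left-singular cascade, exactly the existence of a normalized sequence \(y_n\) along which the combined observability quantity \(\mathfrak M_n(\norm{A_n^*y_n}{H_n}+\norm{J_n^{\rel}P_ny_n}{W_n^{\rel}})\) tends to zero while \(|\langle g_n,y_n\rangle|\gtrsim r_n\). Alternative I is its negation. The filters listed in the hypothesis (strict--Schur removal, cleaning, localized coercivity, homogeneous-tail testing, NS-realizability, \(C_3\)-budget) are used only to certify that the \(g_n\) appearing in either alternative are genuine surviving residuals. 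Exhaustiveness and mutual exclusivity therefore reduce to tertium non datur. The real content is showing that the negation of II yields the quantitative estimate \eqref{eq:final-relaxed-anti-phantom-estimate}.

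For that step I argue by contradiction. Suppose \eqref{eq:final-relaxed-anti-phantom-estimate} fails for every \(C\), along some subsequence and with the \(o(r_n)\) slack. Then there are \(y_n\in Y_n\), normalized by \(\norm{y_n}{Y_n}=1\) (the estimate is homogeneous in \(y\)), such that
\[
|\langle g_n,y_n\rangle|> C_n r_n\bigl(\norm{A_n^*y_n}{H_n}+\norm{J_n^{\rel}P_ny_n}{W_n^{\rel}}\bigr)+\epsilon_0 r_n,
\qquad C_n\to\infty,
\]
for some fixed \(\epsilon_0>0\). This already gives \(|\langle g_n,y_n\rangle|\gtrsim r_n\).

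To get the observability quantity to vanish, I would use the branch-native bound \(|\langle g_n,y\rangle|\le C' r_n\norm{y}{Y_n}\). This bound is supplied by the NS residual representation and the reduced-residual construction \eqref{eq:gK-def}, where \(g_n\) is normalized at scale \(r_n\). It forces \(\norm{A_n^*y_n}{H_n}+\norm{J_n^{\rel}P_ny_n}{W_n^{\rel}}\le C'/C_n\). The choice of \(C_n\) must be tuned against \(\mathfrak M_n\), taking \(C_n\gg \mathfrak M_n\), so that \(\mathfrak M_n(\cdots)\to0\). The resulting sequence is then a left-singular cascade, which contradicts the assumed absence of II. I expect this to be the main obstacle. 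The constant \(C\) in \eqref{eq:final-relaxed-anti-phantom-estimate} must not depend on \(n\), while \(\mathfrak M_n\) may grow. The contradiction sequence only produces a cascade if the amplification is absorbed by the growth of \(C_n\). I would try first a diagonal choice of subsequence and \(C_n\), and allow \(C\) to absorb a fixed power of the moving-window growth that is permitted in the controlled regime of \Cref{app:moving-window}.

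For the remaining consequences in Alternative I, I would proceed in two steps.

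\textbf{Relaxed channel.} When the relaxed component is retained in the comparison class, the term \(\norm{J_n^{\rel}P_ny}{}\) enters paired with \(u_3\). Hölder's inequality, as in \eqref{eq:Schur-rel-bound}, controls it by \(C_\Lambda\delta_n^{1/3}\).

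\textbf{Strict trace cost.} If the cleaned relaxed kernel is trivial on the surviving quotient, Theorem~\ref{thm:local-quotient-cleaning} gives \(\norm{J_n^{\rel}P_ny}{}\ge c\norm{P_ny}{}\). I decompose \(y=(y-P_ny)+P_ny\) and use that \(P_ny\) lies in the cleaned \(\Ker A_n^*\). For the surviving residual, \(\langle g_n,P_ny\rangle\) is then bounded by the relaxed term, and the pairing with the complement of the phantom projection is controlled by \(\norm{A_n^*y}{}\) up to the quotient conditioning.

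Combining these gives \eqref{eq:final-strict-trace-cost-estimate}, after the \(o(r_n)\) term is absorbed using the same compactness argument on the unit sphere of the finite-dimensional \(Y_n\). Finally, Lemma~\ref{lem:trace-cost-duality}, or equivalently Proposition~\ref{prop:finite-window-antiphantom}, converts \eqref{eq:final-strict-trace-cost-estimate} into \(g_n\in\Range A_n\) with \(\Cost^{\trc}_{A_n}(g_n)\le Cr_n^2\).
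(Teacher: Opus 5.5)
There is a genuine gap, and it is in the step you flag yourself. It cannot be repaired by tuning constants. Once \(\mathfrak M_n\to\infty\), the negation of Alternative II together with the crude bound \(|\langle g_n,y\rangle|\le C'r_n\norm{y}{Y_n}\) does not imply \eqref{eq:final-relaxed-anti-phantom-estimate} with an \(n\)-independent \(C\).

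Here is a counterexample. Take \(Y_n=H_n=\mathbb R^2\) with orthonormal bases \(\{e_1,e_2\}\) and \(\{f_1,f_2\}\), and set \(A_n^*e_1=f_1\), \(A_n^*e_2=n^{-1/2}f_2\), \(P_n=0\), \(g_n=r_ne_2\), \(\mathfrak M_n=n\).
\begin{itemize}
\item For a unit vector \(y=ae_1+be_2\) one has \(\mathfrak M_n\norm{A_n^*y}{H_n}\ge n^{1/2}|b|\).
\item Hence \(\mathfrak M_n(\cdots)\to0\) forces \(|\langle g_n,y\rangle|=r_n|b|=o(r_n)\), so there is no cascade.
\item Yet at \(y=e_2\) the estimate would require \(r_n\le Cr_nn^{-1/2}+o(r_n)\), which fails for large \(n\).
\end{itemize}

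Three further problems with the contradiction argument:
\begin{itemize}
\item It can at best deliver the estimate with \(C\) replaced by \(C\mathfrak M_n\). That is what letting \(C\) absorb the moving-window growth amounts to, and it is a different statement.
\item The \(C_n\) are dictated by where the estimate fails, so you cannot choose them \(\gg\mathfrak M_n\).
\item Failure for every fixed \(C\) does not give a uniform \(\epsilon_0\), because the size of the violation may shrink as \(C\) grows.
\end{itemize}

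The missing ingredient is the one the paper's proof rests on: the NS residual representation from the vertical momentum equation. It splits \(\langle g_n,y\rangle\) into a trace-visible part bounded by \(Cr_n\norm{A_n^*y}{H_n}\) and a relaxed vertical-pressure part bounded by \(Cr_n\norm{J_n^{\rel}P_ny}{W_n^{\rel}}+o(r_n)\norm{y}{Y_n}\). This decomposition gives \eqref{eq:final-relaxed-anti-phantom-estimate} directly, with \(C\) independent of \(n\) and of \(\mathfrak M_n\). You used the representation only for a size bound, which discards exactly the channel structure that is needed. In the paper, the normalized-sequence argument only serves to identify a failure of this representation with Alternative II.

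Your strict trace-cost step also fails as written.
\begin{itemize}
\item The lower bound \(\norm{J_n^{\rel}P_ny}{W_n^{\rel}}\ge c\norm{P_ny}{}\) from Theorem~\ref{thm:local-quotient-cleaning} bounds \(P_ny\) by the relaxed term. It does not bound the relaxed term by the trace term.
\item Since \(A_n^*P_ny=0\), the piece \(\langle g_n,P_ny\rangle\) can be dominated by \(r_n\norm{A_n^*y}{H_n}\) only if it vanishes. The paper obtains \eqref{eq:final-strict-trace-cost-estimate} by reading the trivial-kernel hypothesis as saying the phantom component vanishes, so the relaxed term drops. It does not use a decomposition.
\item Bounding the complementary piece ``up to quotient conditioning'' costs a factor \(1/\sigma_{\min}^+(A_n)\). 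That factor is \(n\)-dependent and may degenerate; this is the trace-cost singular collapse isolated in the appendix.
\item The \(o(r_n)\norm{y}{Y_n}\) slack cannot be absorbed into \(Cr_n\norm{A_n^*y}{H_n}\) by compactness on the unit sphere, because the right-hand side is zero on \(\Ker A_n^*\).
\item For the same reason, an inequality with \(o(r_n)\) slack cannot give the exact inclusion \(g_n\in\Range A_n\).
\end{itemize}

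Your logical reading of the dichotomy, the H\"older control of the retained relaxed term, and the final appeal to Lemma~\ref{lem:trace-cost-duality} do match the paper.
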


\begin{proof}
Assume first that Alternative II does not occur.  The residual representation from the vertical
momentum equation decomposes the pairing into a trace-visible part, a relaxed vertical-pressure part,
and the lower-order errors already isolated above.  Hence, for every dual vector \(y\),
\[
        |\langle g_n,y\rangle|
        \le
        C r_n\norm{A_n^*y}{H_n}
        +C r_n\norm{J_n^{\mathrm{rel}}P_ny}{W_n^{\mathrm{rel}}}
        +o(r_n)\norm{y}{Y_n}.
\]
If this estimate failed at the branch-native scale, after normalizing \(y_n\) one would obtain a
sequence with \(\norm{y_n}{Y_n}=1\) and with non-negligible residual pairing.  If
\(\norm{A_n^*y_n}{H_n}\) did not tend to zero, the trace-visible term would already control the
pairing.  If \(\norm{J_n^{\mathrm{rel}}P_ny_n}{W_n^{\mathrm{rel}}}\) did not tend to zero, the relaxed
vertical-pressure channel would control the pairing through the smallness of \(u_3\).  Therefore, in a controlled moving-window regime, an uncontrolled failure would force
\[
        \mathfrak M_n\Big(
        \norm{A_n^*y_n}{H_n}
        +
        \norm{J_n^{\mathrm{rel}}P_ny_n}{W_n^{\mathrm{rel}}}
        \Big)\to0,
        \qquad
        |\langle g_n,y_n\rangle|\gtrsim r_n,
\]
which is precisely Alternative II.  Since Alternative II is excluded, the relaxed anti-phantom
estimate holds.  If the relaxed component is kept in the relaxed comparison, it is controlled by
\(\int u_3\partial_3\pi\).  If the cleaned relaxed kernel is trivial, the phantom component itself
vanishes and \eqref{eq:final-strict-trace-cost-estimate} follows.  The range inclusion and trace-cost
bound are exactly Lemma~\ref{lem:trace-cost-duality}.

Conversely, if Alternative II occurs, then the combined observation of \(y_n\) tends to zero after multiplication by the allowed amplification \(\mathfrak M_n\), while \(\norm{y_n}{Y_n}=1\), and the residual pairing remains of order \(r_n\).  Thus the combined
trace/relaxed-pressure observability mechanism fails in a way that is actually excited by the
NS-derived residual.  This is a true phantom cascade in the sense of the definition.
\end{proof}

\begin{corollary}[No true phantom implies the logarithmic route]\label{cor:no-phantom-implies-log-route}
Assume that no NS-realizable cleaned relaxed-invisible left-singular cascade exists in the relevant
localized moving-window regime.  Then relaxed anti-phantom closure holds.  Consequently the
finite-stage residuals satisfy controlled trace-cost exactification, the failed-selection branch is
excluded, and the subcritical strict-shadow selection estimate
\[
        m_\ell
        \le
        C_M\ell^\mu+C_M\ell^{-N}\delta^b
\]
holds under the preparation and finite-window reduction hypotheses.  The prepared comparison then
gives
\[
        X^{\mathrm{har}}_{\theta/4}(u,p;M)
        \le
        C_M\ell^a
        +
        C_M\ell^{-N}\exp(C_M\ell^{-N})\delta^b.
\]
Choosing \(\ell^{-N}\sim |\log\delta|\) yields
\[
        X^{\mathrm{har}}_{\theta/4}(u,p;M)
        \le
        C_{M,\sigma}|\log\delta|^{-\sigma},
\]
for every admissible \(\sigma>0\).  After strict-shadow decay and the Caffarelli--Kohn--Nirenberg
conversion, this gives a conditional logarithmic regularity-radius bound of the form
\[
        r_{\mathrm{reg}}(0,0)
        \ge
        c_M |\log\delta|^{-C_M}.
\]
\end{corollary}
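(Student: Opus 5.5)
The plan is to chain the no-cascade hypothesis through the final dichotomy, then through trace-cost duality and the selection argument, and finally into the prepared comparison with a logarithmic choice of scale.

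First, I would invoke Theorem~\ref{thm:final-anti-phantom-dichotomy}. The hypothesis that no NS-realizable cleaned relaxed-invisible left-singular cascade exists rules out Alternative~II. So Alternative~I holds, and the relaxed anti-phantom estimate \eqref{eq:final-relaxed-anti-phantom-estimate} is available at the branch-native scale \(r_n\).

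Second, I would split the residual pairing into its trace-visible and relaxed-visible parts.
\begin{itemize}
\item The relaxed term is retained in the comparison class. It is bounded as in \eqref{eq:Schur-rel-bound}, giving a contribution \(C_\Lambda\eta^{K+1}\delta^{1/3}|s|\).
\item The trace-visible part satisfies \eqref{eq:final-strict-trace-cost-estimate}.
\item Lemma~\ref{lem:trace-cost-duality} then gives \(g_n\in\Range A_n\) with \(\Cost^{\trc}\le Cr_n^2\).
\end{itemize}
This is the controlled trace-cost exactification.

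Third, I would argue by contradiction to exclude the failed-selection branch. Suppose \(m_n\gg\eta_n\). Proposition~\ref{prop:TD-tightness} makes the branch trace-tight. Proposition~\ref{prop:recursive-obstruction-removal} then gives a dichotomy:
\begin{itemize}
\item either a finite-order strict or Schur obstruction is visible, in which case the fixed-window Lojasiewicz bound of Proposition~\ref{prop:fixed-window-lojasiewicz} gives a finite-power improvement;
\item or the branch is strict--Schur flat, in which case the selected-time correction \(\xi\) of cost \(O(r_n^2)\) is admissible.
\end{itemize}
In the flat case, inserting \(\xi\) into the selected energy expansion used for \eqref{eq:strict-visible-Schur} produces descent by a fixed fraction of \(m_n\), up to errors of size \(r_n^2+\delta^{1/3}\eta^{K+1}\), which are \(o(m_n)\). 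This contradicts minimality of \(m_n\). Hence \(m_\ell\le C_M\ell^\mu+C_M\ell^{-N}\delta^b\).

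Fourth, I would feed this selection estimate into the prepared comparison. The step from the selected time to the cylinder \(Q_{\theta/4}\) costs a Gronwall-type factor \(\exp(C_M\ell^{-N})\), coming from stability of the strict system over the prepared interval. This gives the stated bound on \(X^{\har}_{\theta/4}\).

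Fifth, I would choose the scale logarithmically. Taking \(\ell^{-N}=c|\log\delta|\) with \(c\,C_M<b/2\) makes
\[
\ell^{-N}\exp(C_M\ell^{-N})\,\delta^b\le|\log\delta|\,\delta^{b/2},
\]
which is negligible. The remaining term \(\ell^a\) equals \(|\log\delta|^{-a/N}\). The loss of arbitrary \(\sigma\) comes from tuning \(\mu\), \(a\) and \(N\) within the admissible range.

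Finally, I would convert to a regularity radius. Strict-shadow decay at scale \(r\) combined with CKN \(\varepsilon\)-regularity requires \(X^{\har}\) at scale \(r\) below \(\varepsilon_0\). Iterating the estimate across dyadic scales, and using scale invariance so that \(\delta\) at scale \(r\) is at most \(r^{-2}\delta\), gives \(r_{\reg}\ge c_M|\log\delta|^{-C_M}\).

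The main obstacle is the third step. Converting the trace-cost bound into a genuine contradiction with sharp minimality needs the correction \(\xi\) to be an admissible strict competitor at an admissible good time, uniformly along the moving windows. I would impose this through Assumption~\ref{ass:sharp-intersection} together with the controlled-regime hypotheses, and state the corollary as conditional on them, as the theorem-status table indicates.
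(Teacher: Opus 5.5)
Your chain (final dichotomy, trace-cost duality, contradiction with minimality, prepared comparison, logarithmic choice of $\ell$, CKN) matches what the paper runs through Theorem~\ref{thm:final-anti-phantom-dichotomy}, Theorems~\ref{thm:conditional-selection} and~\ref{thm:conditional-log-excess}, and Corollary~\ref{cor:conditional-radius}. Your third step, however, does not close as written.

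The trace-cost bound controls only the size of the correction, $\norm{\xi}{H_n}\le Cr_n$, not its pairing with the sharp normal $N_K$. By the very expansion you cite, inserting $\xi$ changes the selected energy by at most $C\eta^{K+1}r_n+O(\eta^{2K+2}r_n^2)$. A fixed-fraction descent would need $\ip{N_K}{\xi}\gtrsim\eta^{-(K+1)}m_n$, and nothing supplies that. Note that \eqref{eq:strict-visible-Schur} is the defining property of strict-visible Schur vectors, not a property of minimal-norm preimages of $-g_n$.

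In the paper the gain comes from elsewhere. The finite-window active quotient reduction and fixed-window exactification of Assumption~\ref{ass:trace-reduction} reduce the contradiction to the active residual of a finite-stage formal strict branch attached to the failed selected trace. The trace correction then serves only to exactify that branch: $V_\eta^{(K)}+\eta^{K+1}\calL_K\xi$ becomes a genuine strict competitor with selected energy below that of the near-minimizer, and the cost $r_n^2$ enters only as an error.

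You never invoke that assumption. Proposition~\ref{prop:TD-tightness} with Assumption~\ref{ass:sharp-intersection} yields trace tightness only. Proposition~\ref{prop:recursive-obstruction-removal} takes a formal branch as given rather than producing one attached to $U_n$. The ``finite-window reduction hypotheses'' of the statement are exactly what closes this step; sharp intersection plus controlled-regime bounds do not.

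There are three smaller corrections.

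\begin{enumerate}
\item Second step. Alternative~I yields range inclusion only for the trace-visible part of the residual, and only up to the $o(r_n)\norm{y}{Y_n}$ term in \eqref{eq:final-relaxed-anti-phantom-estimate}. Concluding $g_n\in\Range A_n$ with $\Cost^{\trc}_{A_n}(g_n)\le Cr_n^2$ via \eqref{eq:final-strict-trace-cost-estimate} requires the additional triviality of the cleaned relaxed kernel.
\item Fifth step. ``Admissible'' means $\sigma<a/N$, as in Theorem~\ref{thm:conditional-log-excess}. The exponents $a$ and $N$ are fixed structural constants, and nothing is tuned.
\item Last step. Dyadic iteration is neither justified nor needed. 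The bound $C_3(r)\le r^{-2}\delta$ is fine, but re-applying the estimate at scale $r$ requires $\Phi(r)\le M$, which does not follow from $\Phi(1)\le M$. The paper argues in one shot through Assumption~\ref{ass:strict-decay-CKN}, namely $\Psi_u(r)\le C_Mr^\alpha+C_Mr^{-2}\calX^{\har}_{\theta/4}(u,p;M)$. It chooses $r$ a small negative power of $|\log\delta|$ so that both terms fall below the CKN threshold.
\end{enumerate}
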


\begin{corollary}[True phantom gives the exact no-rate candidate]\label{cor:true-phantom-no-rate-candidate}
If an NS-realizable cleaned relaxed-invisible left-singular cascade exists and can be lifted to a
sequence of suitable weak solutions satisfying
\[
        \Phi^{(n)}(1)\le M,
        \qquad
        C_3^{(n)}(1)=\delta_n\to0,
\]
with harmonic-pressure excess converging to the strict class slower than every logarithmic or power
profile, then the qualitative strict-shadow compactness statement remains true but no universal
logarithmic or power rate follows from the present mechanism.  Thus such a cascade is the precise
solution-level candidate for a no-log-rate theorem.
\end{corollary}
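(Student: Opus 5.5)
The plan is to treat this corollary as a bookkeeping statement. Its hypotheses already contain the solution-level realization of the cascade, so the argument only has to check two things: the qualitative conclusion survives, and the quantitative conclusion fails along the given branch. I would not construct the lift. Its existence is assumed, and this hypothesis is the real content that the corollary packages.

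First, compactness. The lifted sequence \((u^{(n)},p^{(n)})\) consists of suitable weak solutions in \(Q_1\) with \(\Phi^{(n)}(1)\le M\). Proposition~\ref{prop:qual-compactness} therefore applies verbatim and gives
\[
\calX^{\har}_\theta(u^{(n)},p^{(n)};M)\le\omega_{M,\theta}(\delta_n)\to0 .
\]
So the qualitative strict-shadow statement is untouched. This is consistent with the picture of \cref{sec:no-rate}, where qualitative convergence holds but the modulus is only the given sequence.

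Second, failure of any universal rate, by contradiction. Suppose there were a universal profile \(F_{M,\theta}(\delta)=C|\log\delta|^{-\sigma}\) or \(C\delta^{\kappa}\) with
\[
\calX^{\har}_\theta(u,p;M)\le F_{M,\theta}(C_3(1))
\]
for every admissible solution. Specializing to the branch gives \(\calX^{\har}_\theta(u^{(n)},p^{(n)};M)\le F_{M,\theta}(\delta_n)\). This contradicts the assumed convergence slower than every logarithmic or power profile, which along a subsequence means \(\calX^{\har}_\theta(u^{(n)},p^{(n)};M)/F(\delta_n)\to\infty\). Both the logarithmic and the power cases are covered because every power profile is dominated by every logarithmic one as \(\delta\downarrow0\), so it suffices to beat all logarithmic profiles.

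Third, I would tie this to the mechanism. By Theorem~\ref{thm:final-anti-phantom-dichotomy}, the existence of the cascade places us in Alternative~II. There the normalized \(y_n\) violate \eqref{eq:final-relaxed-anti-phantom-estimate} at the branch-native scale, and by Lemma~\ref{lem:trace-cost-duality} the residuals \(g_n\) admit no trace preimage of cost \(O(r_n^2)\). The hypotheses of Corollary~\ref{cor:no-phantom-implies-log-route} are therefore unavailable, and the present route yields no rate. This is exactly the sense in which the cascade is the ``precise solution-level candidate'': it is the unique exit of the dichotomy, and its lift is precisely what a no-log-rate theorem would need.

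The main obstacle is interpretive rather than technical. One must make precise that ``slower than every logarithmic or power profile'' is a quantifier over profiles evaluated along the same \(\delta_n\). One must also avoid claiming that the lift exists: the corollary is an implication, and its hypothesis is exactly what remains open.
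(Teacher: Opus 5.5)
Your argument is correct and is essentially the intended one. The paper states this corollary without a separate proof, treating it as an immediate unpacking of its hypotheses: Proposition~\ref{prop:qual-compactness} gives the qualitative part, the assumed slower-than-every-logarithm convergence along $\delta_n\to0$ excludes any universal rate (power profiles being dominated by logarithmic ones), and Alternative~II of Theorem~\ref{thm:final-anti-phantom-dichotomy} identifies the cascade as the obstruction. Your second step alone already yields the conclusion, in the stronger form that no such rate holds at all, so your third step is interpretive rather than logically required.
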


\begin{remark}[Existence status of the final phantom]
At the level of abstract finite-dimensional linear algebra, combined observability can fail: one may
choose a normalized direction lying in the common kernel of \(A_n^*\) and \(J_n^{\mathrm{rel}}P_n\),
and then choose a residual pointing in that direction.  This proves only model possibility.

In the natural controlled regimes analyzed in this paper, the same phenomenon is excluded.  Clean
active periodic finite windows have diagonal relaxed observation with nonzero multipliers; perturbative
localization preserves this injectivity; aspect-controlled logarithmic moving windows do not create
intrinsic visibility collapse; periodic forcing-to-vertical coercivity rules out a clean active
NS-lift anomaly; homogeneous-tail cancellation has observable cost; and trace-cost singular collapse
is harmless when NS residuals align with the singular geometry.

For the full localized, moving-window, NS-realizable image, the existence of such a cascade remains
open.  The present paper therefore does not claim either that the final phantom exists or that it is
unconditionally absent in Navier--Stokes.  The proved conclusion is the reduction:
\[
\boxed{
\begin{gathered}
\text{one-component logarithmic rate by this route}\\
\Longleftrightarrow\\
\text{absence of the final NS-realizable}\\
\text{relaxed-invisible left-singular cascade.}
\end{gathered}}
\]
This is the sharp theorem target left by the mechanism-identification analysis.
\end{remark}

\section{Conditional logarithmic consequence}\label{sec:conditional-reduction}

We now state the conditional theorem that explains how the pieces fit together.  The theorem is intentionally schematic but mathematically explicit about its assumptions.  The conversion from harmonic-pressure approximation to small-scale regularity uses the CKN \(\varepsilon\)-regularity mechanism and its later variants \cite{CKN1982,Lin1998,GustafsonKangTsai2007,GuevaraPhuc2017,AlbrittonBarkerPrange2023}.

\subsection{Structural assumptions}

Fix \(M\ge1\).  Assume the following inputs.

\begin{assumption}[Prepared comparison]\label{ass:prepared-comparison}
For every suitable weak solution with \(\Phi(1)\le M\), \(C_3(1)=\delta\), and for each \(\ell\in(0,\ell_0)\), the coarse-graining procedure produces
\[
\calU^\ell\in\frakD^{\old,0}_{M,\theta}(\ell,
\delta)
\]
with the residual hierarchy \eqref{eq:residual-hierarchy}.
\end{assumption}

\begin{assumption}[Sharp trace and finite-window reduction]\label{ass:trace-reduction}
Every failed subcritical selection branch in \(\frakD^{\old,0}_{M,\theta}\) admits a sharp good-time subsequence with trace tightness, a finite-window active quotient reduction, and fixed-window exactification.
\end{assumption}

\begin{assumption}[Vertical-duality or relaxed anti-phantom closure]\label{ass:VD-closure}
For every active residual arising from the finite-window reduction in Assumption~\ref{ass:trace-reduction}, one of the following closure inputs is available at the branch-native scale:
\begin{enumerate}[label=(\roman*)]
\item the strict vertical-duality estimate \eqref{eq:VD-estimate}; or
\item the relaxed anti-phantom estimate of Theorem~\ref{thm:final-anti-phantom-dichotomy}, with the relaxed-visible component retained in the comparison class and controlled by the small vertical component.
\end{enumerate}
In the stricter formulation this says that the surviving residual lies in \(\Range A\) with controlled trace cost.  In the relaxed formulation it says that any component not in \(\Range A\) is seen by the cleaned relaxed vertical-pressure channel and therefore is not a true phantom.
\end{assumption}

\begin{assumption}[Strict shadow decay and comparison to CKN]\label{ass:strict-decay-CKN}
Strict shadows with scale-invariant bound \(M\) obey a quantitative local decay estimate for \(\Psi\).  Moreover, harmonic-pressure excess relative to a strict shadow transfers this decay to \(\Psi_u(r)=C_u(r)+D_u(r)\) with the usual scale factor \(r^{-2}\).
\end{assumption}

\subsection{Subcritical selection}

\begin{theorem}[Conditional subcritical strict-shadow selection]\label{thm:conditional-selection}
Assume Assumptions~\ref{ass:prepared-comparison}, \ref{ass:trace-reduction}, and \ref{ass:VD-closure}.  Then there exist constants \(C_M,\mu,N,b>0\) such that every prepared object arising from Assumption~\ref{ass:prepared-comparison} satisfies
\begin{equation}\label{eq:conditional-selection}
m^\ell_{\old}
\le
C_M\ell^\mu+C_M\ell^{-N}\delta^b.
\end{equation}
\end{theorem}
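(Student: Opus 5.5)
The argument is by contradiction with a failed branch. Suppose \eqref{eq:conditional-selection} fails for every choice of constants. Then there are prepared objects \(\calU^{\ell_n}\in\frakD^{\old,0}_{M,\theta}(\ell_n,\delta_n)\), supplied by Assumption~\ref{ass:prepared-comparison}, whose selected distances satisfy
\[
m_n:=m^{\ell_n}_{\old}\gg\eta_n=\ell_n^\mu+\ell_n^{-N}\delta_n^b,
\]
with \(\mu<1/6\) fixed as in \cref{sec:no-rate}. By O8, \(m_n\to0\) along such a branch, so there is a genuine normalized failed branch \(U_n=V_n+\eps_nW_n\) with \(\eps_n=m_n^{1/2}\), selected at times \(s_n\in G_{\ell_n}\). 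The aim is to build, for large \(n\), an admissible competitor \((s',V')\) whose selected energy is \(\le m_n-c\,m_n+o(m_n)\). This contradicts the minimality in \eqref{eq:old-selected-distance}.

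\emph{Step 1 (reduction).} Apply Assumption~\ref{ass:trace-reduction}. This passes to a sharp good-time subsequence on which the \(W_n\) are trace-tight, in the sense of \eqref{eq:trace-tightness-conclusion}, so that high-frequency escape (the model of Theorem~\ref{thm:arbitrary-slow-expanded}) is excluded. Tightness places \(W_n\) in a fixed finite window \(\Lambda\) up to \(o(1)\). There I apply the recursive alternative, Proposition~\ref{prop:recursive-obstruction-removal}, in its strict--Schur refinement \eqref{eq:Yvis-str-Schur}.

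If some finite-order visible obstruction \(\calO_{j,\Lambda}\neq0\) occurs, Proposition~\ref{prop:fixed-window-lojasiewicz} bounds \(\dist\) by \(C_\Lambda\|\calC_\Lambda\|^{\alpha_\Lambda}\). Combined with the residual hierarchy \eqref{eq:residual-hierarchy}, this gives \(m_n\lesssim\eta_n^{\alpha_\Lambda}\). The exponent loss \(\alpha_\Lambda\) is absorbed by shrinking \(\mu,b\) and enlarging \(N\), since \(\Lambda\) ranges over finitely many windows once tightness fixes a cutoff. This contradicts the failed-branch assumption. We may therefore assume that the branch is NS-surviving flat, in the sense of Definition~\ref{def:strict--Schur-flat}, at every finite stage \(K\).

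\emph{Step 2 (exactification).} On the flat branch, form the reduced residual \(g_K(\eta)\) of \eqref{eq:gK-def} and the map \(A_K\) of \eqref{eq:AK-def}, with \(\eta\) comparable to \(\eps_n\). Assumption~\ref{ass:VD-closure} now supplies one of two inputs.

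In case (i), \eqref{eq:VD-estimate} and Lemma~\ref{lem:trace-cost-duality} give \(\xi\) with \(A_K\xi=-g_K\) and \(\|\xi\|\le r_K(\eta)\). Then \(V'=V_\eta^{(K)}+\eta^{K+1}\calL_K\xi\) is strict up to an error \(O(\eta^{K+2})\). Fixed-window exactification removes that error at a cost \(o(m_n)\).

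In case (ii), the phantom component of \(g_K\) is kept in the relaxed comparison. By \eqref{eq:Schur-rel-bound}, its energy contribution is at most \(C_\Lambda\eta^{K+1}\delta_n^{1/3}\). This is \(o(m_n)\) provided \(\delta_n^{1/3}\ll\eps_n^{1-K}\cdot\) (branch factors), which is ensured by \(m_n\gg\ell_n^{-N}\delta_n^b\) once \(b\le1/3\) and \(N\) is large.

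\emph{Step 3 (descent).} Compare the selected energy at the competitor with \(m_n\). The first variation along the sharp normal \(N_K\) gives the descent \(-\eta^{K+1}\langle N_K,\xi\rangle\), while the covariance and variance terms are controlled by O5--O7 and the good-time bound \(\int\phi\kappa\le C_G\ell_n^2=o(m_n)\). The result is a strict decrease \(c\,m_n\), hence the contradiction, and \eqref{eq:conditional-selection} follows with the adjusted exponents.

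The main obstacle is uniformity. The branch-native scale \(r_K(\eta)=C_a\eta^{K+1}+C_a\rho_n\eps_n^{-p_a}\eta\) must be small compared with the trace scale \(\eps_n\) uniformly in \(n\). At the same time, the window and stage constants \(C_\Lambda,\alpha_\Lambda,K\) must not degenerate along the subsequence, which is exactly the non-summable phenomenon of Theorem~\ref{thm:nonsummable-flat}. My first attempt would be to fix \(K\) once and for all, large enough that \(\eta^{K+1}=o(\eps_n^2)\), and to take \(\rho_n\) to be a power of \(\eta_n\), so that \(\rho_n\eps_n^{-p_a}\eta\to0\) relative to \(\eps_n\) because \(m_n\gg\eta_n\). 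Any remaining dependence of constants on \(\Lambda\) would be pushed into \(C_M\), using the finiteness of the tight windows provided by Assumption~\ref{ass:trace-reduction}.
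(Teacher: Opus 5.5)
Your skeleton matches the paper's: contradiction, the normalization $W_n=(U_n-V_n)/m_n^{1/2}$, trace-tightness and a finite-window active quotient from Assumption~\ref{ass:trace-reduction}, then Lemma~\ref{lem:trace-cost-duality} under Assumption~\ref{ass:VD-closure}. However, the step that is supposed to close the contradiction does not work as written.

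In Step~3 you take the decrease from the first-variation term $-\eta^{K+1}\langle N_K,\xi\rangle$. That expansion belongs to the strict-visible Schur mechanism, and it gives descent only under the bridge condition \eqref{eq:strict-visible-Schur}. Your $\xi$ is the minimal-norm preimage of $-g_K$, so $\langle N_K,\xi\rangle$ has no sign. Even in absolute value the term is at most $\eta^{K+1}r_K(\eta)\|N_K\|$. Under your own requirement $r_K(\eta)=o(\eps_n)$ and with $\eta\sim\eps_n$, this is $o(m_n)$, so it cannot produce a decrease $c\,m_n$.

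The paper's contradiction is of a different kind. The correction cost lies below the \emph{normalized} selection gap. Hence the exactified strict competitor $V'$, built along the failed branch itself (first jet $W_n$ at amplitude $\eps_n$), has selected trace within $o(m_n^{1/2})$ of $U_n(s_n)$. Its selected energy is therefore $o(m_n)+C_G\ell_n^2=o(m_n)$; here $\ell_n^2=o(m_n)$ because $\mu<2$. This contradicts the fact that $m_n$ is the infimum in \eqref{eq:old-selected-distance}. You should evaluate the competitor's energy directly in this way, not through a first variation.

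Two of your added steps also fail.

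\emph{Step~1.} The exponents $\mu,N,b$ are fixed before the failed sequence is extracted, but $\alpha_\Lambda$ depends on the window that tightness produces for that particular sequence. Tightness only gives, for each tolerance, some window; it does not give finitely many a priori windows. A uniform \L{}ojasiewicz exponent is explicitly excluded by (E5). A bound $m_n\lesssim C_\Lambda\eta_n^{\gamma}$ contradicts $m_n\gg\eta_n$ only if $\gamma\ge1$, and this fails for degenerate obstructions: for instance, $\calC(x)=x^k$ with $k\ge3$ gives $\gamma=2/k$. Branch-dependent multiplicative constants are harmless in a contradiction argument, but exponents cannot be readjusted after the sequence is chosen. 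The paper instead handles this stage through the fixed-window exactification that is part of Assumption~\ref{ass:trace-reduction}, and you should simply cite it.

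\emph{Case~(ii) of Step~2.} Your competitor retains a nonzero component of the residual in $\Ker A_K^*$, so it is not in $\frakS_M$. Since $m^\ell_{\old}$ is an infimum over strict shadows only, controlling that component's energy by \eqref{eq:Schur-rel-bound} yields no contradiction. The paper's sketch uses only the strict consequence $\Cost^{\trc}(g)\le Cr^2$. Under alternative~(ii), that consequence needs the additional trivial-cleaned-kernel condition that leads to \eqref{eq:final-strict-trace-cost-estimate} in Theorem~\ref{thm:final-anti-phantom-dichotomy}.
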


\begin{proof}[Proof sketch]
Suppose \eqref{eq:conditional-selection} fails.  Then there is a branch with
\[
\frac{m_n}{\ell_n^\mu+\ell_n^{-N}\delta_n^b}\to\infty.
\]
Choose near minimizers \(s_n,V_n\) and normalize
\[
W_n=\frac{U_n^{\ell_n}-V_n}{m_n^{1/2}}.
\]
The trace reduction assumption gives a trace-tight subsequence and a finite-window active quotient problem.  Fixed-window exactification reduces the contradiction to the active residual \(g_{a,n}(\eta)\).  By vertical-duality closure and Lemma~\ref{lem:trace-cost-duality},
\[
\Cost^{\trc}(g_{a,n}(\eta))
\le r_{a,n}(\eta)^2.
\]
Thus the residual can be removed by a trace correction whose cost is below the normalized selection gap.  This produces a strict competitor with smaller selected trace energy than the near minimizer, contradicting the definition of \(m_n\).  The contradiction yields \eqref{eq:conditional-selection}.
\end{proof}

\subsection{Logarithmic harmonic-pressure approximation}

The selection estimate feeds into the prepared comparison chain.  We state the resulting form.

\begin{theorem}[Conditional logarithmic harmonic-pressure approximation]\label{thm:conditional-log-excess}
Assume Assumptions~\ref{ass:prepared-comparison}, \ref{ass:trace-reduction}, and \ref{ass:VD-closure}.  There exist constants \(C_M>0\), \(a>0\), \(N>0\), \(b>0\), and \(C>0\) such that
\begin{equation}\label{eq:ell-bound}
\calX^{\har}_{\theta/4}(u,p;M)
\le
C_M\ell^a+C_M\ell^{-N}\exp(C\ell^{-N})\delta^b
\end{equation}
for all sufficiently small \(\ell\in(0,\ell_0)\).  Consequently, for every \(0<\sigma<a/N\) there are \(C_{M,\sigma}\) and \(\delta_{M,\sigma}>0\) such that, whenever \(0<\delta<\delta_{M,\sigma}\),
\begin{equation}\label{eq:log-excess}
\calX^{\har}_{\theta/4}(u,p;M)
\le
C_{M,\sigma}|\log\delta|^{-\sigma}.
\end{equation}
\end{theorem}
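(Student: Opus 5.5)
The plan is to chain the selection estimate of Theorem~\ref{thm:conditional-selection} into the prepared comparison of Assumption~\ref{ass:prepared-comparison}. From the result we obtain \eqref{eq:ell-bound} at each fixed coarse-graining scale \(\ell\). We then optimize \(\ell\) against \(\delta\) to get \eqref{eq:log-excess}.

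\textbf{Step 1.} Fix a suitable weak solution with \(\Phi(1)\le M\), \(C_3(1)=\delta\), and a scale \(\ell\in(0,\ell_0)\). Assumption~\ref{ass:prepared-comparison} gives a prepared object \(\calU^\ell\in\frakD^{\old,0}_{M,\theta}(\ell,\delta)\). Theorem~\ref{thm:conditional-selection} gives a good time \(s_*\in G_\ell\) and a strict shadow \((V,Q)\in\frakS_M\) with
\[
\tfrac12\int\phi|U^\ell(s_*)-V(s_*)|^2\dx+\int\phi\kappa^\ell(s_*)\dx\le C_M\ell^\mu+C_M\ell^{-N}\delta^b .
\]

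\textbf{Step 2.} Propagate this selected-time closeness forward on \(Q_{\theta/4}\). Set \(w=U^\ell_h-V_h\) and pair the difference of the prepared equation \eqref{eq:prepared-equation} and the strict system \eqref{eq:strict-system} with \(\phi w\). The quadratic term \(w\otimes V\) is absorbed through the scale-invariant bounds O1. Its contribution is Gronwall-type: the shadow is only in the scale-invariant class, and the truncation gradients live at scale \(\ell\). Regularizing \(V\) at scale \(\ell\) yields a factor \(\exp(C\ell^{-N})\); this is the source of the exponential in \eqref{eq:ell-bound}. The source terms are handled as follows.
\begin{itemize}
\item \(\nabh\cdot\tau^\ell\) is bounded by O4, giving \(C_M\ell^{a_0}\).
\item \(G^\ell\) is bounded by \(C_M\ell^{-N}\delta^b\).
\item The variance is carried by O7, with \(\kappa^\ell(s_*)\) small at the start and the errors bounded by \eqref{eq:variance-errors}.
\end{itemize}
The pressure is compared in the quotient by \(\calH\). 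Its Calderón--Zygmund part is controlled by \(\|w\|_{L^3}\) through the Poisson law, and the harmonic part is dropped by O0. The \(L^2\) energy bound is converted to the \(L^3\) and \(L^{3/2}\) quantities in \eqref{eq:harmonic-excess} by interpolation with the \(L^2H^1\) bound. The error between \(u\) and \(U^\ell\) is \(O(\ell^{a}+\ell^{-N}\delta^b)\) by the prepared comparison. This gives \eqref{eq:ell-bound} with \(a=\min(\mu,a_0)\) times an interpolation exponent.

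\textbf{Step 3.} Choose \(\ell\) so that \(C\ell^{-N}=\epsilon|\log\delta|\) with \(\epsilon<b\) small. Then
\[
\ell^{-N}\exp(C\ell^{-N})\delta^b\lesssim|\log\delta|\,\delta^{b-\epsilon}\to0
\]
faster than any power of \(|\log\delta|\). Meanwhile
\[
\ell^a=(C/(\epsilon|\log\delta|))^{a/N}=C_{M,\sigma}|\log\delta|^{-a/N}.
\]
So \eqref{eq:log-excess} holds for every \(\sigma<a/N\). The threshold \(\delta_{M,\sigma}\) is chosen so that \(\ell<\ell_0\) and the second term is dominated by the first.

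\textbf{Main obstacle.} The hardest part is Step 2: making the forward propagation from a single good time rigorous with only exponential loss in \(\ell^{-N}\). The difficulty is that \(V\) has only scale-invariant regularity. I would first mollify the shadow at scale \(\ell\), paying \(\|\nabla S_\ell V\|_\infty\lesssim\ell^{-N}\). I would then use the thick good-time structure, so that the starting time lies a fixed parabolic distance before \(Q_{\theta/4}\).
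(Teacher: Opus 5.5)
Your overall skeleton and Step~3 match the paper. The paper's proof simply declares \eqref{eq:ell-bound} to be ``the prepared comparison output after applying Theorem~\ref{thm:conditional-selection}''. It attributes the exponential to a finite-window stability factor, not to Gronwall. It then optimizes with $\ell^{-N}=\eta|\log\delta|$ and $C\eta<b/2$; your $\epsilon$ is its $C\eta$, and that step is correct.

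The gap is Step~2, the part where you go beyond the paper; as written it would not yield \eqref{eq:ell-bound}.

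(i) The term $w\otimes V$ cannot be ``absorbed through O1''. The localized energy estimate for $w=U^\ell_h-V_h$ is a weak--strong stability estimate, and it needs $V$ in a critical Serrin-type class. Mollifying only moves the problem to $\int(w\cdot\nabla w)\cdot(V-S_\ell V)$. There $V-S_\ell V$ is merely in the energy class ($L^{10/3}_{t,x}$, supercritical), so it cannot be absorbed into $\|\nabla w\|_{L^2}^2$. Quantitative regularity of strict shadows is Assumption~\ref{ass:strict-decay-CKN}, which is not a hypothesis of this theorem.

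(ii) O0 lets you discard $\calH$ only when \emph{measuring} the pressure distance in \eqref{eq:harmonic-excess}, not inside the energy identity. There, a harmonic $h$ in the pressure difference contributes $\int\phi\,w\cdot\nabla h=-\int h\,w_h\cdot\nabla_h\phi\neq0$. This term is linear in $w$ with a non-small coefficient, since selection constrains only $V(s_*)$ and not the harmonic part of $Q$ relative to $P^\ell$. A linear term destroys propagation of the smallness of $\frac12\int\phi|w(s_*)|^2$, so a shadow frozen at $s_*$ cannot be carried forward. You would have to construct, for $t>s_*$, a new strict shadow driven by the $x_3$-independent harmonic part of $P^\ell$, charging the rest to $\calV^\ell$. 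Nothing in O0--O10 provides such a construction.

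(iii) The quantity $\calX^{\har}_{\theta/4}(u,p;M)$ concerns $(u,p)$. Assumption~\ref{ass:prepared-comparison} as stated only puts $\calU^\ell$ in $\frakD^{\old,0}_{M,\theta}(\ell,\delta)$. Nothing in O0--O10 bounds $\|u-U^\ell\|_{L^3}$, or the distance between $[p]_{\calH}$ and $[P^\ell]_{\calH}$. Your sentence ``the error between $u$ and $U^\ell$ is $O(\ell^a+\ell^{-N}\delta^b)$ by the prepared comparison'' therefore relies on the same unformalized prepared-comparison chain that the paper invokes for all of \eqref{eq:ell-bound}.

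If you accept that input, Step~2 is superfluous and your proof reduces to the paper's. If you do not, Step~2 never reaches $\calX^{\har}$. A genuine derivation would need three explicit extra hypotheses:
\begin{enumerate}
\item a $u$--$U^\ell$ comparison estimate;
\item a local strict-shadow construction that matches the harmonic pressure from $s_*$ onward;
\item critical-class regularity of the comparison shadows.
\end{enumerate}
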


\begin{proof}
The estimate \eqref{eq:ell-bound} is the prepared comparison output after applying Theorem~\ref{thm:conditional-selection}; the first term is the smoothing and strict-shadow comparison error, while the second term is the small-component residual accumulated through a finite-window stability factor.

For the optimization, choose
\[
\ell^{-N}=\eta|\log\delta|
\]
with \(0<\eta<b/(2C)\).  Then
\[
\ell^a=(\eta|\log\delta|)^{-a/N},
\]
and
\[
\ell^{-N}\exp(C\ell^{-N})\delta^b
=
\eta|\log\delta|\,\delta^{b-C\eta}
\le
\eta|\log\delta|\,\delta^{b/2}.
\]
For small \(\delta\), the second term is bounded by any fixed negative power of \(|\log\delta|\), while the first term gives \(|\log\delta|^{-a/N}\).  Hence \eqref{eq:log-excess} holds for every \(\sigma<a/N\) after changing constants.
\end{proof}

\subsection{Conditional regularity-radius consequence}

\begin{corollary}[Conditional logarithmic regularity-radius bound]\label{cor:conditional-radius}
Assume Assumptions~\ref{ass:prepared-comparison}, \ref{ass:trace-reduction}, \ref{ass:VD-closure}, and \ref{ass:strict-decay-CKN}.  Then there exist \(\sigma>0\), \(C_M>0\), and \(\delta_M>0\) such that if \((u,p)\) is a suitable weak solution in \(Q_1\) with
\[
\Phi(1)\le M,
\qquad
C_3(1)=\delta<\delta_M,
\]
then the local regularity radius satisfies
\begin{equation}\label{eq:conditional-radius}
r_{\reg}(0,0)
\ge
c_M|\log\delta|^{-C_M}
\end{equation}
for some positive constants depending only on \(M\) and on the structural constants in the assumptions.
\end{corollary}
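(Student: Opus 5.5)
The plan is to prove Corollary~\ref{cor:conditional-radius} by chaining Theorem~\ref{thm:conditional-log-excess} with the CKN $\varepsilon$-regularity criterion, using Assumption~\ref{ass:strict-decay-CKN} as the bridge. Under Assumptions~\ref{ass:prepared-comparison}--\ref{ass:VD-closure}, Theorem~\ref{thm:conditional-log-excess} already gives
\[
\calX^{\har}_{\theta/4}(u,p;M)\le C_{M,\sigma}|\log\delta|^{-\sigma}
\]
for some fixed admissible $0<\sigma<a/N$ and all $\delta<\delta_{M,\sigma}$. So there is a strict shadow $(V,Q)\in\frakS_M(Q_{\theta/4})$ and a harmonic $h\in\calH$ with
\[
\|u-V\|_{L^3(Q_{\theta/4})}^3+\|p-Q-h\|_{L^{3/2}(Q_{\theta/4})}^{3/2}\lesssim |\log\delta|^{-\sigma}.
\]

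First, I will use the strict-shadow half of Assumption~\ref{ass:strict-decay-CKN}. It gives a quantitative decay $\Psi_V(r)\le C_M r^{\gamma}$ for some $\gamma>0$ and $r\le\theta/4$, with $\Psi_V=C_V+D_V$ the scale-invariant cubic-velocity and pressure quantities of the shadow. Second, I will use the transfer half of the assumption: the triangle inequality in $L^3$ and $L^{3/2}$ on $Q_r$, together with the factor $r^{-2}$ in $C$ and $D$, yields
\[
\Psi_u(r)\le C\bigl(\Psi_V(r)+r^{-2}\calX^{\har}_{\theta/4}\bigr)\le C_M r^\gamma+C_M r^{-2}|\log\delta|^{-\sigma}.
\]
Here the harmonic part $h$ is handled as allowed by the harmonic-pressure quotient, since $\varepsilon$-regularity in the form of \cite{Seregin2015,Wolf2017} only needs the pressure modulo harmonic functions.

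Third, I will choose $r$ to balance the two terms. Take $r_*$ with $C_M r_*^\gamma\le\varepsilon_0/2$, and require $C_M r^{-2}|\log\delta|^{-\sigma}\le\varepsilon_0/2$. Any $r\in[\,c|\log\delta|^{-\sigma/2},\,r_*]$ with $c=c(M,\varepsilon_0)$ then gives $\Psi_u(r)<\varepsilon_0$, provided $\delta_M$ is small enough that this interval is nonempty. Fourth, the CKN/Lin $\varepsilon$-regularity criterion \cite{CKN1982,Lin1998} gives boundedness of $u$ on $Q_{r/2}$. This step must be applied uniformly at base points $z_0$ near the origin, using the translated version of the excess estimate, to obtain $r_{\reg}(0,0)\ge c\,r$. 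Taking $r\simeq|\log\delta|^{-\sigma/2}$ gives \eqref{eq:conditional-radius} with $C_M=\sigma/2$, or any larger exponent.

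The main obstacle is the transfer step. On the one hand, the harmonic part $h$ is not small in $L^{3/2}$, so the pressure quantity $D_u(r)$ must be understood as the harmonic-quotient version, or $h$ must be controlled by interior harmonic estimates. Those estimates give $\|h\|_{L^\infty(B_r)}$ bounds and hence a contribution $r^{-2}\int_{Q_r}|h|^{3/2}\lesssim r^{3}$ that also decays. On the other hand, the shadow class is set up on a fixed cylinder, so recentering requires the estimate at nearby points. I would handle the harmonic part with the mean-value inequality for $h(\cdot,t)$ on $B_{\theta/8}$, and I would invoke the translation invariance of the hypotheses $\Phi(1)\le M$ and $C_3(1)=\delta$ on slightly smaller cylinders to cover base points, absorbing the resulting constants into $c_M$.
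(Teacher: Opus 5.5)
Your proposal is correct and follows the paper's own sketch. Theorem~\ref{thm:conditional-log-excess} together with the decay/transfer in Assumption~\ref{ass:strict-decay-CKN} gives $\Psi_u(r)\le C_M r^{\alpha}+C_M r^{-2}\calX^{\har}_{\theta/4}(u,p;M)$, and choosing $r$ as a small negative power of $|\log\delta|$ puts this below the CKN threshold.

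There are two harmless slips. First, $h$ is only $L^{3/2}$ in time, so the mean-value bound gives $r^{-2}\int_{Q_r}|h|^{3/2}\lesssim r\,\|h\|_{L^{3/2}(Q_{\theta/4})}^{3/2}$ rather than $r^3$; any positive power suffices. Second, no recentering is needed, because the scaled CKN/Lin criterion at the origin already yields regularity on $Q_{r/2}$.
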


\begin{proof}[Proof sketch]
By Theorem~\ref{thm:conditional-log-excess}, the solution is logarithmically close, in harmonic-pressure excess, to a strict shadow.  By strict shadow decay and the perturbative comparison in Assumption~\ref{ass:strict-decay-CKN},
\[
\Psi_u(r)
\le
C_M r^\alpha + C_M r^{-2}\calX^{\har}_{\theta/4}(u,p;M)
\]
for small \(r\).  Choose \(r\) as a sufficiently small negative power of \(|\log\delta|\) so that the right-hand side is below the CKN threshold.  The Caffarelli--Kohn--Nirenberg criterion then yields \eqref{eq:conditional-radius}.
\end{proof}

\begin{remark}
The corollary is conditional.  The proof of Assumption~\ref{ass:VD-closure}, or an alternative \(\NS\)-specific mechanism of comparable strength, is the genuine remaining analytic problem.
\end{remark}

\section{Conclusion and theorem targets}

The paper proves an insufficiency theorem for the old observable package and identifies the remaining Navier--Stokes-specific obstruction.  It should be read as a structural continuation of the suitable-weak/partial-regularity framework \cite{CKN1982,Seregin2015}, the one-component regularity program \cite{KukavicaZiane2006,CheminZhang2016,HanLeiLiZhao2019,KangNguyen2023}, and the finite-scale harmonic-pressure viewpoint \cite{Yu2026HarmonicPressure}. The old observables--scale-invariant energy and pressure, harmonic-pressure compactness, covariance preparation, good-time trace functionals, and fixed-window compatibility tests--give topological closure toward the strict \(2.5D\) class. They do not, by themselves, force a selected-time trace rate.

The obstruction is not a simple finite-dimensional singularity. Parabolic trace-drop eliminates elementary high-frequency escape, and fixed-window analytic geometry gives finite-power control inside every fixed analytic compatibility window. What survives is an all-order finite-mode flat branch with potentially non-summable finite-stage constants. At that point the problem becomes Navier--Stokes realizability.

The proposed NS-specific mechanism is vertical obstruction control refined by Schur visibility. The vertical momentum equation converts strict pressure obstruction into vertical lift cost. Strict-invisible does not mean Navier--Stokes-invisible: a Schur defect that strict trace cost cannot remove may still be seen as a vertical-pressure residual in relaxed comparison. Therefore the strict route requires full Schur trace-projectability, while the relaxed route requires only the absence of true relaxed phantoms.

The sharp theorem target left by the present analysis is
\[
\boxed{\text{prove or construct an NS-realizable, cleaned, relaxed-invisible, unaligned left-singular cascade.}}
\]
Proving nonexistence of this cascade supplies the relaxed anti-phantom closure needed for the conditional logarithmic reduction. Constructing such a cascade would identify the precise obstruction to this route.

\subsection*{Research program}
The envelope theorem suggests a concrete route forward. First, strengthen the no-rate envelope toward increasingly PDE-like toy models. Second, add one structural condition at a time,
\[
  \calD^{\old,0}\supset \calD^{\old,\sharp}\supset \calD^{\old,\trc}\supset \calD^{\old,\mathrm{fw}}\supset \calD^{\old+\VD}\supset \calD^{\NS},
\]
and test when arbitrary-slow trace convergence disappears. Third, formulate the NS-realizable strict-boundary tangent cone. Fourth, prove either strict Schur projectability or the weaker relaxed anti-phantom closure in localized moving-window regimes.

\appendix

\section{Detailed Fourier and symbol-level vertical-lift calculations}\label{app:vertical-lift}

\subsection{A Fourier model for the quadratic vertical lift}\label{subsec:fourier-vcost2}

The zero-shadow calculation above identifies the conceptual difference between the strict quadratic obstruction and the full Navier--Stokes vertical lift.  We now compute both explicitly in a periodic finite-window model.  The calculation is not intended as a Navier--Stokes solution construction.  It is a symbol-level test showing which strict obstructions are expensive and which are cheap from the viewpoint of the full vertical momentum equation.

Work on the periodic box
\[
\mathbb T^3=\mathbb T_h^2\times\mathbb T_{x_3}.
\]
Let
\[
\psi(x)=A\sin(kx_1)\sin(\ell x_2)\cos(mx_3)
\]
and set
\[
W_h=\nabla_h^\perp\psi=(-\partial_2\psi,\partial_1\psi).
\]
Thus
\begin{align}
W_1&=-A\ell\sin(kx_1)\cos(\ell x_2)\cos(mx_3),\label{eq:fourier-W1}\\
W_2&=Ak\cos(kx_1)\sin(\ell x_2)\cos(mx_3).\label{eq:fourier-W2}
\end{align}
Clearly \(\divh W_h=0\).  Define the horizontal quadratic pressure source
\[
F(W)=\partial_a\partial_b(W_aW_b),\qquad a,b\in\{1,2\}.
\]
A direct trigonometric computation gives
\begin{equation}\label{eq:fourier-F}
F(W)=A^2k^2\ell^2\cos^2(mx_3)
\bigl[\cos(2kx_1)+\cos(2\ell x_2)\bigr].
\end{equation}
Since
\[
\cos^2(mx_3)=\frac12(1+\cos(2mx_3)),
\]
the part independent of \(x_3\) gives no vertical pressure obstruction.  The active part is
\begin{equation}\label{eq:fourier-F-z}
F_z(W)=\frac{A^2k^2\ell^2}{2}
\bigl[\cos(2kx_1)+\cos(2\ell x_2)\bigr]\cos(2mx_3).
\end{equation}

\paragraph{Strict pressure obstruction.}
The strict second pressure \(\pi_2^h\) is determined by the horizontal Poisson equation
\[
-\Delta_h\pi_2^h=F(W).
\]
For the \(x_3\)-dependent part,
\begin{equation}\label{eq:pi2h-fourier}
\pi_{2,z}^h
=
\frac{A^2}{8}
\left[\ell^2\cos(2kx_1)+k^2\cos(2\ell x_2)\right]
\cos(2mx_3).
\end{equation}
The strict quadratic obstruction is
\[
B(W,W)=\nabla_h\partial_3\pi_2^h.
\]
Therefore
\[
\partial_3\pi_{2,z}^h
=
-\frac{A^2m}{4}
\left[\ell^2\cos(2kx_1)+k^2\cos(2\ell x_2)\right]
\sin(2mx_3),
\]
and hence
\begin{align}
B_1(W,W)&=\frac{A^2mk\ell^2}{2}\sin(2kx_1)\sin(2mx_3),\label{eq:B1-fourier}\\
B_2(W,W)&=\frac{A^2mk^2\ell}{2}\sin(2\ell x_2)\sin(2mx_3).\label{eq:B2-fourier}
\end{align}
Thus, when \(k\sim\ell\sim K\),
\begin{equation}\label{eq:B-scaling-unscaled}
\norm{B(W,W)}{L^2}\sim A^2mK^3.
\end{equation}
If \(m=0\), the mode is genuinely two-dimensional and \(B(W,W)=0\), as expected.

\paragraph{Full vertical pressure source.}
The full Navier--Stokes pressure solves the full Poisson equation
\[
-\Delta\pi_2^{\mathrm{full}}=F(W).
\]
For the \(x_3\)-dependent part,
\begin{equation}\label{eq:full-pi2-fourier}
\pi_{2,z}^{\mathrm{full}}
=
\frac{A^2k^2\ell^2}{8}
\left[
\frac{\cos(2kx_1)}{k^2+m^2}
+
\frac{\cos(2\ell x_2)}{\ell^2+m^2}
\right]\cos(2mx_3).
\end{equation}
The full vertical pressure source is
\[
S_2(W)=\partial_3\pi_2^{\mathrm{full}},
\]
so
\begin{equation}\label{eq:S2-fourier}
S_2(W)
=
-\frac{A^2mk^2\ell^2}{4}
\left[
\frac{\cos(2kx_1)}{k^2+m^2}
+
\frac{\cos(2\ell x_2)}{\ell^2+m^2}
\right]
\sin(2mx_3).
\end{equation}
When \(k\sim\ell\sim K\),
\begin{equation}\label{eq:S2-scaling-unscaled}
\norm{S_2(W)}{L^2}
\sim
A^2\frac{mK^4}{K^2+m^2}.
\end{equation}
Thus the strict obstruction and the full vertical source are related but are not the same object:
\[
\norm{B(W,W)}{L^2}\sim A^2mK^3,
\qquad
\norm{S_2(W)}{L^2}\sim A^2\frac{mK^4}{K^2+m^2}.
\]

\paragraph{The second vertical lift.}
In the static zero-shadow Fourier model, the second vertical equation is
\[
-\Delta z_2=-S_2(W),
\qquad\text{equivalently}\qquad
\Delta z_2=S_2(W).
\]
Hence, up to harmless signs,
\begin{equation}\label{eq:z2-fourier}
z_2
\sim
\frac{A^2mk^2\ell^2}{16}
\left[
\frac{\cos(2kx_1)}{(k^2+m^2)^2}
+
\frac{\cos(2\ell x_2)}{(\ell^2+m^2)^2}
\right]\sin(2mx_3).
\end{equation}
Consequently, for \(k\sim\ell\sim K\),
\begin{equation}\label{eq:z2-scaling-unscaled}
\norm{z_2}{L^2}
\sim
A^2\frac{mK^4}{(K^2+m^2)^2}.
\end{equation}
This is the Fourier symbol of the second vertical lift cost:
\begin{equation}\label{eq:VCost2-fourier-symbol}
\mathrm{VCost}_2(W)
\sim
A^2\frac{mK^4}{(K^2+m^2)^2}.
\end{equation}

\paragraph{Normalized scaling.}
Normalize the horizontal trace so that \(\norm{W}{L^2}\sim1\).  Since \(W_h=\nabla_h^\perp\psi\) has horizontal frequency \(K\), this means \(AK\sim1\), hence \(A\sim K^{-1}\).  Substituting into the preceding estimates gives
\begin{align}
\norm{B(W,W)}{L^2}&\sim mK,\label{eq:B-normalized}\\
\norm{S_2(W)}{L^2}&\sim \frac{mK^2}{K^2+m^2},\label{eq:S-normalized}\\
\norm{z_2}{L^2}&\sim \frac{mK^2}{(K^2+m^2)^2}.\label{eq:z-normalized}
\end{align}
Several regimes are worth recording:
\begin{enumerate}[label=(\roman*)]
\item If \(m=0\), then \(B(W,W)=S_2(W)=z_2=0\).  This is the genuinely two-dimensional case.
\item If \(m\ll K\), then \(\norm{z_2}{L^2}\sim m/K^2\).  Slow vertical variation has a cheap vertical lift.
\item If \(m\sim K\), then \(\norm{z_2}{L^2}\sim K^{-1}\), while \(\norm{B(W,W)}{L^2}\sim K^2\).  The strict obstruction can be large even though the full vertical lift is cheap.
\item If \(m\gg K\), then \(\norm{z_2}{L^2}\sim K^2/m^3\).  Very high vertical frequency is also strongly damped by the full elliptic and parabolic inverses.
\end{enumerate}

The conclusion is important.  The strict quadratic obstruction is a horizontal-quotient obstruction created by imposing \(\partial_3\pi_2=0\).  Full Navier--Stokes does not impose this condition; it permits \(\partial_3\pi_2\ne0\), but the variation must be absorbed through the vertical component.  Thus the strict obstruction is not the same as an \(\NS\)-realizability obstruction.  It becomes one only if the associated vertical lift cost is too large for the available \(C_3\)-budget.

\subsection{Symbol-level vertical-lift operator}\label{subsec:symbol-vlift}

The Fourier calculation can be summarized by a bilinear operator.  For a zero-shadow horizontal tangent \(W=(W_h,0)\), define
\begin{equation}\label{eq:V2-operator}
\calV_2(W,W)
:=
-(\partial_t-\Delta)^{-1}
\partial_3(-\Delta)^{-1}
\partial_a\partial_b(W_aW_b).
\end{equation}
Then \(z_2=\calV_2(W,W)\) is the canonical second vertical response in the full equation.  In space-time Fourier variables \((\omega,\xi)=(\omega,\xi_h,\xi_3)\), its multiplier is, up to harmless constants,
\begin{equation}\label{eq:V2-symbol}
\widehat{z_2}(\omega,\xi)
=
\frac{i\xi_3}{i\omega+|\xi|^2}
\frac{1}{|\xi|^2}
(-\xi_a\xi_b)
\widehat{W_aW_b}(\omega,\xi).
\end{equation}
Thus the scalar symbol has size
\begin{equation}\label{eq:V2-symbol-size}
|m_2(\omega,\xi)|
\lesssim
\frac{|\xi_3||\xi_h|^2}{(|\omega|+|\xi|^2)|\xi|^2}.
\end{equation}
In the static elliptic regime \(\omega=0\), with \(|\xi_h|\sim K\) and \(|\xi_3|\sim m\),
\begin{equation}\label{eq:V2-static-size}
|m_2(0,\xi)|\sim \frac{mK^2}{(K^2+m^2)^2}.
\end{equation}
For fixed \(K\), the function
\[
M(K,m)=\frac{mK^2}{(K^2+m^2)^2}
\]
obeys
\[
M(K,m)=\frac1K\frac{r}{(1+r^2)^2},\qquad r=\frac mK.
\]
The maximum occurs at \(r=1/\sqrt3\), and
\begin{equation}\label{eq:V2-max-gain}
\sup_m M(K,m)\sim K^{-1}.
\end{equation}
Thus the full vertical lift has a genuine symbol-level smoothing gain for high horizontal frequencies.

This gain should not be overinterpreted.  The map \(W\mapsto W\otimes W\) is critical.  In the natural energy class
\[
W\in L_t^\infty L_x^2\cap L_t^2\dot H_x^1,
\]
the product \(W\otimes W\) is not automatically strong enough to produce an \(L^3\)-controlled vertical component.  Therefore the most realistic use of \(\calV_2\) is not a global bound
\[
\norm{\calV_2(W,W)}{L^3}\lesssim \norm{W}{\calE}^2,
\]
which is likely too strong, but a finite-window or dual bound of the form
\begin{equation}\label{eq:dual-V2-target}
\left|\ip{\calV_2(W,W)}{\calA_y}\right|
\le r(W)\norm{A^*y}{H}.
\end{equation}
This is precisely the symbol-level root of vertical duality.

\section{Additional reduced-duality calculations and Schur determinants}\label{app:schur-details}

\subsection{Adjoint trace-defect factorization as a model problem}\label{subsec:adjoint-factorization-model}

The preceding operator calculation suggests how vertical duality should be proved.  Fix a finite stage with selected-time trace correction space \(H\), active strict quotient space \(Y\), and vertical momentum test space \(\calV\).  Let
\[
A:H\to Y
\]
be the active trace-defect map.  The desired residual representation has the form
\begin{equation}\label{eq:residual-representation-model}
\ip{g_{n,\eta}}{y}_Y
=
\ip{\calV_{n,\eta}}{B_{n,\eta}^*y}_{\calV',\calV}
+\mathrm{Comm}_{n,\eta}(y),
\end{equation}
where \(g_{n,\eta}\in Y\) is the active quotient residual, \(\calV_{n,\eta}\) is the branch-native vertical momentum residual, and
\[
B_{n,\eta}^*:Y\to\calV
\]
is the vertical test generated by the strict quotient dual vector \(y\).

In the zero-shadow Fourier model, the strict obstruction is
\[
B(W,W)=\nabla_h\partial_3(-\Delta_h)^{-1}F,
\qquad
F=\partial_a\partial_b(W_aW_b),
\]
while the full vertical source is
\[
S_2(W)=\partial_3(-\Delta)^{-1}F.
\]
The adjoint lifting \(B^*y\) is defined by
\[
\ip{B(W,W)}{y}=\ip{S_2(W)}{B^*y}.
\]
In Fourier variables this requires
\begin{equation}\label{eq:Bstar-symbol}
\widehat{B^*y}(\xi)
=
\frac{|\xi|^2}{|\xi_h|^2}
\, i\xi_h\cdot\widehat y(\xi)
\end{equation}
for active modes with \(\xi_h\ne0\).  Thus \(B^*\) is an anisotropic elliptic lifting operator, roughly
\begin{equation}\label{eq:Bstar-anisotropic}
B^*\sim \left(1+\frac{\xi_3^2}{|\xi_h|^2}\right)\nabla_h\cdot.
\end{equation}
This expression is potentially singular near horizontal zero modes, which is why the harmonic pressure quotient and horizontal zero-mode removal are not cosmetic.

The actual factorization target is not a direct bound on \(B^*y\).  Rather, one wants
\begin{equation}\label{eq:model-factorization}
B^*y=M A^*y+R^*y,
\end{equation}
where \(M:H\to\calV\) is bounded at the fixed finite stage and \(R^*y\) is a lower-order, cutoff, moving-base, or higher-order remainder.  If
\begin{equation}\label{eq:remainder-factor-bound}
\left|\ip{\calV_{n,\eta}}{R^*y}\right|
+|\mathrm{Comm}_{n,\eta}(y)|
\le C r_{n,\eta}\norm{A^*y}{H},
\end{equation}
then \eqref{eq:residual-representation-model} and \eqref{eq:model-factorization} imply
\[
|\ip{g_{n,\eta}}{y}|
\le C r_{n,\eta}\norm{A^*y}{H}.
\]
By the trace-cost duality lemma, this gives a small selected-time correction cost.  Thus vertical duality is equivalent, at the finite-stage level, to the compatibility between the strict quotient adjoint and the selected-time trace adjoint.

A useful way to define \(A^*y\) is through a backward strict adjoint equation.  Given \(y\in Y\), let \(\Phi_y\) solve schematically
\begin{equation}\label{eq:backward-strict-adjoint}
-\partial_t\Phi_y-\Delta\Phi_y
-(V_h\cdot\nabla_h)\Phi_y
+(\nabla_hV_h)^T\Phi_y+\nabla_h\chi_y
=
\calQ_y,
\qquad
\divh\Phi_y=0,
\end{equation}
where \(\calQ_y\) is the active quotient forcing generated by \(y\).  Then
\begin{equation}\label{eq:Astar-adjoint-trace}
A^*y=\Phi_y(s_*).
\end{equation}
Thus \(A^*y\) is the part of the quotient dual vector visible at the selected trace.  Directions with \(A^*y=0\) are trace-invisible; vertical duality asserts that true Navier--Stokes residuals have no uncontrolled pairing with such directions.

\subsection{A third-order zero-shadow reduced model}\label{subsec:third-order-reduced-test}

The first nontrivial reduced quotient appears after the raw second-order obstruction is removed.  In the periodic zero-shadow model, consider
\[
V_h^\eta=\eta W+\eta^2R_2+\eta^3R_3+\cdots.
\]
The strict compatibility expansion is
\begin{align*}
\calC(V^\eta)
&=
\eta^2B(W,W)
+\eta^3\{B(W,R_2)+B(R_2,W)\}\
&\quad+
\eta^4\{B(W,R_3)+B(R_3,W)+B(R_2,R_2)\}
+\cdots.
\end{align*}
Assume first that
\begin{equation}\label{eq:O2-zero}
\calO_2=B(W,W)=0.
\end{equation}
Then the quadratic visible obstruction is absent, and one may solve for a second-order strict correction \(R_2\) satisfying
\[
\partial_tR_2-\Delta R_2+\nabh\pi_2
=-\nabh\cdot(W\otimes W),
\qquad
\divh R_2=0,
\qquad
\partial_3\pi_2=0.
\]
The third-order strict equation is
\[
\partial_tR_3-\Delta R_3+\nabh\pi_3
=-\nabh\cdot(W\otimes R_2+R_2\otimes W),
\qquad
\divh R_3=0,
\]
and the strict pressure constraint gives the third-order obstruction
\begin{equation}\label{eq:O3-def}
\calO_3
=
B(W,R_2)+B(R_2,W).
\end{equation}
If \(\calO_3\ne0\), this is again a finite-order visible obstruction and does not belong to the vertical-duality stage.

The corresponding full Navier--Stokes vertical source is obtained by using the full pressure inverse.  Let
\[
F_3(W,R_2)=\partial_a\partial_b(W_aR_{2,b}+R_{2,a}W_b).
\]
Then
\begin{equation}\label{eq:S3-def}
S_3(W,R_2)=\partial_3(-\Delta)^{-1}F_3(W,R_2),
\end{equation}
and, when lower vertical jets have been chosen to vanish, the third-order vertical lift equation is
\begin{equation}\label{eq:VCost3-equation}
(\partial_t-\Delta)z_3=-S_3(W,R_2).
\end{equation}
Thus
\begin{equation}\label{eq:VCost3-def}
\mathrm{VCost}_3(W,R_2)
:=
\inf\{\norm{z_3}{\calZ}:(\partial_t-\Delta)z_3=-S_3(W,R_2)\}.
\end{equation}
In a clean periodic active window, if \(\calO_3=0\), then all active Fourier modes of \(F_3\) with \(\xi_h\ne0\) and \(\xi_3\ne0\) vanish.  Hence \(S_3=0\) on the active quotient.  Therefore the same conclusion as in the raw second-order test holds: a nonzero \(\calO_3\) is a visible obstruction, while \(\calO_3=0\) removes the third-order vertical source from the surviving quotient.

The reduced third-order quotient is
\begin{equation}\label{eq:Yred3}
Y_{\mathrm{red}}^{(3)}=Y_\Lambda/\operatorname{span}\{\calO_2,\calO_3\},
\end{equation}
and its dual consists of all \(y\) such that
\[
\ip{\calO_2}{y}=\ip{\calO_3}{y}=0.
\]
The reduced residual begins at fourth order:
\begin{equation}\label{eq:g3-def}
g_3(\eta)=
\Pi_{\mathrm{red}}^{(3)}\eta^{-4}\calC(V_\eta^{(3)}),
\end{equation}
whose leading coefficient is
\begin{equation}\label{eq:O4-leading}
\Pi_{\mathrm{red}}^{(3)}
\{B(W,R_3)+B(R_3,W)+B(R_2,R_2)\}.
\end{equation}
The trace-defect map \(A_3:H_3\to Y_{\mathrm{red}}^{(3)}\) and the vertical lift \(B_3^*\) are then defined exactly as in \eqref{eq:AK-def} and \eqref{eq:reduced-residual-representation}.  The correct third-order reduced vertical-duality statement is
\begin{equation}\label{eq:VD3}
|\ip{g_3(\eta)}{y}|
\le
r_3(\eta)\norm{A_3^*y}{H_3}
\qquad
\forall y\in (Y_{\mathrm{red}}^{(3)})^*.
\end{equation}
Thus vertical duality is recursive: it acts only after all lower visible obstruction coefficients have been quotiented out.

\subsection{What must be proved to obtain reduced vertical duality}\label{subsec:proof-obligations-reduced-vd}

The reduced VD estimate \eqref{eq:reduced-VD-target} is not a single black-box inequality.  It separates into three proof obligations.

\medskip
\noindent\textbf{(I) Residual representation.}
One must prove
\begin{equation}\label{eq:obligation-residual-representation}
\ip{g_K(\eta)}{y}
=
\ip{\calV_{\NS,K}(\eta)}{B_K^*y}
+\operatorname{Comm}_K(y).
\end{equation}
In the periodic finite-window model this is a Fourier multiplier identity.  In a localized cylinder it requires the harmonic pressure gauge, localized inverse operators, vertical antiderivatives, and cutoff commutator bookkeeping.  This is a difficult but concrete pressure-duality calculation.

\medskip
\noindent\textbf{(II) Vertical adjoint lift.}
One must construct
\[
B_K^*:(Y_{\mathrm{red}}^{(K)})^*\to\calV_K.
\]
In the zero-shadow periodic calculation, its raw symbol is
\[
\widehat{B^*y}(\xi)
\sim
\frac{|\xi|^2}{|\xi_h|^2}\,i\xi_h\cdot\widehat y(\xi).
\]
In the localized setting it should be the vertical antiderivative of a localized pressure-quotient inverse applied to the horizontal divergence of \(y\), with the corresponding cutoff corrections.  The potential singularity at \(|\xi_h|=0\) is precisely why active quotients must remove horizontal harmonic and gauge directions.

\medskip
\noindent\textbf{(III) Reduced anti-phantom principle.}
The genuinely new mechanism is the assertion that the reduced vertical test is seen by actual Navier--Stokes residuals only through the selected trace-visible part.  A strong sufficient form is
\begin{equation}\label{eq:strong-factorization-obligation}
B_K^*y=M_KA_K^*y+R_K^*y,
\end{equation}
with branch-native bounds on \(R_K^*\).  More intrinsically, one wants
\begin{equation}\label{eq:anti-phantom-core}
A_K^*y=0
\quad\Longrightarrow\quad
\ip{g_K(\eta)}{y}=0
\end{equation}
up to the prescribed branch-native remainder.  This is the Navier--Stokes-specific anti-phantom principle: an \(\NS\)-derived residual should not occupy reduced cokernel directions invisible to selected-time trace corrections.

The algebraic parts of this scheme are clear: compatibility expansion, visible quotient removal, and finite-dimensional trace-cost duality.  The PDE bookkeeping parts are residual representation, construction of \(B_K^*\), and commutator estimates.  The truly new mechanism is \eqref{eq:anti-phantom-core}, or its quantitative form \eqref{eq:reduced-VD-target}.

\begin{center}
\begin{tabularx}{\textwidth}{@{}lXl@{}}
\toprule
Component & Content & Status \\
\midrule
Compatibility expansion & \(\calC(V_\eta)=\sum\eta^j\calO_j+\cdots\) & algebraic \\
Visible quotient removal & \(Y_{\mathrm{red}}^{(K)}=Y/Y_{\mathrm{vis}}^{(K)}\) & linear algebra \\
Trace-cost lemma & dual bound implies small correction cost & finite-dimensional linear algebra \\
Residual representation & \(\ip{g}{y}=\ip{\calV}{B^*y}+\operatorname{Comm}\) & PDE bookkeeping \\
Construction of \(B^*\) & quotient dual to vertical test & elliptic/gauge analysis \\
Commutator bounds & cutoff, projection, weak defects & PDE estimates \\
Anti-phantom principle & residual lies in trace-defect range & NS-specific \\
\bottomrule
\end{tabularx}
\end{center}

\subsection{Dual form: from vertical lift cost to reduced residual range inclusion}\label{subsec:VCost-to-VD}

The Fourier and reduced-quotient tests lead back to the general dual principle.  Testing the vertical lift equation against a vertical adjoint test \(\psi\) gives, schematically,
\[
\ip{\partial_3\pi_2}{\psi}
=-\ip{(\partial_t-\Delta)z_2}{\psi}
=-\ip{z_2}{(-\partial_t-\Delta)\psi}+\text{trace terms}.
\]
If \(\psi\) is generated by a reduced active quotient dual vector \(y\), this identity is the prototype of
\begin{equation}\label{eq:VD-from-adjoint-new}
\ip{g_K(\eta)}{y}
=
\ip{\calM_3(u,p)}{\calA_{K,y}}
+\text{trace-admissible commutators}.
\end{equation}
For true Navier--Stokes solutions, \(\calM_3(u,p)=0\), where
\[
\calM_3(u,p)=
\partial_tu_3-\Delta u_3+u_h\cdot\nabh u_3+u_3\partial_3u_3+
\partial_3p.
\]
The old formulation asked for an operator factorization of \(\calA_{K,y}\) through \(A_K^*y\).  The refined formulation asks for the resulting residual to lie in \(\Range A_K\), with controlled minimal preimage.  This is exactly \Cref{prob:NS-derived-range-inclusion}.

Thus \(\NS\)-realizability and vertical duality are two forms of the same principle.  The primal form says that a surviving horizontal branch has a low-cost vertical and horizontal tail.  The dual finite-stage form says that its active residual has no component in the reduced phantom cokernel after all finite-order visible obstructions have been removed.

\begin{problem}[Vertical lift cost and reduced residual range inclusion]\label{prob:vertical-lift-factorization}
Prove, first in a smooth periodic finite-window model and then in a localized harmonic-pressure-gauge setting, that every \(\NS\)-realizable all-order finite-mode flat branch satisfies
\[
g_K(\eta)\in\Range A_K,
\qquad
\Cost^{\trc}(g_K(\eta))\le C r_K(\eta)^2,
\]
where \(r_K(\eta)\) is controlled by the appropriate vertical lift cost and higher-order parabolic remainders.  Equivalently, prove the reduced active-residual estimate
\[
|\ip{g_K(\eta)}{y}|
\le r_K(\eta)\norm{A_K^*y}{H_K}
\qquad
\text{for all }y\in (Y_{\mathrm{red}}^{(K)})^*.
\]
This is the concrete reduced vertical-duality theorem target.
\end{problem}

\subsection{The first nontrivial periodic finite-window Schur defect}\label{subsec:first-schur-defect}

We now compute the first nontrivial shape of \(\mathfrak S_K\).  A single active quotient mode is vacuous.  If \(Y_\xi\simeq\mathbb C\) and \(H_\xi\simeq\mathbb C\), with
\[
        A_\xi h=a_\xi h,
        \qquad B_\xi C_\xi\beta=b_\xi\beta,
\]
then either \(a_\xi\ne0\), in which case \(\Range A_\xi=Y_\xi\) and the phantom projection is zero, or \(a_\xi=0\), in which case trace-projectability fails unless \(b_\xi\beta=0\).  There is no angle condition.

The first genuinely nontrivial model has
\[
        Y=\operatorname{span}\{q_1,q_2\}\simeq\mathbb C^2,
        \qquad H\simeq\mathbb C,
\]
and
\[
        Ah=h(a_1q_1+a_2q_2).
\]
Thus \(\Range A=\operatorname{span}\{a\}\), where \(a=(a_1,a_2)\).  Suppose a one-dimensional forcing parameter \(\beta\) produces
\[
        BC\beta=\beta(b_1q_1+b_2q_2),
        \qquad b=(b_1,b_2).
\]
Then
\begin{equation}\label{eq:first-Schur-projection}
        \mathfrak S\beta
        =\beta\left(b-\frac{\ip{b}{a}}{|a|^2}a\right).
\end{equation}
Hence
\begin{equation}\label{eq:first-Schur-determinant}
        \mathfrak S=0
        \quad\Longleftrightarrow\quad
        b\parallel a
        \quad\Longleftrightarrow\quad
        a_1b_2-a_2b_1=0.
\end{equation}
Moreover,
\begin{equation}\label{eq:first-Schur-size}
        |\mathfrak S\beta|^2
        =|\beta|^2
        \frac{|a_1b_2-a_2b_1|^2}{|a_1|^2+|a_2|^2}.
\end{equation}
This is the first finite-dimensional Schur obstruction.  It is a symbol-alignment condition: the vertical response vector \(b\) must align with the trace-defect vector \(a\).

We can insert the Fourier symbols from the vertical lift calculation.  For a mode with horizontal scale \(K\) and vertical scale \(m\), the static quadratic vertical-lift multiplier is
\begin{equation}\label{eq:sigmaV-again}
        \sigma_{\calV}(K,m)
        =\frac{mK^2}{(K^2+m^2)^2}.
\end{equation}
Thus, for two quotient modes \(\xi^{(i)}\),
\[
        b_i=\frac{m_iK_i^2}{(K_i^2+m_i^2)^2}\,\beta_i.
\]
The trace-defect coefficients have the form
\[
        a_i=\sigma_A(\xi^{(i)})\alpha_i,
\]
where \(\sigma_A\) depends on the finite stage, the chosen trace lifting, and the reduced quotient projection.

The Schur cancellation condition becomes
\begin{equation}\label{eq:Schur-symbol-alignment}
        \sigma_A(\xi^{(1)})\alpha_1\,
        \sigma_{\calV}(\xi^{(2)})\beta_2
        =
        \sigma_A(\xi^{(2)})\alpha_2\,
        \sigma_{\calV}(\xi^{(1)})\beta_1.
\end{equation}
Unless a structural relation forces \eqref{eq:Schur-symbol-alignment}, the Schur defect is nonzero.

For example, with equal horizontal scale \(K\) and vertical scales \(m_1=K\), \(m_2=2K\), one has
\[
        \sigma_{\calV}(K,K)=\frac1{4K},
        \qquad
        \sigma_{\calV}(K,2K)=\frac{2}{25K},
\]
so
\[
        \frac{\sigma_{\calV}(K,K)}{\sigma_{\calV}(K,2K)}=\frac{25}{8}.
\]
Different vertical frequencies therefore produce different vertical response ratios.  Trace-projectability is not an automatic consequence of finite-dimensional linear algebra; it is a Schur cancellation condition.

\subsection{Two-mode calculation of NS forcing coefficients}\label{subsec:two-mode-forcing-coefficients}

We next compute the lower-order Navier--Stokes forcing coefficients that enter the two-mode Schur determinant.  Work on \(\mathbb T^3\).  For a wave vector \(p=(p_h,p_3)\), with \(p_h\ne0\), set
\[
        e_p=\frac{p_h^\perp}{|p_h|}
        =\frac{(-p_2,p_1)}{|p_h|}.
\]
Then \(p_h\cdot e_p=0\), and
\[
        W_h^p(x)=A_p e_p e^{ip\cdot x}
\]
is horizontally divergence-free.  Let the input modes be \(p,q,r\), and focus on the two quadratic output modes
\[
        \sigma_1=p+q,
        \qquad
        \sigma_2=p+r.
\]
The horizontal pressure source is
\[
        F(W)=\partial_a\partial_b(W_aW_b).
\]
For the interaction of \(p\) and \(q\), the coefficient of \(W_aW_b\) at \(p+q\) is
\[
        A_pA_q(e_{p,a}e_{q,b}+e_{q,a}e_{p,b}).
\]
Therefore
\[
        \widehat F_{p+q}
        =-\sigma_a\sigma_bA_pA_q(e_{p,a}e_{q,b}+e_{q,a}e_{p,b})
        =-2A_pA_q(\sigma_h\cdot e_p)(\sigma_h\cdot e_q).
\]
Since \(\sigma_h=p_h+q_h\), and \(p_h\cdot e_p=q_h\cdot e_q=0\),
\[
        \sigma_h\cdot e_p=q_h\cdot e_p,
        \qquad
        \sigma_h\cdot e_q=p_h\cdot e_q.
\]
Writing the two-dimensional cross product as
\[
        p_h\times q_h=p_1q_2-p_2q_1,
\]
we have
\[
        q_h\cdot e_p=\frac{p_h\times q_h}{|p_h|},
        \qquad
        p_h\cdot e_q=-\frac{p_h\times q_h}{|q_h|}.
\]
Thus
\begin{equation}\label{eq:beta-pq}
        \widehat F_{p+q}
        =2A_pA_q\frac{(p_h\times q_h)^2}{|p_h||q_h|}.
\end{equation}
Define
\begin{equation}\label{eq:beta-def}
        \beta(p,q)=2A_pA_q\frac{(p_h\times q_h)^2}{|p_h||q_h|}.
\end{equation}
Then
\[
        \beta_1=\beta(p,q),
        \qquad
        \beta_2=\beta(p,r).
\]
The vertical response coefficient in output mode \(\sigma_i\) is
\[
        b_i=\Gamma_i\beta_i,
\]
where \(\Gamma_i\) is the corresponding full vertical Stokes multiplier.  Hence
\begin{equation}\label{eq:b1-b2-two-mode}
        b_1=\Gamma_1
        2A_pA_q\frac{(p_h\times q_h)^2}{|p_h||q_h|},
        \qquad
        b_2=\Gamma_2
        2A_pA_r\frac{(p_h\times r_h)^2}{|p_h||r_h|}.
\end{equation}
The Schur cancellation condition \(a_1b_2-a_2b_1=0\) becomes
\begin{equation}\label{eq:amplitude-ratio-schur}
        \frac{A_q}{A_r}
        =
        \frac{a_1\Gamma_2(p_h\times r_h)^2|q_h|}
        {a_2\Gamma_1(p_h\times q_h)^2|r_h|},
\end{equation}
provided the denominators are nonzero.

This ratio is not forced by the bare quadratic Navier--Stokes coefficient \eqref{eq:beta-def}.  For instance, take
\[
        p_h=(1,0),
        \qquad q_h=(0,1),
        \qquad r_h=(1,1).
\]
Then \(p_h\times q_h=p_h\times r_h=1\), \(|q_h|=1\), \(|r_h|=\sqrt2\), and if \(A_q=A_r\),
\[
        \frac{\beta_2}{\beta_1}=\frac1{\sqrt2}.
\]
If the output vertical frequencies satisfy \(m_1=K\), \(m_2=2K\), then \(\Gamma_1/\Gamma_2=25/8\) in the equal-horizontal-scale model.  Therefore
\[
        \frac{b_2}{b_1}=\frac{8}{25\sqrt2},
\]
which need not equal \(a_2/a_1\).  The Schur determinant is generically nonzero.

Thus the ordinary quadratic pressure source does not automatically provide reduced vertical duality.  Any cancellation must come from additional surviving-branch constraints, from the reduced quotient projection, from an \(\NS\)-compatible choice of trace lifting, or from a deeper realizability identity.

\section{Moving-window and localized controlled-regime tests}\label{app:moving-window}

\subsection{Moving-window and cascade obstruction analysis}\label{subsec:moving-window-cascade-obstruction}

We now address the final obstruction left by the fixed-window analysis.  The preceding steps show that, in a fixed active window, Schur defects are relaxed-visible after removing vertical-zero modes, horizontal gauge modes, perturbative localization errors, and non-Navier--Stokes-realizable candidates.  This still does not rule out a moving-window escape: finite-window visibility, localization, trace-cost inversion, and NS-lift constants may deteriorate as the stage and window grow.

Let
\[
        \mathfrak W=(K,\Lambda,\chi)
\]
denote a finite-stage localized window, where \(K\) is the obstruction stage, \(\Lambda\) is a finite active frequency window, and \(\chi\) is the localization cutoff.  Associated with \(\mathfrak W\) are the cleaned strict trace-invisible Schur cokernel \(\widehat\Phi_{\mathfrak W}\), the cleaned relaxed observation map
\[
        \widehat J_{\mathfrak W}^{\rm rel}:
        \widehat\Phi_{\mathfrak W}\longrightarrow \widehat W_{\mathfrak W},
\]
and the NS-realization map \(\widehat{\mathcal R}_{\mathfrak W}^{\rm NS}\).  Define the cleaned visibility constant
\[
        \gamma_{\mathfrak W}
        =
        \inf_{0\neq \widehat s\in\widehat\Phi_{\mathfrak W}}
        \frac{\|\widehat J_{\mathfrak W}^{\rm rel}\widehat s\|_{\widehat W_{\mathfrak W}}}
        {\|\widehat s\|_{\widehat Y_{\mathfrak W}}}.
\]
If \(\gamma_{\mathfrak W}>0\), set \(\mathcal V_{\mathfrak W}=\gamma_{\mathfrak W}^{-1}\); if \(\gamma_{\mathfrak W}=0\), set \(\mathcal V_{\mathfrak W}=+\infty\).

Similarly, define the trace-cost amplification by
\[
        \mathcal T_{\mathfrak W}
        =
        \sup_{0\neq g\in\Range A_{\mathfrak W}}
        \frac{\|A_{\mathfrak W}^{\dagger}g\|_{H_{\mathfrak W}}}
        {\|g\|_{Y_{\mathfrak W}}},
\]
where \(A_{\mathfrak W}^{\dagger}\) is the minimal-norm right inverse on \(\Range A_{\mathfrak W}\).  Finally, define the NS-lift amplification by
\[
        \mathcal N_{\mathfrak W}(\eps,\delta)
        =
        \sup_{\substack{0\neq \widehat s\in\widehat{\mathcal K}_{\mathfrak W}^{\rm ph}\\
        \widehat s\in\Range \widehat{\mathcal R}_{\mathfrak W}^{\rm NS}}}
        \frac{\|\widehat s\|_{\widehat Y_{\mathfrak W}}}
        {\operatorname{NSCost}_{\mathfrak W}(\widehat s;\eps)+\delta^{1/3}}.
\]
The total moving-window amplification is
\[
        \mathfrak C_{\mathfrak W}(\eps,\delta)
        =1+\mathcal V_{\mathfrak W}+\mathcal T_{\mathfrak W}
        +\mathcal N_{\mathfrak W}(\eps,\delta).
\]
The exact normalization of \(\mathfrak C_{\mathfrak W}\) is not canonical; what matters is that it records all constants that may deteriorate when \(K\to\infty\), \(\Lambda\to\infty\), or the cutoff degenerates.

\begin{definition}[Moving-window phantom cascade candidate]\label{def:moving-window-phantom-cascade}
A moving-window phantom cascade candidate is a sequence
\[
        \mathfrak W_n=(K_n,\Lambda_n,\chi_n),
        \qquad
        \eps_n\downarrow0,
        \qquad
        \delta_n\downarrow0,
\]
together with nonzero cleaned Schur vectors \(0\neq\widehat s_n\in\widehat\Phi_{\mathfrak W_n}\) such that at least one window parameter escapes, \(\widehat s_n\in\ker A_{\mathfrak W_n}^*\),
\[
        \frac{\|\widehat J_{\mathfrak W_n}^{\rm rel}\widehat s_n\|}
        {\|\widehat s_n\|}\longrightarrow0,
\]
and
\[
        \widehat s_n\in\Range \widehat{\mathcal R}_{\mathfrak W_n}^{\rm NS},
        \qquad
        \operatorname{NSCost}_{\mathfrak W_n}(\widehat s_n;\eps_n)
        \le C\delta_n^{1/3}\|\widehat s_n\|.
\]
It is called an NS-admissible moving-window true phantom cascade if, in addition, the selected-time trace distance, denoted here by \(m_n^{\rm tr}\), satisfies
\[
        m_n^{\rm tr}
        \gg
        \ell_n^\mu+\ell_n^{-N}\delta_n^b
\]
for every admissible subcritical scale predicted by the finite-window theory.
\end{definition}

\begin{definition}[Summable moving-window visibility]\label{def:summable-moving-window-visibility}
The relaxed Schur visibility theory is said to have a summable moving-window majorant if there exists a nondecreasing function \(\mathfrak M(\mathfrak W)\) such that
\[
        \mathfrak C_{\mathfrak W}(\eps,\delta)\le \mathfrak M(\mathfrak W)
\]
for all admissible windows, and such that along every finite-stage exactification scheme with amplitude \(\eta\),
\[
        \sum_{K\ge K_0}\mathfrak M(K,\Lambda_K,\chi_K)\eta^K<\infty
\]
for all sufficiently small \(\eta\).
\end{definition}

\begin{proposition}[Summable majorant excludes moving-window escape]\label{prop:summable-majorant-excludes-moving-window}
Assume that every fixed window has no cleaned NS-admissible true phantom after the NS-realizability filter, and that the finite-window visibility, trace-cost, commutator, and NS-lift constants admit a summable moving-window majorant.  Then there is no NS-admissible moving-window true phantom cascade.
\end{proposition}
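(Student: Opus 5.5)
We argue by contradiction. Suppose an NS-admissible moving-window true phantom cascade $(\mathfrak W_n,\eps_n,\delta_n,\widehat s_n)$ exists in the sense of Definition~\ref{def:moving-window-phantom-cascade}. Normalize $\|\widehat s_n\|_{\widehat Y_{\mathfrak W_n}}=1$; every condition in the definition is homogeneous, and the NS-cost condition scales linearly. The strategy is to show that the summable majorant forces the relaxed visibility ratio of $\widehat s_n$ to stay bounded below along the sequence. This contradicts $\|\widehat J^{\rm rel}_{\mathfrak W_n}\widehat s_n\|\to0$.

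\textbf{Step 1 (uniform visibility from the majorant).} By definition $\mathcal V_{\mathfrak W}\le\mathfrak C_{\mathfrak W}(\eps,\delta)\le\mathfrak M(\mathfrak W)$. Summability gives $\mathfrak M(K,\Lambda_K,\chi_K)\eta^K\to0$ for every small $\eta>0$, so $\mathfrak M$ grows at most subexponentially in the stage: $\mathfrak M(\mathfrak W_n)\le C_\eta\eta^{-K_n}$. The fixed-window hypothesis says each $\widehat{\mathcal K}^{\rm ph}_{\mathfrak W}$ meets $\Range\widehat{\mathcal R}^{\rm NS}_{\mathfrak W}$ only at $0$ within the budget. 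Together with the finite-dimensional compactness argument of Theorem~\ref{thm:local-quotient-cleaning}, this gives $\gamma_{\mathfrak W}>0$ on the NS-admissible part. Moreover $\gamma_{\mathfrak W_n}\ge\mathfrak M(\mathfrak W_n)^{-1}$.

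\textbf{Step 2 (pairing against the branch-native scale).} The cascade must actually excite the selected trace, so $m^{\rm tr}_n\gg\ell_n^\mu+\ell_n^{-N}\delta_n^b$. Run the finite-stage exactification at amplitude $\eta$. The trace-visible part of the residual is removed at cost $\le\mathcal T_{\mathfrak W_n}^2r_n^2\le\mathfrak M^2r_n^2$ by Lemma~\ref{lem:trace-cost-duality}. The relaxed-visible Schur part is controlled through \eqref{eq:Schur-rel-bound}, with constant $\mathcal V_{\mathfrak W_n}\eta^{K_n+1}\delta_n^{1/3}$. The NS-lift part is controlled by $\mathcal N_{\mathfrak W_n}(\operatorname{NSCost}+\delta_n^{1/3})\le 2C\,\mathfrak M\,\delta_n^{1/3}$. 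Summing over stages with the majorant $\sum_K\mathfrak M\eta^K<\infty$ gives a total correction of order $\eta^{K_0}+\delta_n^{1/3}$. This is below the sharp selection gap, which contradicts near-minimality of the selected trace, exactly as in the proof sketch of Theorem~\ref{thm:conditional-selection}.

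\textbf{Step 3 (contradiction with relaxed invisibility).} Alternatively, and more directly: $\widehat s_n\in\ker A^*_{\mathfrak W_n}$ with unit norm and admissible NS cost means $\widehat s_n$ lies in the set on which $\gamma_{\mathfrak W_n}$ is computed. Hence $\|\widehat J^{\rm rel}\widehat s_n\|\ge\mathfrak M(\mathfrak W_n)^{-1}$. This lower bound is not uniform, however, so Step 3 alone fails. The contradiction must come from Step 2: the decay $\|\widehat J^{\rm rel}\widehat s_n\|\to0$ has to be faster than $\mathfrak M(\mathfrak W_n)^{-1}\eta^{K_n}$ for the phantom to affect $m_n^{\rm tr}$, and summability excludes that.

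The main obstacle is Step 2: making precise that the amplification entering the selected-trace energy is $\mathfrak M(\mathfrak W_n)\eta^{K_n}$ rather than $\mathfrak M(\mathfrak W_n)$ alone. Only the weighted series is summable, so without this weighting the argument does not close. I would first try to prove it by tracking the $\eta^{K+1}$ prefactor in \eqref{eq:low-order-annihilation} through the reduced residual $g_K$, so that each stage's loss appears multiplied by $\eta^{K}$.
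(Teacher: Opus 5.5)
Your proposal follows essentially the same route as the paper: the contradiction comes not from the non-uniform bound $\gamma_{\mathfrak W_n}\ge\mathfrak M(\mathfrak W_n)^{-1}$ but from weighting the escaping cascade's stage-$K$ contributions by $\mathfrak M(K,\Lambda_K,\chi_K)\eta^K$, whose summability keeps the trace and relaxed corrections below the branch-native scale and contradicts the failed-selection gap; the paper simply asserts this $\eta^K$ weighting as built into the finite-stage exactification scheme in the definition of the summable majorant, which is the same source (the $\eta^{K+1}$ prefactor) you propose to track. Two small slips: summability of $\sum_K\mathfrak M\eta^K$ for small $\eta$ gives at most exponential (not subexponential) growth of $\mathfrak M$, and the paper uses the fixed-window hypothesis to force the cascade to escape, not to get $\gamma_{\mathfrak W}>0$, which already follows from $\mathcal V_{\mathfrak W}\le\mathfrak M(\mathfrak W)<\infty$.
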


\begin{proof}
If a cascade existed, it could not remain in a fixed window, because fixed-window cleaned phantoms have been removed by hypothesis.  Hence it would have to escape through growing stage, growing frequency window, or degenerating cutoff.  The summable majorant controls precisely this escape: the contribution at stage \(K\) is bounded by \(\mathfrak M(K,\Lambda_K,\chi_K)\eta^K\), and the resulting tail is summable for sufficiently small \(\eta\).  Therefore the selected trace correction or relaxed vertical-pressure control remains below the branch-native residual scale, contradicting the assumed moving-window phantom behavior.
\end{proof}

\begin{theorem}[Moving-window dichotomy]\label{thm:moving-window-dichotomy}
After active visibility, localization stability, harmonic-gauge cleaning, and NS-realizability filtering, exactly one of the following alternatives remains.
\begin{enumerate}
\item \emph{Summable anti-phantom closure.}  All cleaned finite-window constants admit a summable moving-window majorant.  Then every NS-derived surviving Schur residual is either trace-projectable or relaxed-visible with controllable \(u_3\)-pairing.
\item \emph{NS-admissible true phantom cascade.}  There exists a moving-window sequence satisfying strict invisibility, relaxed almost-invisibility, NS-admissibility, and non-summable amplification.  This is the only remaining candidate mechanism capable of defeating quantitative one-component selection by the present route.
\end{enumerate}
\end{theorem}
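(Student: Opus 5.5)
The plan is to treat the Moving-window dichotomy as a logical alternative on the total amplification $\mathfrak C_{\mathfrak W}(\eps,\delta)$. We split according to whether a summable moving-window majorant in the sense of Definition~\ref{def:summable-moving-window-visibility} exists. Exclusivity is immediate once each alternative is formulated as the negation of the other: Alternative~1 asserts a summable majorant, and Alternative~2 asserts non-summable amplification along some admissible sequence. The substance is therefore to show two things: that Alternative~1 yields the stated anti-phantom conclusion, and that the failure of Alternative~1 produces a sequence with all the properties listed in Alternative~2.

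\emph{Step 1 (Alternative 1).} Fix a window $\mathfrak W$ and an NS-derived surviving residual, and decompose it through the cleaned phantom projection into its $\Range A_{\mathfrak W}$ part and its $\widehat\Phi_{\mathfrak W}$ part.
\begin{itemize}
\item The range part is trace-projectable, with minimal preimage at most $\mathcal T_{\mathfrak W}$ times its norm, by Lemma~\ref{lem:trace-cost-duality} and Proposition~\ref{prop:finite-window-antiphantom}.
\item For the cokernel part, the gauge-null components vanish by Theorem~\ref{thm:local-quotient-cleaning}. The cleaned components are controlled by $\mathcal V_{\mathfrak W}\,\|\widehat J^{\rm rel}_{\mathfrak W}\widehat s\|$, and this pairs with $u_3$ at cost $\delta^{1/3}$, as in \eqref{eq:rel-pairing-general}. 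Any component surviving in $\widehat{\mathcal K}^{\rm ph}$ is bounded through $\mathcal N_{\mathfrak W}(\eps,\delta)(\operatorname{NSCost}+\delta^{1/3})$ by Theorem~\ref{thm:finite-stage-NS-realizability-filter}.
\end{itemize}
Each of these bounds is at most $\mathfrak C_{\mathfrak W}\le\mathfrak M(\mathfrak W)$. Summing over stages with weights $\eta^K$ then gives a convergent tail. Proposition~\ref{prop:summable-majorant-excludes-moving-window} converts this into the statement that each surviving residual is trace-projectable or relaxed-visible with controllable $u_3$-pairing.

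\emph{Step 2 (Alternative 2).} Suppose no summable majorant exists. Then for every candidate $\mathfrak M$ there are windows $\mathfrak W_n=(K_n,\Lambda_n,\chi_n)$ along some exactification scheme with $\sum_K\mathfrak C_{\mathfrak W_K}\eta^K=\infty$ for arbitrarily small $\eta$. In particular, $\mathfrak C_{\mathfrak W_n}$ must grow faster than every geometric rate. Since the windows are fixed in number per stage, at least one window parameter escapes. We then ask which of $\mathcal V,\mathcal T,\mathcal N$ blows up and treat each case.
\begin{itemize}
\item If $\mathcal V_{\mathfrak W_n}\to\infty$, then by compactness of the unit sphere in each finite-dimensional $\widehat\Phi_{\mathfrak W_n}$ (as in Proposition~\ref{prop:fredholm-nonperturbative-localization}) there are normalized $\widehat s_n\in\ker A^*_{\mathfrak W_n}$ with $\|\widehat J^{\rm rel}\widehat s_n\|\to0$.
\item If $\mathcal N$ blows up, the defining supremum directly supplies realizable $\widehat s_n$ satisfying $\operatorname{NSCost}\lesssim\delta_n^{1/3}\|\widehat s_n\|$.
\item If only $\mathcal T$ blows up, the near-kernel left-singular directions of $A_{\mathfrak W_n}$ play the role of $\widehat s_n$.
\end{itemize}
A diagonal extraction then assembles a sequence satisfying the requirements of Definition~\ref{def:moving-window-phantom-cascade}.

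\emph{Main obstacle.} The hard part is Step~2: upgrading non-summability of a \emph{supremum} of three constants into a \emph{single} sequence that simultaneously has all the required properties (strict invisibility, relaxed almost-invisibility, and NS-admissibility), and that is actually excited by the NS-derived residual. Blow-up of $\mathcal T$ alone, for example, does not give relaxed invisibility. I would first try to handle this by restricting the suprema to the NS-realizable cleaned cokernel from the outset, so that every bad direction is already in $\widehat{\mathcal K}^{\rm ph}\cap\Range\widehat{\mathcal R}^{\rm NS}$. Failing that, I would weaken Alternative~2 to "at least one constant is non-summable along NS-realizable directions," which makes the dichotomy essentially definitional.
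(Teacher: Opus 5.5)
Your skeleton is the paper's own. You split on whether a summable majorant in the sense of \cref{def:summable-moving-window-visibility} exists, invoke \cref{prop:summable-majorant-excludes-moving-window} in the first case, and otherwise extract and normalize along the deteriorating windows and pass the result through the NS-admissibility filter. The obstacle you flag is a genuine gap, and the paper's proof does not close it with any additional idea. It simply asserts that normalizing Schur vectors along the bad windows yields a candidate, and discards that candidate as a ``quotient artifact'' if it is not NS-admissible. That is your definitional fallback, and it makes ``exactly one'' true only in that filtered sense.

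Concretely, your third bullet in Step~2 fails. $\mathcal T_{\mathfrak W}$ is a supremum over $\Range A_{\mathfrak W}$, so its near-extremal directions are left singular vectors of $A_{\mathfrak W}$ with small singular value. They are not elements of $\ker A_{\mathfrak W}^*$, from which $\widehat\Phi_{\mathfrak W}$ is built. Suppose $\gamma_{\mathfrak W}$ is bounded below uniformly while $\mathcal T_{\mathfrak W_K}$ grows faster than every geometric rate along some scheme. Then the majorant hypothesis of Alternative~1 fails, since $\mathfrak C\ge\mathcal T$. Yet no sequence is relaxed almost-invisible, so no candidate in the sense of \cref{def:moving-window-phantom-cascade} exists either. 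Your first fix cannot repair this: restricting the suprema to the NS-realizable cleaned cokernel does not touch $\mathcal T$, which lives on the range.

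The missing idea is that non-summability of operator constants carries no information about the residual, so the bad alternative has to be formulated at residual level. The paper does this only later, in the trace-cost singular-collapse test. Under global residual alignment, small singular values are harmless (\cref{lem:singular-values-harmless-corrected}). Failure of alignment yields normalized dual vectors $y_n$ with $\|A_{\mathfrak W_n}^*y_n\|\to0$ but $|\langle g_n,y_n\rangle|\gg r_n\|A_{\mathfrak W_n}^*y_n\|$. These are the unaligned left-singular directions that become the final cascade of \cref{thm:final-anti-phantom-dichotomy}: near-invisible dual vectors, not exact Schur vectors in $\ker A^*$.

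Your first fix is, however, the right one for the $\mathcal V$ part. With the constants as literally defined, take a single fixed window containing a nonzero but non-NS-admissible element of $\widehat{\mathcal K}^{\rm ph}_{\mathfrak W}=\ker\widehat J^{\rm rel}_{\mathfrak W}$. This already forces $\gamma_{\mathfrak W}=0$, hence $\mathfrak C_{\mathfrak W}=+\infty$, so no finite majorant exists, while the extracted candidate is discarded.

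Two smaller slips.
\begin{itemize}
\item In Step~1 the third bullet is vacuous: $\mathfrak C_{\mathfrak W}\le\mathfrak M(\mathfrak W)<\infty$ gives $\gamma_{\mathfrak W}>0$, hence $\widehat{\mathcal K}^{\rm ph}_{\mathfrak W}=\{0\}$. The bound you quote there is also just the definition of $\mathcal N_{\mathfrak W}$, not \cref{thm:finite-stage-NS-realizability-filter}.
\item In Step~2, $\mathcal N_{\mathfrak W_n}\to\infty$ only produces realizable $\widehat s_n$ with $\operatorname{NSCost}+\delta_n^{1/3}=o(\|\widehat s_n\|)$. That is not the normalization $\operatorname{NSCost}\le C\delta_n^{1/3}\|\widehat s_n\|$ required in \cref{def:moving-window-phantom-cascade}.
\end{itemize}
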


\begin{proof}
If the summable majorant exists, \cref{prop:summable-majorant-excludes-moving-window} rules out moving-window escape.  If no such majorant exists, then the obstruction must be witnessed by windows along which visibility, trace-cost, localization, or NS-lift constants deteriorate.  Passing to a subsequence and normalizing the Schur vectors yields a moving-window candidate.  If it passes the NS-admissibility filter, it is exactly the second alternative; otherwise it is a quotient artifact and is discarded.
\end{proof}

\subsection{Growth profile of the moving-window constants}

We now estimate the possible growth of the constants appearing in the moving-window dichotomy. The purpose is not to prove a final uniform theorem at once. The purpose is to identify which constants are benign, which constants are logarithmically absorbable, and which constants would be genuinely dangerous for quantitative one-component selection.

Let
\[
\mathfrak W=(K,\Lambda,\chi)
\]
be a finite-stage localized window. Write
\[
N_\Lambda=\max_{\sigma\in\Lambda}|\sigma|,
\qquad
h_\Lambda=\min_{\sigma\in\Lambda}|\sigma_h|,
\qquad
v_\Lambda=\min_{\sigma\in\Lambda}|\sigma_3|,
\]
and
\[
\rho_\Lambda =
\max_{\sigma\in\Lambda}
\frac{|\sigma|}{|\sigma_h|}.
\]
The active condition is
\[
h_\Lambda>0,\qquad v_\Lambda>0.
\]
The parameter \(\rho_\Lambda\) measures proximity to the horizontal gauge sector. If \(\rho_\Lambda\) is large, then the window contains modes whose horizontal frequency is small compared with their full frequency.

\subsubsection*{1. Active relaxed-visibility constant}

On the clean periodic active Schur quotient, the relaxed observation has diagonal multiplier
\[
\mu_\sigma =
\frac{|\sigma|^2}{|\sigma_h|^2}.
\]
Thus
\[
\mu_\sigma\ge 1
\]
on every active mode. In Schur-normalized coefficient coordinates this means that the relaxed observation does not lose visibility at high frequency.

More generally, let
\[
|s|_{Y_\Lambda^a}^2 =
\sum_{\sigma\in\Lambda}
\langle\sigma\rangle^{2a}|s_\sigma|^2,
\qquad
|w|_{W_\Lambda^b}^2 =
\sum_{\sigma\in\Lambda}
\langle\sigma\rangle^{2b}|w_\sigma|^2.
\]
Then
\[
|J_\Lambda^{\mathrm{rel}}s|_{W_\Lambda^b}^2 =
\sum_{\sigma\in\Lambda}
\langle\sigma\rangle^{2b}
|\mu_\sigma s_\sigma|^2.
\]
Consequently,
\[
|J_\Lambda^{\mathrm{rel}}s|_{W_\Lambda^b}
\ge
\left(
\min_{\sigma\in\Lambda}
\mu_\sigma\langle\sigma\rangle^{b-a}
\right)
|s|_{Y_\Lambda^a}.
\]
Since \(\mu_\sigma\ge1\), we obtain
\[
\mathcal V_\Lambda^{a,b}
:=
\left[
\inf_{s\ne0}
\frac{|J_\Lambda^{\mathrm{rel}}s|_{W_\Lambda^b}}
{|s|_{Y_\Lambda^a}}
\right]^{-1}
\le
C N_\Lambda^{(a-b)_+},
\]
up to the finite-dimensional condition number associated with the chosen quotient basis.

Thus active relaxed visibility has at worst polynomial growth coming from the choice of Sobolev weights and quotient coordinates. It does not contain an intrinsic small divisor in the active Schur-normalized variables.

\begin{proposition}[Polynomial active-visibility profile]
Assume the cleaned active quotient basis has condition number
\[
\kappa_\Lambda^{\mathrm{quot}}.
\]
Then
\[
\mathcal V_\Lambda^{a,b}
\le
C\kappa_\Lambda^{\mathrm{quot}}N_\Lambda^{(a-b)_+}.
\]
In particular, if the quotient bases are uniformly conditioned or polynomially conditioned, active relaxed visibility contributes at most polynomial growth.
\end{proposition}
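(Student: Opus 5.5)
The plan is to split $J_\Lambda^{\mathrm{rel}}$ as $D_\mu\circ T$. Here $T$ is the change of coordinates from the chosen cleaned quotient basis of $Y_\Lambda^a$ to the Schur-normalized output-mode coefficients $(s_\sigma)_{\sigma\in\Lambda}$. The factor $D_\mu=\diag(\mu_\sigma)$ is the diagonal relaxed multiplier from \eqref{eq:finite-window-Jrel-diagonal}. By definition, the quotient condition number bounds the distortion of $T$:
\[
\kappa_\Lambda^{\mathrm{quot}}=\norm{T}{}\,\norm{T^{-1}}{},
\]
with both norms measured in the weighted coefficient norms $|\cdot|_{Y_\Lambda^a}$. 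The estimate then reduces to a lower bound for the diagonal part plus the coordinate-change loss.

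\textbf{Step 1: the diagonal part.} Since $\Lambda$ is active, $\mu_\sigma=|\sigma|^2/|\sigma_h|^2\ge1$ for every $\sigma\in\Lambda$, exactly as recorded above. In the weighted coordinates this gives, mode by mode,
\[
\langle\sigma\rangle^{b}|\mu_\sigma s_\sigma|\ge \langle\sigma\rangle^{b-a}\,\langle\sigma\rangle^{a}|s_\sigma|.
\]
Hence
\[
|D_\mu s|_{W^b_\Lambda}\ge \Bigl(\min_{\sigma\in\Lambda}\langle\sigma\rangle^{b-a}\Bigr)|s|_{Y^a_\Lambda}.
\]
If $b\ge a$, the minimum is at least $1$. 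If $b<a$, the minimum is attained at the largest frequency and is at least $c\,N_\Lambda^{-(a-b)}$, because $\langle\sigma\rangle\le C N_\Lambda$ for $\sigma\ne0$. Combining the two cases gives the diagonal bound $\ge c\,N_\Lambda^{-(a-b)_+}$.

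\textbf{Step 2: composition and inversion.} I would compose this with the coordinate change. For a Schur vector written in the quotient basis,
\[
|J s|\ge c N_\Lambda^{-(a-b)_+}\,|Ts|\ge c N_\Lambda^{-(a-b)_+}\norm{T^{-1}}{}^{-1}|s|.
\]
Normalizing the basis so that $\norm{T}{}\ge1$ (or rescaling it), we have $\norm{T^{-1}}{}\le\kappa_\Lambda^{\mathrm{quot}}$. Taking the reciprocal of the infimum then gives
\[
\mathcal V^{a,b}_\Lambda\le C\kappa_\Lambda^{\mathrm{quot}}N_\Lambda^{(a-b)_+}.
\]
The last sentence of the proposition follows by substituting a uniform or polynomial bound for $\kappa_\Lambda^{\mathrm{quot}}$.

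\textbf{Main obstacle.} The delicate point is the normalization in Step 2. The cleaned quotient $\widehat\Phi$ is a subspace and quotient, not the full coordinate space. So I must check that restricting to $\Phi_\Lambda$ and passing to the cleaned quotient does not enlarge the distortion beyond $\kappa_\Lambda^{\mathrm{quot}}$. Restricting an injective diagonal map to a subspace only increases the infimum, so the subspace step is harmless. The quotient step is exactly what the condition number is meant to absorb, and I would define $\kappa_\Lambda^{\mathrm{quot}}$ to include it.
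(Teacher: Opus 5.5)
Your proposal is correct and follows the paper's own argument: the weighted diagonal bound $|J_\Lambda^{\mathrm{rel}}s|_{W^b_\Lambda}\ge(\min_{\sigma\in\Lambda}\mu_\sigma\langle\sigma\rangle^{b-a})|s|_{Y^a_\Lambda}$ with $\mu_\sigma\ge1$, followed by absorbing the change to the chosen quotient basis into $\kappa_\Lambda^{\mathrm{quot}}$. Your explicit factorization $J=D_\mu T$ and the normalization $\|T\|\ge1$ make precise what the paper leaves as ``up to the condition number''; some normalization of this kind is genuinely needed, because $\kappa_\Lambda^{\mathrm{quot}}$ is scale-invariant while $\mathcal V_\Lambda^{a,b}$ is not.
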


\subsubsection*{2. Raw pressure observation versus Schur-normalized observation}

The preceding estimate uses Schur-normalized coordinates. At the raw pressure-forcing level, the relaxed vertical pressure map is
\[
\beta_\sigma
\longmapsto
\frac{i\sigma_3}{|\sigma|^2}\beta_\sigma.
\]
The inverse raw pressure observation has size
\[
\left|
\frac{|\sigma|^2}{\sigma_3}
\right|.
\]
Thus, on a window \(\Lambda\),
\[
\mathcal V_{\Lambda,\mathrm{raw}}
\lesssim
\frac{N_\Lambda^2}{v_\Lambda}.
\]
This can blow up near the vertical-zero sector \(v_\Lambda\downarrow0\). Such blow-up is not an active Schur phantom. It is exactly the vertical-zero degeneracy already removed by the active quotient.

By contrast, after Schur normalization,
\[
s_\sigma=\Gamma_\sigma\beta_\sigma,
\qquad
\Gamma_\sigma =
\frac{|\sigma_3||\sigma_h|^2}{|\sigma|^4},
\]
the relaxed observation multiplier becomes
\[
\mu_\sigma =
\frac{|\sigma|^2}{|\sigma_h|^2}.
\]
Hence the raw vertical-zero degeneracy and the active Schur visibility problem are different objects.

\subsubsection*{3. Localized commutator constants}

Let \(\chi_R(x)=\chi(x/R)\) be a slowly varying cutoff. The localized observation has the form
\[
J_{\Lambda,R}^{\mathrm{rel}} =
J_\Lambda^{\mathrm{rel}}
+
\mathcal C_{\Lambda,R}.
\]
For a finite active window, pseudodifferential commutator estimates give
\[
|\mathcal C_{\Lambda,R}|_{Y_\Lambda^a\to W_\Lambda^b}
\le
C_{\chi,a,b}
R^{-1}
N_\Lambda^{q_0}
\rho_\Lambda^{q_1},
\]
where the exponents \(q_0,q_1\) depend on the chosen finite-window norms and on how many derivatives of the symbol and cutoff are used.

Thus localization is perturbative provided
\[
R
\gg
N_\Lambda^{q_0}
\rho_\Lambda^{q_1}
\mathcal V_\Lambda^{a,b}.
\]
In particular, for aspect-controlled windows with \(\rho_\Lambda=O(1)\), localization costs are polynomial in \(N_\Lambda\) and can be absorbed by taking the localization scale sufficiently large relative to the window.

\begin{proposition}[Commutator growth profile]
Assume the window is active and the localized pressure symbol is evaluated away from the horizontal gauge cone. Then
\[
|\mathcal C_{\Lambda,R}|
\le
C R^{-1}N_\Lambda^{q_0}\rho_\Lambda^{q_1}.
\]
Therefore localized relaxed visibility remains stable whenever
\[
R^{-1}N_\Lambda^{q_0}\rho_\Lambda^{q_1} =
o(\gamma_\Lambda).
\]
If this condition fails, any possible localized kernel is caused by nonperturbative localization, not by the clean active Schur symbol.
\end{proposition}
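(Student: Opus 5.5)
The estimate follows from the decomposition already used in the proof of Lemma~\ref{lem:finite-window-commutator-bound}:
\[
\mathcal C_{\Lambda,R}=P_\Lambda\chi_R[J^{\mathrm{rel}},\chi_R]P_\Lambda+P_\Lambda(\chi_R^2-1)J^{\mathrm{rel}}P_\Lambda .
\]
I will bound the two pieces separately in the weighted norms \(Y_\Lambda^a\to W_\Lambda^b\), keeping track of powers of \(N_\Lambda\) and \(\rho_\Lambda\). The stability conclusion is then immediate from Theorem~\ref{thm:localized-commutator-stability}, with \(\gamma_\Lambda\) in place of the clean constant. If \(R^{-1}N_\Lambda^{q_0}\rho_\Lambda^{q_1}=o(\gamma_\Lambda)\), the hypothesis \eqref{eq:commutator-smallness-assumption} holds for large parameters. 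The last sentence of the statement is the contrapositive, read through Proposition~\ref{prop:fredholm-nonperturbative-localization}.

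\textbf{Commutator piece.} The relaxed multiplier has symbol \(m(\xi)=i\xi_3/|\xi|^2\) at the raw level; after Schur normalization it becomes \(\mu(\xi)=|\xi|^2/|\xi_h|^2\). I will use the Kohn--Nirenberg commutator expansion
\[
[J^{\mathrm{rel}},\chi_R]=\sum_{1\le|\alpha|<M}\frac{1}{\alpha!}\operatorname{Op}\bigl(\partial_\xi^\alpha m\,D_x^\alpha\chi_R\bigr)+\text{remainder}.
\]
Each term carries a factor \(R^{-|\alpha|}\). The key symbol bound is \(|\partial_\xi^\alpha\mu(\xi)|\lesssim \rho^{2}|\xi_h|^{-|\alpha|}\lesssim \rho^{2+|\alpha|}|\xi|^{-|\alpha|}\) on the active cone. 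This is where the restriction ``away from the horizontal gauge cone'' enters: it keeps \(|\xi_h|\ge|\xi|/\rho_\Lambda\). The Sobolev weights \(\langle\sigma\rangle^{b-a}\) contribute at most \(N_\Lambda^{(b-a)_+}\). Together these give the bound \(R^{-1}N_\Lambda^{q_0}\rho_\Lambda^{q_1}\) for the leading term, and the higher-order terms are smaller.

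\textbf{Main obstacle.} The difficulty is that \(\chi_R\) does not preserve the finite window \(\Lambda\). The symbols must therefore be evaluated at intermediate frequencies \(\sigma+\zeta\), where \(\hat\chi_R\) is concentrated at \(|\zeta|\lesssim R^{-1}\). Those intermediate frequencies could approach the cone \(\xi_h=0\). I would handle this by restricting to \(R\ge 2\rho_\Lambda/h_\Lambda\), so that \(|\sigma_h+\zeta_h|\ge h_\Lambda/2\). The Schwartz tail of \(\hat\chi\) then gives an arbitrarily high power of \(R^{-1}\), which is absorbed into \(q_0,q_1\).

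\textbf{Multiplication-leakage piece.} The term \(P_\Lambda(\chi_R^2-1)J^{\mathrm{rel}}P_\Lambda\) is not controlled by derivatives of \(\chi_R\). Exactly as in Lemma~\ref{lem:finite-window-commutator-bound}, I will invoke the leakage hypothesis \eqref{eq:finite-window-multiplication-leakage-hypothesis}, composed with the diagonal bound \(\max_\sigma|\mu_\sigma|\le\rho_\Lambda^2\) and the weights. This produces the same form \(CR^{-1}N_\Lambda^{q_0}\rho_\Lambda^{q_1}\).

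I would state explicitly that this piece is assumed rather than derived. Without the leakage hypothesis one only has the finite operator-norm bound of Lemma~\ref{lem:finite-window-commutator-bound}, and the proposition then reduces to its final alternative.
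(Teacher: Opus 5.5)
Your skeleton is the one the paper relies on. The paper gives no separate proof: it only says that ``pseudodifferential commutator estimates give'' the bound, on top of the decomposition in the proof of Lemma~\ref{lem:finite-window-commutator-bound}, and then uses Theorem~\ref{thm:localized-commutator-stability} and Proposition~\ref{prop:fredholm-nonperturbative-localization}. You are right that the multiplication-leakage term must be assumed. For compactly supported $\chi$ one has $\chi_R^2-1=-1$ off $\operatorname{supp}\chi_R$, so nothing is gained from $\nabla\chi_R=O(R^{-1})$, and the statement (which omits this hypothesis) can only be proved in conditional form. However, the hypothesis you cite does not give the claimed profile. The bound \eqref{eq:finite-window-multiplication-leakage-hypothesis} carries an unspecified window-dependent constant $C_\Lambda$. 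Composing it with $\max_\sigma\mu_\sigma\le\rho_\Lambda^2$ and the weights yields $C_\Lambda\rho_\Lambda^2N_\Lambda^{(b-a)_+}R^{-1}$. This has the form $CR^{-1}N_\Lambda^{q_0}\rho_\Lambda^{q_1}$ with $C$ independent of $\Lambda$ only if $C_\Lambda$ is itself polynomial in $N_\Lambda,\rho_\Lambda$. The whole content of a growth profile is the $\Lambda$-dependence (for one fixed window any $C_\Lambda R^{-1}$ would do). You must therefore assume a window-uniform quantitative leakage bound, e.g.\ $\|P_\Lambda(\chi_R^2-1)P_\Lambda\|\le C_\chi R^{-1}N_\Lambda^{q}\rho_\Lambda^{q'}$, not the paper's version.

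Second, your tail argument fails for the Schur-normalized symbol. Near the axis $\{\xi_h=0\}$ one has $\mu(\xi)\approx\xi_3^2/|\xi_h|^2$, which is not locally integrable in the two transverse variables. Hence $\mu(\xi)\widehat{\chi_R}(\xi-\sigma)$ is in general not locally integrable there, however small $\widehat{\chi_R}$ is on that region, and $\partial_\xi^\alpha\mu$ is worse. The part of $\widehat{\chi_R}$ reaching the cone therefore produces a divergence, not an $O(R^{-M})$ term. Your restriction $R\ge2\rho_\Lambda/h_\Lambda$ only handles the bulk $|\zeta|\lesssim R^{-1}$. The hypothesis that the symbol is ``evaluated away from the horizontal gauge cone'' must be imposed on the operator itself. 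Replace $\mu$ by $\psi\mu$, with $\psi$ a smooth conic cutoff supported in $\{|\xi_h|\ge|\xi|/(6\rho_\Lambda)\}$ and equal to $1$ on $\{|\xi_h|\ge|\xi|/(3\rho_\Lambda)\}$, together with a low-frequency cutoff. The latter set contains the neighbourhoods $|\zeta|\le h_\Lambda/2$ of every $\sigma\in\Lambda$. Then $|\partial_\xi^\alpha(\psi\mu)|\lesssim\rho_\Lambda^{2+|\alpha|}|\xi|^{-|\alpha|}$ holds globally, and your expansion and tail estimate go through with polynomial dependence on $\rho_\Lambda$. Relatedly, decide which operator $J^{\mathrm{rel}}$ denotes. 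The raw multiplier $i\xi_3/|\xi|^2$ in your expansion has no cone singularity and produces no $\rho_\Lambda$ factor, whereas the symbol bound you actually use is for $\mu$.
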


\subsubsection*{4. Vertical lift and aspect-ratio amplification}

For the static quadratic vertical lift, the vertical response multiplier is
\[
\Gamma(K,m) =
\frac{mK^2}{(K^2+m^2)^2},
\]
where
\[
K=|\sigma_h|,
\qquad
m=|\sigma_3|.
\]
The direct vertical lift has a smoothing gain:
\[
\sup_{m>0}\Gamma(K,m)
\lesssim
K^{-1}.
\]
Indeed, writing (r=m/K),
\[
\Gamma(K,m) =
\frac1K
\frac{r}{(1+r^2)^2}.
\]
Thus the full vertical response is cheaper than the strict horizontal obstruction at high horizontal frequency.

However, the inverse lift coefficient is
\[
\Gamma(K,m)^{-1} =
\frac{K^2}{m}+2m+\frac{m^3}{K^2}.
\]
This shows the aspect-ratio danger.

\begin{enumerate}
\item If \(m\sim K\), then
\[
\Gamma(K,m)^{-1}\sim K.
\]
The inverse lift cost is only polynomial.

\item If \(m\ll K\), then
\[
\Gamma(K,m)^{-1}\sim \frac{K^2}{m}.
\]
This approaches the vertical-zero degeneracy.

\item If \(m\gg K\), then
\[
\Gamma(K,m)^{-1}\sim \frac{m^3}{K^2}.
\]
This is a vertical-high-frequency aspect degeneration.
\end{enumerate}

Therefore an NS-admissible moving-window phantom cascade must either remain aspect-controlled, in which case the vertical lift constants are polynomial, or exploit an extreme aspect-ratio regime.

\begin{proposition}[Vertical lift growth profile]
If the active window is aspect-controlled in the sense that
\[
cK_\sigma\le m_\sigma\le C K_\sigma
\qquad
\text{for all }\sigma\in\Lambda,
\]
then the inverse vertical lift amplification satisfies
\[
\Gamma_\sigma^{-1}\lesssim N_\Lambda.
\]
If the aspect ratio is not controlled, the only possible blow-up profiles are
\[
\frac{K_\sigma^2}{m_\sigma}
\quad\text{or}\quad
\frac{m_\sigma^3}{K_\sigma^2}.
\]
Thus any non-polynomial vertical-lift amplification must come from a cascade approaching a degenerate aspect sector.
\end{proposition}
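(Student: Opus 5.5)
The claim concerns the inverse lift coefficient \(\Gamma_\sigma^{-1}=\frac{(K^2+m^2)^2}{mK^2}\) with \(K=K_\sigma=|\sigma_h|\), \(m=m_\sigma=|\sigma_3|\), so the proof reduces to estimating this explicit rational function mode by mode. The first step is to expand the numerator,
\[
\Gamma(K,m)^{-1}=\frac{K^4+2K^2m^2+m^4}{mK^2}=\frac{K^2}{m}+2m+\frac{m^3}{K^2},
\]
which the paper has already recorded. Every term is positive, so \(\Gamma^{-1}\) is comparable to the largest of the three terms. Activity of the window (\(K,m\ne0\)) guarantees the expression is finite mode by mode.

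For the aspect-controlled case, assume \(cK\le m\le CK\). Substituting gives
\[
\frac{K^2}{m}\le c^{-1}K,\qquad 2m\le 2CK,\qquad \frac{m^3}{K^2}\le C^3K.
\]
Hence \(\Gamma_\sigma^{-1}\le (c^{-1}+2C+C^3)K_\sigma\). Since \(K_\sigma\le|\sigma|\le N_\Lambda\), the bound \(\Gamma_\sigma^{-1}\lesssim N_\Lambda\) follows with a constant depending only on \(c,C\). Taking the maximum over the finite window gives the uniform bound. Equivalently, one can write \(\Gamma^{-1}=K\,(1+r^2)^2/r\) with \(r=m/K\in[c,C]\) and bound the continuous factor on a compact interval.

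For the uncontrolled case, the plan is to show that the middle term never dominates. If \(m\le K\), then \(2m\le 2K^2/m\) and \(m^3/K^2\le m\le K^2/m\), so \(\Gamma^{-1}\le 4K^2/m\). If \(m\ge K\), then \(K^2/m\le m\le m^3/K^2\) and \(2m\le 2m^3/K^2\), so \(\Gamma^{-1}\le 4m^3/K^2\). Therefore \(\Gamma_\sigma^{-1}\le 4\max\{K_\sigma^2/m_\sigma,\ m_\sigma^3/K_\sigma^2\}\), and only these two profiles can blow up.

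The remaining step, which I expect to be the only real obstacle, is making the phrase ``non-polynomial amplification'' precise. I would interpret it as follows: along a sequence of windows, if \(\max_\sigma\Gamma_\sigma^{-1}\) is not bounded by a power of \(N_\Lambda\), pick maximizing modes \(\sigma_n\). On integer lattice modes \(m\ge1\), so \(K^2/m\le N^2\). A super-polynomial \(K^2/m\) profile is therefore impossible on \(\mathbb Z^3\). It can only occur if the ratio \(m_{\sigma_n}/K_{\sigma_n}\to 0\) faster than any power, that is, by approach to the vertical-zero sector in rescaled or non-lattice windows. Symmetrically, \(m^3/K^2\) can exceed every power only if \(K_{\sigma_n}/m_{\sigma_n}\to 0\), which is approach to the horizontal gauge cone, \(\rho_\Lambda\to\infty\). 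In both cases the ratio \(r_n=m_{\sigma_n}/K_{\sigma_n}\) leaves every compact subset of \((0,\infty)\), since a bounded ratio would place the sequence back in the polynomial case above. This contrapositive is exactly the statement that non-polynomial amplification forces a cascade toward a degenerate aspect sector.
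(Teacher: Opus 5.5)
Your proposal is correct and follows the paper's own route. The paper supports this proposition only with the preceding expansion $\Gamma(K,m)^{-1}=K^2/m+2m+m^3/K^2$ and the three-regime comparison $m\sim K$, $m\ll K$, $m\gg K$; you make that quantitative with the explicit constant $c^{-1}+2C+C^3$ and the bound $\Gamma_\sigma^{-1}\le 4\max\{K_\sigma^2/m_\sigma,\,m_\sigma^3/K_\sigma^2\}$.

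One small asymmetry: your lattice argument for $K^2/m$ applies equally to $m^3/K^2$, since active modes of $\mathbb Z^3$ have $K_\sigma\ge 1$ and hence $m_\sigma^3/K_\sigma^2\le N_\Lambda^3$. So in the periodic setting $\Gamma_\sigma^{-1}\le 4N_\Lambda^3$ outright, and the ``non-polynomial'' clause only has content for rescaled or non-lattice windows, or if it is read as ``worse than $O(N_\Lambda)$''.
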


\subsubsection*{5. Trace-cost amplification}

The trace-cost amplification is
\[
\mathcal T_{\mathfrak W} =
\frac1{\sigma_{\min}^+(A_{\mathfrak W})},
\]
where \(\sigma_{\min}^+\) is the smallest nonzero singular value of the active trace-defect map. This is the most delicate constant.

In a parabolic finite-window model, \(A_{\mathfrak W}^*\) is represented by a backward adjoint equation. Recovering selected-time trace data from a quotient forcing over a time interval \(\tau\) typically carries a factor of the form
\[
\exp(c\tau N_\Lambda^2).
\]
Thus a natural model upper bound is
\[
\mathcal T_{\mathfrak W}
\le
C_{\mathfrak W}^{\mathrm{alg}}
\exp(c\tau_{\mathfrak W}N_\Lambda^2),
\]
where \(C_{\mathfrak W}^{\mathrm{alg}}\) records finite-dimensional algebraic conditioning.

This bound should be interpreted as a theorem target, not as an automatic fact. If \(A_{\mathfrak W}\) develops additional singular algebraic degeneracies, then \(C_{\mathfrak W}^{\mathrm{alg}}\) may grow rapidly. The dangerous case is not ordinary exponential parabolic growth, but super-exponential or non-summable collapse of the smallest singular values.

\begin{definition}[Trace-cost growth regimes]
We distinguish three regimes.

\begin{enumerate}
\item \emph{Polynomial trace-cost regime:}
\[
\mathcal T_{\mathfrak W}\le C N_\Lambda^p.
\]
This is harmless for logarithmic selection.

\item \emph{Single-exponential parabolic regime:}
\[
\mathcal T_{\mathfrak W}\le \exp(CN_\Lambda^p)
\]
for a fixed power (p). This can often be absorbed by logarithmic optimization if the finite-window scale is chosen as a suitable power of \(|\log\delta|\).

\item \emph{Non-summable trace-cost regime:}
\[
\mathcal T_{\mathfrak W_n}
\]
grows faster than the residual gains available in the finite-stage scheme. This is a genuine candidate source of moving-window escape.
\end{enumerate}
\end{definition}

\subsubsection*{6. Combined growth profile}

Combining the preceding estimates gives the following model bound. Assume:

\begin{enumerate}
\item the windows are active;
\item the quotient bases have polynomial condition number;
\item the aspect ratio is controlled;
\item the localization scale is perturbative;
\item the trace-cost map has at most single-exponential parabolic amplification.
\end{enumerate}

Then there exist constants (C,p,c>0) such that
\[
\mathfrak C_{\mathfrak W}
\le
C N_\Lambda^p
\exp(c\tau_{\mathfrak W}N_\Lambda^2).
\]
This is the expected benign growth profile.

If one chooses
\[
N_\Lambda^2\sim |\log\delta|,
\]
then
\[
\exp(c\tau N_\Lambda^2)\delta^b
\sim
\delta^{b-c\tau},
\]
which is still small provided \(c\tau<b\). This is the same logarithmic absorption mechanism that appears in the conditional logarithmic approximation theorem.

Thus the growth profile (10.159) supports the positive route. It suggests that active, aspect-controlled, perturbatively localized moving windows should not produce a failure of logarithmic one-component selection.

\begin{theorem}[Growth-profile dichotomy]
A moving-window true phantom cascade can survive the preceding filters only if at least one of the following non-benign growth mechanisms occurs:

\begin{enumerate}
\item \emph{Quotient conditioning collapse:}
\[
\kappa_{\Lambda_n}^{\mathrm{quot}}\to\infty
\]
faster than any summable majorant allowed by the finite-stage scheme.

\item \emph{Nonperturbative localization:}
\[
R_n^{-1}N_{\Lambda_n}^{q_0}\rho_{\Lambda_n}^{q_1}
\not=o(\gamma_{\Lambda_n}).
\]

\item \emph{Extreme aspect ratio:}
\[
\frac{m_n}{K_n}\to0
\quad\text{or}\quad
\frac{m_n}{K_n}\to\infty
\]
in a way that is not removed by the vertical-zero or horizontal-gauge cleaning.

\item \emph{Trace-cost singular collapse:}
\[
\sigma_{\min}^+(A_{\mathfrak W_n})
\]
decays faster than the parabolic single-exponential profile can absorb.

\item \emph{NS-lift anomaly:}
there exists a cleaned relaxed-invisible vector \(\widehat s_n\) such that
\[
\widehat s_n\in
\Range
\widehat{\mathcal R}_{\mathfrak W_n}^{\mathrm{NS}},
\qquad
\operatorname{NSCost}_{\mathfrak W_n}(\widehat s_n;\varepsilon_n)
\le
C\delta_n^{1/3}|\widehat s_n|,
\]
while
\[
|\widehat J_{\mathfrak W_n}^{\mathrm{rel}}\widehat s_n|
=o(|\widehat s_n|).
\]
\end{enumerate}

If none of these five mechanisms occurs, the moving-window constants admit a summable or logarithmically absorbable majorant, and the moving-window true phantom cascade is excluded.
\end{theorem}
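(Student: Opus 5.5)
The plan is to argue by contraposition: assume none of the five mechanisms occurs along the moving-window sequence \(\mathfrak W_n=(K_n,\Lambda_n,\chi_n)\), and show that the total amplification \(\mathfrak C_{\mathfrak W_n}(\eps_n,\delta_n)=1+\mathcal V+\mathcal T+\mathcal N\) then admits a majorant of the benign form \(C N_{\Lambda_n}^p\exp(c\tau N_{\Lambda_n}^2)\). Once this holds, the summable-majorant proposition (\cref{prop:summable-majorant-excludes-moving-window}) excludes the cascade, or the logarithmic absorption \(N_\Lambda^2\sim|\log\delta|\) does. Each of the five growth propositions above controls one summand, so the argument is essentially bookkeeping. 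The key step is to check that the negations of (1)--(5) are exactly the hypotheses those propositions need.

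I would carry this out term by term.

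\textbf{Visibility.} The negation of (1) gives polynomially conditioned quotient bases. The polynomial active-visibility proposition then yields \(\mathcal V_{\Lambda_n}^{a,b}\le C\kappa^{\mathrm{quot}}_{\Lambda_n}N_{\Lambda_n}^{(a-b)_+}\), using \(\mu_\sigma\ge1\).

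\textbf{Localization.} The negation of (2) means \(R_n^{-1}N^{q_0}\rho^{q_1}=o(\gamma_{\Lambda_n})\). \Cref{thm:localized-commutator-stability} then preserves \(\gamma\) up to a factor \(\tfrac12\), so the localized visibility constant is at most twice the clean one.

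\textbf{Aspect ratio.} The negation of (3) means \(m/K\) stays in a compact subset of \((0,\infty)\), after the vertical-zero and gauge sectors have been cleaned away. The vertical-lift growth proposition gives \(\Gamma_\sigma^{-1}\lesssim N_\Lambda\), and \(\rho_\Lambda=O(1)\), which also keeps the commutator exponent harmless.

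\textbf{Trace cost.} The negation of (4) is, by definition of the trace-cost regimes, the statement \(\mathcal T\le C^{\mathrm{alg}}\exp(c\tau N^2)\) with \(C^{\mathrm{alg}}\) absorbable.

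\textbf{NS lift.} The negation of (5) says that every cleaned, NS-realizable, budget-admissible \(\widehat s_n\) satisfies \(|\widehat J^{\mathrm{rel}}\widehat s_n|\ge c|\widehat s_n|\) along the sequence. This gives relaxed visibility on exactly the vectors for which \(\mathcal N_{\mathfrak W_n}\) is a supremum. On those vectors the \(u_3\)-pairing bound \eqref{eq:Schur-rel-bound} holds, and that bound controls \(\mathcal N\) polynomially.

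Summing the five bounds gives the combined profile. It then remains to insert this profile into the finite-stage scheme. Either the stage contributions \(\mathfrak M(K,\Lambda_K,\chi_K)\eta^K\) are summable for small \(\eta\), using the polynomial factors together with \(\exp(c\tau N^2)\) and \(N_{\Lambda_K}\) at most polynomial in \(K\); or the window is truncated at \(N^2\sim|\log\delta|\) with \(c\tau<b\), as in \cref{thm:conditional-log-excess}. In both cases \cref{thm:moving-window-dichotomy} places us in the summable alternative, and no true phantom cascade survives.

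The main obstacle is the interface between (5) and the definition of \(\mathcal N\). Non-occurrence of (5) is a sequential, qualitative statement, while the majorant needs a quantitative lower bound uniform in \(n\). My first attempt would be a compactness/contradiction argument: if no uniform \(c>0\) existed, normalizing and extracting a subsequence would produce exactly a sequence of type (5). A second difficulty is that the single-exponential trace-cost factor must stay summable against \(\eta^K\). This forces an assumption relating \(N_{\Lambda_K}\) to \(K\), for example \(N_{\Lambda_K}^2\lesssim K\), which I would state explicitly as part of the controlled regime instead of claiming it in general.
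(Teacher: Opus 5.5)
Your contrapositive, term-by-term bookkeeping is the same route as the paper's proof. That proof cites the active-visibility, commutator, vertical-lift and parabolic trace-cost growth estimates, declares the resulting polynomial and single-exponential losses summable or logarithmically absorbable, and observes that violating the hypotheses is exactly the five mechanisms. Your two caveats are legitimate points the paper does not make. One correction in the summable branch: requiring \(N_{\Lambda_K}\) to be at most polynomial in \(K\) is not enough, since with \(N_{\Lambda_K}\sim K\) the terms \(e^{c\tau K^2}\eta^K\) diverge for every \(\eta\). The condition \(N_{\Lambda_K}^2\lesssim K\) from your last paragraph is the one you actually need.

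The step that does not work as written is the NS-lift term. \(\mathcal N_{\mathfrak W}\) is a supremum over \(\widehat{\mathcal K}^{\mathrm{ph}}_{\mathfrak W}=\Ker\widehat J^{\mathrm{rel}}_{\mathfrak W}\cap\widehat\Phi_{\mathfrak W}\), i.e.\ over exactly relaxed-invisible vectors. Moreover, \eqref{eq:Schur-rel-bound} is an upper bound on the relaxed pairing \(\mathcal I_s\), not a lower bound on \(\operatorname{NSCost}\), so it cannot control \(\mathcal N\).

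What the negation of (5) actually gives is that, for large \(n\), no nonzero NS-realizable vector in that kernel is budget-admissible. The budget-violating ones then yield only \(\mathcal N_{\mathfrak W_n}\le C^{-1}\delta_n^{-1/3}\), which is not polynomial in \(N_{\Lambda_n}\). A polynomial bound needs a genuine lower bound \(\operatorname{NSCost}\gtrsim N^{-p}|\widehat s|\). That is forcing-to-vertical coercivity (\cref{ass:forcing-to-vertical-coercivity}, or the periodic/localized coercivity theorems), an extra input not supplied by the negation of (5). The paper's own profile \(\mathfrak C_{\mathfrak W}\le CN^p\exp(c\tau N^2)\) simply omits \(\mathcal N\).

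For the exclusion statement itself you need none of this, and not your uniformization argument either. Mechanism (5) is precisely the list of defining properties of an NS-admissible moving-window cascade candidate in \cref{def:moving-window-phantom-cascade}: a cleaned strict-invisible \(\widehat s_n\) with relaxed ratio tending to zero, \(\widehat s_n\in\Range\widehat{\mathcal R}^{\mathrm{NS}}\), and \(\operatorname{NSCost}\le C\delta_n^{1/3}\|\widehat s_n\|\). So any such cascade exhibits (5), and the negation of (5) excludes it directly. The growth bookkeeping is needed only for the separate claim about a benign majorant.
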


\begin{proof}
The active relaxed-visibility estimate gives only polynomial loss, modulo quotient conditioning. The commutator estimate gives only polynomial loss divided by the localization scale. The vertical lift estimate gives polynomial loss under aspect control. The parabolic trace-cost model gives at most a single-exponential loss in \(N_\Lambda^2\). Such losses are summable or logarithmically absorbable under the stated assumptions. Therefore a surviving cascade must violate at least one of the assumptions. These violations are exactly the five mechanisms listed above.
\end{proof}

\begin{corollary}[Reduced quantitative target]
To close the relaxed anti-phantom route, it is enough to prove the following four estimates for NS-derived surviving branches:

\[
\kappa_\Lambda^{\mathrm{quot}}
\le
N_\Lambda^p,
\]
\[
|\mathcal C_{\Lambda,R}|
\le
C R^{-1}N_\Lambda^p,
\]
\[
\operatorname{NSCost}_{\mathfrak W}(\widehat s;\varepsilon)
\gtrsim
N_\Lambda^{-p}|\widehat s|,
\]
unless \(\widehat s\) is relaxed-visible, and
\[
\mathcal T_{\mathfrak W}
\le
\exp(CN_\Lambda^q).
\]
Under these bounds, the only remaining losses are polynomial and single-exponential. They can be handled by logarithmic optimization rather than by a power-rate argument.
\end{corollary}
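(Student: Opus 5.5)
The plan is to derive the corollary directly from the Growth-profile dichotomy theorem. Under the four displayed bounds, I will show that none of its five non-benign mechanisms can occur along a sequence of windows \(\mathfrak W_n=(K_n,\Lambda_n,\chi_n)\) whose size is tied to \(\delta_n\). The dichotomy then says the moving-window constants have a logarithmically absorbable majorant. The Moving-window dichotomy theorem, together with \cref{prop:summable-majorant-excludes-moving-window}, then excludes an NS-admissible moving-window true phantom cascade. This is exactly the relaxed anti-phantom closure needed in Assumption~\ref{ass:VD-closure}(ii).

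\textbf{Step 1: the visibility, localization and trace-cost mechanisms.} I would handle these one at a time.
\begin{enumerate}[label=(\alph*)]
\item \emph{Quotient conditioning (mechanism 1).} Insert \(\kappa^{\mathrm{quot}}_\Lambda\le N_\Lambda^p\) into the Polynomial active-visibility proposition. This gives \(\mathcal V_\Lambda^{a,b}\le C N_\Lambda^{p+(a-b)_+}\), which rules out quotient conditioning collapse.
\item \emph{Nonperturbative localization (mechanism 2).} The localization scale is a free parameter. Choose \(R_n\ge 4C N_{\Lambda_n}^{p}\mathcal V_{\Lambda_n}^{a,b}\). The commutator bound then gives \(|\mathcal C_{\Lambda_n,R_n}|\le \tfrac14\gamma_{\Lambda_n}\). \Cref{thm:localized-commutator-stability} then applies with constant \(\tfrac12\gamma_{\Lambda_n}\), which again grows only polynomially in \(N_{\Lambda_n}\).
\item \emph{Trace-cost collapse (mechanism 4).} The bound \(\mathcal T_{\mathfrak W}\le\exp(CN_\Lambda^q)\) places us in the single-exponential regime. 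So trace-cost singular collapse does not occur.
\end{enumerate}

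\textbf{Step 2: the NS-lift anomaly (mechanism 5).} Suppose \(\widehat s_n\) is cleaned and relaxed-invisible, lies in \(\Range\widehat{\mathcal R}^{\NS}_{\mathfrak W_n}\), and satisfies \(\operatorname{NSCost}\le C\delta_n^{1/3}|\widehat s_n|\). Because it is relaxed-invisible, the third estimate applies and gives \(\operatorname{NSCost}\gtrsim N_{\Lambda_n}^{-p}|\widehat s_n|\). These two bounds are incompatible as soon as \(N_{\Lambda_n}^{p}\delta_n^{1/3}\to0\). That holds under the logarithmic window choice \(N_{\Lambda_n}^{q}\sim \eta|\log\delta_n|\) used below.

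\textbf{Step 3: extreme aspect ratio (mechanism 3).} I expect this to be the main obstacle, because aspect control is not among the four hypotheses. My first attempt is to show that aspect degeneration is harmful only through the lift constants \(\Gamma_\sigma^{-1}\) and the raw pressure visibility. It cannot act through the Schur-normalized relaxed visibility, since \(\mu_\sigma=|\sigma|^2/|\sigma_h|^2\ge1\) uniformly in the ratio \(m/K\). The raw vertical-zero blow-up \(N^2/v_\Lambda\) has already been removed by cleaning. The \(m^3/K^2\) and \(K^2/m\) lift amplifications enter only \(\mathcal N_{\mathfrak W}\), and the NSCost lower bound controls that term directly. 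Making this precise requires checking that \(\mathcal N_{\mathfrak W}\) as defined in the appendix is bounded by \(N_\Lambda^{p}\) plus the budget term without any aspect hypothesis. If that fails, the fallback is to state aspect control as an implicit fifth hypothesis.

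\textbf{Step 4: absorption.} Summing the bounds gives \(\mathfrak C_{\mathfrak W}\le CN_\Lambda^{p'}\exp(CN_\Lambda^{q})\). With \(N_\Lambda^q=\eta|\log\delta|\) and \(\eta<b/(2C)\), the product \(\mathfrak C_{\mathfrak W}\delta^b\le C|\log\delta|^{p'/q}\delta^{b/2}\) tends to zero. This is the optimization from the proof of \Cref{thm:conditional-log-excess}, and it shows every remaining loss is logarithmically absorbable, as claimed. One delicate point is that \(q\) need not equal \(2\). The optimization therefore has to be carried out with the same \(q\) in both the window choice and the trace-cost bound.
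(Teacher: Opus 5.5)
Your proposal is correct and follows the paper's route. The paper gives no separate proof: it reads the corollary off the Growth-profile dichotomy by matching the four estimates to mechanisms 1, 2, 5 and 4, then absorbing the polynomial and single-exponential losses logarithmically. Your worry about mechanism 3 is justified, since the corollary silently drops the aspect-control hypothesis, but your first attempt already closes it. \(\mathcal N_{\mathfrak W}\) is a supremum over exactly relaxed-invisible cleaned vectors, so the third estimate gives \(\mathcal N_{\mathfrak W}\lesssim N_\Lambda^{p}\) with no aspect assumption, and \(\mu_\sigma\ge1\) keeps the visibility constant aspect-independent. Consistently, the proof of \cref{thm:aspect-controlled-moving-window-exclusion} never uses aspect control. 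That theorem, not \cref{prop:summable-majorant-excludes-moving-window}, is the right citation for your final log-absorption step, because the proposition needs a majorant summable in \(K\) and your bound does not guarantee one.
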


\begin{remark}[Interpretation]
The moving-window analysis changes the question from `does a finite-window phantom exist?'' to `can the constants blow up in a non-summable NS-admissible way?'' In the active, aspect-controlled, perturbatively localized regime, the answer appears negative: the symbol-level constants are polynomial and the parabolic trace-cost loss is at worst single-exponential. Thus a failure mechanism must exploit a sharper mechanism: near-gauge degeneration, near-vertical-zero degeneration, nonperturbative cutoff leakage, singular trace-cost collapse, or an NS-lift anomaly.
\end{remark}

\subsection{Aspect-controlled moving-window exclusion}\label{subsec:aspect-controlled-moving-window-exclusion}

We now prove the first positive exclusion theorem for moving-window phantom cascades.  The theorem says that the moving-window mechanism cannot produce a true phantom as long as the windows remain active, aspect-controlled, perturbatively localized, and have at most single-exponential trace-cost growth.  Thus any genuine failure of the relaxed anti-phantom route must leave this benign regime.

Let
\[
        \mathfrak W_n=(K_n,\Lambda_n,\chi_n)
\]
be a sequence of finite-stage localized windows.  Write
\[
        N_n=N_{\Lambda_n}:=\max_{\sigma\in\Lambda_n}|\sigma|,
\]
and assume that every \(\Lambda_n\) is active:
\[
        |\sigma_h|\neq0,
        \qquad
        |\sigma_3|\neq0
        \qquad
        \text{for every }\sigma\in\Lambda_n.
\]

\begin{definition}[Aspect-controlled window sequence]\label{def:aspect-controlled-window-sequence}
The window sequence \((\Lambda_n)\) is called aspect-controlled if there exist constants
\[
        0<c_{\rm asp}\le C_{\rm asp}<\infty
\]
such that
\[
        c_{\rm asp}|\sigma_h|
        \le
        |\sigma_3|
        \le
        C_{\rm asp}|\sigma_h|
        \qquad
        \text{for every }\sigma\in\Lambda_n
\]
and every \(n\).  Equivalently, the vertical and horizontal frequencies remain comparable throughout the moving-window sequence.
\end{definition}

For each window \(\mathfrak W_n\), let \(\widehat\Phi_{\mathfrak W_n}\) be the cleaned strict trace-invisible Schur cokernel, and let
\[
        \widehat J_{\mathfrak W_n}^{\rm rel}:
        \widehat\Phi_{\mathfrak W_n}\to \widehat W_{\mathfrak W_n}
\]
be the cleaned relaxed vertical-pressure observation map.  Define the cleaned visibility constant
\[
        \gamma_n
        =
        \inf_{\widehat s\neq0}
        \frac{\|\widehat J_{\mathfrak W_n}^{\rm rel}\widehat s\|}
        {\|\widehat s\|}.
\]

\begin{assumption}[Benign moving-window constants]\label{ass:benign-moving-window-constants}
There exist constants \(C_0,p,q>0\) such that the following hold for all \(n\).
\begin{enumerate}
\item \emph{Polynomial cleaned visibility loss:}
\[
        \gamma_n^{-1}\le C_0N_n^p.
\]
\item \emph{Perturbative localization:}
\[
        \|\mathcal C_{\mathfrak W_n}\|\le \frac12\gamma_n.
\]
\item \emph{Polynomial quotient conditioning:} all cleaned quotient identifications and projections have condition number bounded by \(C_0N_n^p\).
\item \emph{Single-exponential trace-cost growth:}
\[
        \|A_{\mathfrak W_n}^{\dagger}\|
        \le
        C_0N_n^p\exp(C_0N_n^q).
\]
\item \emph{No NS-lift anomaly in the cleaned phantom sector:} every cleaned NS-admissible Schur vector satisfies either
\[
        \|\widehat J_{\mathfrak W_n}^{\rm rel}\widehat s\|
        \ge
        C_0^{-1}N_n^{-p}\|\widehat s\|,
\]
or else its NS lift cost exceeds the available one-component budget:
\[
        \operatorname{NSCost}_{\mathfrak W_n}(\widehat s;\eps_n)
        >
        C\delta_n^{1/3}\|\widehat s\|.
\]
\end{enumerate}
\end{assumption}

\begin{definition}[Logarithmically admissible window scale]\label{def:log-admissible-window-scale}
A moving-window sequence is called logarithmically admissible relative to \(\delta_n\downarrow0\) if there exists a constant
\[
        0<\alpha<\frac{b}{2C_0}
\]
such that
\[
        N_n^q\le \alpha |\log\delta_n|
\]
for all sufficiently large \(n\).
\end{definition}

\begin{theorem}[Aspect-controlled moving-window exclusion]\label{thm:aspect-controlled-moving-window-exclusion}
Assume that the moving-window sequence \(\mathfrak W_n=(K_n,\Lambda_n,\chi_n)\) satisfies the following conditions: the windows are active, the windows are aspect-controlled, localization is perturbative, \cref{ass:benign-moving-window-constants} holds, and the window scale is logarithmically admissible relative to \(\delta_n\).  Then no \emph{obstructive} NS-admissible moving-window true phantom cascade is supported in this regime.

More precisely, every NS-admissible cleaned Schur vector \(0\neq\widehat s_n\in\widehat\Phi_{\mathfrak W_n}\) satisfies the polynomial relaxed-visibility lower bound
\begin{equation}\label{eq:C15-polynomial-lower-bound}
        \|\widehat J_{\mathfrak W_n}^{\rm rel}\widehat s_n\|
        \ge C_0^{-1}N_n^{-p}\|\widehat s_n\|.
\end{equation}
The ratio in \eqref{eq:C15-polynomial-lower-bound} may tend to zero if \(N_n\to\infty\).  The conclusion is not that bare moving-window almost-invisibility is impossible; the conclusion is that this polynomial loss is logarithmically absorbable and therefore cannot support a failed selected-trace branch with
\[
        m_n^{\rm tr}\gg \ell_n^\mu+\ell_n^{-N}\delta_n^b.
\]
\end{theorem}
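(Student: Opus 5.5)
The argument has two parts. First, I establish the polynomial lower bound \eqref{eq:C15-polynomial-lower-bound} from \cref{ass:benign-moving-window-constants}. Second, I feed that bound, together with the single-exponential trace-cost bound, into the selected-time contradiction argument behind \cref{thm:conditional-selection}. Logarithmic admissibility then absorbs every loss.

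For the lower bound, fix an NS-admissible cleaned Schur vector \(0\ne\widehat s_n\in\widehat\Phi_{\mathfrak W_n}\). By definition it lies in \(\Range\widehat{\mathcal R}^{\rm NS}_{\mathfrak W_n}\) and satisfies \(\operatorname{NSCost}_{\mathfrak W_n}(\widehat s_n;\eps_n)\le C\delta_n^{1/3}\|\widehat s_n\|\). The budget branch of item 5 of the assumption is therefore excluded, and the other branch gives \eqref{eq:C15-polynomial-lower-bound} directly.

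I would also record the consistency of this bound with the localized picture. Item 1 gives \(\gamma_n\ge C_0^{-1}N_n^{-p}\) for the clean map. Item 2 together with \cref{thm:localized-commutator-stability} gives \(\tfrac12\gamma_n\) for the localized map. Item 3 transfers this bound between cleaned quotient coordinates at a further cost of \(C_0N_n^p\). Aspect control enters here: it gives \(\rho_{\Lambda_n}=O(1)\), so the vertical-lift constants \(\Gamma_\sigma^{-1}\lesssim N_n\) from the vertical lift growth profile are polynomial and create no additional small divisor. I would absorb this factor into \(p\) by enlarging the constant.

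For absorbability, I would decompose the surviving residual pairing \(\langle g_n,y\rangle\), as in the proof of \cref{thm:final-anti-phantom-dichotomy}, into three parts:
\begin{itemize}[leftmargin=2em]
\item a trace-visible part, bounded through \(\|A_{\mathfrak W_n}^\dagger\|\) and \cref{lem:trace-cost-duality};
\item a relaxed part, bounded by \eqref{eq:Schur-rel-bound} with constant \(C_\Lambda\lesssim \gamma_n^{-1}\lesssim N_n^{p}\);
\item errors of size \(o(r_n)\).
\end{itemize}
The total amplification is then at most \(C N_n^{2p}\exp(C_0N_n^q)\). Under \(N_n^q\le\alpha|\log\delta_n|\) with \(\alpha<b/(2C_0)\), this gives
\[
\exp(C_0N_n^q)\delta_n^b\le\delta_n^{b/2}, \qquad N_n^{2p}\lesssim|\log\delta_n|^{2p/q}.
\]
Hence every term multiplying \(\delta_n^b\) or \(\delta_n^{1/3}\) is \(o\) of any power of \(\ell_n\) comparable to \(\eta_n=\ell_n^\mu+\ell_n^{-N}\delta_n^b\). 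The arithmetic is the same as in the proof of \cref{thm:conditional-log-excess}.

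It follows that the trace correction cost is \(o(m_n^{\rm tr})\) along any branch with \(m_n^{\rm tr}\gg\eta_n\). Constructing the competitor as in \cref{thm:conditional-selection} then lowers the selected energy below \(m_n^{\rm tr}\), which is a contradiction. So no obstructive cascade exists, even though the ratio in \eqref{eq:C15-polynomial-lower-bound} may tend to zero.

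The main obstacle is this last matching step. One must check that the polynomial factors \(N_n^{2p}\) multiply only the \(\delta\)-dependent residual terms and not the \(\ell_n^\mu\) smoothing term. One must also check that \(r_n\) is calibrated to \(\eta_n\), so that the relaxed pairing \(\eta^{K+1}\delta_n^{1/3}N_n^p\) is genuinely below \(m_n^{\rm tr}\). I would first try to show that the relaxed channel only ever pairs with \(u_3\), so that its loss carries a factor \(\delta_n^{1/3}\) and is killed by the logarithmic window choice. If that fails for the \(\ell\)-dependent terms, I would restrict the conclusion to branches where \(\ell_n^{-N}\) is tied to \(|\log\delta_n|\), as in \cref{cor:no-phantom-implies-log-route}.
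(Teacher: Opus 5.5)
Your proposal is correct and follows the paper's proof essentially step for step. The lower bound is read off from the no-anomaly alternative in item~5 of \cref{ass:benign-moving-window-constants} once NS-admissibility excludes the budget branch, and absorbability is the same computation $\exp(C_0N_n^q)\delta_n^b\le\delta_n^{b-C_0\alpha}\le\delta_n^{b/2}$ with the polynomial factor bounded by a power of $|\log\delta_n|$, followed by the selected-time competitor contradiction; the calibration against $m_n^{\rm tr}$ that you flag as the main obstacle is left equally implicit in the paper, which only shows the amplified residual is $o(|\log\delta_n|^{-L})$ for every $L$ before asserting it lies below the failed-selection gap.
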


\begin{proof}
Let \(0\neq\widehat s_n\in\widehat\Phi_{\mathfrak W_n}\) be NS-admissible.  Then
\[
        \operatorname{NSCost}_{\mathfrak W_n}(\widehat s_n;\eps_n)
        \le C\delta_n^{1/3}\|\widehat s_n\|.
\]
By the no-anomaly alternative in \cref{ass:benign-moving-window-constants}, budget admissibility forces the polynomial relaxed-visibility estimate \eqref{eq:C15-polynomial-lower-bound}.  This rules out exact relaxed invisibility in each fixed window but still allows the quotient \(\|\widehat J^{\rm rel}\widehat s_n\|/\|\widehat s_n\|\) to decay polynomially when \(N_n\to\infty\).

The remaining issue is whether that polynomial decay can obstruct selection.  The active trace-cost amplification satisfies
\[
        \|A_{\mathfrak W_n}^{\dagger}\|
        \le C_0N_n^p\exp(C_0N_n^q).
\]
The small-component residual has size \(\delta_n^b\), up to polynomial factors in \(N_n\).  Hence the amplified residual is bounded by
\[
        C N_n^{p'}\exp(C_0N_n^q)\delta_n^b
\]
for some fixed \(p'\).  Logarithmic admissibility gives
\[
        \exp(C_0N_n^q)\delta_n^b
        \le
        \exp(C_0\alpha |\log\delta_n|)\delta_n^b
        =
        \delta_n^{b-C_0\alpha}.
\]
Since \(\alpha<b/(2C_0)\), we have \(b-C_0\alpha>b/2\). Thus
\[
        C N_n^{p'}\exp(C_0N_n^q)\delta_n^b
        \le C N_n^{p'}\delta_n^{b/2}.
\]
Because \(N_n^q\lesssim |\log\delta_n|\), the polynomial factor \(N_n^{p'}\) is at most a fixed power of \(|\log\delta_n|\). Therefore
\[
        C N_n^{p'}\delta_n^{b/2}
        =o(|\log\delta_n|^{-L})
\]
for every fixed \(L>0\).  Thus the relaxed-visible part is controlled by the smallness of \(u_3\), while the trace-projectable part is corrected with trace cost below the failed-selection gap.  Therefore an NS-admissible branch in this benign regime cannot also satisfy
\[
        m_n^{\rm tr}\gg \ell_n^\mu+\ell_n^{-N}\delta_n^b.
\]
This proves the claimed exclusion of obstructive true phantom cascades.
\end{proof}

\begin{corollary}[Where a failure mechanism must live]\label{cor:where-failure-must-live}
Any NS-admissible moving-window true phantom cascade must violate at least one hypothesis of \cref{thm:aspect-controlled-moving-window-exclusion}.  Hence it must exhibit at least one of the following: extreme aspect ratio, nonperturbative localization, super-logarithmic window growth, trace-cost singular collapse, or an NS-lift anomaly.
\end{corollary}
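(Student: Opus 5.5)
The statement is the contrapositive of \cref{thm:aspect-controlled-moving-window-exclusion}, so I will argue by contradiction. Suppose an NS-admissible moving-window true phantom cascade $(\mathfrak W_n,\eps_n,\delta_n,\widehat s_n)$, in the sense of \cref{def:moving-window-phantom-cascade}, satisfies every hypothesis of that theorem. These hypotheses are: active windows, aspect control (\cref{def:aspect-controlled-window-sequence}), perturbative localization, the five items of \cref{ass:benign-moving-window-constants}, and logarithmic admissibility (\cref{def:log-admissible-window-scale}). Under them, the theorem says that every NS-admissible cleaned Schur vector obeys the polynomial lower bound \eqref{eq:C15-polynomial-lower-bound}. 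The theorem also says that the residual amplified by the trace-cost inverse is $o(|\log\delta_n|^{-L})$ for every $L$. Hence the failed-selection gap $m_n^{\rm tr}\gg\ell_n^\mu+\ell_n^{-N}\delta_n^b$, which is the defining property of an \emph{NS-admissible} (obstructive) cascade, cannot hold. This contradiction shows that at least one hypothesis must fail.

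The remaining work is to translate each failed hypothesis into the list in the corollary. I would proceed case by case.

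\textbf{Aspect control fails.} Then along a subsequence $m_n/K_n\to0$ or $\to\infty$ on some active mode. That is the \emph{extreme aspect ratio} alternative.

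\textbf{Localization is nonperturbative.} Then $\|\mathcal C_{\mathfrak W_n}\|>\frac12\gamma_n$, which is \emph{nonperturbative localization}.

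\textbf{Logarithmic admissibility fails.} Then $N_n^q/|\log\delta_n|$ is unbounded, or exceeds every admissible $\alpha$, which is \emph{super-logarithmic window growth}.

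\textbf{Item 4 fails.} Then $\|A^\dagger_{\mathfrak W_n}\|=1/\sigma^+_{\min}$ exceeds every single-exponential profile, which is \emph{trace-cost singular collapse}.

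\textbf{Item 5 fails.} Then there is a budget-admissible cleaned vector whose relaxed observation is $o(N_n^{-p})\|\widehat s_n\|$. This is the \emph{NS-lift anomaly} of the growth-profile dichotomy theorem. Here I would note that the cascade definition already requires $\widehat s_n\in\Range\widehat{\mathcal R}^{\rm NS}$ and $\operatorname{NSCost}\le C\delta_n^{1/3}\|\widehat s_n\|$, so this case matches mechanism (v) exactly.

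\textbf{Items 1 or 3 fail.} These are polynomial visibility loss and polynomial quotient conditioning. They are not literally in the corollary's list, and this is the step I expect to be the main obstacle. My first attempt is to use the Polynomial active-visibility profile proposition together with the clean diagonal bound $\mu_\sigma\ge1$. These show that, in Schur-normalized active coordinates, $\gamma_n^{-1}$ can exceed $C_0N_n^p$ only through quotient-conditioning collapse or through a kernel produced by the commutator. The commutator route is again covered by the Commutator growth profile proposition: once the windows are aspect-controlled and perturbatively localized, that proposition forces $\gamma_n\gtrsim\frac12\min\mu_\sigma/\kappa^{\rm quot}_n$. Consequently a failure of items 1 or 3 with all other hypotheses intact leaves either quotient-conditioning collapse, or a near-kernel vector of $\widehat J^{\rm rel}$ that is budget-admissible and therefore an NS-lift anomaly. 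To stay honest, I would either reduce quotient-conditioning collapse to the anomaly in this way, or state explicitly that the corollary's list is read together with mechanism (i) of the growth-profile dichotomy theorem.

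Finally, the activity hypothesis is built into the definition of the cleaned cokernel through \cref{prop:kernel-classification-pre-active}, so it cannot be the hypothesis that fails. With that, the case analysis is exhaustive and the corollary follows.
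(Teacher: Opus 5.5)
Your overall route is the intended one. The paper gives no separate argument for this corollary beyond contraposing \cref{thm:aspect-controlled-moving-window-exclusion} and relabelling its hypotheses as mechanisms. Your translations for aspect control, perturbative localization, logarithmic admissibility, and items 4 and 5 of \cref{ass:benign-moving-window-constants} are correct, and your reduction of item 1 to item 3 via $\mu_\sigma\ge1$ and the commutator bound also goes through.

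The gap is item 3. Its failure is bare quotient-conditioning collapse, which is not on the list, and your proposed way out does not work. A collapse of $\kappa^{\mathrm{quot}}_n$ produces near-kernel vectors of $\widehat J^{\mathrm{rel}}_{\mathfrak W_n}$ somewhere in $\widehat\Phi_{\mathfrak W_n}$. Nothing places these vectors in $\Range\widehat{\mathcal R}^{\mathrm{NS}}_{\mathfrak W_n}$ or within the $\delta_n^{1/3}$-budget, so they need not be NS-lift anomalies; with item 5 intact they cannot be. Your fallback, appending mechanism (i) of the growth-profile dichotomy, proves a weaker statement than the one asked.

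The missing idea is to contrapose the theorem's proof rather than its list of hypotheses. The vectors $\widehat s_n$ of a cascade are NS-admissible by \cref{def:moving-window-phantom-cascade}. The proof obtains \eqref{eq:C15-polynomial-lower-bound} for them from item 5 alone; items 1 and 2, and aspect control, are never invoked. The remainder of the proof uses item 4 and \cref{def:log-admissible-window-scale}. Item 3 can enter only through the ``polynomial factors in $N_n$'' multiplying $\|A^{\dagger}_{\mathfrak W_n}\|\,\delta_n^b$, that is, as part of the trace-cost amplification measured in the coordinates in which the residual bound is stated.

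Hence an obstructive cascade must violate one of three things:
\begin{itemize}
\item item 5, which is an NS-lift anomaly;
\item the single-exponential bound on that effective amplification, which is trace-cost singular collapse (no logarithmically absorbable majorant);
\item logarithmic admissibility, which is super-logarithmic window growth.
\end{itemize}
Extreme aspect ratio and nonperturbative localization are then only extra disjuncts. If you do not accept folding the conditioning factor into the trace-cost amplification, the correct conclusion is that the stated list omits quotient-conditioning collapse. You should then flag that as a defect of the statement rather than build it into the proof.
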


\begin{remark}[Interpretation]
This theorem separates the benign moving-window regime from the genuinely dangerous regimes.  Active finite windows are not enough to create a relaxed-invisible phantom.  Aspect-controlled moving windows are still harmless if localization is perturbative and trace-cost growth is at most single-exponential.  Therefore a failure of quantitative one-component selection, if it exists by this route, must exploit one of three genuinely hard mechanisms: extreme anisotropic aspect ratio, nonperturbative localization or harmonic leakage, or singular collapse of the trace-cost map beyond the logarithmically absorbable scale.
\end{remark}

\subsection{Extreme aspect-ratio analysis}\label{subsec:extreme-aspect-ratio-analysis}

We now analyze the two aspect-ratio regimes excluded from \cref{thm:aspect-controlled-moving-window-exclusion}:
\[
        m\ll K,
        \qquad
        m\gg K,
\]
where
\[
        K=|\sigma_h|,
        \qquad
        m=|\sigma_3|.
\]
The first regime approaches the vertical-zero sector; the second regime is a vertically high-frequency sector.  The purpose is to decide whether either regime can produce a relaxed-invisible true Schur phantom.

Recall the three relevant scalar multipliers in the static periodic model.  The raw relaxed vertical-pressure observation is
\[
        \Theta(K,m)=\frac{m}{K^2+m^2}.
\]
The vertical lift-to-Schur coefficient is
\[
        \Gamma(K,m)=\frac{mK^2}{(K^2+m^2)^2}.
\]
If \(s_\sigma\) denotes the Schur-normalized coefficient \(s_\sigma=\Gamma(K,m)\beta_\sigma\), then the relaxed observation in Schur variables is
\[
        J^{\rm rel}s_\sigma=\mu(K,m)s_\sigma,
        \qquad
        \mu(K,m)=\frac{\Theta(K,m)}{\Gamma(K,m)}=
\frac{K^2+m^2}{K^2}.
\]
Writing \(r=m/K\), one has
\[
        \mu(K,m)=1+r^2,
\]
and
\[
        \Gamma(K,m)=\frac1K\frac{r}{(1+r^2)^2},
        \qquad
        \Gamma(K,m)^{-1}=K\left(r^{-1}+2r+r^3\right).
\]

\begin{proposition}[No Schur-normalized visibility loss at extreme aspect ratio]\label{prop:no-schur-normalized-visibility-loss-extreme-aspect}
For every active mode \(K\neq0\), \(m\neq0\), one has
\[
        \mu(K,m)\ge1.
\]
Consequently, in Schur-normalized output coordinates,
\[
        \frac{|J^{\rm rel}s|}{|s|}
\]
cannot tend to zero merely because \(m/K\to0\) or \(m/K\to\infty\).  In particular, extreme aspect ratio alone does not create a relaxed-invisible Schur phantom.
\end{proposition}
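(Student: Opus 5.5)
The plan is to reduce the statement to the explicit scalar formula for the Schur-normalized multiplier already derived in the paper, and then read off the lower bound. By the computation preceding \cref{thm:finite-window-active-Schur-visibility}, and recalled just above the statement, the relaxed observation in Schur-normalized output coordinates is diagonal, with entry
\[
\mu(K,m)=\frac{\Theta(K,m)}{\Gamma(K,m)}=\frac{m}{K^2+m^2}\cdot\frac{(K^2+m^2)^2}{mK^2}=\frac{K^2+m^2}{K^2}=1+r^2,\qquad r=\frac mK.
\]
The first step is to verify this algebra on active modes. The cancellation of the factor $m$ is legitimate precisely because $m\neq0$, and the division by $K^2$ is legitimate because $K\neq0$. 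Since $r^2\ge0$, this immediately gives $\mu(K,m)\ge1$.

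The second step is to pass to vectors. For $s=\sum_\sigma s_\sigma q_\sigma$ supported on active output modes, the diagonal form gives, in the coefficient $\ell^2$ norm,
\[
|J^{\rm rel}s|^2=\sum_\sigma\mu_\sigma^2|s_\sigma|^2\ge\sum_\sigma|s_\sigma|^2=|s|^2.
\]
Hence $|J^{\rm rel}s|/|s|\ge1$ uniformly, with no dependence on $K$, on $m$, or on the size of the window. Along any sequence of modes with $m/K\to0$ or $m/K\to\infty$, the ratio therefore stays bounded below by $1$. (In the second regime, $\mu$ in fact tends to infinity.) A relaxed-invisible Schur phantom would require the ratio to tend to zero, so none can arise from aspect ratio alone.

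I would then add a short remark contrasting this with the raw, non-normalized quantities, in order to explain why degeneration nonetheless appears elsewhere. The raw observation $\Theta\sim m/K^2$ tends to $0$ as $r\to0$. The inverse lift $\Gamma^{-1}=K(r^{-1}+2r+r^3)$ blows up at both extremes. Both of these effects cancel in the ratio $\Theta/\Gamma$. Any loss therefore lives in the forcing-to-Schur normalization, that is, in the NS-lift cost. This is not a visibility loss, and it is the case handled by the realizability and budget filters.

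The only real obstacle is interpretive rather than computational: the stated norm is the Schur-normalized $\ell^2$ coefficient norm. For other weighted norms $Y^a_\Lambda$ or $W^b_\Lambda$, or for an ill-conditioned quotient basis, one would instead invoke the polynomial active-visibility profile. That profile gives the bound only up to factors of $\kappa^{\rm quot}_\Lambda$ and $N_\Lambda^{(a-b)_+}$, and neither factor depends on $r$. I would state the result in coordinates and note this caveat, following the argument of \cref{thm:finite-window-active-Schur-visibility}.
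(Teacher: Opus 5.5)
Your proposal is correct and follows the paper's proof. The identity \(\mu=\Theta/\Gamma=1+(m/K)^2\ge1\) on active modes, combined with the diagonal form of \(J^{\mathrm{rel}}\), gives \(|J^{\mathrm{rel}}s|\ge|s|\) in the Schur-normalized coefficient norm, which is the vector-level step the paper leaves implicit. One correction, affecting only your closing caveat: the weight factor \(N_\Lambda^{(a-b)_+}\) does depend on the aspect ratio at fixed \(K\), since \(|\sigma|=K\sqrt{1+r^2}\), and indeed the single-mode ratio \(\mu_\sigma\langle\sigma\rangle^{b-a}\) tends to zero as \(r\to\infty\) when \(a-b>2\), so you should not claim \(r\)-independence there; the proposition as stated is unaffected.
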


\begin{proof}
The formula \(\mu(K,m)=1+(m/K)^2\) immediately gives \(\mu(K,m)\ge1\) for every active mode.  If \(m/K\to0\), then \(\mu(K,m)\to1\).  If \(m/K\to\infty\), then \(\mu(K,m)\to\infty\).  In neither case can \(J^{\rm rel}\) lose visibility on a nonzero Schur-normalized vector.
\end{proof}

\subsubsection*{Slow vertical variation: \(m\ll K\)}

Assume \(r=m/K\ll1\).  Then
\[
        \Theta(K,m)\sim \frac{m}{K^2},
        \qquad
        \Gamma(K,m)\sim \frac{m}{K^2},
        \qquad
        \mu(K,m)\sim1.
\]
Thus the raw vertical-pressure coefficient is small, but the Schur-normalized relaxed observation is not small.  The reason is that the Schur coefficient itself is also small for bounded forcing:
\[
        s_\sigma=\Gamma(K,m)\beta_\sigma
        \sim \frac{m}{K^2}\beta_\sigma.
\]
To keep an order-one Schur defect, one must choose
\[
        |\beta_\sigma|\sim \Gamma(K,m)^{-1}|s_\sigma|
        \sim \frac{K^2}{m}|s_\sigma|.
\]
This is not a visibility loss; it is a forcing-amplitude amplification.

\begin{lemma}[Slow-vertical sector is not a true phantom]\label{lem:slow-vertical-sector-not-phantom}
Let \(m/K\to0\) with \(m\neq0\).  Suppose the raw forcing coefficients satisfy \(|\beta_\sigma|\le B_\sigma\).  Then
\[
        |s_\sigma|\le C\frac{m}{K^2}B_\sigma,
        \qquad
        |J^{\rm rel}s_\sigma|\sim |s_\sigma|.
\]
Thus a slow-vertical active mode can only produce an order-one Schur defect if the raw forcing coefficient grows like \(K^2/m\).  It cannot produce a nonzero relaxed-invisible Schur defect.
\end{lemma}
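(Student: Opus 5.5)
The plan is to work mode by mode with the explicit scalar multipliers recorded just before the statement, writing $r=m/K\to0$. The first step is the bound on the Schur coefficient. From the Schur normalization $s_\sigma=\Gamma(K,m)\beta_\sigma$ and the identity $\Gamma(K,m)=K^{-1}r(1+r^2)^{-2}$, I will use that $(1+r^2)^{-2}\le1$ for every $r>0$. This gives $\Gamma(K,m)\le r/K=m/K^2$ exactly, with no asymptotic loss. Together with $|\beta_\sigma|\le B_\sigma$ this yields $|s_\sigma|\le (m/K^2)B_\sigma$, so any $C\ge1$ works. The same expansion gives the matching lower bound $\Gamma(K,m)\ge \tfrac14\, m/K^2$ once $r\le1$. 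I will record this because it is what converts the bound into the statement about forcing amplification.

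The second step is the visibility equivalence. Here I will invoke the Schur-normalized multiplier $\mu(K,m)=1+r^2$ from \cref{prop:no-schur-normalized-visibility-loss-extreme-aspect}. This gives $|J^{\rm rel}s_\sigma|=(1+r^2)|s_\sigma|$, and in the regime $r\to0$ (indeed for all $r\le1$) this lies between $|s_\sigma|$ and $2|s_\sigma|$, which is the claimed $\sim$.

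The third step covers the amplitude requirement and the non-phantom conclusion. Suppose an order-one defect $|s_\sigma|\ge c>0$ is to be produced. Inverting $\Gamma$ and using $\Gamma^{-1}=K(r^{-1}+2r+r^3)\ge K/r=K^2/m$ shows that this forces $B_\sigma\ge|\beta_\sigma|=\Gamma^{-1}|s_\sigma|\ge cK^2/m$. For non-invisibility, suppose $s_\sigma\ne0$ and $J^{\rm rel}s_\sigma=0$. Then $(1+r^2)s_\sigma=0$ with $1+r^2\ge1$, which is a contradiction. Since the relaxed map is diagonal in output-mode coordinates (\cref{thm:finite-window-active-Schur-visibility}), the conclusion passes from single modes to any vector supported on slow-vertical active modes, with the uniform lower constant $1$.

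I do not expect a genuine obstacle, because each step is an elementary inequality on explicit rational symbols. The only point requiring care is interpretive. The small raw observation $\Theta\sim m/K^2$ must not be mistaken for a visibility loss. To make this clear I will note explicitly that the raw-level degeneracy $\Theta\to0$ is cancelled exactly by $\Gamma\to0$ in the ratio $\mu=\Theta/\Gamma$. I will also stress that the only cost incurred in this regime is the growth of the forcing amplitude $K^2/m$, which enters the NS-lift and budget filters rather than the phantom space.
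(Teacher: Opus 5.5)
Your proposal is correct and follows essentially the paper's own argument. The paper likewise just combines $s_\sigma=\Gamma(K,m)\beta_\sigma$ with $\Gamma\sim m/K^2$, and $J^{\rm rel}s_\sigma=\mu(K,m)s_\sigma$ with $\mu=1+(m/K)^2\sim1$. You make the constants explicit ($C=1$, and $1\le\mu\le2$ for $m\le K$).

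One small remark: the necessity of forcing of size $\gtrsim K^2/m$ comes from your upper bound $\Gamma\le m/K^2$. Your lower bound $\Gamma\ge\frac14 m/K^2$ is needed only to make the required forcing two-sidedly comparable to $(K^2/m)|s_\sigma|$, i.e.\ to get the full ``grows like''.
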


\begin{proof}
The estimate for \(s_\sigma\) follows from \(s_\sigma=\Gamma(K,m)\beta_\sigma\) and \(\Gamma(K,m)\sim m/K^2\) when \(m\ll K\).  The estimate for the relaxed observation follows from \(J^{\rm rel}s_\sigma=\mu(K,m)s_\sigma\) and \(\mu(K,m)\sim1\).
\end{proof}

\begin{remark}[Vertical-zero boundary]
At \(m=0\) exactly, one has \(\partial_3\pi_\sigma=0\).  This is the vertical-zero sector and has already been removed by the gauge-cleaned active quotient.  The regime \(m/K\to0\) with \(m\neq0\) is different: it is still active, and its Schur-normalized relaxed observation remains nondegenerate.
\end{remark}

\subsubsection*{High vertical frequency: \(m\gg K\)}

Assume \(r=m/K\gg1\).  Then
\[
        \Theta(K,m)\sim \frac1m,
        \qquad
        \Gamma(K,m)\sim \frac{K^2}{m^3},
        \qquad
        \mu(K,m)\sim \frac{m^2}{K^2}.
\]
Thus the raw vertical-pressure signal may be small for bounded raw forcing, but the Schur-normalized relaxed observation is very large:
\[
        J^{\rm rel}s_\sigma
        =\mu(K,m)s_\sigma
        \sim \frac{m^2}{K^2}s_\sigma.
\]
Again, if the raw forcing coefficient is bounded, the produced Schur defect is small:
\[
        s_\sigma=\Gamma(K,m)\beta_\sigma
        \sim \frac{K^2}{m^3}\beta_\sigma.
\]
To keep an order-one Schur defect, one must choose
\[
        |\beta_\sigma|\sim \Gamma(K,m)^{-1}|s_\sigma|
        \sim \frac{m^3}{K^2}|s_\sigma|.
\]
This is a severe forcing-amplitude amplification, not a relaxed visibility failure.

\begin{lemma}[High-vertical sector is strongly relaxed-visible]\label{lem:high-vertical-sector-strongly-visible}
Let \(m/K\to\infty\).  Then
\[
        \frac{|J^{\rm rel}s_\sigma|}{|s_\sigma|}
        \sim
        \frac{m^2}{K^2}	o\infty.
\]
In particular, no sequence of unit Schur-normalized vectors supported in the high-vertical active sector can be relaxed-invisible.  If the raw forcing coefficients remain bounded by \(B_\sigma\), then
\[
        |s_\sigma|\le C\frac{K^2}{m^3}B_\sigma.
\]
\end{lemma}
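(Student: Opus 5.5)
The argument will mirror the slow-vertical Lemma~\ref{lem:slow-vertical-sector-not-phantom}, with the regime $r=m/K\to\infty$ in place of $r\to0$. Everything rests on the three closed-form multipliers already recorded in this subsection: $\Theta(K,m)=m/(K^2+m^2)$, $\Gamma(K,m)=mK^2/(K^2+m^2)^2$, and $\mu=\Theta/\Gamma=1+r^2$. The plan is to compute the asymptotics of $\mu$ and $\Gamma$ in terms of $r$. The visibility claim then follows from the diagonal form of $J^{\rm rel}$ in Schur-normalized output coordinates, as in Theorem~\ref{thm:finite-window-active-Schur-visibility}. The forcing bound follows from the normalization $s_\sigma=\Gamma_\sigma\beta_\sigma$.

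First, since $J^{\rm rel}s_\sigma=\mu(K,m)s_\sigma$ modewise, we get $|J^{\rm rel}s_\sigma|/|s_\sigma|=1+r^2$ for every nonzero $s_\sigma$. For $r\ge1$ this lies between $r^2$ and $2r^2$, so it is comparable to $m^2/K^2$ and tends to infinity. For a vector $s$ supported on several high-vertical active modes, I will use the $\ell^2$ coefficient norm. The diagonal structure then gives
\[
|J^{\rm rel}s|_{\ell^2}\ge\min_{\sigma}(1+r_\sigma^2)\,|s|_{\ell^2}\ge\Bigl(\min_\sigma r_\sigma^2\Bigr)|s|_{\ell^2}.
\]
If the high-vertical sector is defined by $\min_\sigma r_\sigma\to\infty$, the ratio blows up along any unit sequence. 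In particular it cannot tend to zero, so no relaxed-invisible sequence exists. If the statement is meant for a fixed finite-window norm other than $\ell^2$, I would invoke norm equivalence with window-dependent constants, exactly as in Theorem~\ref{thm:finite-window-active-Schur-visibility}, together with the standing quotient-conditioning assumption.

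Second, for the forcing bound I will write
\[
\Gamma(K,m)=\frac{mK^2}{m^4(1+r^{-2})^2}\le\frac{K^2}{m^3}.
\]
Then $|s_\sigma|=\Gamma|\beta_\sigma|\le (K^2/m^3)B_\sigma$, which is the claim with $C=1$. A matching lower bound $\Gamma\ge K^2/(4m^3)$ for $r\ge1$ shows that this rate is sharp. It also recovers the amplification statement that an order-one Schur defect requires $|\beta_\sigma|\sim (m^3/K^2)|s_\sigma|$.

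The computations themselves are routine. The main obstacle is interpretive: the lemma has to be read in Schur-normalized output coordinates, after the cleaning of vertical-zero and horizontal-gauge modes, and not at the raw forcing level. At the raw level the multiplier $\Theta\sim1/m$ does decay, so the lemma would be false there. I would therefore state explicitly, citing Proposition~\ref{prop:kernel-classification-pre-active}, that the raw decay is absorbed by the normalization $s_\sigma=\Gamma_\sigma\beta_\sigma$. Any loss of smallness is then a forcing-amplitude issue, to be handled by the NS-lift budget filter, and not a visibility defect. I would also note that localized commutator errors, which could depend on $\rho_\Lambda$, are outside this clean periodic statement.
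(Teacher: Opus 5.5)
Your proposal is correct and follows the paper's proof, which reads off the first claim from \(\mu(K,m)=1+(m/K)^2\) and the second from \(s_\sigma=\Gamma(K,m)\beta_\sigma\) with \(\Gamma(K,m)\sim K^2/m^3\) for \(m\gg K\). Your additions are harmless refinements of that same argument: the explicit bounds \(r^2\le\mu\le 2r^2\) for \(r\ge1\), the estimate \(\Gamma\le K^2/m^3\) giving \(C=1\), and the \(\ell^2\) extension to multi-mode vectors via the diagonal form of \(J^{\rm rel}\).
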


\begin{proof}
The first claim follows from \(\mu(K,m)=1+(m/K)^2\).  The second follows from \(s_\sigma=\Gamma(K,m)\beta_\sigma\) and \(\Gamma(K,m)\sim K^2/m^3\) when \(m\gg K\).
\end{proof}

\begin{remark}[High-vertical modes are not invisible]
The high-vertical regime may be cheap in the vertical lift variable because of elliptic and parabolic smoothing.  However, it is not invisible to the relaxed vertical-pressure channel after Schur normalization.  A failure mechanism in this regime would have to exploit some other source: enormous raw forcing, singular quotient conditioning, nonperturbative localization, or a trace-cost collapse.
\end{remark}

\begin{theorem}[Extreme aspect-ratio exclusion under bounded forcing]\label{thm:extreme-aspect-ratio-exclusion-bounded-forcing}
Assume an active moving-window sequence contains modes \(\sigma_n=(\sigma_{n,h},\sigma_{n,3})\), with
\[
        K_n=|\sigma_{n,h}|,
        \qquad
        m_n=|\sigma_{n,3}|,
        \qquad
        N_n=|\sigma_n|,
\]
and assume the raw finite-stage forcing coefficients satisfy a polynomial bound
\[
        |\beta_n|\le C N_n^p.
\]
Then neither extreme aspect-ratio regime produces an order-one relaxed-invisible Schur defect.

More precisely, if \(m_n/K_n\to0\), then
\[
        |s_n|\le C\frac{m_n}{K_n^2}N_n^p,
        \qquad
        |J^{\rm rel}s_n|\sim |s_n|.
\]
If \(m_n/K_n\to\infty\), then
\[
        |s_n|\le C\frac{K_n^2}{m_n^3}N_n^p,
        \qquad
        |J^{\rm rel}s_n|\sim \frac{m_n^2}{K_n^2}|s_n|.
\]
Thus an extreme aspect-ratio failure mechanism cannot be a pure relaxed-visibility failure.  It must be accompanied by forcing-amplitude blow-up, quotient-conditioning collapse, nonperturbative localization, or an NS-lift anomaly.
\end{theorem}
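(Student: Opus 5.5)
The plan is to reduce everything to the scalar identities already computed for a single active output mode, namely
\[
s_\sigma=\Gamma(K,m)\beta_\sigma,\qquad J^{\rm rel}s_\sigma=\mu(K,m)s_\sigma,\qquad \mu(K,m)=1+r^2,\quad r=m/K,
\]
and then combine them with the polynomial forcing bound \(|\beta_n|\le CN_n^p\). Since both the forcing-to-Schur map and the relaxed observation are diagonal in output-mode coordinates (\cref{thm:finite-window-active-Schur-visibility}), it suffices to argue mode by mode along the sequence \(\sigma_n\). Any collision of several interactions into the same output mode is handled by first summing them into the single coefficient \(\beta_n\), as in \cref{rem:mode-collisions-finite-window}. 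The assumed polynomial bound is taken to apply to this summed coefficient.

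\emph{Case \(m_n/K_n\to0\).} Here \(r_n\to0\), so
\[
\Gamma(K_n,m_n)=\frac{m_n}{K_n^2}\,(1+r_n^2)^{-2}\le \frac{m_n}{K_n^2}.
\]
This gives \(|s_n|\le (m_n/K_n^2)|\beta_n|\le C(m_n/K_n^2)N_n^p\). Also \(\mu_n=1+r_n^2\in[1,2]\) for large \(n\), hence \(|J^{\rm rel}s_n|\sim|s_n|\). This is exactly \cref{lem:slow-vertical-sector-not-phantom} with \(B_\sigma=CN_n^p\).

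\emph{Case \(m_n/K_n\to\infty\).} Write
\[
\Gamma(K_n,m_n)=\frac{K_n^2}{m_n^3}\cdot\frac{r_n^4}{(1+r_n^2)^2}\le\frac{K_n^2}{m_n^3},
\]
which gives \(|s_n|\le C(K_n^2/m_n^3)N_n^p\). Moreover \(\mu_n=1+r_n^2\sim r_n^2=m_n^2/K_n^2\), as in \cref{lem:high-vertical-sector-strongly-visible}.

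To conclude, I would use the fact that in both cases \(|J^{\rm rel}s_n|/|s_n|=\mu_n\ge1\) for every nonzero \(s_n\), by \cref{prop:no-schur-normalized-visibility-loss-extreme-aspect}. So no unit Schur vector supported on these modes can have relaxed observation tending to zero, and an order-one relaxed-invisible defect is impossible. The bounds on \(|s_n|\) show a second fact: an order-one Schur defect requires \(|\beta_n|\gtrsim\Gamma_n^{-1}\), which is \(K_n^2/m_n\) or \(m_n^3/K_n^2\). When that exceeds the polynomial budget \(N_n^p\), the defect can only arise through forcing blow-up.

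The final sentence of the statement, that any failure must involve forcing blow-up, conditioning collapse, nonperturbative localization or an NS-lift anomaly, then follows by contraposition. Pure relaxed visibility is never lost in Schur-normalized coordinates, so a failure must break one of the remaining hypotheses listed in the growth-profile dichotomy.

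The main obstacle is that this interpretive sentence is the only step not fully determined by the scalar algebra. Making it precise requires passing from a single mode to a general cleaned quotient vector. For that I would use the diagonal structure together with the quotient conditioning constant \(\kappa^{\rm quot}\): this gives \(|J^{\rm rel}s|\ge(\kappa^{\rm quot})^{-1}|s|\). Consequently, the only way relaxed visibility can degenerate is through conditioning collapse, which is one of the listed mechanisms.
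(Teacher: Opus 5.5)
Your proposal is correct and follows the same route as the paper, which likewise combines \(s_n=\Gamma(K_n,m_n)\beta_n\) with the asymptotics \(\Gamma\sim m_n/K_n^2\) (resp.\ \(\Gamma\sim K_n^2/m_n^3\)) and \(\mu=1+(m_n/K_n)^2\); your exact bound \(\Gamma(K,m)=\frac{m}{K^2(1+r^2)^2}\le\min\{m/K^2,\,K^2/m^3\}\) is a harmless sharpening that holds for every \(n\), not only asymptotically. The paper's proof gives no justification for the closing interpretive sentence, so your remarks on it (via \(\mu_\sigma\ge1\) and the quotient conditioning constant) add to what the paper proves rather than fall short of it.
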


\begin{proof}
Both estimates follow directly from \(s_n=\Gamma(K_n,m_n)\beta_n\).  If \(m_n\ll K_n\), then \(\Gamma(K_n,m_n)\sim m_n/K_n^2\), which gives the first estimate, while \(\mu(K_n,m_n)\to1\).  If \(m_n\gg K_n\), then \(\Gamma(K_n,m_n)\sim K_n^2/m_n^3\), which gives the second estimate, while \(\mu(K_n,m_n)\sim m_n^2/K_n^2\).
\end{proof}

\begin{corollary}[Refined location of possible aspect-ratio failure mechanisms]\label{cor:refined-location-aspect-failures}
After active quotienting and harmonic-gauge cleaning, an extreme aspect-ratio moving-window true phantom cascade must satisfy at least one of the following: forcing amplification \(|\beta_n|\gtrsim \Gamma(K_n,m_n)^{-1}|s_n|\), quotient-conditioning collapse, nonperturbative localization, or an NS-lift anomaly with vertical budget
\[
        \operatorname{NSCost}_n(s_n;\eps_n)
        \le
        C\delta_n^{1/3}|s_n|.
\]
Absent these additional mechanisms, extreme aspect ratio does not create a true relaxed phantom.
\end{corollary}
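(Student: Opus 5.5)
The statement to prove is the Refined location corollary. The plan is to argue by contraposition. Suppose an extreme aspect-ratio moving-window true phantom cascade exists, so that after active quotienting and harmonic-gauge cleaning we have unit cleaned Schur vectors \(\widehat s_n\in\widehat\Phi_{\mathfrak W_n}\) with \(m_n/K_n\to0\) or \(m_n/K_n\to\infty\) on the relevant modes and \(\|\widehat J^{\rm rel}_{\mathfrak W_n}\widehat s_n\|/\|\widehat s_n\|\to0\). Suppose also that none of the four listed mechanisms occurs, and derive a contradiction.

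\textbf{Step 1: reduce to clean diagonal Schur coordinates.} The absence of quotient-conditioning collapse means the cleaned quotient identifications are uniformly (or polynomially) conditioned. So up to those constants, \(\|\widehat s_n\|\) and \(\|\widehat J^{\rm rel}\widehat s_n\|\) are comparable to the corresponding output-mode coefficient norms \(|s_n|\) and \(|J^{\rm rel}s_n|\).

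The absence of nonperturbative localization means \(\|\mathcal C_{\mathfrak W_n}\|\le\frac12\gamma_n\), or more generally is \(o\) of the clean visibility constant. By \cref{thm:localized-commutator-stability}, the localized observation then inherits the clean lower bound up to a factor \(\tfrac12\).

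\textbf{Step 2: apply the clean aspect-ratio bounds.} By \cref{prop:no-schur-normalized-visibility-loss-extreme-aspect}, \(\mu(K,m)=1+(m/K)^2\ge1\) on every active mode. Hence the clean diagonal observation satisfies \(|J^{\rm rel}s|\ge|s|\) in Schur-normalized coordinates. Combined with Step 1, this gives \(\|\widehat J^{\rm rel}\widehat s_n\|\gtrsim\|\widehat s_n\|\) up to the conditioning constants, which contradicts relaxed almost-invisibility.

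The remaining way the cascade could still be order-one obstructive is through the size of \(s_n\) itself. Here I would invoke \cref{thm:extreme-aspect-ratio-exclusion-bounded-forcing}, or directly \(s_n=\Gamma(K_n,m_n)\beta_n\). An order-one (nonnegligible) Schur defect forces \(|\beta_n|\gtrsim\Gamma(K_n,m_n)^{-1}|s_n|\), which is exactly the forcing-amplification alternative. So if forcing is not amplified, the defect is too small to matter.

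\textbf{Step 3: handle the NS-admissibility requirement.} By \cref{def:moving-window-phantom-cascade}, a true phantom cascade must satisfy \(\operatorname{NSCost}_n(s_n;\eps_n)\le C\delta_n^{1/3}|s_n|\). Because the statement lists this bound as part of the NS-lift anomaly alternative, the cascade either already exhibits it, or it fails NS-admissibility and is discarded by \cref{thm:finite-stage-NS-realizability-filter}.

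\textbf{Conclusion and main obstacle.} Collecting the three steps, any such cascade must satisfy one of the four alternatives, as claimed. The main obstacle is Step 1: ensuring that the cleaning projection \(\Pi_{\rm act}\) and the localized commutator do not destroy the diagonal structure in the extreme regimes. In particular, when \(m/K\to0\), the harmonic-leakage space \(\partial_3\mathcal H\) may approach active modes. I would handle this by folding any such degeneration into quotient-conditioning collapse, i.e.\ by defining \(\kappa^{\rm quot}\) to include the angle between the active and harmonic components.
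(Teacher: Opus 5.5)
Your proposal is correct and is essentially the paper's own (implicit) argument. The paper gives no separate proof: the corollary is read off from \cref{prop:no-schur-normalized-visibility-loss-extreme-aspect} ($\mu(K,m)=1+(m/K)^2\ge1$, so there is no Schur-normalized visibility loss) and from the identity $s_n=\Gamma(K_n,m_n)\beta_n$ behind \cref{thm:extreme-aspect-ratio-exclusion-bounded-forcing}. Every remaining degeneracy is then charged to conditioning, localization, or the budget bound, which, as you observe, is already part of the definition of an NS-admissible cascade.

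One caution: the contradiction in your Step 2 needs uniformly bounded conditioning. Merely polynomial conditioning only gives $\|\widehat J^{\rm rel}\widehat s_n\|\gtrsim N_n^{-p}\|\widehat s_n\|$, which is compatible with the visibility ratio tending to zero. The disjunction itself is still secured by your Step 3.
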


\begin{remark}[Interpretation]
The extreme aspect-ratio test is favorable to the relaxed route.  In the slow-vertical regime \(m\ll K\), the raw vertical pressure signal is small, but so is the Schur coefficient produced by bounded forcing; after Schur normalization, relaxed visibility remains order one.  In the high-vertical regime \(m\gg K\), the Schur-normalized relaxed observation becomes stronger, not weaker.  Therefore the failure-mechanism search must shift away from pure aspect ratio and toward the genuinely nonlinear issue: can the Navier--Stokes deformation complex generate the enormous raw forcing or singular quotient normalization needed to keep an order-one cleaned phantom while preserving the \(C_3\)-budget?
\end{remark}

\subsection{Forcing-amplitude and NS-lift anomaly test}

The extreme aspect-ratio analysis shows that aspect ratio alone does not create relaxed invisibility. In Schur-normalized coordinates,
\[
J^{\mathrm{rel}}s_\sigma =
\mu(K,m)s_\sigma,
\qquad
\mu(K,m)=1+\frac{m^2}{K^2},
\]
so the relaxed observation does not vanish on active modes. The only remaining possibility is more subtle: to keep a nonzero Schur defect \(s_\sigma\) in an extreme aspect-ratio window, the raw pressure-forcing coefficient \(\beta_\sigma\) must be very large. The present subsection tests whether such large forcing can be generated by a Navier--Stokes finite-stage deformation while keeping the vertical component within the \(C_3\)-budget.

Let
\[
K=|\sigma_h|,
\qquad
m=|\sigma_3|,
\qquad
\sigma=(\sigma_h,\sigma_3).
\]
Recall
\[
s_\sigma=\Gamma(K,m)\beta_\sigma,
\qquad
\Gamma(K,m)=\frac{mK^2}{(K^2+m^2)^2}.
\]
Thus any prescribed Schur coefficient satisfies
\[
|\beta_\sigma| =
\frac{(K^2+m^2)^2}{mK^2}|s_\sigma|.
\]
Equivalently,
\[
|\beta_\sigma| =
\left(\frac{K^2}{m}+2m+\frac{m^3}{K^2}\right)|s_\sigma|.
\]
This is the raw forcing-amplitude cost of producing \(s_\sigma\).

\begin{definition}[Raw forcing cost]
For an output Schur coefficient \(s_\sigma\), define the raw forcing cost
\[
\operatorname{FCost}_{\sigma}(s_\sigma) =
\Gamma(K,m)^{-1}|s_\sigma|.
\]
For a finite window \(\Lambda\), define
\[
\operatorname{FCost}_{\Lambda}(s) =
\left(
\sum_{\sigma\in\Lambda}
\Gamma(K_\sigma,m_\sigma)^{-2}|s_\sigma|^2
\right)^{1/2}.
\]
Thus \(\operatorname{FCost}_\Lambda(s)\) is the size of the raw pressure-forcing package needed to produce the Schur vector \(s\).
\end{definition}

\begin{lemma}[Forcing amplification in extreme aspect-ratio sectors]
Let \(s_\sigma\neq0\).

\begin{enumerate}
\item If \(m\ll K\), then
\[
\operatorname{FCost}_{\sigma}(s_\sigma)
\sim
\frac{K^2}{m}|s_\sigma|.
\]

\item If \(m\sim K\), then
\[
\operatorname{FCost}_{\sigma}(s_\sigma)
\sim
K|s_\sigma|.
\]

\item If \(m\gg K\), then
\[
\operatorname{FCost}_{\sigma}(s_\sigma)
\sim
\frac{m^3}{K^2}|s_\sigma|.
\]
\end{enumerate}

Therefore, any order-one Schur defect in an extreme aspect-ratio window requires a large raw forcing coefficient.
\end{lemma}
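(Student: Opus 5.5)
The plan is to reduce everything to the closed-form inverse of the Schur normalization. By definition, $\operatorname{FCost}_\sigma(s_\sigma)=\Gamma(K,m)^{-1}|s_\sigma|$. I will start from the identity already recorded before the definition,
\[
\Gamma(K,m)^{-1}=\frac{(K^2+m^2)^2}{mK^2}=\frac{K^2}{m}+2m+\frac{m^3}{K^2},
\]
which follows by expanding $(K^2+m^2)^2=K^4+2K^2m^2+m^4$ and dividing by $mK^2$. All three terms are positive on active modes. Hence $\Gamma^{-1}$ is comparable, with absolute constants between $1$ and $4$, to the largest of the three terms. This gives two-sided $\sim$ bounds rather than one-sided ones.

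Next I pass to the aspect variable $r=m/K$, as in the extreme aspect-ratio analysis. There the identity reads $\Gamma^{-1}=K(r^{-1}+2r+r^3)$, and the three cases become comparisons of $r^{-1}$, $r$ and $r^3$.

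\begin{enumerate}
\item For $r\ll1$ (say $r\le1/2$), the term $r^{-1}$ dominates, since $2r+r^3\le \tfrac{9}{8}r^{-1}\cdot r^2\cdot\ldots$ is bounded by a small multiple of $r^{-1}$. This gives $\operatorname{FCost}_\sigma\sim K r^{-1}|s_\sigma|=\frac{K^2}{m}|s_\sigma|$.
\item For $c\le r\le C$, all three terms are comparable to $1$ with constants depending only on $c,C$. This gives $\operatorname{FCost}_\sigma\sim K|s_\sigma|$.
\item For $r\gg1$, the term $r^3$ dominates. This gives $\operatorname{FCost}_\sigma\sim K r^3|s_\sigma|=\frac{m^3}{K^2}|s_\sigma|$.
\end{enumerate}

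The only care needed is to make the meaning of "$\ll$" explicit as a threshold, so that the implied constants are uniform.

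For the final sentence of the lemma I note that $\inf_{r>0}(r^{-1}+2r+r^3)$ is attained and positive. More to the point, in either extreme regime the factor $r^{-1}$ or $r^3$ tends to infinity. Thus for fixed $|s_\sigma|$ of order one, the forcing $|\beta_\sigma|=\operatorname{FCost}_\sigma(s_\sigma)$ is at least $K\max(r^{-1},r^3)$, which is unbounded relative to the balanced cost $K|s_\sigma|$. This is exactly the forcing amplification claimed, and it is consistent with Lemma~\ref{lem:slow-vertical-sector-not-phantom} and Lemma~\ref{lem:high-vertical-sector-strongly-visible}.

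I do not expect a genuine obstacle; the argument is elementary. The one point to watch is the phrase "large raw forcing" in case (2) when $K$ itself is small. There I would state the conclusion relative to $K|s_\sigma|$, that is, as amplification by the unbounded factor $\max(r^{-1},r^3)$, rather than as an absolute largeness claim.
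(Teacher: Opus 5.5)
Your proposal is correct and takes the same approach as the paper. Both arguments rest on the expansion $\Gamma(K,m)^{-1}=K^2/m+2m+m^3/K^2$ and read off the dominant term in each regime; your rescaling by $r=m/K$ and your explicit thresholds only make the implied constants uniform. One fix: the garbled bound in case 1 should read $2r+r^3=r^{-1}(2r^2+r^4)\le\tfrac{9}{16}\,r^{-1}$ for $r\le\tfrac12$, and that is all the argument needs.
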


\begin{proof}
The three estimates follow directly from
\[
\Gamma(K,m)^{-1} =
\frac{K^2}{m}+2m+\frac{m^3}{K^2}.
\]
If \(m\ll K\), the first term dominates. If \(m\sim K\), all terms are comparable to (K). If \(m\gg K\), the last term dominates.
\end{proof}

We next relate raw forcing to Navier--Stokes jets. In the periodic quadratic model, if the output mode is \(\sigma=p+q\), the horizontal pressure-forcing coefficient has the form
\[
\beta(p,q) =
2A_pA_q
\frac{(p_h\times q_h)^2}{|p_h||q_h|}.
\]
This coefficient is quadratic in the horizontal jet amplitudes. Thus a large \(\beta\) requires large lower-order horizontal amplitudes, unless the geometric interaction factor is itself large.

Let
\[
\mathcal H_K(\mathfrak J)
\]
denote the horizontal finite-stage size of a jet package \(\mathfrak J\), measured in the finite-window norm used for trace selection. In a fixed periodic window, assume the quadratic forcing map satisfies
\[
|\beta_\sigma(\mathfrak J)|
\le
C_\Lambda
\mathcal H_K(\mathfrak J)^2.
\]
This is the finite-dimensional form of the elementary fact that the forcing coefficient is bilinear in lower-order horizontal jets.

\begin{proposition}[Horizontal amplitude lower bound]
If a finite-stage jet package \(\mathfrak J\) produces a Schur coefficient \(s_\sigma\), then
\[
\mathcal H_K(\mathfrak J)
\ge
C_\Lambda^{-1/2}
\Gamma(K,m)^{-1/2}|s_\sigma|^{1/2}.
\]
In particular, in the two extreme regimes:
\[
m\ll K
\quad\Longrightarrow\quad
\mathcal H_K(\mathfrak J)
\gtrsim
\frac{K}{m^{1/2}}|s_\sigma|^{1/2},
\]
and
\[
m\gg K
\quad\Longrightarrow\quad
\mathcal H_K(\mathfrak J)
\gtrsim
\frac{m^{3/2}}{K}|s_\sigma|^{1/2}.
\]
\end{proposition}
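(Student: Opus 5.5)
The plan is to chain the two relations already available: the Schur normalization \(s_\sigma=\Gamma(K,m)\beta_\sigma\) and the assumed quadratic forcing bound \(|\beta_\sigma(\mathfrak J)|\le C_\Lambda\mathcal H_K(\mathfrak J)^2\). If \(\mathfrak J\) produces \(s_\sigma\), then the raw forcing coefficient at output mode \(\sigma\) is forced. By the definition of the raw forcing cost it equals
\[
|\beta_\sigma|=\Gamma(K,m)^{-1}|s_\sigma|=\operatorname{FCost}_\sigma(s_\sigma).
\]
Here we use that \(\Gamma(K,m)\neq0\) on active modes, as in the proof of \cref{thm:finite-window-active-Schur-visibility}. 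Combining this with the bilinear bound gives
\[
\Gamma(K,m)^{-1}|s_\sigma|\le C_\Lambda\,\mathcal H_K(\mathfrak J)^2 .
\]
Taking square roots yields the stated lower bound \(\mathcal H_K(\mathfrak J)\ge C_\Lambda^{-1/2}\Gamma(K,m)^{-1/2}|s_\sigma|^{1/2}\).

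For the two extreme regimes, substitute the asymptotics from the preceding forcing-amplification lemma. These come from \(\Gamma(K,m)^{-1}=K^2/m+2m+m^3/K^2\). When \(m\ll K\) the first term dominates, so \(\Gamma^{-1/2}\sim K/m^{1/2}\). When \(m\gg K\) the last term dominates, so \(\Gamma^{-1/2}\sim m^{3/2}/K\). In each case the term dropped is at most a fixed multiple of the dominant one, since \(2m\le 2K^2/m\) when \(m\le K\) and symmetrically when \(m\ge K\). The implied constants in \(\gtrsim\) therefore absorb \(C_\Lambda^{-1/2}\) and these numerical factors.

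The main point needing care is the meaning of ``produces''. The coefficient \(\beta_\sigma\) must be the summed output coefficient at mode \(\sigma\), with all collision paths combined as in \cref{rem:mode-collisions-finite-window}. The Schur coefficient \(s_\sigma\) must be exactly \(\Gamma\beta_\sigma\) in the clean static periodic normalization, with no quotient-conditioning factor. If a quotient projection intervened, I would instead bound \(|s_\sigma|\le\kappa^{\mathrm{quot}}_\Lambda\Gamma|\beta_\sigma|\) and carry \(\kappa^{\mathrm{quot}}_\Lambda\) into \(C_\Lambda\). With that convention fixed, the argument is a two-line inequality chain.
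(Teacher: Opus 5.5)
Your argument is correct and is essentially the paper's proof: invert \(s_\sigma=\Gamma(K,m)\beta_\sigma\), insert the quadratic forcing bound \(|\beta_\sigma(\mathfrak J)|\le C_\Lambda\mathcal H_K(\mathfrak J)^2\), take square roots, and read off the extreme regimes from \(\Gamma(K,m)^{-1}=K^2/m+2m+m^3/K^2\). For these one-sided lower bounds, positivity of the three terms already gives \(\Gamma^{-1}\ge K^2/m\) and \(\Gamma^{-1}\ge m^3/K^2\) for every \(m\), so you do not need to check that the dropped terms are comparable to the dominant one.
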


\begin{proof}
If \(\mathfrak J\) produces \(s_\sigma\), then
\[
|\beta_\sigma(\mathfrak J)| =
\Gamma(K,m)^{-1}|s_\sigma|.
\]
Using the quadratic forcing bound
\[
|\beta_\sigma(\mathfrak J)|
\le
C_\Lambda\mathcal H_K(\mathfrak J)^2
\]
gives
\[
\mathcal H_K(\mathfrak J)^2
\ge
C_\Lambda^{-1}
\Gamma(K,m)^{-1}|s_\sigma|.
\]
Taking square roots proves the estimate. The extreme aspect-ratio formulas follow from the preceding lemma.
\end{proof}

This estimate is not yet a contradiction. A large horizontal jet may still be allowed in a purely formal deformation. The Navier--Stokes filter requires more: the same jet must also generate a vertical response whose \(L^3\)-size is compatible with the one-component budget.

Let \(Z_K(\mathfrak J)\) denote the vertical jet produced by the NS deformation complex, and define
\[
\operatorname{VSize}_\varepsilon(\mathfrak J) =
\left|
\sum_{j=1}^K\varepsilon^j z_j
\right|_{L^3}.
\]
The one-component budget requires
\[
\operatorname{VSize}_\varepsilon(\mathfrak J)
\le
C\delta^{1/3}.
\]

\begin{definition}[NS-lift anomaly]
Let \(s_\sigma\) be an active cleaned Schur coefficient. We say that \(s_\sigma\) exhibits an NS-lift anomaly at parameters \(K,m,\varepsilon,\delta\) if there exists a finite-stage NS jet \(\mathfrak J\) such that

\[
\Gamma(K,m)\beta_\sigma(\mathfrak J)=s_\sigma,
\]
\[
\operatorname{VSize}_\varepsilon(\mathfrak J)
\le
C\delta^{1/3},
\]
and yet
\[
|s_\sigma|\gtrsim1
\]
while the required raw forcing cost satisfies
\[
\operatorname{FCost}_{\sigma}(s_\sigma)\to\infty.
\]
Thus an NS-lift anomaly is a situation in which the deformation complex produces an increasingly large raw forcing coefficient without paying a corresponding vertical-component cost.
\end{definition}

The following criterion rules out such anomalies under a natural coercivity condition.

\begin{assumption}[Forcing-to-vertical coercivity]\label{ass:forcing-to-vertical-coercivity}
There exist constants (c>0) and \(a\ge0\) such that every finite-stage NS jet producing a raw coefficient \(\beta_\sigma\) satisfies
\[
\operatorname{VSize}_\varepsilon(\mathfrak J)
\ge
c\,\varepsilon^K
N_\sigma^{-a}
|\beta_\sigma(\mathfrak J)|.
\]
Here \(N_\sigma=|\sigma|\), and \(K\) is the finite stage at which the coefficient is produced.
\end{assumption}

This assumption is intentionally stated as a test condition. In concrete models it should be proved from the vertical momentum equation, incompressibility, and finite-window elliptic estimates. Its role is to prevent the lower-order horizontal forcing from becoming arbitrarily large while the vertical response remains negligible.

\begin{theorem}[NS-lift anomaly exclusion under coercivity]
Assume the forcing-to-vertical coercivity estimate in \Cref{ass:forcing-to-vertical-coercivity}. Let \(s_\sigma\) be produced by an NS-admissible finite-stage jet satisfying
\[
\operatorname{VSize}_\varepsilon(\mathfrak J)
\le
C\delta^{1/3}.
\]
Then
\[
|s_\sigma|
\le
C
\varepsilon^{-K}
\delta^{1/3}
N_\sigma^a
\Gamma(K,m).
\]
Consequently, an order-one Schur defect can be NS-admissible only if
\[
\delta^{1/3}
\gtrsim
\varepsilon^K
N_\sigma^{-a}
\Gamma(K,m)^{-1}.
\]
In particular, in the extreme aspect-ratio regimes:
\[
m\ll K
\quad\Longrightarrow\quad
\delta^{1/3}
\gtrsim
\varepsilon^K
N_\sigma^{-a}
\frac{K^2}{m},
\]
and
\[
m\gg K
\quad\Longrightarrow\quad
\delta^{1/3}
\gtrsim
\varepsilon^K
N_\sigma^{-a}
\frac{m^3}{K^2}.
\]
If these lower bounds contradict the assumed smallness of \(\delta\), then the candidate Schur defect is not NS-admissible.
\end{theorem}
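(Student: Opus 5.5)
The plan is to chain the coercivity assumption with the Schur normalization identity $s_\sigma=\Gamma(K,m)\beta_\sigma$, and then read off the aspect-ratio cases from the explicit formula for $\Gamma(K,m)^{-1}$. No new analysis is needed beyond \Cref{ass:forcing-to-vertical-coercivity} and the forcing-cost lemma.

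\emph{Step 1 (forcing bound from the budget).} Let $\mathfrak J$ be an NS-admissible finite-stage jet producing $s_\sigma$ with $\operatorname{VSize}_\varepsilon(\mathfrak J)\le C\delta^{1/3}$. Applying \Cref{ass:forcing-to-vertical-coercivity} to this jet gives
\[
c\,\varepsilon^K N_\sigma^{-a}|\beta_\sigma(\mathfrak J)|\le \operatorname{VSize}_\varepsilon(\mathfrak J)\le C\delta^{1/3}.
\]
Hence $|\beta_\sigma(\mathfrak J)|\le C\varepsilon^{-K}N_\sigma^{a}\delta^{1/3}$.

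\emph{Step 2 (transfer to the Schur coefficient).} By definition of the Schur-normalized coefficient, $|s_\sigma|=\Gamma(K,m)|\beta_\sigma(\mathfrak J)|$. On active modes $\Gamma(K,m)>0$, so multiplying the bound from Step 1 by $\Gamma(K,m)$ yields the main estimate
\[
|s_\sigma|\le C\varepsilon^{-K}\delta^{1/3}N_\sigma^a\Gamma(K,m).
\]
If $|s_\sigma|\gtrsim 1$, rearranging gives the necessary condition
\[
\delta^{1/3}\gtrsim \varepsilon^K N_\sigma^{-a}\Gamma(K,m)^{-1}.
\]

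\emph{Step 3 (aspect-ratio regimes).} Use the exact identity $\Gamma(K,m)^{-1}=K^2/m+2m+m^3/K^2$ together with the forcing-amplification lemma. For $m\ll K$ the term $K^2/m$ dominates, and for $m\gg K$ the term $m^3/K^2$ dominates. Substituting each into Step 2 gives the two displayed lower bounds. The final sentence is then the contrapositive: if these lower bounds fail for the given small $\delta$, no budget-admissible jet can produce an order-one $s_\sigma$, so the candidate is not NS-admissible in the sense of \Cref{def:NS-admissible-cleaned-true-phantom}.

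\emph{Expected obstacle.} The step needing the most care is bookkeeping rather than analysis. First, constants must match: the $C$ in the budget and the $c$ in coercivity combine into a single $C$, which may depend on $\Lambda$ and on the normalization of $\operatorname{VSize}$. Second, the derivation uses the equality $s_\sigma=\Gamma\beta_\sigma$ with the same output coefficient that appears in the coercivity assumption. This means mode collisions must already be summed into $\beta_\sigma$, as in \Cref{rem:mode-collisions-finite-window}. I would state this identification explicitly before applying the assumption.
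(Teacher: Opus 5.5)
Your proposal is correct and follows the paper's argument. You combine the coercivity assumption with the vertical budget and the Schur normalization \(s_\sigma=\Gamma(K,m)\beta_\sigma\), then read off the aspect-ratio regimes from \(\Gamma(K,m)^{-1}=K^2/m+2m+m^3/K^2\); the only difference is cosmetic, since the paper substitutes \(|\beta_\sigma|=\Gamma^{-1}|s_\sigma|\) into the coercivity inequality before invoking the budget, whereas you bound \(\beta_\sigma\) first and then multiply by \(\Gamma\).
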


\begin{proof}
Since the jet produces \(s_\sigma\),
\[
|\beta_\sigma(\mathfrak J)| =
\Gamma(K,m)^{-1}|s_\sigma|.
\]
By coercivity,
\[
\operatorname{VSize}_\varepsilon(\mathfrak J)
\ge
c\varepsilon^K N_\sigma^{-a}
\Gamma(K,m)^{-1}|s_\sigma|.
\]
Using the vertical budget
\[
\operatorname{VSize}_\varepsilon(\mathfrak J)
\le
C\delta^{1/3}
\]
gives
\[
c\varepsilon^K N_\sigma^{-a}
\Gamma(K,m)^{-1}|s_\sigma|
\le
C\delta^{1/3}.
\]
Rearranging yields
\[
|s_\sigma|
\le
C\varepsilon^{-K}\delta^{1/3}N_\sigma^a\Gamma(K,m).
\]
If \(|s_\sigma|\gtrsim1\), this implies
\[
\delta^{1/3}
\gtrsim
\varepsilon^K N_\sigma^{-a}\Gamma(K,m)^{-1}.
\]
The two extreme aspect-ratio estimates follow from
\[
\Gamma(K,m)^{-1}
\sim
\frac{K^2}{m}
\quad(m\ll K),
\]
and
\[
\Gamma(K,m)^{-1}
\sim
\frac{m^3}{K^2}
\quad(m\gg K).
\]
\end{proof}

\begin{corollary}[Aspect-ratio phantom exclusion by vertical budget]
Assume forcing-to-vertical coercivity. Let \(\delta_n\downarrow0\), and let
\[
(K_n,m_n,\varepsilon_n,s_n)
\]
be an active extreme aspect-ratio sequence with \(|s_n|\gtrsim1\). If either
\[
\varepsilon_n^{K_n}N_n^{-a}\frac{K_n^2}{m_n}
\gg
\delta_n^{1/3}
\quad
(m_n\ll K_n),
\]
or
\[
\varepsilon_n^{K_n}N_n^{-a}\frac{m_n^3}{K_n^2}
\gg
\delta_n^{1/3}
\quad
(m_n\gg K_n),
\]
then the sequence cannot be an NS-admissible true phantom cascade.
\end{corollary}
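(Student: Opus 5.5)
The corollary follows from the preceding NS-lift anomaly exclusion theorem by contraposition, applied index by index. Suppose, for contradiction, that \((K_n,m_n,\varepsilon_n,s_n)\) is part of an NS-admissible true phantom cascade. Then each \(s_n\) is produced by an NS-admissible finite-stage jet \(\mathfrak J_n\) with \(\operatorname{VSize}_{\varepsilon_n}(\mathfrak J_n)\le C\delta_n^{1/3}\). This is exactly the budget hypothesis of the theorem, and forcing-to-vertical coercivity is assumed. The theorem therefore gives, for every \(n\),
\[
|s_n|\le C\varepsilon_n^{-K_n}\delta_n^{1/3}N_n^{a}\Gamma(K_n,m_n),
\]
and hence, since \(|s_n|\gtrsim1\),
\[
\varepsilon_n^{K_n}N_n^{-a}\Gamma(K_n,m_n)^{-1}\lesssim\delta_n^{1/3},
\]
with an implied constant independent of \(n\).

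The next step is to insert the asymptotics of \(\Gamma^{-1}\) in each aspect regime. The exact identity \(\Gamma(K,m)^{-1}=K^2/m+2m+m^3/K^2\) gives \(\Gamma(K_n,m_n)^{-1}\ge K_n^2/m_n\) and \(\Gamma(K_n,m_n)^{-1}\ge m_n^3/K_n^2\) for every active mode. So no comparability constant is lost, and in particular the lower bound holds without passing to the limit \(m_n/K_n\to0\) or \(\infty\).

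In the regime \(m_n\ll K_n\), this yields \(\varepsilon_n^{K_n}N_n^{-a}K_n^2/m_n\lesssim\delta_n^{1/3}\). That contradicts the hypothesis \(\varepsilon_n^{K_n}N_n^{-a}K_n^2/m_n\gg\delta_n^{1/3}\), read as the ratio tending to infinity along the sequence, or along a subsequence if the hypothesis is only assumed there. The regime \(m_n\gg K_n\) is handled identically with \(m_n^3/K_n^2\). In either case the budget-admissibility requirement of Definition~\ref{def:moving-window-phantom-cascade} fails for all large \(n\), so the sequence is not an NS-admissible cascade.

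The main obstacle is bookkeeping rather than analysis, and it has two parts.
\begin{itemize}
\item The constants \(c,a\) in Assumption~\ref{ass:forcing-to-vertical-coercivity} and the budget constant \(C\) must be uniform in \(n\). Otherwise the \(\gg\) hypothesis does not beat them. I would state this uniformity explicitly, since it is implicit in the assumption's quantifiers.
\item The stage \(K_n\) at which \(s_n\) is produced must be identified with the exponent in \(\varepsilon_n^{K_n}\), matching the theorem.
\end{itemize}
A minor point is that \(N_n=|\sigma_n|\) should refer to the output mode carrying \(s_n\), as in the theorem. If \(s_n\) is spread over several modes, I would apply the argument to a mode carrying a fixed fraction of \(|s_n|\), losing only a window-dimension factor that I would assume is absorbed into \(N_n^{-a}\).
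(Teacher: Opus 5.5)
Your proposal is correct and is the same argument as the paper, which proves the corollary simply as the contrapositive of the necessary lower bound $\delta^{1/3}\gtrsim\varepsilon^{K}N_\sigma^{-a}\Gamma(K,m)^{-1}$ from the preceding NS-lift anomaly exclusion theorem. Your use of the exact identity $\Gamma(K,m)^{-1}=K^2/m+2m+m^3/K^2$ to get $\Gamma^{-1}\ge K^2/m$ and $\Gamma^{-1}\ge m^3/K^2$ without comparability constants, in place of the paper's asymptotic $\sim$, is a harmless sharpening. The same holds for your remarks on the uniformity of $c,a,C$ and on $K_n$ doing double duty as stage and horizontal frequency, which make explicit bookkeeping the paper leaves implicit.
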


\begin{proof}
This is the contrapositive of the necessary lower bounds in the preceding theorem.
\end{proof}

\begin{remark}[Why this is the correct pressure test]
Extreme aspect ratio can make the vertical lift coefficient \(\Gamma(K,m)\) small. But a small \(\Gamma\) has two opposite meanings. It means that bounded raw forcing produces a tiny Schur defect; to keep a visible Schur defect, the raw forcing must be amplified by \(\Gamma^{-1}\). The NS-lift anomaly test asks whether Navier--Stokes can generate this amplified raw forcing without paying vertical \(L^3\)-cost. If the coercivity estimate holds, then it cannot. Thus the aspect-ratio failure route is reduced to proving or disproving forcing-to-vertical coercivity.
\end{remark}

\begin{definition}[Residual extreme-aspect candidate]
After the forcing-amplitude and NS-lift anomaly test, the only remaining extreme-aspect candidate is a sequence for which coercivity fails:
\[
\operatorname{VSize}_{\varepsilon_n}(\mathfrak J_n) =
o\left(
\varepsilon_n^{K_n}
N_n^{-a}
|\beta_n|
\right),
\]
while
\[
s_n=\Gamma(K_n,m_n)\beta_n
\]
remains nonzero and cleaned relaxed-invisible. Such a sequence will be called a residual extreme-aspect NS-lift anomaly.
\end{definition}

\begin{proposition}[Reduced target after the NS-lift anomaly test]
The extreme aspect-ratio route to a failure mechanism is possible only if there exists a residual extreme-aspect NS-lift anomaly. Equivalently, one must construct NS finite-stage jets for which the raw pressure-forcing coefficient grows like
\[
|\beta_n|\sim \Gamma(K_n,m_n)^{-1}|s_n|,
\]
but the vertical response violates every coercive lower bound of the form
\[
\operatorname{VSize}_{\varepsilon_n}(\mathfrak J_n)
\gtrsim
\varepsilon_n^{K_n}N_n^{-a}|\beta_n|.
\]
If no such anomaly exists, then extreme aspect-ratio windows cannot produce an NS-admissible moving-window true phantom cascade.
\end{proposition}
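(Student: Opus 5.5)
The plan is to prove the last proposition (``Reduced target after the NS-lift anomaly test'') by contraposition. We show that if no residual extreme-aspect NS-lift anomaly exists, then every extreme aspect-ratio candidate is excluded. The tools are the preceding exclusion results together with the case split already used in \Cref{cor:refined-location-aspect-failures}. We start from a hypothetical NS-admissible moving-window true phantom cascade whose windows leave the aspect-controlled regime. By \cref{thm:aspect-controlled-moving-window-exclusion} and \Cref{cor:where-failure-must-live}, after passing to a subsequence we may assume that it contains active output modes $\sigma_n$ with $m_n/K_n\to0$ or $m_n/K_n\to\infty$. It also carries cleaned Schur coefficients $s_n\neq0$, normalized so that $|s_n|\gtrsim1$ on the relevant mode.

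The first step is to pin down the raw forcing size. Since $s_n=\Gamma(K_n,m_n)\beta_n$ is an exact identity in Schur-normalized output coordinates (this uses \Cref{rem:mode-collisions-finite-window}, which sums colliding paths into one output coefficient), the forcing-amplification lemma gives $|\beta_n|=\Gamma(K_n,m_n)^{-1}|s_n|$. This is the claimed growth profile $|\beta_n|\sim\Gamma(K_n,m_n)^{-1}|s_n|$, and it is large in either extreme regime. By \cref{thm:extreme-aspect-ratio-exclusion-bounded-forcing}, this growth cannot be avoided: polynomially bounded forcing would force $|s_n|\to0$ whenever $\Gamma(K_n,m_n)N_n^p\to0$.

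The second step is the dichotomy on coercivity. Suppose that along the sequence the vertical response obeys a bound $\operatorname{VSize}_{\varepsilon_n}(\mathfrak J_n)\ge c\,\varepsilon_n^{K_n}N_n^{-a}|\beta_n|$ for some $c,a$; this is \Cref{ass:forcing-to-vertical-coercivity} restricted to the sequence. Then the NS-lift anomaly exclusion theorem and its corollary give $\delta_n^{1/3}\gtrsim\varepsilon_n^{K_n}N_n^{-a}\Gamma(K_n,m_n)^{-1}$. Combined with budget admissibility $\operatorname{VSize}\le C\delta_n^{1/3}$ and the smallness of $\delta_n$ relative to the branch-native scale, this shows the sequence is not NS-admissible, which is a contradiction. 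If no such bound holds for any $c,a$, then after a diagonal subsequence we obtain $\operatorname{VSize}=o(\varepsilon_n^{K_n}N_n^{-a}|\beta_n|)$ for every $a$. Together with $s_n\neq0$ and cleaned relaxed invisibility (inherited from the cascade definition), this is exactly a residual extreme-aspect NS-lift anomaly. The ``equivalently'' clause is just the unfolded definition, and the final sentence is the contrapositive of the argument.

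I expect the main obstacle to be the coercivity-failure branch. The negation of ``some coercive bound holds'' has to be extracted uniformly along one subsequence, with $a$ quantified as in the definition of the anomaly. This is a diagonal argument over $a\in\mathbb N$ and $c=1/j$. A second point is that the inequality $\varepsilon^{K}N^{-a}\Gamma^{-1}\gg\delta^{1/3}$ is not automatic, since $\varepsilon_n^{K_n}$ may be tiny. So the contradiction branch should be phrased relative to the budget, exactly as in the corollary: failure of that inequality itself counts as failure of the coercive lower bound at the admissible scale. It is then absorbed into the anomaly alternative rather than treated as a separate case.
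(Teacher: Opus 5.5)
Your route is the paper's route. You use the exact identity $|\beta_n|=\Gamma(K_n,m_n)^{-1}|s_n|$ in Schur-normalized output coordinates, and then split on whether a coercive bound $\operatorname{VSize}_{\varepsilon_n}(\mathfrak J_n)\ge c\,\varepsilon_n^{K_n}N_n^{-a}|\beta_n|$ holds. If it holds, the NS-lift anomaly exclusion theorem applies. If it fails for all $(c,a)$, that is the residual anomaly. Your diagonal extraction over $c=1/j$, $a=j$ is a correct way to make precise the paper's phrase ``violates all such coercive lower bounds''; it works because $N_n\ge1$.

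The step that fails is your resolution of the obstacle you flag at the end. Consider the case where a coercive bound holds with some $(c,a)$ but $\varepsilon_n^{K_n}N_n^{-a}\Gamma(K_n,m_n)^{-1}|s_n|\lesssim\delta_n^{1/3}$.
\begin{itemize}
\item Coercivity and the budget $\operatorname{VSize}_{\varepsilon_n}(\mathfrak J_n)\le C\delta_n^{1/3}$ can then hold together, so the exclusion theorem gives no contradiction.
\item The sequence is not an anomaly either, because a coercive bound with exponent $a$ is incompatible with $\operatorname{VSize}_{\varepsilon_n}(\mathfrak J_n)=o(\varepsilon_n^{K_n}N_n^{-a}|\beta_n|)$.
\end{itemize}
Declaring that this case ``counts as failure of the coercive lower bound at the admissible scale'' changes the anomaly. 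It becomes a condition involving $\delta_n$, rather than a comparison of $\operatorname{VSize}$ with $\varepsilon^{K}N^{-a}|\beta|$. That is not the notion the statement's ``equivalently'' clause refers to, so the case is relabelled rather than handled. Likewise, ``smallness of $\delta_n$ relative to the branch-native scale'' is not what the contradiction branch needs.

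The paper's proof has the same soft spot and simply passes over it. Its phrase ``excludes NS-admissibility for sufficiently small $\delta$'' tacitly imposes the budget-scale separation $\varepsilon_n^{K_n}N_n^{-a}\Gamma(K_n,m_n)^{-1}\gg\delta_n^{1/3}$. That separation is the hypothesis of the corollary on aspect-ratio phantom exclusion by vertical budget. It is automatic only when $\varepsilon$, $K$, $N$ are frozen as $\delta\to0$, not along a moving window with $\varepsilon_n\to0$ and $K_n$ possibly growing. State this separation explicitly as a hypothesis on the extreme-aspect cascade instead of absorbing its failure into the anomaly alternative. With it, your argument closes exactly as the paper's does.
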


\begin{proof}
The extreme aspect-ratio analysis already shows that extreme aspect ratio alone does not create relaxed invisibility in Schur-normalized variables. Therefore an order-one Schur defect requires raw forcing amplification by \(\Gamma^{-1}\). If the amplified forcing obeys any coercive lower bound into the vertical \(L^3\)-budget, then the preceding theorem excludes NS-admissibility for sufficiently small \(\delta\). Hence the only remaining possibility is a sequence that violates all such coercive lower bounds. This is exactly a residual extreme-aspect NS-lift anomaly.
\end{proof}

\subsection{Periodic forcing-to-vertical coercivity}

We now test the forcing-to-vertical coercivity condition in the clean periodic finite-window model. The conclusion is positive: in an active periodic output window, a nonzero raw pressure-forcing coefficient necessarily produces a nonzero vertical response with an explicit polynomial lower bound. Consequently, the extreme aspect-ratio route cannot hide a large raw forcing coefficient at zero vertical cost.

Work on the periodic box \(\mathbb T^3\). Let an output mode be
\[
\sigma=(\sigma_h,\sigma_3),
\qquad
K=|\sigma_h|,
\qquad
m=|\sigma_3|,
\qquad
|\sigma|^2=K^2+m^2.
\]
We restrict to active modes:
\[
K\neq0,\qquad m\neq0.
\]

Let \(\beta_\sigma\) be the horizontal pressure-forcing coefficient at output mode \(\sigma\):
\[
-\Delta \pi_\sigma^{\mathrm{full}} =
\beta_\sigma e^{i\sigma\cdot x}.
\]
Thus
\[
\widehat{\pi_\sigma^{\mathrm{full}}} =
\frac{\beta_\sigma}{|\sigma|^2}.
\]
The static vertical lift equation is
\[
-\Delta z_\sigma =
-\partial_3\pi_\sigma^{\mathrm{full}},
\]
or equivalently, up to harmless phase factors,
\begin{equation}
\widehat z_\sigma =
\frac{i\sigma_3}{|\sigma|^4}\beta_\sigma.
\label{eq:tag-10-197}
\end{equation}
Therefore
\begin{equation}
|\widehat z_\sigma| =
\frac{m}{(K^2+m^2)^2}|\beta_\sigma|.
\label{eq:tag-10-198}
\end{equation}

For a finite active output window \(\Lambda\), write
\[
\beta=(\beta_\sigma)_{\sigma\in\Lambda},
\qquad
z=(z_\sigma)_{\sigma\in\Lambda}.
\]
Define
\[
N_\Lambda=\max_{\sigma\in\Lambda}|\sigma|,
\qquad
v_\Lambda=\min_{\sigma\in\Lambda}|\sigma_3|.
\]
Since the window is active and periodic, \(v_\Lambda\ge1\).

\begin{lemma}[Raw forcing-to-vertical coercivity]
For every finite active periodic output window \(\Lambda\),
\begin{equation}
|z|_{\ell^2(\Lambda)}
\ge
\left(
\min_{\sigma\in\Lambda}
\frac{|\sigma_3|}{|\sigma|^4}
\right)
|\beta|_{\ell^2(\Lambda)}.
\label{eq:tag-10-199}
\end{equation}
In particular,
\begin{equation}
|z|_{\ell^2(\Lambda)}
\ge
N_\Lambda^{-4}
|\beta|_{\ell^2(\Lambda)}.
\label{eq:tag-10-200}
\end{equation}

\end{lemma}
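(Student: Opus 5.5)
The plan is to exploit the fact that the forcing-to-vertical map in the static periodic model is diagonal in output-mode coordinates. By \eqref{eq:tag-10-197}--\eqref{eq:tag-10-198}, each coefficient satisfies $|\widehat z_\sigma| = d_\sigma|\beta_\sigma|$ with $d_\sigma := |\sigma_3|/|\sigma|^4$. So the map $\beta\mapsto z$ is represented, up to unimodular phase factors, by the diagonal matrix $\diag(d_\sigma)_{\sigma\in\Lambda}$. Before using this, I would first confirm that distinct output modes do not mix. This follows because the static elliptic inverses $(-\Delta)^{-1}$ and $\partial_3$ are Fourier multipliers, so $z_\sigma$ depends only on $\beta_\sigma$. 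Forcing-level collisions have already been summed into the single output coefficient $\beta_\sigma$, as in \Cref{rem:mode-collisions-finite-window}.

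With diagonality in hand, \eqref{eq:tag-10-199} is immediate. Squaring and summing gives
\[
|z|_{\ell^2(\Lambda)}^2=\sum_{\sigma\in\Lambda}d_\sigma^2|\beta_\sigma|^2\ge\Bigl(\min_{\sigma\in\Lambda}d_\sigma\Bigr)^2|\beta|_{\ell^2(\Lambda)}^2 .
\]
I then take square roots. The minimum exists and is positive because $\Lambda$ is finite and active, so $|\sigma_3|\neq0$ and $\sigma\neq0$ for every mode.

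For \eqref{eq:tag-10-200}, I would bound $d_\sigma$ from below uniformly on the window. Periodicity gives $\sigma\in\mathbb Z^3$, so activity forces $|\sigma_3|\ge1$, that is, $v_\Lambda\ge1$. Also $|\sigma|\le N_\Lambda$ by definition. Hence $d_\sigma\ge 1/|\sigma|^4\ge N_\Lambda^{-4}$ for every $\sigma\in\Lambda$, and substituting this into \eqref{eq:tag-10-199} gives the claim.

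I do not expect a real obstacle here. The only point needing care is the integer-lattice lower bound $|\sigma_3|\ge1$. It is exactly where periodicity enters, and it would fail in a localized or continuous-frequency model, where only $d_\sigma\ge v_\Lambda N_\Lambda^{-4}$ survives. A secondary point is that the phase convention in \eqref{eq:tag-10-197} does not matter, since only moduli enter the estimate.
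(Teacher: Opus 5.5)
Your proposal is correct and is essentially the paper's own argument: orthogonality of the Fourier modes gives the diagonal identity $|z|_{\ell^2(\Lambda)}^2=\sum_{\sigma\in\Lambda}\bigl(|\sigma_3|/|\sigma|^4\bigr)^2|\beta_\sigma|^2$. The first bound follows by taking the minimum multiplier, and the second from $|\sigma_3|\ge1$ on an integer active window together with $|\sigma|\le N_\Lambda$. Your side remark that only $v_\Lambda N_\Lambda^{-4}$ would survive without the lattice bound is accurate, though not needed here.
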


\begin{proof}
By orthogonality of Fourier modes and formula \eqref{eq:tag-10-198},
\[
|z|_{\ell^2(\Lambda)}^2 =
\sum_{\sigma\in\Lambda}
\left(
\frac{|\sigma_3|}{|\sigma|^4}
\right)^2
|\beta_\sigma|^2.
\]
Hence
\[
|z|_{\ell^2(\Lambda)}
\ge
\left(
\min_{\sigma\in\Lambda}
\frac{|\sigma_3|}{|\sigma|^4}
\right)
|\beta|_{\ell^2(\Lambda)}.
\]
Since \(|\sigma_3|\ge1\) on a periodic active window and \(|\sigma|\le N_\Lambda\), we have
\[
\frac{|\sigma_3|}{|\sigma|^4}
\ge
N_\Lambda^{-4}.
\]
This gives \eqref{eq:tag-10-200}.
\end{proof}

\begin{remark}[Forcing collisions]
If several lower-order interactions produce the same output mode \(\sigma\), the coefficient \(\beta_\sigma\) in the preceding lemma is the total output coefficient after summing all interaction paths. If those paths cancel, then \(\beta_\sigma=0\) and no vertical response is produced. This is forcing-level cancellation, not a nonzero forcing hidden from the vertical lift.
\end{remark}

We next translate the raw coercivity estimate into Schur-normalized coordinates. Recall the vertical lift-to-Schur coefficient
\begin{equation}
s_\sigma=\Gamma(K,m)\beta_\sigma,
\qquad
\Gamma(K,m) =
\frac{mK^2}{(K^2+m^2)^2}.
\label{eq:tag-10-201}
\end{equation}
Combining \eqref{eq:tag-10-198} and \eqref{eq:tag-10-201}, we obtain the exact identity
\begin{equation}
|\widehat z_\sigma| =
\frac{1}{K^2}|s_\sigma|.
\label{eq:tag-10-202}
\end{equation}
Thus, in Schur-normalized variables, the extreme aspect-ratio dependence cancels.

\begin{proposition}[Schur-normalized vertical coercivity]
Let \(s=(s_\sigma)_{\sigma\in\Lambda}\) be a Schur-normalized active output vector, and let \(z\) be the canonical static periodic vertical response generated by the same forcing package. Then
\begin{equation}
|z|_{\ell^2(\Lambda)}
\ge
\left(
\min_{\sigma\in\Lambda}
\frac1{|\sigma_h|^2}
\right)
|s|_{\ell^2(\Lambda)}.
\label{eq:tag-10-203}
\end{equation}
In particular,
\begin{equation}
|z|_{\ell^2(\Lambda)}
\ge
N_\Lambda^{-2}
|s|_{\ell^2(\Lambda)}.
\label{eq:tag-10-204}
\end{equation}

\end{proposition}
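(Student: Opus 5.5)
The plan is to argue mode by mode, exactly as in the raw coercivity lemma, but now in Schur-normalized coordinates. The key input is the exact identity \eqref{eq:tag-10-202}, which I would first re-derive as a one-line check. For an active output mode \(\sigma\), formula \eqref{eq:tag-10-198} gives \(|\widehat z_\sigma| = m|\sigma|^{-4}|\beta_\sigma|\). The normalization \eqref{eq:tag-10-201} gives \(|\beta_\sigma| = \Gamma(K,m)^{-1}|s_\sigma| = |\sigma|^4(mK^2)^{-1}|s_\sigma|\). Substituting, the factors \(m\) and \(|\sigma|^4\) cancel, leaving \(|\widehat z_\sigma| = K^{-2}|s_\sigma| = |\sigma_h|^{-2}|s_\sigma|\). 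Both conversions require \(K\neq0\) and \(m\neq0\), so this is precisely where the active hypothesis enters; the harmless phase factors drop out once we pass to absolute values. Here I would use that \(z\) is the canonical static response to the same forcing package that defines \(s\), so that the two relations hold with the same \(\beta_\sigma\), and no collision issue arises because \(\beta_\sigma\) is already the summed output coefficient.

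Next I would assemble the window estimate using orthogonality of distinct Fourier modes:
\[
|z|_{\ell^2(\Lambda)}^2=\sum_{\sigma\in\Lambda}|\sigma_h|^{-4}|s_\sigma|^2\ge\Bigl(\min_{\sigma\in\Lambda}|\sigma_h|^{-2}\Bigr)^2|s|_{\ell^2(\Lambda)}^2 .
\]
Taking square roots gives \eqref{eq:tag-10-203}; the minimum exists and is positive because \(\Lambda\) is finite and active.

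For \eqref{eq:tag-10-204}, I would bound \(|\sigma_h|\le|\sigma|\le N_\Lambda\) for every \(\sigma\in\Lambda\), so \(|\sigma_h|^{-2}\ge N_\Lambda^{-2}\), and insert this into the minimum.

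There is no real obstacle here. The only point needing care is the bookkeeping of the normalization conventions: one must check that the \(\Gamma\) used to define \(s_\sigma\) is the same static multiplier whose quotient with \eqref{eq:tag-10-198} cancels the \(m\)-dependence. Once that is confirmed, the aspect-ratio independence of the bound is automatic.
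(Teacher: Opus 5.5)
Your proposal is correct and follows essentially the same route as the paper: the paper also uses the identity \(|\widehat z_\sigma|=|\sigma_h|^{-2}|s_\sigma|\) mode by mode, sums over \(\Lambda\), takes the minimum of \(|\sigma_h|^{-2}\), and then uses \(|\sigma_h|\le|\sigma|\le N_\Lambda\). The only difference is that you re-derive that identity from \eqref{eq:tag-10-198} and \eqref{eq:tag-10-201}, whereas the paper cites it directly.
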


\begin{proof}
By \eqref{eq:tag-10-202},
\[
|z|_{\ell^2(\Lambda)}^2 =
\sum_{\sigma\in\Lambda}
\frac{1}{|\sigma_h|^4}|s_\sigma|^2.
\]
Therefore
\[
|z|_{\ell^2(\Lambda)}
\ge
\left(
\min_{\sigma\in\Lambda}
|\sigma_h|^{-2}
\right)
|s|_{\ell^2(\Lambda)}.
\]
Since \(|\sigma_h|\le|\sigma|\le N_\Lambda\), we get
\[
|\sigma_h|^{-2}\ge N_\Lambda^{-2}.
\]
This proves \eqref{eq:tag-10-204}.
\end{proof}

\begin{corollary}[No periodic finite-window NS-lift anomaly]
In the clean static periodic active finite-window model, an order-one Schur-normalized defect cannot be generated with vertical cost smaller than a polynomial factor. More precisely, if
\[
|s|_{\ell^2(\Lambda)}\ge c_0>0,
\]
then its canonical vertical response satisfies
\begin{equation}
|z|_{\ell^2(\Lambda)}
\ge
c_0N_\Lambda^{-2}.
\label{eq:tag-10-205}
\end{equation}
Thus there is no super-polynomial or exponentially small vertical response hiding an order-one active Schur defect.

\end{corollary}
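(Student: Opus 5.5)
The plan is to derive the corollary directly from the Schur-normalized vertical coercivity proposition just proved, namely \eqref{eq:tag-10-204}, and then read off its meaning. The underlying identity is \eqref{eq:tag-10-202}, $|\widehat z_\sigma|=|\sigma_h|^{-2}|s_\sigma|$. It comes from combining the static vertical lift symbol \eqref{eq:tag-10-198} with the Schur normalization $s_\sigma=\Gamma(K,m)\beta_\sigma$ in \eqref{eq:tag-10-201}. In this identity the aspect-ratio factors $m/(K^2+m^2)^2$ cancel exactly. As a result, the vertical response of an active output mode is controlled from below by the Schur coefficient through the harmless factor $|\sigma_h|^{-2}$ alone.

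Concretely, I would first fix a finite active periodic window $\Lambda$ and a Schur-normalized output vector $s$ with $|s|_{\ell^2(\Lambda)}\ge c_0$. Let $z$ be its canonical static vertical response, obtained from the same forcing package. By Fourier orthogonality, $|z|_{\ell^2}^2=\sum_\sigma|\sigma_h|^{-4}|s_\sigma|^2$. Bounding $|\sigma_h|\le|\sigma|\le N_\Lambda$ termwise gives $|z|_{\ell^2}\ge N_\Lambda^{-2}|s|_{\ell^2}\ge c_0N_\Lambda^{-2}$, which is \eqref{eq:tag-10-205}.

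The qualitative statement that no super-polynomially small vertical response can hide an order-one defect is then the contrapositive. Suppose we had a sequence of active windows $\Lambda_n$ and defects $s_n$ with $|s_n|\ge c_0$ and $|z_n|=o(N_{\Lambda_n}^{-p})$ for every $p$. Taking $p=2$ contradicts the bound above.

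The one point needing care, and the main obstacle, is forcing-level collisions. I would handle these as in the remark following the raw coercivity lemma: $\beta_\sigma$ and hence $s_\sigma$ denote total output coefficients after all interaction paths into $\sigma$ are summed. Cancellations among paths therefore reduce $s$ itself and cannot produce a nonzero $s$ with small $z$. The per-mode identity \eqref{eq:tag-10-202} is applied only after this summation.

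I would also remark on the scope of the argument. It is $\ell^2$-based and static. Converting it to the $L^3$ vertical budget $\mathrm{VSize}$ on a fixed window costs only finite-dimensional norm equivalence, with polynomial dependence on $N_\Lambda$ in the periodic setting. So the conclusion verifies \cref{ass:forcing-to-vertical-coercivity} in the clean periodic model, up to the $\varepsilon^K$ stage factor, and excludes a periodic NS-lift anomaly.
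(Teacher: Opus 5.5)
Your proposal is correct and takes the paper's route. The paper proves the corollary as an immediate consequence of the Schur-normalized coercivity bound \eqref{eq:tag-10-204}, $|z|_{\ell^2(\Lambda)}\ge N_\Lambda^{-2}|s|_{\ell^2(\Lambda)}$, together with $|s|_{\ell^2(\Lambda)}\ge c_0$; your re-derivation via the per-mode identity \eqref{eq:tag-10-202}, and your remark that collisions are summed into the output coefficient before that identity is applied, just unpack that proposition.
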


\begin{proof}
This is immediate from \eqref{eq:tag-10-204}.
\end{proof}

We now include the finite-stage amplitude. Suppose the Schur defect is produced at order \(K_0\) in a formal expansion with amplitude \(\varepsilon\). The vertical contribution to \(u_3^\varepsilon\) at that stage is
\[
\varepsilon^{K_0}z_{K_0}.
\]
Therefore
\[
|u_3^\varepsilon|_{L^3}
\gtrsim
\varepsilon^{K_0}|z_{K_0}|_{L^2},
\]
up to the harmless finite-volume norm comparison on \(\mathbb T^3\). Combining this with \eqref{eq:tag-10-204} gives
\begin{equation}
|u_3^\varepsilon|_{L^3}
\gtrsim
\varepsilon^{K_0}N_\Lambda^{-2}|s|.
\label{eq:tag-10-206}
\end{equation}

\begin{theorem}[Periodic finite-window forcing-to-vertical coercivity]
Let \(\Lambda\) be a finite active periodic output window. Let \(s\) be a Schur-normalized defect generated at finite stage \(K_0\), and let \(u_3^\varepsilon\) be the corresponding canonical vertical response in the periodic finite-window Stokes system. Then
\begin{equation}
|u_3^\varepsilon|_{L^3}
\ge
c_\Lambda
\varepsilon^{K_0}
N_\Lambda^{-2}
|s|_{\ell^2(\Lambda)}.
\label{eq:tag-10-207}
\end{equation}
Equivalently, if the one-component budget satisfies
\[
|u_3^\varepsilon|_{L^3}^3\le\delta,
\]
then
\begin{equation}
|s|_{\ell^2(\Lambda)}
\le
C_\Lambda
\varepsilon^{-K_0}
N_\Lambda^2
\delta^{1/3}.
\label{eq:tag-10-208}
\end{equation}
Thus an order-one active Schur defect is compatible with the \(C_3\)-budget only if
\begin{equation}
\delta^{1/3}
\gtrsim
\varepsilon^{K_0}N_\Lambda^{-2}.
\label{eq:tag-10-209}
\end{equation}

\end{theorem}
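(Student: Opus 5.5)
The plan is to chain the Schur-normalized coercivity proposition \eqref{eq:tag-10-204} with a lower bound relating the \(L^3\) norm of \(u_3^\varepsilon\) to the \(\ell^2\) norm of its stage-\(K_0\) Fourier coefficients. Then I invert the resulting inequality using the \(C_3\)-budget. Everything takes place in a fixed finite active periodic window on \(\mathbb T^3\), so norm equivalences cost only constants depending on \(\Lambda\). I absorb those into \(c_\Lambda\) and \(C_\Lambda\).

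\textbf{Step 1.} Fix the stage-\(K_0\) vertical jet \(z_{K_0}\). By \eqref{eq:tag-10-202} its active-window coefficients satisfy \(|\widehat z_\sigma|=|\sigma_h|^{-2}|s_\sigma|\). Summing over \(\sigma\in\Lambda\) gives \(\|P_\Lambda z_{K_0}\|_{L^2}\ge N_\Lambda^{-2}|s|_{\ell^2(\Lambda)}\), which is \eqref{eq:tag-10-204} up to the Parseval normalization on \(\mathbb T^3\).

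\textbf{Step 2.} I isolate \(\varepsilon^{K_0}z_{K_0}\) inside \(u_3^\varepsilon=\sum_j\varepsilon^jz_j\). The clean interpretation of ``canonical vertical response generated at stage \(K_0\)'' is that the lower jets \(z_j\), \(j<K_0\), contribute nothing to the active output modes of \(\Lambda\). This holds, for example, because the branch is strict-flat below \(K_0\), so \(z_j=0\) as in the zero-shadow computations \eqref{eq:VCost3-equation}. Under that reading, \(P_\Lambda u_3^\varepsilon=\varepsilon^{K_0}P_\Lambda z_{K_0}+O(\varepsilon^{K_0+1})\). I then use \(\|P_\Lambda f\|_{L^2}\le\|f\|_{L^2}\le|\mathbb T^3|^{1/6}\|f\|_{L^3}\), valid by Hölder on a finite-volume box and because \(P_\Lambda\) is an orthogonal projection. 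This gives \(\|u_3^\varepsilon\|_{L^3}\ge c\,\varepsilon^{K_0}N_\Lambda^{-2}|s|-C\varepsilon^{K_0+1}\). For \(\varepsilon\le\varepsilon_\Lambda\) the error is absorbed, or else it is simply zero in the truncated finite-stage model where \(u_3^\varepsilon\) is exactly the stage-\(K_0\) response. This yields \eqref{eq:tag-10-207}.

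\textbf{Step 3.} The budget \(\|u_3^\varepsilon\|_{L^3}^3\le\delta\) gives \(\|u_3^\varepsilon\|_{L^3}\le\delta^{1/3}\). Rearranging \eqref{eq:tag-10-207} yields \eqref{eq:tag-10-208}, and setting \(|s|\gtrsim1\) gives \eqref{eq:tag-10-209}. This is the same inversion used in the NS-lift anomaly exclusion theorem, now with \(a=2\) and with the Schur normalization replacing \(\Gamma^{-1}\).

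The main obstacle is Step 2: making precise that higher-order jets cannot cancel the stage-\(K_0\) active component in \(L^3\). I would handle this either by stating the theorem for the truncated finite-stage response, or by restricting to small \(\varepsilon\) and keeping the orthogonal projection onto \(\Lambda\), since projection is contractive in \(L^2\). Collisions in which several interaction paths feed the same output mode are already resolved by the forcing-collision remark, because \(\beta_\sigma\), and hence \(s_\sigma\), is the summed coefficient.
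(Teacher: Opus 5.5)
Your proposal is correct and follows the paper's argument: apply the Schur-normalized coercivity \eqref{eq:tag-10-204} to $z_{K_0}$, multiply by $\varepsilon^{K_0}$, compare $L^2$ and $L^3$ norms, and invert using the budget $\|u_3^\varepsilon\|_{L^3}\le\delta^{1/3}$. Two of your choices are slightly more careful than the paper: using H\"older on the finite-volume box together with the contractivity of $P_\Lambda$ gives a $\Lambda$-independent constant where the paper appeals to finite-dimensional norm equivalence, and your handling of the other jets makes explicit what the paper assumes silently, namely that $u_3^\varepsilon$ is identified with $\varepsilon^{K_0}z_{K_0}$ on $\Lambda$ (the truncated reading is the one that gives the bound for all $s$, since your small-$\varepsilon$ absorption is not uniform in $|s|$).
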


\begin{proof}
The static vertical response satisfies
\[
|z_{K_0}|_{\ell^2(\Lambda)}
\ge
N_\Lambda^{-2}
|s|_{\ell^2(\Lambda)}.
\]
Multiplying by the finite-stage amplitude \(\varepsilon^{K_0}\) gives
\[
|\varepsilon^{K_0}z_{K_0}|_{\ell^2(\Lambda)}
\ge
\varepsilon^{K_0}N_\Lambda^{-2}
|s|_{\ell^2(\Lambda)}.
\]
On a fixed finite-dimensional periodic window, the relevant \(L^2\) and \(L^3\) norms are equivalent. Hence
\[
|u_3^\varepsilon|_{L^3}
\ge
c_\Lambda
\varepsilon^{K_0}N_\Lambda^{-2}
|s|_{\ell^2(\Lambda)}.
\]
If
\[
|u_3^\varepsilon|_{L^3}^3\le\delta,
\]
then
\[
|u_3^\varepsilon|_{L^3}\le \delta^{1/3}.
\]
Rearranging gives \eqref{eq:tag-10-208} and \eqref{eq:tag-10-209}.
\end{proof}

\begin{remark}[Why extreme aspect ratio disappears]
In raw forcing variables, the vertical response multiplier
\[
\frac{m}{(K^2+m^2)^2}
\]
can be very small when \(m\ll K\) or \(m\gg K\). However, the Schur coefficient itself contains the multiplier
\[
\Gamma(K,m)=\frac{mK^2}{(K^2+m^2)^2}.
\]
Dividing the vertical response by the Schur coefficient leaves
\[
\frac{1}{K^2}.
\]
Therefore the apparent aspect-ratio degeneracy is a raw-coordinate artifact. In Schur-normalized variables, the canonical vertical cost of a nonzero active defect is controlled by horizontal frequency only.
\end{remark}

\begin{corollary}[Extreme aspect-ratio route is excluded in the periodic model]
In the clean static periodic active finite-window model, neither
\[
m\ll K
\]
nor
\[
m\gg K
\]
can produce an NS-admissible true phantom through a vertical-lift anomaly. Any order-one Schur-normalized defect forces a vertical response of size at least
\[
\varepsilon^{K_0}K^{-2}
\]
up to finite-window constants. Hence the extreme aspect-ratio route can survive only if one leaves the clean periodic finite-window model, for example through nonperturbative localization, harmonic-gauge leakage, singular trace-cost collapse, or a noncanonical cancellation mechanism not present in the periodic Stokes response.
\end{corollary}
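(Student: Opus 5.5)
The claim to prove is the final corollary: in the clean static periodic active finite-window model, neither $m\ll K$ nor $m\gg K$ can produce an NS-admissible true phantom through a vertical-lift anomaly, and any order-one Schur-normalized defect forces a vertical response of size at least $\varepsilon^{K_0}K^{-2}$ up to finite-window constants.

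The plan is to read this directly off the periodic finite-window forcing-to-vertical coercivity theorem and the exact Schur-normalized identity \eqref{eq:tag-10-202}. The point is that the mode-by-mode relation $|\widehat z_\sigma|=K_\sigma^{-2}|s_\sigma|$ contains no dependence on the aspect ratio $r=m_\sigma/K_\sigma$.

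\textbf{Step 1: remove the aspect ratio.} Fix an active periodic output window $\Lambda$ and a Schur-normalized defect $s$ generated at stage $K_0$. Combining $\widehat z_\sigma=i\sigma_3|\sigma|^{-4}\beta_\sigma$ with $s_\sigma=\Gamma(K_\sigma,m_\sigma)\beta_\sigma$ gives \eqref{eq:tag-10-202}. On each active mode the $m$-dependence of the numerator and of $\Gamma$ cancels exactly, so the bound holds uniformly in both regimes $m\ll K$ and $m\gg K$. Summing over modes, as in the Schur-normalized coercivity proposition, I get
\[
|z|_{\ell^2(\Lambda)}\ge \min_{\sigma\in\Lambda}|\sigma_h|^{-2}\,|s|_{\ell^2(\Lambda)}.
\]
For a defect concentrated at horizontal scale $K$ this is $\gtrsim K^{-2}|s|$.

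\textbf{Step 2: insert the stage amplitude and compare norms.} The stage-$K_0$ contribution to $u_3^\varepsilon$ is $\varepsilon^{K_0}z_{K_0}$. By finite-window norm equivalence, \eqref{eq:tag-10-207} gives
\[
|u_3^\varepsilon|_{L^3}\ge c_\Lambda\varepsilon^{K_0}K^{-2}|s|.
\]
For $|s|\gtrsim1$ this is the claimed vertical cost.

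\textbf{Step 3: contradiction with the anomaly definition.} An NS-lift anomaly requires $\operatorname{VSize}_\varepsilon=o(\varepsilon^{K_0}N^{-a}|\beta|)$ with $|s|\gtrsim1$. Step 1 verifies forcing-to-vertical coercivity (\Cref{ass:forcing-to-vertical-coercivity}) in this model: taking $a=4$, \eqref{eq:tag-10-200} applies. The NS-lift anomaly exclusion theorem and the reduced-target proposition then show that no residual extreme-aspect anomaly exists. By the extreme aspect-ratio exclusion theorem, extreme aspect ratio alone gives no relaxed invisibility, since $\mu\ge1$. Hence no NS-admissible true phantom arises by this route.

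The main obstacle is the step where the lower bound must hold for the actual $u_3^\varepsilon$ and not just for the canonical response $z_{K_0}$. Other stages $j\ne K_0$ could in principle cancel $\varepsilon^{K_0}z_{K_0}$ in $L^3$. I would handle this first by orthogonality in $\varepsilon$-powers, treating the formal jet coefficientwise as the budget definition $\sum\varepsilon^j\|z_j\|$ suggests. Failing that, I would fold any such noncanonical cancellation into the excluded list the statement already names, namely cancellations absent from the periodic Stokes response. That keeps the corollary's claim restricted to the canonical clean model.
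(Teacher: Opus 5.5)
Your proposal is correct and follows essentially the paper's own (implicit) argument: the exact identity \eqref{eq:tag-10-202} cancels the aspect ratio mode by mode, and the periodic coercivity theorem with the stage amplitude $\varepsilon^{K_0}$ then gives the $\varepsilon^{K_0}K^{-2}$ lower bound. Two small remarks. First, the aspect-uniform $K^{-2}$ form (rather than $N_\Lambda^{-2}$) comes from \eqref{eq:tag-10-203}, which your Step 1 already supplies, and not from \eqref{eq:tag-10-207} as literally stated. Second, the cross-stage cancellation you flag is something the paper silently sidesteps by taking $u_3^\varepsilon$ to be the canonical response $\varepsilon^{K_0}z_{K_0}$, so folding it into the excluded noncanonical-cancellation list matches the corollary's intended scope.
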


\begin{remark}[Space-time periodic extension]
If one works with space-time Fourier modes \((\omega,\sigma)\), then
\[
(i\omega+|\sigma|^2)\widehat z_{\omega,\sigma}
+
i\sigma_3\widehat\pi_{\omega,\sigma}=0,
\qquad
\widehat\pi_{\omega,\sigma}=\frac{\beta_{\omega,\sigma}}{|\sigma|^2}.
\]
Thus
\[
|\widehat z_{\omega,\sigma}| =
\frac{|\sigma_3|}{|\sigma|^2\sqrt{\omega^2+|\sigma|^4}}
|\beta_{\omega,\sigma}|.
\]
On a parabolic window with
\[
|\omega|\lesssim N_\Lambda^2,
\qquad
|\sigma|\le N_\Lambda,
\]
this gives the same polynomial coercivity profile
\[
|\widehat z_{\omega,\sigma}|
\gtrsim
N_\Lambda^{-4}|\beta_{\omega,\sigma}|
\]
at the raw forcing level, and a corresponding Schur-normalized polynomial lower bound. Thus parabolic time frequencies do not create an exponential or super-polynomial NS-lift anomaly in a fixed active periodic window.
\end{remark}

\begin{proposition}[Reduced target after the periodic coercivity test]
After the periodic forcing-to-vertical coercivity theorem, a genuine NS-lift anomaly cannot occur in the clean active periodic finite-window model. Therefore any remaining NS-lift anomaly must exploit one of the following:

\begin{enumerate}
\item nonperturbative localization, so that the canonical periodic Stokes response is not the leading model;
\item harmonic-pressure gauge leakage, so that vertical pressure is partly hidden in the local harmonic sector;
\item singular trace-cost collapse, so that a visible vertical response cannot be converted into admissible selected-time control;
\item noncanonical cancellation among inhomogeneous and homogeneous vertical tails in a finite cylinder;
\item a moving-window limit in which the finite-dimensional norm equivalence constants degenerate faster than any logarithmically absorbable majorant.
\end{enumerate}

Thus the periodic active aspect-ratio route to a failure mechanism is closed.
\end{proposition}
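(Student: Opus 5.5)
The claim is a bookkeeping consequence of results already established in the periodic coercivity subsection. I would assemble it from those pieces instead of proving anything new. Work in the clean static periodic active finite-window model and take as given the periodic forcing-to-vertical coercivity theorem, namely \eqref{eq:tag-10-207}--\eqref{eq:tag-10-209}, together with the Schur-normalized identity \eqref{eq:tag-10-202}, \(|\widehat z_\sigma|=K^{-2}|s_\sigma|\). The argument first shows that the only route an extreme aspect-ratio phantom could take is through an NS-lift anomaly. It then shows that the anomaly is impossible under the clean periodic hypotheses. Finally, it lists the hypotheses whose failure reopens the route, which gives the five alternatives.

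\textbf{Step 1: only an NS-lift anomaly can carry an aspect-ratio phantom.} This is Proposition~\ref{prop:no-schur-normalized-visibility-loss-extreme-aspect}, together with Theorem~\ref{thm:extreme-aspect-ratio-exclusion-bounded-forcing} and the reduced-target proposition after the NS-lift anomaly test. In Schur variables \(\mu=1+(m/K)^2\ge1\), so aspect ratio never causes relaxed invisibility. An order-one cleaned defect therefore needs raw forcing amplified by \(\Gamma^{-1}\) while its vertical cost stays within the budget. That is exactly a residual extreme-aspect NS-lift anomaly, meaning a failure of Assumption~\ref{ass:forcing-to-vertical-coercivity}.

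\textbf{Step 2: the clean periodic model verifies coercivity.} Invoke the periodic coercivity theorem. By \eqref{eq:tag-10-199} and \eqref{eq:tag-10-200}, the raw bound \(|z|\ge N_\Lambda^{-4}|\beta|\) is precisely Assumption~\ref{ass:forcing-to-vertical-coercivity} with \(a=4\). The Schur-normalized form \eqref{eq:tag-10-204} removes all aspect dependence. The NS-lift anomaly exclusion theorem then gives \(|s|\le C_\Lambda\varepsilon^{-K_0}N_\Lambda^2\delta^{1/3}\). An order-one defect is therefore budget-inadmissible once \(\delta^{1/3}\ll\varepsilon^{K_0}N_\Lambda^{-2}\). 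By Step 1, this closes the periodic active aspect-ratio route.

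\textbf{Step 3: contrapositive enumeration.} List every hypothesis used in Steps 1--2, since a surviving anomaly must violate at least one.
\begin{enumerate}
\item The canonical periodic Stokes response is the leading model. Failure means nonperturbative localization, i.e.\ violation of \eqref{eq:commutator-smallness-assumption}.
\item The pressure is the full Poisson inverse with no harmonic component. Failure means harmonic gauge leakage, i.e.\ the space \(\mathcal G^{\mathrm{har}}\) is nontrivial in the active sector.
\item The relaxed-visible vertical response converts into selected-time control through Lemma~\ref{lem:trace-cost-duality}. Failure means singular trace-cost collapse.
\item \(z\) is the canonical particular solution. Failure means noncanonical cancellation with homogeneous tails in a finite cylinder.
\item The constant \(c_\Lambda\) in \eqref{eq:tag-10-207} comes from norm equivalence at fixed \(\Lambda\). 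Failure means that along a moving window this equivalence constant degenerates beyond the logarithmically absorbable scale of Definition~\ref{def:log-admissible-window-scale}.
\end{enumerate}

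\textbf{Main obstacle.} The hard step is making the enumeration genuinely exhaustive. In particular, I must check that the \(L^2\)-to-\(L^3\) passage \(|u_3^\varepsilon|_{L^3}\gtrsim\varepsilon^{K_0}|z_{K_0}|\) silently uses two facts: the absence of cancellation between different stages \(j\ne K_0\), and a fixed window. I would fold stage-cancellation into item (4) and window-dependent constants into item (5), so that every implicit hypothesis is covered by one of the listed alternatives.
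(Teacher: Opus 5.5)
Your proposal is correct and matches the paper's reasoning. The paper gives no separate proof: it treats the proposition as a summary of the periodic coercivity theorem, the Schur-normalized identity \eqref{eq:tag-10-202} (which removes the aspect-ratio dependence), and the preceding corollary listing the ways to leave the clean periodic model, and your Steps 1--3 follow exactly this route. One minor slip: the aspect-free bound $|s|\lesssim \varepsilon^{-K_0}N_\Lambda^2\delta^{1/3}$ comes directly from \eqref{eq:tag-10-208}, whereas the NS-lift anomaly exclusion theorem with $a=4$ only gives the weaker factor $N_\sigma^4\Gamma(K,m)=m_\sigma K_\sigma^2$; either polynomial bound suffices for the conclusion.
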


\subsection{Localized forcing-to-vertical coercivity after harmonic-gauge cleaning}

The periodic coercivity theorem shows that, in an active finite Fourier window, a Schur-normalized defect forces a vertical response of size at least \(N_\Lambda^{-2}|s|\). We now formulate the localized analogue. The purpose is to separate the principal coercive part from three local errors: cutoff commutators, harmonic-pressure leakage, and finite-cylinder homogeneous tails.

Let
\[
Q_{\mathrm{sh}}\Subset Q_{\mathrm{prep}}\Subset Q_1
\]
be nested cylinders, and let \(\chi\in C_c^\infty(Q_{\mathrm{prep}})\) satisfy \(\chi\equiv1\) on \(Q_{\mathrm{sh}}\). Let \(\Lambda\) be a finite active frequency window with
\[
N_\Lambda=\max_{\sigma\in\Lambda}|\sigma|,
\qquad
|\sigma_h|\neq0,\quad |\sigma_3|\neq0
\quad
\text{for all }\sigma\in\Lambda.
\]
Let \(Y_{\Lambda,\chi}\) be the localized Schur quotient and let
\[
\widehat Y_{\Lambda,\chi} =
Y_{\Lambda,\chi}/\mathcal G_{\Lambda,\chi}
\]
be the quotient obtained after removing horizontal gauge, vertical-zero, and harmonic-pressure leakage directions. For a cleaned Schur vector
\[
\widehat s\in \widehat Y_{\Lambda,\chi},
\]
choose a representative \(s\in Y_{\Lambda,\chi}\) with no gauge component.

Let \(\beta_s\) be the localized raw pressure-forcing package associated with \(s\). The localized full pressure representative \(\pi_s\) is defined by
\[
-\Delta \pi_s =
\chi,\beta_s
\]
in the preparation cylinder, with the pressure understood modulo the local harmonic space
\[
\mathcal H(Q_{\mathrm{sh}}) =
{h:\Delta h(\cdot,t)=0\text{ in }B_{\mathrm{sh}}}.
\]
The localized vertical source is
\[
S_s =
\partial_3\pi_s.
\]
The canonical localized vertical lift \(z_s\) is defined as the solution of
\begin{equation}
(\partial_t-\Delta)z_s =
-S_s
\label{eq:tag-10-210}
\end{equation}
with a fixed admissible local normalization, for instance zero parabolic boundary data on \(Q_{\mathrm{prep}}\), or equivalently orthogonality to the finite-dimensional homogeneous caloric kernel. Different normalizations differ by homogeneous tails and are treated below.

Let
\[
\Pi_{\mathrm{act}}
\]
denote the projection away from the harmonic-pressure leakage and vertical-zero sectors. We measure the cleaned vertical lift by
\[
Z_{\Lambda,\chi}(s) =
\Pi_{\mathrm{act}}z_s.
\]

\begin{definition}[Localized vertical coercivity constant]
The localized forcing-to-vertical coercivity constant is
\begin{equation}
\mathfrak c_{\Lambda,\chi} =
\inf_{\widehat s\neq0}
\frac{
|Z_{\Lambda,\chi}(s)|_{L^3(Q_{\mathrm{sh}})}
}{
|\widehat s|_{\widehat Y_{\Lambda,\chi}}
},
\label{eq:tag-10-211}
\end{equation}
where \(s\) ranges over gauge-cleaned representatives of \(\widehat s\).

We say that localized vertical coercivity holds on \((\Lambda,\chi)\) if
\[
\mathfrak c_{\Lambda,\chi}>0.
\]
\end{definition}

The periodic model predicts
\[
\mathfrak c_{\Lambda,\chi}
\gtrsim
N_\Lambda^{-2},
\]
up to commutator and finite-dimensional localization losses. The next theorem makes this precise in a perturbative localization regime.

\begin{assumption}[Localized principal-symbol stability]
There exist constants \(C,p>0\) such that, for every gauge-cleaned Schur vector \(s\),
\begin{equation}
|Z_{\Lambda,\chi}(s)-Z_{\Lambda}^{\mathrm{per}}(s)|_{L^3(Q_{\mathrm{sh}})}
\le
C\mathcal E_{\Lambda,\chi},
|s|_{Y_{\Lambda,\chi}},
\label{eq:tag-10-212}
\end{equation}
where \(Z_{\Lambda}^{\mathrm{per}}\) is the periodic principal vertical response and
\begin{equation}
\mathcal E_{\Lambda,\chi}
\le
N_\Lambda^p|\nabla\chi|_{C^p}
+
N_\Lambda^p|\chi-1|_{\mathrm{leak}}
+
\mathrm{Tail}_{\Lambda,\chi}.
\label{eq:tag-10-213}
\end{equation}
Here \(\mathrm{Tail}_{\Lambda,\chi}\) denotes the contribution of homogeneous parabolic tails after the chosen normalization.
\end{assumption}

In a slowly varying cutoff model \(\chi_R(x,t)=\chi(x/R,t/R^2)\), one expects
\[
\mathcal E_{\Lambda,\chi_R}
\le
C N_\Lambda^p R^{-1}
+
\mathrm{Tail}_{\Lambda,\chi_R}.
\]

\begin{lemma}[Periodic principal lower bound in the localized norm]
For the periodic principal vertical response,
\begin{equation}
|Z_{\Lambda}^{\mathrm{per}}(s)|_{L^3(Q_{\mathrm{sh}})}
\ge
cN_\Lambda^{-2}
|s|_{Y_{\Lambda,\chi}},
\label{eq:tag-10-214}
\end{equation}
after changing the constant according to the fixed finite-window norm equivalence.

\end{lemma}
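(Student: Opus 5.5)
The plan is to reduce the localized $L^3(Q_{\mathrm{sh}})$ lower bound to the already proved Schur-normalized periodic coercivity estimate, namely $|z|_{\ell^2(\Lambda)}\ge N_\Lambda^{-2}|s|_{\ell^2(\Lambda)}$ from the Proposition on Schur-normalized vertical coercivity. The only additional work is a chain of finite-window norm comparisons.

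\emph{Step 1: Fourier side.} Write the periodic principal response as
\[
Z_\Lambda^{\mathrm{per}}(s)=\sum_{\sigma\in\Lambda}\widehat z_\sigma e^{i\sigma\cdot x},
\qquad
|\widehat z_\sigma|=|\sigma_h|^{-2}|s_\sigma|.
\]
This is the exact identity \eqref{eq:tag-10-202}. It uses the Schur normalization $s_\sigma=\Gamma(K,m)\beta_\sigma$, in which the aspect-ratio dependence cancels. Summing over $\sigma\in\Lambda$ and using $|\sigma_h|\le N_\Lambda$ gives $|\widehat z|_{\ell^2(\Lambda)}\ge N_\Lambda^{-2}|s|_{\ell^2(\Lambda)}$.

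\emph{Step 2: from coefficients to $L^3(Q_{\mathrm{sh}})$.} The space $T_\Lambda=\operatorname{span}\{e^{i\sigma\cdot x}:\sigma\in\Lambda\}$ is finite-dimensional. The map $f\mapsto|f|_{L^3(Q_{\mathrm{sh}})}$ is a norm on $T_\Lambda$: a trigonometric polynomial vanishing on the open set $B_{\mathrm{sh}}$ is real-analytic and therefore vanishes identically. Hence there is $c_{\Lambda,\mathrm{sh}}>0$ with
\[
|f|_{L^3(Q_{\mathrm{sh}})}\ge c_{\Lambda,\mathrm{sh}}|\widehat f|_{\ell^2(\Lambda)}\qquad\text{for }f\in T_\Lambda.
\]
In the static model the time interval only contributes the factor $|I_{\mathrm{sh}}|^{1/3}$. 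In the space-time version one uses the parabolic window and the analogous analyticity in $t$.

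\emph{Step 3: from $\ell^2$ to $Y_{\Lambda,\chi}$.} Compare $|s|_{\ell^2(\Lambda)}$ with $|s|_{Y_{\Lambda,\chi}}$ using the fixed finite-window norm equivalence allowed in the statement. Chaining Steps 1--3 gives \eqref{eq:tag-10-214}, with $c$ absorbing $c_{\Lambda,\mathrm{sh}}$ and the quotient-norm constant.

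\emph{Main obstacle.} The delicate point is Step 2. One must ensure the comparison constant does not secretly depend on $s$, and that restricting to $Q_{\mathrm{sh}}$ does not kill active modes. I would handle this with the analyticity/unique-continuation argument above together with compactness of the unit sphere in $T_\Lambda$. I would also state explicitly that the constant depends on $\Lambda$ and $Q_{\mathrm{sh}}$, consistent with the lemma's phrase ``after changing the constant''. A uniform-in-$\Lambda$ version would need an Ingham/Turán-type inequality and is not claimed here.
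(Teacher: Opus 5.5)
Your proposal follows the paper's proof: the mode-by-mode identity $\widehat z_\sigma=|\sigma_h|^{-2}s_\sigma$ gives $|z|_{\ell^2(\Lambda)}\ge N_\Lambda^{-2}|s|_{\ell^2(\Lambda)}$, and finite-window equivalence of the $L^3(Q_{\mathrm{sh}})$, $L^2$, coefficient and $Y_{\Lambda,\chi}$ norms finishes the argument. Your analyticity-plus-compactness justification of Step 2, with an explicitly $\Lambda$-dependent constant, is more careful than the paper, which asserts without proof that these equivalence constants grow only polynomially in $N_\Lambda$; Turán--Remez-type examples suggest that claim can fail when $B_{\mathrm{sh}}$ does not contain a full period cell.
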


\begin{proof}
In the periodic active model, the Schur-normalized vertical response satisfies mode by mode
\[
\widehat z_\sigma =
|\sigma_h|^{-2}s_\sigma
\]
up to harmless nonzero phases. Hence
\[
|z|_{\ell^2(\Lambda)}
\ge
N_\Lambda^{-2}|s|_{\ell^2(\Lambda)}.
\]
Since \(\Lambda\) is finite, and since \(Q_{\mathrm{sh}}\) has fixed positive measure, the finite-window \(L^3(Q_{\mathrm{sh}})\), \(L^2\), and coefficient norms are equivalent up to constants depending polynomially on \(N_\Lambda\). With the chosen normalization of \(Y_{\Lambda,\chi}\), this gives the stated lower bound.
\end{proof}

\begin{theorem}[Localized forcing-to-vertical coercivity]
Assume localized principal-symbol stability. Suppose the localization error satisfies
\begin{equation}
\mathcal E_{\Lambda,\chi}
\le
\frac{c}{2C}N_\Lambda^{-2},
\label{eq:tag-10-215}
\end{equation}
where \(c\) is the constant in the periodic principal lower bound and \(C\) is the constant in the stability estimate. Then
\begin{equation}
|Z_{\Lambda,\chi}(s)|_{L^3(Q_{\mathrm{sh}})}
\ge
\frac c2
N_\Lambda^{-2}
|\widehat s|_{\widehat Y_{\Lambda,\chi}}
\label{eq:tag-10-216}
\end{equation}
for every cleaned Schur vector \(\widehat s\). In particular,
\begin{equation}
\mathfrak c_{\Lambda,\chi}
\ge
\frac c2 N_\Lambda^{-2}.
\label{eq:tag-10-217}
\end{equation}
Thus perturbative localization and harmonic-gauge cleaning preserve the periodic forcing-to-vertical coercivity estimate.

\end{theorem}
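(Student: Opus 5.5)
The argument is a reverse-triangle-inequality perturbation, modelled on the proof of the localized commutator stability theorem (Theorem~\ref{thm:localized-commutator-stability}). Fix a cleaned Schur vector \(\widehat s\neq0\) and choose a gauge-cleaned representative \(s\in Y_{\Lambda,\chi}\), meaning one with no component in \(\mathcal G_{\Lambda,\chi}\). Then split
\[
Z_{\Lambda,\chi}(s)=Z_{\Lambda}^{\mathrm{per}}(s)+\bigl(Z_{\Lambda,\chi}(s)-Z_{\Lambda}^{\mathrm{per}}(s)\bigr),
\]
and bound each piece in \(L^3(Q_{\mathrm{sh}})\). The periodic principal lower bound \eqref{eq:tag-10-214} gives
\[
\|Z_{\Lambda}^{\mathrm{per}}(s)\|\ge cN_\Lambda^{-2}|s|_{Y_{\Lambda,\chi}}.
\]
The localized principal-symbol stability assumption \eqref{eq:tag-10-212} gives
\[
\|Z_{\Lambda,\chi}(s)-Z_{\Lambda}^{\mathrm{per}}(s)\|\le C\mathcal E_{\Lambda,\chi}|s|_{Y_{\Lambda,\chi}}.
\]
Inserting the smallness hypothesis \eqref{eq:tag-10-215}, the error is at most \(\frac c2N_\Lambda^{-2}|s|\). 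The triangle inequality then yields
\[
\|Z_{\Lambda,\chi}(s)\|_{L^3(Q_{\mathrm{sh}})}\ge \tfrac c2N_\Lambda^{-2}|s|_{Y_{\Lambda,\chi}}.
\]

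The remaining step passes from \(|s|_{Y_{\Lambda,\chi}}\) to the quotient norm \(|\widehat s|_{\widehat Y_{\Lambda,\chi}}\). Since the quotient norm is the infimum over representatives, \(|\widehat s|\le|s|\) for any representative, so the lower bound transfers directly:
\[
\|Z_{\Lambda,\chi}(s)\|\ge\tfrac c2N_\Lambda^{-2}|s|\ge\tfrac c2N_\Lambda^{-2}|\widehat s|.
\]
This gives \eqref{eq:tag-10-216}. Taking the infimum over \(\widehat s\neq0\) in definition \eqref{eq:tag-10-211} of \(\mathfrak c_{\Lambda,\chi}\) gives \eqref{eq:tag-10-217}.

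\textbf{Main obstacle: well-definedness.} The delicate point is that \(Z_{\Lambda,\chi}(s)=\Pi_{\mathrm{act}}z_s\) must depend only on \(\widehat s\), or at least that the bound holds for every gauge-cleaned representative, as the infimum in \eqref{eq:tag-10-211} requires. I would argue this as in Lemma~\ref{lem:well-defined-cleaned-relaxed-observation}. If two representatives differ by an element of \(\mathcal G_{\Lambda,\chi}\), their vertical sources differ by harmonic leakage \(\partial_3 h\) or by vertical-zero components. After solving \eqref{eq:tag-10-210} with the fixed normalization, these produce lifts annihilated by \(\Pi_{\mathrm{act}}\), once \(\Pi_{\mathrm{act}}\) is interpreted on the lift side as removing the caloric lifts of those sectors.

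If that compatibility is not available verbatim, I would fall back on a weaker route: restrict to the canonical representative orthogonal to \(\mathcal G_{\Lambda,\chi}\), whose norm equals \(|\widehat s|\) in the Hilbert setting. In that case the stability estimate \eqref{eq:tag-10-212} is applied only to gauge-cleaned representatives, exactly as it is stated. The homogeneous-tail dependence on normalization causes no additional difficulty here, because it is already absorbed into \(\mathrm{Tail}_{\Lambda,\chi}\) inside \(\mathcal E_{\Lambda,\chi}\).
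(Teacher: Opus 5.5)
Your proof is correct and follows the same reverse-triangle-inequality argument as the paper: the periodic principal lower bound \eqref{eq:tag-10-214}, the stability estimate \eqref{eq:tag-10-212}, and the smallness hypothesis \eqref{eq:tag-10-215}, followed by passage to the quotient norm. Two small remarks: your last step, using \(|\widehat s|_{\widehat Y_{\Lambda,\chi}}\le|s|_{Y_{\Lambda,\chi}}\) for the quotient norm, is slightly sharper than the paper's appeal to comparability of norms because it keeps the constant exactly \(c/2\); and the well-definedness detour is unnecessary, since \eqref{eq:tag-10-211} already takes the infimum over all gauge-cleaned representatives, and your bound holds for each of them.
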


\begin{proof}
Let \(s\) be a gauge-cleaned representative of \(\widehat s\). By the triangle inequality,
\[
|Z_{\Lambda,\chi}(s)|_{L^3}
\ge
|Z_{\Lambda}^{\mathrm{per}}(s)|_{L^3} -
|Z_{\Lambda,\chi}(s)-Z_{\Lambda}^{\mathrm{per}}(s)|_{L^3}.
\]
By the periodic principal lower bound,
\[
|Z_{\Lambda}^{\mathrm{per}}(s)|_{L^3}
\ge
cN_\Lambda^{-2}|s|_{Y_{\Lambda,\chi}}.
\]
By localized principal-symbol stability,
\[
|Z_{\Lambda,\chi}(s)-Z_{\Lambda}^{\mathrm{per}}(s)|_{L^3}
\le
C\mathcal E_{\Lambda,\chi}|s|_{Y_{\Lambda,\chi}}.
\]
Using
\[
\mathcal E_{\Lambda,\chi}
\le
\frac{c}{2C}N_\Lambda^{-2},
\]
we obtain
\[
|Z_{\Lambda,\chi}(s)|_{L^3}
\ge
\frac c2 N_\Lambda^{-2}|s|_{Y_{\Lambda,\chi}}.
\]
Since \(s\) is gauge-cleaned, its quotient norm is comparable to
\[
|\widehat s|_{\widehat Y_{\Lambda,\chi}}.
\]
This proves the estimate.
\end{proof}

\begin{corollary}[Localized vertical-budget obstruction]
Assume localized forcing-to-vertical coercivity. If a cleaned Schur defect \(\widehat s\) is produced at finite stage \(K_0\) with amplitude \(\varepsilon\), then the corresponding vertical component satisfies
\begin{equation}
|u_3^\varepsilon|_{L^3(Q_{\mathrm{sh}})}
\ge
c\varepsilon^{K_0}N_\Lambda^{-2}
|\widehat s|_{\widehat Y_{\Lambda,\chi}}.
\label{eq:tag-10-218}
\end{equation}
Consequently, if
\[
|u_3^\varepsilon|_{L^3(Q_{\mathrm{sh}})}^3\le \delta,
\]
then
\begin{equation}
|\widehat s|_{\widehat Y_{\Lambda,\chi}}
\le
C\varepsilon^{-K_0}N_\Lambda^2\delta^{1/3}.
\label{eq:tag-10-219}
\end{equation}
In particular, an order-one cleaned Schur defect is compatible with the local \(C_3\)-budget only if
\begin{equation}
\delta^{1/3}
\gtrsim
\varepsilon^{K_0}N_\Lambda^{-2}.
\label{eq:tag-10-220}
\end{equation}
\end{corollary}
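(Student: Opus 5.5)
The corollary follows from the localized forcing-to-vertical coercivity theorem by the same scaling argument used in the periodic finite-window forcing-to-vertical coercivity theorem. Write the finite-stage vertical component as \(u_3^\varepsilon=\sum_{j\le K}\varepsilon^jz_j\), and let the cleaned defect \(\widehat s\) be generated at stage \(K_0\). The stage-\(K_0\) vertical jet is then the canonical localized lift \(z_{K_0}=z_s\) of \eqref{eq:tag-10-210}, up to a homogeneous caloric tail. That tail is absorbed in \(\mathrm{Tail}_{\Lambda,\chi}\) inside \(\mathcal E_{\Lambda,\chi}\), so it is already covered by the hypothesis \eqref{eq:tag-10-215}.

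\textbf{Step 1: a lower bound at stage \(K_0\).} Apply \(\Pi_{\mathrm{act}}\), which discards only the harmonic-leakage and vertical-zero sectors, and use \eqref{eq:tag-10-216}:
\[
\bigl|\Pi_{\mathrm{act}}\varepsilon^{K_0}z_{K_0}\bigr|_{L^3(Q_{\mathrm{sh}})}\ge \tfrac c2\varepsilon^{K_0}N_\Lambda^{-2}|\widehat s|_{\widehat Y_{\Lambda,\chi}}.
\]

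\textbf{Step 2: pass to the full \(u_3^\varepsilon\).} Since \(\Pi_{\mathrm{act}}\) is a fixed finite-dimensional projection with bounded norm, it suffices to bound \(|\Pi_{\mathrm{act}}u_3^\varepsilon|\) from below. The other stages \(j\ne K_0\) must not cancel the stage-\(K_0\) term. I would separate them by their \(\varepsilon\)-homogeneity, since \(u_3^\varepsilon\) is a polynomial in \(\varepsilon\) with finite-dimensional coefficients. On a fixed window all norms of such polynomials are equivalent to the maximum of their coefficient norms, uniformly for \(\varepsilon\) in a compact range. As in the proof of the periodic theorem, I would take the convention that \(\widehat s\) is produced at its lowest stage with \(\varepsilon\) small, so that the \(\varepsilon^{K_0}\) term dominates. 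This yields \eqref{eq:tag-10-218} with a smaller constant \(c\).

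\textbf{Step 3: rearrange.} From \(|u_3^\varepsilon|_{L^3}^3\le\delta\) we get \(|u_3^\varepsilon|_{L^3}\le\delta^{1/3}\). Combining this with \eqref{eq:tag-10-218} gives \eqref{eq:tag-10-219}. Setting \(|\widehat s|\gtrsim1\) gives \eqref{eq:tag-10-220}.

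The main obstacle is Step 2, the non-cancellation between stages and between the different representatives. Quotient independence is handled by noting that \eqref{eq:tag-10-216} holds for every gauge-cleaned representative. Cross-stage interference is handled by the lowest-order and small-\(\varepsilon\) convention, with constants that depend on the window. If stronger uniformity is needed, I would instead state the estimate for the stage-\(K_0\) component \(\varepsilon^{K_0}z_{K_0}\). That is exactly the quantity entering \(\operatorname{NSCost}\).
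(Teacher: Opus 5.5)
Your proposal follows the paper's proof: the paper likewise identifies the stage-$K_0$ part of $u_3^\varepsilon$ with $\varepsilon^{K_0}Z_{\Lambda,\chi}(s)$, applies the localized coercivity bound \eqref{eq:tag-10-216}, and rearranges using $|u_3^\varepsilon|_{L^3}\le\delta^{1/3}$. The paper simply treats the other stages as non-cancelling ``higher-order or gauge-cleaned remainders,'' so your Step~2 adds care rather than a different route; note, however, that what actually justifies it is small-$\varepsilon$ domination of the lowest nonzero active term, with a threshold that depends on the jet and not only on the window, not a norm equivalence for polynomials in $\varepsilon$, since $\sum_j\varepsilon^jz_j$ can vanish at an isolated value of $\varepsilon$ even when its coefficients are nonzero.
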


\begin{proof}
The finite-stage vertical component contains the term
\[
\varepsilon^{K_0}Z_{\Lambda,\chi}(s)
\]
plus higher-order or gauge-cleaned remainders. Applying the localized coercivity estimate gives
\[
|u_3^\varepsilon|_{L^3}
\ge
c\varepsilon^{K_0}N_\Lambda^{-2}|\widehat s|.
\]
If the \(C_3\)-budget gives
\[
|u_3^\varepsilon|_{L^3}\le\delta^{1/3},
\]
then rearranging yields the claimed upper bound for \(|\widehat s|\).
\end{proof}

The preceding theorem covers perturbative localization. If the cutoff, boundary, or homogeneous-tail contribution is not perturbative, the remaining obstruction is finite-dimensional.

\begin{definition}[Localized homogeneous-tail kernel]
The localized homogeneous-tail kernel is
\begin{equation}
\mathcal K_{\Lambda,\chi}^{\mathrm{hom}} =
\left\{
\widehat s\in\widehat Y_{\Lambda,\chi}:
Z_{\Lambda,\chi}(s)=0
\text{ for some admissible homogeneous-tail normalization}
\right\}.
\label{eq:tag-10-221}
\end{equation}
Equivalently, it is the kernel of the cleaned localized vertical lift map after quotienting harmonic-pressure leakage and vertical-zero modes.
\end{definition}

\begin{proposition}[Finite-dimensional alternative for nonperturbative localization]
For fixed \((\Lambda,\chi)\), exactly one of the following holds.

\begin{enumerate}
\item \emph{Coercive alternative.} There exists \(c_{\Lambda,\chi}>0\) such that
\[
|Z_{\Lambda,\chi}(s)|_{L^3(Q_{\mathrm{sh}})}
\ge
c_{\Lambda,\chi}|\widehat s|_{\widehat Y_{\Lambda,\chi}}
\]
for all cleaned Schur vectors.

\item \emph{Homogeneous-tail kernel alternative.} There exists a nonzero cleaned Schur vector
\[
0\neq \widehat s\in \mathcal K_{\Lambda,\chi}^{\mathrm{hom}}.
\]
\end{enumerate}

In the second case, every possible localized NS-lift anomaly must lie in the finite-dimensional space
\[
\mathcal K_{\Lambda,\chi}^{\mathrm{hom}}.
\]
\end{proposition}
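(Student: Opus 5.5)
The statement is a finite-dimensional dichotomy for the linear map \(\widehat s\mapsto Z_{\Lambda,\chi}(s)\), where \(s\) is a gauge-cleaned representative. I would prove it by the same compactness-of-the-unit-sphere argument already used in \cref{prop:fredholm-nonperturbative-localization} and \cref{thm:local-quotient-cleaning}.

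\textbf{Well-definedness and linearity.} The first step is to check that the cleaned localized vertical lift induces a well-defined linear map \(\widehat Z_{\Lambda,\chi}:\widehat Y_{\Lambda,\chi}\to L^3(Q_{\mathrm{sh}})\).

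Fix the homogeneous-tail normalization. For definiteness, take the one orthogonal to the finite-dimensional homogeneous caloric kernel; any other admissible normalization differs from it by a homogeneous tail.

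The map \(s\mapsto\beta_s\mapsto\pi_s\mapsto S_s=\partial_3\pi_s\mapsto z_s\) is a composition of linear solution operators.

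Two representatives of the same class \(\widehat s\) differ by an element of \(\mathcal G_{\Lambda,\chi}\). For such an element, \(\partial_3\pi\) is harmonic leakage or vanishes in the active space. Its lift is therefore killed by \(\Pi_{\mathrm{act}}\). This is exactly the argument of \cref{lem:well-defined-cleaned-relaxed-observation}. Hence \(Z_{\Lambda,\chi}(s)=\Pi_{\mathrm{act}}z_s\) depends only on \(\widehat s\).

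The "for some admissible normalization" clause in \(\mathcal K^{\mathrm{hom}}_{\Lambda,\chi}\) needs one more step. I would enlarge the map to \((\widehat s,\tau)\mapsto Z_{\Lambda,\chi}(s)+\Pi_{\mathrm{act}}\tau\), where \(\tau\) ranges over the finite-dimensional tail space. I would then define the kernel as the projection of the kernel of this enlarged map onto the \(\widehat s\) factor. That projection is still a linear subspace.

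\textbf{Dichotomy.} Since \(\Lambda\) is finite, \(\widehat Y_{\Lambda,\chi}\) is finite-dimensional, and its unit sphere is compact. Consider the function
\[
f(\widehat s)=\inf_{\tau}\,|Z_{\Lambda,\chi}(s)+\Pi_{\mathrm{act}}\tau|_{L^3(Q_{\mathrm{sh}})}.
\]
It is a distance to a finite-dimensional subspace composed with a linear map, so it is a continuous seminorm.

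Suppose \(f\) has trivial zero set. Then \(f\) attains a positive minimum \(c_{\Lambda,\chi}\) on the unit sphere, and homogeneity gives the coercive lower bound. The bound holds a fortiori for the canonical normalization, since \(|Z|\ge f\).

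Suppose instead some nonzero \(\widehat s\) has \(f(\widehat s)=0\). The infimum is attained because the tail space is finite-dimensional and closed, so an admissible normalization gives \(Z=0\). Then \(\widehat s\in\mathcal K^{\mathrm{hom}}_{\Lambda,\chi}\setminus\{0\}\).

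The two alternatives are mutually exclusive. A kernel element violates any positive lower bound, provided the coercive alternative is read, as intended, uniformly over admissible normalizations, i.e. with \(f\) in place of \(|Z|\). I would state this reading explicitly.

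\textbf{Final clause.} Take a candidate localized NS-lift anomaly \(\widehat s\). By definition, along it the cleaned vertical lift is \(o(|\widehat s|)\) relative to every coercive bound. In the coercive alternative this is impossible, by the same rearrangement as in the localized vertical-budget corollary.

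In the kernel alternative, decompose \(\widehat s=\widehat s_0+\widehat s_1\) with \(\widehat s_0\in\mathcal K^{\mathrm{hom}}\) and \(\widehat s_1\) in a complement. On the complement, \(f\) is coercive by the same compactness argument. So the anomalous part must vanish, and \(\widehat s\) lies in \(\mathcal K^{\mathrm{hom}}\).

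\textbf{Main obstacle.} The compactness step itself is routine. The delicate point is the first step: making the homogeneous-tail freedom a finite-dimensional linear parameter that commutes with \(\Pi_{\mathrm{act}}\) and the gauge quotient. If the admissible normalizations formed an infinite-dimensional family, the infimum might not be attained. I would resolve this by using the finite-window projection to restrict tails to the window's finite caloric kernel.
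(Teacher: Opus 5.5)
Your argument is correct and is essentially the paper's proof. The cleaned localized vertical lift is a linear map on the finite-dimensional space $\widehat Y_{\Lambda,\chi}$, so either its kernel is trivial and compactness of the unit sphere yields $c_{\Lambda,\chi}>0$, or its kernel is nonzero and is $\mathcal K^{\mathrm{hom}}_{\Lambda,\chi}$. Your only departure is to absorb the ``for some admissible normalization'' clause into the seminorm $f$ over the finite-dimensional tail space, which makes ``exactly one'' hold literally with that definition; the paper instead identifies $\mathcal K^{\mathrm{hom}}_{\Lambda,\chi}$ with the kernel of the fixed-normalization lift and does not argue the final clause separately.
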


\begin{proof}
The cleaned localized vertical lift map is a linear map between finite-dimensional spaces. If its kernel is trivial, then its norm is bounded below on the unit sphere, giving the coercive alternative. If the kernel is nontrivial, the nonzero kernel vectors form the homogeneous-tail kernel. These are the only possibilities.
\end{proof}

\begin{remark}[Meaning of the homogeneous-tail kernel]
A vector in \(\mathcal K_{\Lambda,\chi}^{\mathrm{hom}}\) is not automatically a Navier--Stokes counterexample. It only says that, in the localized finite-dimensional model, the canonical vertical response can be cancelled by a homogeneous solution or by nonperturbative boundary leakage. Such a vector must still pass the NS-realizability filter, the local energy inequality, and the selected-time trace-cost test. Thus nonperturbative localization reduces the obstruction to a finite-dimensional kernel problem rather than producing a true phantom automatically.
\end{remark}

\begin{theorem}[Localized NS-lift anomaly exclusion in the perturbative regime]
Assume that:

\begin{enumerate}
\item the finite window is active;
\item harmonic-pressure leakage, horizontal gauge, and vertical-zero sectors have been quotiented out;
\item the localized commutator and homogeneous-tail errors satisfy the perturbative bound \eqref{eq:tag-10-215}.
\end{enumerate}

Then no order-one cleaned Schur defect can be produced by a finite-stage NS deformation with arbitrarily small \(C_3\)-budget. More precisely, if
\[
|\widehat s_n|_{\widehat Y_{\Lambda_n,\chi_n}}\ge c_0>0,
\]
and
\[
\delta_n\to0,
\]
then the necessary condition
\[
\delta_n^{1/3}
\gtrsim
\varepsilon_n^{K_n}N_{\Lambda_n}^{-2}
\]
must hold. If
\[
\varepsilon_n^{K_n}N_{\Lambda_n}^{-2}
\gg
\delta_n^{1/3},
\]
then the sequence is not NS-admissible.

\end{theorem}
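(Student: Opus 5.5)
The statement is a direct consequence of the preceding corollary (Localized vertical-budget obstruction), applied along the sequence \((\Lambda_n,\chi_n,\varepsilon_n,\delta_n,K_n)\). The plan is to verify that the hypotheses of the localized forcing-to-vertical coercivity theorem hold uniformly in \(n\), then read off the necessary condition. The contrapositive gives the non-admissibility clause.

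\emph{Step 1.} For each \(n\), hypotheses (1)--(3) say that \(\Lambda_n\) is active and that the cleaned quotient \(\widehat Y_{\Lambda_n,\chi_n}\) has been formed. They also give the perturbative bound \(\mathcal E_{\Lambda_n,\chi_n}\le \tfrac{c}{2C}N_{\Lambda_n}^{-2}\). I will take the constants \(c,C\) from the periodic principal lower bound and the principal-symbol stability assumption to be independent of \(n\); this is implicit in the uniform perturbative bound. The localized coercivity theorem then gives
\[
|Z_{\Lambda_n,\chi_n}(s_n)|_{L^3(Q_{\mathrm{sh}})}\ge \tfrac c2 N_{\Lambda_n}^{-2}|\widehat s_n|.
\]

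\emph{Step 2.} Suppose the sequence is NS-admissible, so that \(\widehat s_n\) is produced at stage \(K_n\) with amplitude \(\varepsilon_n\) and \(|u_3^{\varepsilon_n}|_{L^3}^3\le\delta_n\). The vertical-budget corollary yields
\[
c_0\le |\widehat s_n|\le C\varepsilon_n^{-K_n}N_{\Lambda_n}^2\delta_n^{1/3},
\]
that is, \(\delta_n^{1/3}\ge (c_0/C)\,\varepsilon_n^{K_n}N_{\Lambda_n}^{-2}\). This is the stated necessary condition. If instead \(\varepsilon_n^{K_n}N_{\Lambda_n}^{-2}/\delta_n^{1/3}\to\infty\), the inequality fails for large \(n\), so the sequence cannot be NS-admissible.

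\emph{Main obstacle.} The delicate point is the corollary's step that isolates \(\varepsilon^{K_0}Z_{\Lambda,\chi}(s)\) as the leading term of \(u_3^\varepsilon\). The other jets \(\varepsilon^jz_j\) for \(j\ne K_n\) could in principle cancel it in \(L^3\). The first approach I would try is to argue via the finite-window projection: project \(u_3^\varepsilon\) onto the active cleaned component at stage \(K_n\), using orthogonality of the stage-\(K_n\) output modes from lower-order vertical jets in the cleaned quotient. Failing that, I would simply assume, as the corollary does, that the remainders are higher-order or gauge-cleaned, absorbing them into the constant for \(n\) large. Uniformity of the norm-equivalence constants between \(L^3(Q_{\mathrm{sh}})\) and the quotient norm across moving windows is the other point to check. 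It is covered by the polynomial dependence on \(N_\Lambda\) already built into the lemma's constants.
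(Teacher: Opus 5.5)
Your proposal is correct and follows essentially the same route as the paper. The paper's proof applies the localized vertical-budget corollary along the sequence: the perturbative bound gives localized coercivity, and the necessary condition and its contrapositive are then read off, exactly as in your Steps 1--2. The caveats you flag are the uniformity of \(c,C\) in \(n\) and the treatment of \(\varepsilon^{K_0}Z_{\Lambda,\chi}(s)\) as the leading part of \(u_3^\varepsilon\) with no cancellation from the other jets. The paper assumes both implicitly, so your fallback of adopting them matches what the paper does.
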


\begin{proof}
This is exactly the vertical-budget obstruction applied along the sequence. The perturbative hypotheses give localized coercivity, and localized coercivity gives a lower bound for the \(L^3\)-size of the vertical component. If that lower bound exceeds the available \(C_3\)-budget, the finite-stage deformation cannot be NS-admissible.
\end{proof}

\begin{corollary}[Reduction after localized coercivity]
After harmonic-gauge cleaning and perturbative localization, a localized NS-lift anomaly can occur only through a nonzero homogeneous-tail kernel:
\[
0\neq\widehat s\in\mathcal K_{\Lambda,\chi}^{\mathrm{hom}}.
\]
Thus the remaining localized obstruction is finite-dimensional and must be tested against:

\begin{enumerate}
\item NS-realizability of the homogeneous-tail cancellation;
\item compatibility with the local energy inequality;
\item trace-projectability or relaxed visibility of the resulting residual;
\item moving-window growth of the kernel constants.
\end{enumerate}

If the homogeneous-tail kernel is trivial, localized forcing-to-vertical coercivity holds and the NS-lift anomaly route is closed in the localized finite-window model.
\end{corollary}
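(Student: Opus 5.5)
The statement to prove is the last corollary (Reduction after localized coercivity). The plan is to read it as the conjunction of the finite-dimensional alternative for nonperturbative localization and the localized NS-lift anomaly exclusion theorem. Fix a pair \((\Lambda,\chi)\) with harmonic-pressure leakage, horizontal gauge, and vertical-zero sectors already quotiented out. I would split into two cases according to whether the cleaned localized vertical lift map \(\widehat s\mapsto Z_{\Lambda,\chi}(s)\) has trivial kernel.

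If the kernel is trivial, the coercive alternative of the finite-dimensional proposition gives \(c_{\Lambda,\chi}>0\) with \(|Z_{\Lambda,\chi}(s)|_{L^3}\ge c_{\Lambda,\chi}|\widehat s|\). In the perturbative regime \eqref{eq:tag-10-215} the localized coercivity theorem improves this to \(c_{\Lambda,\chi}\ge \tfrac c2 N_\Lambda^{-2}\). The vertical-budget corollary \eqref{eq:tag-10-219} then bounds \(|\widehat s|\le C\varepsilon^{-K_0}N_\Lambda^2\delta^{1/3}\) for any finite-stage NS deformation within the \(C_3\)-budget. So no order-one cleaned Schur defect can be produced with vanishing vertical cost, which means no localized NS-lift anomaly in the sense of the anomaly definition. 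This is the final sentence of the corollary: the NS-lift anomaly route is closed.

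If the kernel is nontrivial, the homogeneous-tail kernel alternative says that any vector whose canonical vertical response can vanish lies in \(\mathcal K^{\mathrm{hom}}_{\Lambda,\chi}\). An anomaly needs \(\operatorname{VSize}\) to be \(o(|\widehat s|)\) relative to any coercive lower bound. I would argue by contradiction. Split \(\widehat s\) into its component in \(\mathcal K^{\mathrm{hom}}\) and a complement. The lift map is bounded below on the complement by compactness of its unit sphere, so the complementary component is controlled by the budget exactly as in the first case. An anomaly sequence must therefore, after normalization, concentrate on \(\mathcal K^{\mathrm{hom}}\), and with fixed \((\Lambda,\chi)\) it converges to a nonzero kernel element. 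The kernel is finite-dimensional because it is a subspace of the finite-dimensional \(\widehat Y_{\Lambda,\chi}\). The four listed tests are the filters such an element must still pass before it becomes a true obstruction. They come respectively from Theorem~\ref{thm:finite-stage-NS-realizability-filter} (realizability), the suitable-weak energy inequality \eqref{eq:local-energy} (energy compatibility), the trace-cost dichotomy Lemma~\ref{lem:trace-cost-duality} together with relaxed visibility (trace-projectability or relaxed visibility), and Theorem~\ref{thm:moving-window-dichotomy} (moving-window growth). This part is bookkeeping and needs no new estimate.

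The main obstacle is the concentration step in the nontrivial-kernel case. It requires the lower bound on the complement to be uniform and to transfer to quotient norms. For fixed \((\Lambda,\chi)\) this follows from finite-dimensional norm equivalence. Along a moving sequence, however, the complementary constant could degenerate. That degeneration belongs to the moving-window item rather than being proved here. I would therefore state the corollary as a fixed-window reduction and explicitly defer uniformity to item (iv).
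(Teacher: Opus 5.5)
Your argument is correct and follows the paper's route. The corollary is given there without a separate proof, as the direct combination of the finite-dimensional alternative for nonperturbative localization (coercive versus nonzero $\mathcal K^{\mathrm{hom}}_{\Lambda,\chi}$) with the localized coercivity and NS-lift anomaly exclusion results in the perturbative regime; the four listed items are a programmatic list of remaining tests rather than proved claims, and your kernel/complement concentration step and deferral of moving-window uniformity to item (iv) simply make explicit what the paper asserts in one line, namely that every localized NS-lift anomaly must lie in $\mathcal K^{\mathrm{hom}}_{\Lambda,\chi}$.
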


\begin{remark}[What has been gained]
The periodic identity
\[
z_\sigma=|\sigma_h|^{-2}s_\sigma
\]
does not survive localization literally. However, after active quotienting and harmonic-gauge cleaning, it survives as a principal coercive estimate:
\[
|z_s|_{L^3}
\gtrsim
N_\Lambda^{-2}|s|
-\text{commutator}
-\text{harmonic leakage}
-\text{homogeneous-tail error}.
\]
Therefore localization does not create an NS-lift anomaly unless the lower-order terms become nonperturbative or a genuine homogeneous-tail kernel appears.
\end{remark}

\subsection{Homogeneous-tail kernel test}

The localized coercivity estimate reduces the possible NS-lift anomaly to a finite-dimensional homogeneous-tail kernel. We now analyze this kernel. The key point is that a homogeneous tail is not free: if it cancels the canonical vertical response in the observation cylinder, then either it has nontrivial boundary/initial/trace data, which is an observable cost, or it is identically zero by parabolic uniqueness. Thus homogeneous-tail cancellation is a true obstruction only if the required tail data are both admissible and invisible to the selected trace and relaxed vertical-pressure channels.

Let
\[
Q_{\mathrm{sh}}\Subset Q_{\mathrm{mid}}\Subset Q_{\mathrm{prep}}
\]
be nested cylinders. Let
\[
Z_{\Lambda,\chi}^{\mathrm{can}}:
\widehat Y_{\Lambda,\chi}\to L^3(Q_{\mathrm{sh}})
\]
denote the gauge-cleaned canonical localized vertical lift constructed in the previous step. Thus
\[
Z_{\Lambda,\chi}^{\mathrm{can}}\widehat s
\]
solves the localized inhomogeneous vertical lift equation
\[
(\partial_t-\Delta)z=-S_{\widehat s}
\]
with a fixed normalization.

Let
\[
\mathcal H_{\Lambda,\chi}^{\mathrm{cal}}
\]
be the finite-dimensional space of homogeneous caloric tails in the same localized window:
\[
(\partial_t-\Delta)h=0
\qquad
\text{in }Q_{\mathrm{prep}}.
\]
Depending on the chosen local realization problem, \(h\) may carry initial data, lateral boundary data, or terminal/trace data. We collect all such data in a tail observation map
\[
\mathcal O_{\mathrm{tail}}:
\mathcal H_{\Lambda,\chi}^{\mathrm{cal}}
\to
\mathcal B_{\Lambda,\chi}^{\mathrm{tail}}.
\]
Examples include the parabolic boundary trace, the trace at the selected time, or the local energy flux produced by the homogeneous tail.

A homogeneous-tail cancellation of a cleaned Schur vector \(\widehat s\) is a tail
\[
h\in \mathcal H_{\Lambda,\chi}^{\mathrm{cal}}
\]
such that
\begin{equation}
Z_{\Lambda,\chi}^{\mathrm{can}}\widehat s+h=0
\qquad
\text{in }Q_{\mathrm{sh}}.
\label{eq:tag-10-222}
\end{equation}
Define the homogeneous-tail cancellation cost by
\begin{equation}
\operatorname{TailCost}_{\Lambda,\chi}(\widehat s) =
\inf
\left\{
|\mathcal O_{\mathrm{tail}}h|_{\mathcal B_{\Lambda,\chi}^{\mathrm{tail}}}:
h\in\mathcal H_{\Lambda,\chi}^{\mathrm{cal}},
\quad
Z_{\Lambda,\chi}^{\mathrm{can}}\widehat s+h=0
\text{ in }Q_{\mathrm{sh}}
\right\},
\label{eq:tag-10-223}
\end{equation}
with the convention that the infimum is \(+\infty\) if no such homogeneous tail exists.

The homogeneous-tail kernel is
\begin{equation}
\mathcal K_{\Lambda,\chi}^{\mathrm{hom}} =
\left\{
\widehat s\in\widehat Y_{\Lambda,\chi}:
\operatorname{TailCost}_{\Lambda,\chi}(\widehat s)<\infty
\right\}.
\label{eq:tag-10-224}
\end{equation}
The zero-cost homogeneous-tail kernel is
\begin{equation}
\mathcal K_{\Lambda,\chi}^{\mathrm{hom},0} =
\left\{
\widehat s\in\widehat Y_{\Lambda,\chi}:
\operatorname{TailCost}_{\Lambda,\chi}(\widehat s)=0
\right\}.
\label{eq:tag-10-225}
\end{equation}

\begin{lemma}[Zero-cost homogeneous tails are trivial under standard normalization]
Assume the homogeneous tail observation is separating:
\begin{equation}
\mathcal O_{\mathrm{tail}}h=0
\quad\Longrightarrow\quad
h=0
\text{ in }Q_{\mathrm{prep}}.
\label{eq:tag-10-226}
\end{equation}
Assume also the localized vertical coercivity estimate
\begin{equation}
|Z_{\Lambda,\chi}^{\mathrm{can}}\widehat s|_{L^3(Q_{\mathrm{sh}})}
\ge
cN_\Lambda^{-2}
|\widehat s|_{\widehat Y_{\Lambda,\chi}}.
\label{eq:tag-10-227}
\end{equation}
Then
\[
\mathcal K_{\Lambda,\chi}^{\mathrm{hom},0} =
{0}.
\]

\end{lemma}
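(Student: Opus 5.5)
The argument is a short contradiction. Fix $\widehat s\in\mathcal K_{\Lambda,\chi}^{\mathrm{hom},0}$, so that $\operatorname{TailCost}_{\Lambda,\chi}(\widehat s)=0$; the aim is to show $\widehat s=0$. I would combine the separating hypothesis \eqref{eq:tag-10-226} with the coercivity estimate \eqref{eq:tag-10-227}. The infimum in \eqref{eq:tag-10-223} is zero but need not be attained, so I first reduce to an exact zero-cost tail.

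\textbf{Step 1 (attainment).} Fix a representative $s$ of $\widehat s$. The admissible set
\[
\mathcal A(\widehat s)=\{h\in\mathcal H_{\Lambda,\chi}^{\mathrm{cal}}:h=-Z_{\Lambda,\chi}^{\mathrm{can}}\widehat s\text{ on }Q_{\mathrm{sh}}\}
\]
is an affine subspace of the finite-dimensional space $\mathcal H_{\Lambda,\chi}^{\mathrm{cal}}$. It is nonempty, because the cost is finite. The map $h\mapsto|\mathcal O_{\mathrm{tail}}h|$ is a seminorm, and by \eqref{eq:tag-10-226} it is in fact a norm on $\mathcal H_{\Lambda,\chi}^{\mathrm{cal}}$, since $\mathcal O_{\mathrm{tail}}$ is injective. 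Minimizing a norm over a nonempty closed affine subspace of a finite-dimensional space attains its infimum. So there is $h_*\in\mathcal A(\widehat s)$ with $\mathcal O_{\mathrm{tail}}h_*=0$. Equivalently, take a minimizing sequence: it is bounded in this norm, hence precompact, and its limit stays in the closed set $\mathcal A(\widehat s)$.

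\textbf{Step 2 (separation and coercivity).} Applying \eqref{eq:tag-10-226} to $h_*$ gives $h_*=0$ in $Q_{\mathrm{prep}}$, and in particular on $Q_{\mathrm{sh}}$. The cancellation identity \eqref{eq:tag-10-222} then reduces to $Z_{\Lambda,\chi}^{\mathrm{can}}\widehat s=0$ in $Q_{\mathrm{sh}}$, so its $L^3(Q_{\mathrm{sh}})$ norm vanishes. Inserting this into \eqref{eq:tag-10-227}, with $c>0$ and $N_\Lambda<\infty$, gives $|\widehat s|_{\widehat Y_{\Lambda,\chi}}=0$, that is $\widehat s=0$. The inclusion $0\in\mathcal K_{\Lambda,\chi}^{\mathrm{hom},0}$ is immediate: for $\widehat s=0$ the canonical lift is zero by linearity, so $h=0$ is admissible with zero cost.

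\textbf{Main obstacle.} The one delicate point is Step 1, where the infimum must be attained. This needs $\mathcal H_{\Lambda,\chi}^{\mathrm{cal}}$ to be finite-dimensional (as stated) and the constraint ``$=0$ in $Q_{\mathrm{sh}}$'' to define a closed condition. If either failed, I would argue by contradiction instead. Take $h_k\in\mathcal A(\widehat s)$ with $|\mathcal O_{\mathrm{tail}}h_k|\to0$. Using the separating property together with equivalence of norms on the finite-dimensional space, $h_k\to0$ in $L^3(Q_{\mathrm{sh}})$. Since $h_k=-Z_{\Lambda,\chi}^{\mathrm{can}}\widehat s$ on $Q_{\mathrm{sh}}$ for every $k$, this forces $Z_{\Lambda,\chi}^{\mathrm{can}}\widehat s=0$ on $Q_{\mathrm{sh}}$, and coercivity concludes as in Step 2.
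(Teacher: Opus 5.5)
Your argument is correct and is essentially the paper's proof: the paper takes a sequence of cancelling tails whose observed cost tends to zero, extracts a convergent subsequence in the finite-dimensional tail space, and uses the separating property to see that the limit tail vanishes. It then concludes $Z_{\Lambda,\chi}^{\mathrm{can}}\widehat s=0$ on $Q_{\mathrm{sh}}$, hence $\widehat s=0$ by coercivity. Your Step 1 is slightly more careful, since your observation that $h\mapsto|\mathcal O_{\mathrm{tail}}h|$ is a norm on $\mathcal H_{\Lambda,\chi}^{\mathrm{cal}}$ supplies the boundedness of that sequence, which the paper's subsequence extraction uses implicitly.
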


\begin{proof}
Let
\[
\widehat s\in \mathcal K_{\Lambda,\chi}^{\mathrm{hom},0}.
\]
Then there exists a sequence of homogeneous tails \(h_j\) satisfying
\[
Z_{\Lambda,\chi}^{\mathrm{can}}\widehat s+h_j=0
\quad
\text{in }Q_{\mathrm{sh}},
\]
and
\[
|\mathcal O_{\mathrm{tail}}h_j|\to0.
\]
Since the tail space is finite-dimensional, after passing to a subsequence,
\[
h_j\to h
\]
in all finite-window norms. Then
\[
\mathcal O_{\mathrm{tail}}h=0,
\]
so by the separating property,
\[
h=0.
\]
Passing to the limit in the cancellation identity gives
\[
Z_{\Lambda,\chi}^{\mathrm{can}}\widehat s=0
\quad
\text{in }Q_{\mathrm{sh}}.
\]
By localized coercivity,
\[
0 =
|Z_{\Lambda,\chi}^{\mathrm{can}}\widehat s|_{L^3(Q_{\mathrm{sh}})}
\ge
cN_\Lambda^{-2}
|\widehat s|_{\widehat Y_{\Lambda,\chi}}.
\]
Hence
\[
\widehat s=0.
\]
\end{proof}

\begin{definition}[Tail observability constant]
The finite-window tail observability constant is
\begin{equation}
\mathfrak O_{\Lambda,\chi}^{\mathrm{tail}} =
\sup_{0\neq h\in\mathcal H_{\Lambda,\chi}^{\mathrm{cal}}}
\frac{
|h|_{L^3(Q_{\mathrm{sh}})}
}{
|\mathcal O_{\mathrm{tail}}h|_{\mathcal B_{\Lambda,\chi}^{\mathrm{tail}}}
}.
\label{eq:tag-10-228}
\end{equation}
If \(\mathcal O_{\mathrm{tail}}\) is not separating, we set
\[
\mathfrak O_{\Lambda,\chi}^{\mathrm{tail}}=+\infty.
\]
\end{definition}

\begin{proposition}[Tail cancellation forces observable tail cost]
Assume \(\mathfrak O_{\Lambda,\chi}^{\mathrm{tail}}<\infty\) and localized vertical coercivity holds. If
\[
\widehat s\in\mathcal K_{\Lambda,\chi}^{\mathrm{hom}}
\]
and \(h\) cancels the canonical lift:
\[
Z_{\Lambda,\chi}^{\mathrm{can}}\widehat s+h=0
\quad
\text{in }Q_{\mathrm{sh}},
\]
then
\begin{equation}
|\mathcal O_{\mathrm{tail}}h|_{\mathcal B_{\Lambda,\chi}^{\mathrm{tail}}}
\ge
\left(\mathfrak O_{\Lambda,\chi}^{\mathrm{tail}}\right)^{-1}
cN_\Lambda^{-2}
|\widehat s|_{\widehat Y_{\Lambda,\chi}}.
\label{eq:tag-10-229}
\end{equation}
Consequently,
\begin{equation}
\operatorname{TailCost}_{\Lambda,\chi}(\widehat s)
\ge
\left(\mathfrak O_{\Lambda,\chi}^{\mathrm{tail}}\right)^{-1}
cN_\Lambda^{-2}
|\widehat s|_{\widehat Y_{\Lambda,\chi}}.
\label{eq:tag-10-230}
\end{equation}

\end{proposition}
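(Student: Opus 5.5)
The plan is a two-inequality argument: the cancellation identity turns the coercive lower bound on the canonical lift into a lower bound on the tail itself, and the tail observability constant then converts that into a lower bound on the observed tail data. Fix a cleaned $\widehat s\in\mathcal K_{\Lambda,\chi}^{\mathrm{hom}}$ and a homogeneous tail $h\in\mathcal H_{\Lambda,\chi}^{\mathrm{cal}}$ with $Z_{\Lambda,\chi}^{\mathrm{can}}\widehat s+h=0$ in $Q_{\mathrm{sh}}$. If $\widehat s=0$ there is nothing to prove, so assume $\widehat s\neq0$.

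\emph{Step 1: lower bound on the tail.} From the cancellation identity, $h=-Z_{\Lambda,\chi}^{\mathrm{can}}\widehat s$ on $Q_{\mathrm{sh}}$. Hence, by localized vertical coercivity \eqref{eq:tag-10-227},
\[
|h|_{L^3(Q_{\mathrm{sh}})}=|Z_{\Lambda,\chi}^{\mathrm{can}}\widehat s|_{L^3(Q_{\mathrm{sh}})}\ge cN_\Lambda^{-2}|\widehat s|_{\widehat Y_{\Lambda,\chi}}>0 .
\]
In particular $h\neq0$.

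\emph{Step 2: convert to observed tail data.} Because $h\neq0$, $h$ is admissible in the supremum \eqref{eq:tag-10-228}. This gives $|h|_{L^3(Q_{\mathrm{sh}})}\le\mathfrak O_{\Lambda,\chi}^{\mathrm{tail}}\,|\mathcal O_{\mathrm{tail}}h|_{\mathcal B^{\mathrm{tail}}_{\Lambda,\chi}}$.

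We need $\mathcal O_{\mathrm{tail}}h\neq0$, which is where the finiteness hypothesis is used. If $\mathcal O_{\mathrm{tail}}h=0$, the ratio in \eqref{eq:tag-10-228} would be $+\infty$, contradicting $\mathfrak O_{\Lambda,\chi}^{\mathrm{tail}}<\infty$. Equivalently, the convention attached to that definition makes finiteness imply that the observation is separating. We are also using $\mathfrak O_{\Lambda,\chi}^{\mathrm{tail}}>0$, which holds automatically once a nonzero tail exists.

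Dividing by $\mathfrak O_{\Lambda,\chi}^{\mathrm{tail}}$ and combining with Step 1 yields \eqref{eq:tag-10-229}.

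\emph{Step 3: pass to the infimum.} The bound \eqref{eq:tag-10-229} holds for every cancelling tail $h$, and its right-hand side does not depend on $h$. Taking the infimum in the definition \eqref{eq:tag-10-223} therefore gives \eqref{eq:tag-10-230}. If no cancelling tail exists, the cost is $+\infty$ and the bound holds trivially, although membership in $\mathcal K^{\mathrm{hom}}$ already excludes that case.

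The only step needing care is the norm bookkeeping in Step 1. The coercivity estimate is stated for gauge-cleaned representatives, while the cancellation identity is stated in $Q_{\mathrm{sh}}$. So I would check that the $L^3(Q_{\mathrm{sh}})$ norm of $h$ used in \eqref{eq:tag-10-228} is the same norm as in \eqref{eq:tag-10-227}, and that the chosen representative of $\widehat s$ has no gauge component. Given the paper's conventions, both checks are immediate.
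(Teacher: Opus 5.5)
Your proposal is correct and follows the paper's own argument: the cancellation identity together with localized coercivity gives $|h|_{L^3(Q_{\mathrm{sh}})}\ge cN_\Lambda^{-2}|\widehat s|$, the definition of $\mathfrak O^{\mathrm{tail}}_{\Lambda,\chi}$ bounds $|h|_{L^3(Q_{\mathrm{sh}})}$ above by $\mathfrak O^{\mathrm{tail}}_{\Lambda,\chi}|\mathcal O_{\mathrm{tail}}h|$, and taking the infimum over cancelling tails gives the TailCost bound. Your additional checks ($h\neq0$ on $Q_{\mathrm{sh}}$, separation following from finiteness of $\mathfrak O^{\mathrm{tail}}_{\Lambda,\chi}$ via the paper's convention, and the trivial case $\widehat s=0$) only make explicit what the paper leaves implicit.
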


\begin{proof}
The cancellation identity gives
\[
|h|_{L^3(Q_{\mathrm{sh}})} =
|Z_{\Lambda,\chi}^{\mathrm{can}}\widehat s|_{L^3(Q_{\mathrm{sh}})}.
\]
By localized coercivity,
\[
|h|_{L^3(Q_{\mathrm{sh}})}
\ge
cN_\Lambda^{-2}
|\widehat s|_{\widehat Y_{\Lambda,\chi}}.
\]
By definition of \(\mathfrak O_{\Lambda,\chi}^{\mathrm{tail}}\),
\[
|h|_{L^3(Q_{\mathrm{sh}})}
\le
\mathfrak O_{\Lambda,\chi}^{\mathrm{tail}}
|\mathcal O_{\mathrm{tail}}h|.
\]
Combining the two inequalities gives
\[
|\mathcal O_{\mathrm{tail}}h|
\ge
\left(\mathfrak O_{\Lambda,\chi}^{\mathrm{tail}}\right)^{-1}
cN_\Lambda^{-2}
|\widehat s|.
\]
Taking the infimum over all cancelling tails gives the tail-cost bound.
\end{proof}

\begin{theorem}[Homogeneous-tail kernel test]
Let \((\Lambda,\chi)\) be a fixed localized active finite window. Assume:

\begin{enumerate}
\item harmonic-pressure leakage, horizontal gauge, and vertical-zero modes have been removed;
\item localized vertical coercivity holds:
\[
|Z_{\Lambda,\chi}^{\mathrm{can}}\widehat s|_{L^3(Q_{\mathrm{sh}})}
\ge
cN_\Lambda^{-2}|\widehat s|;
\]
\item the tail observation map is separating and has finite observability constant:
\[
\mathfrak O_{\Lambda,\chi}^{\mathrm{tail}}<\infty.
\]
\end{enumerate}

Then homogeneous-tail cancellation cannot produce a zero-cost true phantom. More quantitatively, every nonzero cancellable vector satisfies
\begin{equation}
\operatorname{TailCost}_{\Lambda,\chi}(\widehat s)
\ge
c
\left(\mathfrak O_{\Lambda,\chi}^{\mathrm{tail}}\right)^{-1}
N_\Lambda^{-2}
|\widehat s|.
\label{eq:tag-10-231}
\end{equation}
Thus a homogeneous-tail cancellation is either tail-visible or not admissible. It is not a relaxed-invisible true phantom unless the required tail data are themselves invisible to all selected trace, local energy, and relaxed vertical-pressure observations.

\end{theorem}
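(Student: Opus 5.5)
The plan is to obtain the theorem by chaining the two preceding results, with essentially no new analysis. First, the zero-cost claim. Under hypotheses (2) and (3), the lemma on zero-cost homogeneous tails applies directly. The separating property is exactly what finiteness of \(\mathfrak O_{\Lambda,\chi}^{\mathrm{tail}}\) encodes, by the convention in the definition of the tail observability constant. That lemma gives \(\mathcal K_{\Lambda,\chi}^{\mathrm{hom},0}=\{0\}\), so no nonzero cleaned Schur vector can be cancelled by a tail of zero observable cost. Hypothesis (1) is used only to ensure that we work on the cleaned quotient \(\widehat Y_{\Lambda,\chi}\), where the coercivity estimate is stated and the quotient norm is comparable to the representative norm.

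Second, the quantitative bound. Fix \(\widehat s\in\mathcal K_{\Lambda,\chi}^{\mathrm{hom}}\) with \(\widehat s\neq0\), and let \(h\) be any cancelling tail, so \(\|h\|_{L^3(Q_{\mathrm{sh}})}=\|Z^{\mathrm{can}}_{\Lambda,\chi}\widehat s\|_{L^3(Q_{\mathrm{sh}})}\). Combine three facts:
\begin{itemize}
\item localized coercivity gives \(\|h\|_{L^3(Q_{\mathrm{sh}})}\ge cN_\Lambda^{-2}|\widehat s|\);
\item the definition of \(\mathfrak O^{\mathrm{tail}}_{\Lambda,\chi}\) gives \(\|h\|_{L^3(Q_{\mathrm{sh}})}\le \mathfrak O^{\mathrm{tail}}_{\Lambda,\chi}|\mathcal O_{\mathrm{tail}}h|\);
\item taking the infimum over cancelling \(h\) turns the resulting bound on \(|\mathcal O_{\mathrm{tail}}h|\) into a bound on the tail cost.
\end{itemize}
This is precisely the earlier proposition stating that tail cancellation forces observable tail cost, and it yields \eqref{eq:tag-10-231}. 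If \(\widehat s\notin\mathcal K^{\mathrm{hom}}_{\Lambda,\chi}\), the cost is \(+\infty\) by convention and the bound is trivial.

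Third, the interpretive dichotomy. A cancellable \(\widehat s\neq0\) has strictly positive tail cost, so its tail data \(\mathcal O_{\mathrm{tail}}h\) are nonzero; that is, \(\widehat s\) is tail-visible. The alternative is that no cancelling tail exists, in which case \(\widehat s\) is not admissible as a homogeneous-tail cancellation. The final caveat needs no proof: it only records that invisibility would additionally require \(\mathcal O_{\mathrm{tail}}h\) to escape the selected-trace, local-energy and relaxed vertical-pressure observations, which lies outside the present hypotheses.

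The only delicate point I expect is consistency of norms. The coercivity constant is stated for gauge-cleaned representatives, whereas the tail cost and the observability constant refer to \(L^3(Q_{\mathrm{sh}})\) and \(\mathcal B^{\mathrm{tail}}\). I would check that the cancellation identity is taken in the same cleaned, \(\Pi_{\mathrm{act}}\)-projected sense as \(Z_{\Lambda,\chi}\); otherwise harmonic leakage in \(h\) could be counted in one norm but not the other. I would handle this by assuming the tail space has been projected by \(\Pi_{\mathrm{act}}\) as well, which keeps the constant \(c\) unchanged up to finite-window norm equivalence.
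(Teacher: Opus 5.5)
Your proposal is correct and matches the paper's proof, which likewise invokes the zero-cost homogeneous-tail lemma for the qualitative claim and the tail-cancellation-cost proposition for \eqref{eq:tag-10-231}. Your norm-consistency worry is unnecessary and needs no extra assumption: \(Z^{\mathrm{can}}_{\Lambda,\chi}\) is already gauge-cleaned and the cancellation \(Z^{\mathrm{can}}_{\Lambda,\chi}\widehat s+h=0\) holds as an identity of functions on \(Q_{\mathrm{sh}}\), so \(\|h\|_{L^3(Q_{\mathrm{sh}})}=\|Z^{\mathrm{can}}_{\Lambda,\chi}\widehat s\|_{L^3(Q_{\mathrm{sh}})}\) without projecting the tail space by \(\Pi_{\mathrm{act}}\).
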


\begin{proof}
The zero-cost statement follows from the zero-cost kernel lemma. The quantitative estimate follows from the tail cancellation cost proposition.
\end{proof}

\begin{corollary}[Homogeneous-tail exclusion in the controlled regime]
Assume, in addition, that the tail observability constant has at most logarithmically absorbable growth:
\begin{equation}
\mathfrak O_{\Lambda,\chi}^{\mathrm{tail}}
\le
N_\Lambda^p\exp(CN_\Lambda^q),
\label{eq:tag-10-232}
\end{equation}
and that the window scale is logarithmically admissible relative to \(\delta\):
\begin{equation}
N_\Lambda^q\le \alpha|\log\delta|,
\qquad
\alpha<\frac{b}{2C}.
\label{eq:tag-10-233}
\end{equation}
Then homogeneous-tail cancellation cannot support an NS-admissible moving-window true phantom cascade. Any cancelling tail has observable cost large enough to be detected or absorbed by the trace-cost/relaxed comparison mechanism.
\end{corollary}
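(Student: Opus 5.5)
The statement to be proved is the corollary on homogeneous-tail exclusion in the controlled regime. I will combine the quantitative homogeneous-tail kernel test, \eqref{eq:tag-10-231}, with the logarithmic absorption argument already used in Theorem~\ref{thm:aspect-controlled-moving-window-exclusion}. Fix a window \(\mathfrak W_n=(K_n,\Lambda_n,\chi_n)\) in a putative NS-admissible moving-window cascade, and write \(N_n=N_{\Lambda_n}\). Let \(\widehat s_n\ne0\) be a cleaned cancellable vector with cancelling tail \(h_n\).

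\emph{Step 1: lower bound on the tail cost.} The kernel test gives
\[
|\mathcal O_{\mathrm{tail}}h_n|\ge c\,(\mathfrak O^{\mathrm{tail}}_{\Lambda_n,\chi_n})^{-1}N_n^{-2}|\widehat s_n|.
\]
Inserting the growth hypothesis \eqref{eq:tag-10-232} turns this into
\[
|\mathcal O_{\mathrm{tail}}h_n|\ge c\,N_n^{-2-p}\exp(-CN_n^q)|\widehat s_n|.
\]

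\emph{Step 2: use the window-scale hypothesis.} Since \(N_n^q\le\alpha|\log\delta_n|\) by \eqref{eq:tag-10-233}, we get \(\exp(-CN_n^q)\ge\delta_n^{C\alpha}\ge\delta_n^{b/2}\). Also \(N_n^{-2-p}\) is bounded below by a fixed negative power of \(|\log\delta_n|\). So every cancelling tail carries observable data of size at least \(c\,|\log\delta_n|^{-L}\delta_n^{b/2}|\widehat s_n|\), where \(L\) is fixed.

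\emph{Step 3: split according to the tail cost.} Either the tail data \(\mathcal O_{\mathrm{tail}}h_n\) are seen by the selected-trace, local-energy or relaxed vertical-pressure channels, or they are not.

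In the first case, the tail contributes a visible residual. I will route it through trace-cost duality (Lemma~\ref{lem:trace-cost-duality}) together with the single-exponential bound \(\|A^\dagger_{\mathfrak W_n}\|\le C_0N_n^p\exp(C_0N_n^q)\). As in the proof of Theorem~\ref{thm:aspect-controlled-moving-window-exclusion}, this yields a correction of size \(O(N_n^{p'}\delta_n^{b/2})=o(|\log\delta_n|^{-L})\). That is below the failed-selection gap \(m_n^{\mathrm{tr}}\gg\ell_n^\mu+\ell_n^{-N}\delta_n^b\), so the branch is not obstructive. Alternatively, the relaxed part is controlled by \(\delta_n^{1/3}\) through \eqref{eq:Schur-rel-bound}.

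In the second case, the tail is inadmissible, because its data would have to vanish. By the separating property \eqref{eq:tag-10-226} and the zero-cost kernel lemma, this forces \(\widehat s_n=0\).

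Either way, no cleaned, relaxed-invisible, NS-admissible vector survives with an obstructive selected-trace gap. This contradicts Definition~\ref{def:moving-window-phantom-cascade}.

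\emph{Main obstacle.} The delicate point is Step 3: turning a lower bound on \(|\mathcal O_{\mathrm{tail}}h_n|\) into an actual pairing lower bound with the trace or relaxed observables. The definition of \(\mathcal O_{\mathrm{tail}}\) is flexible (boundary trace, selected-time trace, energy flux), so I would first fix \(\mathcal O_{\mathrm{tail}}\) to be the selected-time trace plus the relaxed observation. Then the transfer is tautological, and the remaining work is only the exponent bookkeeping \(C\alpha<b/2\) already carried out in Step 2.
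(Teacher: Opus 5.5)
Your Steps 1 and 2 are exactly the paper's proof. You insert \(\mathfrak O^{\mathrm{tail}}_{\Lambda,\chi}\le N_\Lambda^p\exp(CN_\Lambda^q)\) into the kernel-test bound to get \(\operatorname{TailCost}_{\Lambda,\chi}(\widehat s)\ge cN_\Lambda^{-(p+2)}\exp(-CN_\Lambda^q)|\widehat s|\). You then use \(N_\Lambda^q\le\alpha|\log\delta|\) to bound the exponential loss below by \(\delta^{C\alpha}\), with \(C\alpha<b/2\). The paper stops there. It concludes directly that a cancelling tail carries observable cost at least of order \(|\log\delta|^{-L}\delta^{b/2}|\widehat s|\), which is compatible with the logarithmic optimization, so the tail cannot stay invisible at the selected-trace scale. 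It makes no case split and never invokes \(A^\dagger\).

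\textbf{First case of Step 3.} This is where the argument goes wrong. You import the absorption from the aspect-controlled moving-window theorem, but that needs inputs the corollary does not give you. First, it needs the single-exponential bound \(\|A^\dagger_{\mathfrak W_n}\|\le C_0N_n^p\exp(C_0N_n^q)\) from the benign-constants assumption, together with \(C_0\alpha<b/2\). You are only given \(C\alpha<b/2\), where \(C\) is the tail-observability exponent. Second, it needs an upper bound \(O(\mathrm{poly}(N_n)\,\delta_n^b)\) on whatever is being corrected, and nothing supplies this for the tail. The only quantitative information about \(h_n\) points the wrong way for an absorption estimate: the Step~2 lower bound is of size roughly \(\delta_n^{b/2}\gg\delta_n^b\), and \(\|h_n\|_{L^3(Q_{\mathrm{sh}})}=\|Z^{\mathrm{can}}_{\Lambda_n,\chi_n}\widehat s_n\|\ge cN_n^{-2}|\widehat s_n|\). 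Notice also that this case never uses Step 2, so the hypothesis on \(\mathfrak O^{\mathrm{tail}}\) does no work in it.

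\textbf{Second case of Step 3.} Invisibility to the selected-trace, local-energy and relaxed channels does not give \(\mathcal O_{\mathrm{tail}}h_n=0\). The separating property and the zero-cost lemma concern \(\mathcal O_{\mathrm{tail}}\), not those channels. If you identify \(\mathcal O_{\mathrm{tail}}\) with the channels, as you suggest at the end, this case is empty by Step 2 and contributes nothing. The identification is also not free: \(J^{\mathrm{rel}}\) acts on cleaned Schur vectors, not on caloric tails, and the separation and growth hypotheses are stated for the given \(\mathcal O_{\mathrm{tail}}\). The paper does not attempt this transfer inside the corollary. It treats \(\mathcal O_{\mathrm{tail}}\)-data (selected-time trace, local energy flux, boundary trace) as observable by definition. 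It then lists ``tail data not counted by the selected-time, local-energy or relaxed observables'' as a separate surviving mechanism in the reduced-target proposition that follows.

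\textbf{Fix.} Drop the case split and close from Step 2 alone. In the paper's reading, any cancelling tail has observable cost at least \(c|\log\delta_n|^{-L}\delta_n^{b/2}|\widehat s_n|\). This is far above the \(\delta_n^b\) scale at which the envelope's residual observables live, so an order-one cleaned \(\widehat s_n\) cannot be carried by an invisible tail.
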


\begin{proof}
By the homogeneous-tail kernel test,
\[
\operatorname{TailCost}_{\Lambda,\chi}(\widehat s)
\ge
c
N_\Lambda^{-2}
\left(\mathfrak O_{\Lambda,\chi}^{\mathrm{tail}}\right)^{-1}
|\widehat s|.
\]
Using the growth bound,
\[
\left(\mathfrak O_{\Lambda,\chi}^{\mathrm{tail}}\right)^{-1}
\ge
N_\Lambda^{-p}\exp(-CN_\Lambda^q).
\]
Thus
\[
\operatorname{TailCost}_{\Lambda,\chi}(\widehat s)
\ge
cN_\Lambda^{-(p+2)}
\exp(-CN_\Lambda^q)
|\widehat s|.
\]
When
\[
N_\Lambda^q\le \alpha|\log\delta|,
\]
the exponential loss is at worst
\[
\exp(-CN_\Lambda^q)
\ge
\delta^{C\alpha}.
\]
With \(\alpha<b/(2C)\), this loss is compatible with the same logarithmic optimization used in the finite-window reduction. Therefore the cancelling tail cannot remain invisible at the selected trace scale while also preserving the \(C_3\)-budget.
\end{proof}

\begin{definition}[Residual homogeneous-tail phantom]
A residual homogeneous-tail phantom is a sequence
\[
(\Lambda_n,\chi_n,\widehat s_n,h_n)
\]
such that:

\[
0\neq \widehat s_n\in\widehat Y_{\Lambda_n,\chi_n},
\]
\[
Z_{\Lambda_n,\chi_n}^{\mathrm{can}}\widehat s_n+h_n=0
\quad
\text{in }Q_{\mathrm{sh}},
\]
\[
(\partial_t-\Delta)h_n=0
\quad
\text{in }Q_{\mathrm{prep}},
\]
and
\begin{equation}
|\mathcal O_{\mathrm{tail}}h_n|
=o\!\left(
N_{\Lambda_n}^{-2}|\widehat s_n|
\right).
\label{eq:tag-10-234}
\end{equation}
Such a sequence can exist only if the tail observability constants degenerate beyond the logarithmically absorbable scale, or if the chosen tail observation map fails to capture the homogeneous data relevant to Navier--Stokes realizability.
\end{definition}

\begin{proposition}[Reduced target after the homogeneous-tail test]
After harmonic-gauge cleaning, localized coercivity, and homogeneous-tail testing, a localized true phantom cascade can survive only if at least one of the following holds:

\begin{enumerate}
\item the tail observation map is not separating;
\item the tail observability constant grows faster than every logarithmically absorbable majorant;
\item the cancelling homogeneous tail has boundary/trace data that are not counted by the selected-time, local energy, or relaxed vertical-pressure observables;
\item the cancellation is not compatible with the finite-stage Navier--Stokes deformation complex;
\item the cancellation is compatible with the deformation complex but creates a singular collapse of the trace-cost map.
\end{enumerate}

If none of these occurs, the homogeneous-tail kernel is harmless.
\end{proposition}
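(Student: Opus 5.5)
The statement is a case analysis, and the plan is to prove it by contraposition: assume none of the five listed failures occurs, and show that the homogeneous-tail kernel is harmless in the sense of the preceding homogeneous-tail kernel test. Concretely, \(\widehat s\) should then have no zero-cost cancellation, every cancellation should carry tail cost with logarithmically absorbable loss, and that cost should be registered by the trace-cost/relaxed mechanism. The work consists of showing that each hypothesis of the homogeneous-tail kernel test, and of its controlled-regime corollary, is exactly the negation of one listed item.

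\textbf{Step 1: tail-cost lower bound.} Negating items (i) and (ii) gives a separating tail observation \(\mathcal O_{\mathrm{tail}}\) and a bound on \(\mathfrak O_{\Lambda,\chi}^{\mathrm{tail}}\) of the form \eqref{eq:tag-10-232}. The localized coercivity \eqref{eq:tag-10-227} is available because the statement is posed after localized coercivity has been imposed, with the perturbative bound \eqref{eq:tag-10-215} in force. Three consequences then follow from results already proved:
\begin{itemize}
\item the zero-cost kernel lemma gives \(\mathcal K^{\mathrm{hom},0}_{\Lambda,\chi}=\{0\}\);
\item the tail cancellation cost proposition gives \eqref{eq:tag-10-231};
\item the homogeneous-tail exclusion corollary, with the logarithmic window scale \eqref{eq:tag-10-233}, shows that the loss \(\exp(-CN_\Lambda^q)\ge\delta^{C\alpha}\) is absorbable.
\end{itemize}
Together these exclude any residual homogeneous-tail phantom in the sense of \eqref{eq:tag-10-234}: such a sequence would force \(\mathfrak O^{\mathrm{tail}}\) to exceed every admissible majorant, which contradicts item (ii) failing.

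\textbf{Step 2: the tail cost is actually seen.} A positive \(\operatorname{TailCost}\) helps only if it appears in the comparison. The negation of item (iii) says the tail data are registered by the selected-time trace, the local energy inequality, or the relaxed vertical-pressure observation. The negation of item (iv) puts the cancelling tail inside \(\mathcal A_K^{\mathrm{NS}}(V,Q)\), so the finite-stage NS-realizability filter applies. The negation of item (v) leaves \(\|A^\dagger_{\mathfrak W}\|\) within the single-exponential regime of Assumption~\ref{ass:benign-moving-window-constants}. I then split the tail contribution into a trace-visible part, handled by Lemma~\ref{lem:trace-cost-duality} with cost \(O(r_n^2)\), and a relaxed-visible part, bounded by \(\delta^{1/3}\) times a logarithmic loss as in the proof of the aspect-controlled moving-window exclusion. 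Either way the tail lies below the failed-selection gap \(m_n^{\mathrm{tr}}\gg\ell_n^\mu+\ell_n^{-N}\delta_n^b\). This is exactly the statement that the kernel is harmless.

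\textbf{Main obstacle.} The hard step is Step 2, specifically turning "the tail data are counted by some observable" (the negation of item (iii)) into a quantitative estimate at the branch-native scale. The danger is that the observable counting the tail, for instance the lateral boundary trace, is not one of the channels \(A_n^*\) or \(J_n^{\mathrm{rel}}P_n\) that enter the combined-observability estimate. My first approach would be to define \(\mathcal O_{\mathrm{tail}}\) as the direct sum of the selected-trace map and the relaxed observation restricted to \(\mathcal H^{\mathrm{cal}}_{\Lambda,\chi}\). Item (iii) failing then literally means the tail is bounded below in the combined observation, and the argument reduces to the controlled-regime anti-phantom closure theorem. If that identification is too strong, I would record the conclusion conditionally on it, consistent with the paper's convention for reductions.
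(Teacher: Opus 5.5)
Your contrapositive is essentially the paper's own proof. Negating items 1--2 lets the homogeneous-tail kernel test, with the standing localized coercivity, give a positive and logarithmically absorbable tail cost. The paper then asserts in a single sentence that this cost is visible in the selected-trace/local-energy channel or absorbed by the relaxed comparison; that sentence is where the negations of items 3--5 implicitly enter, and it is exactly the soft step you flag.

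One small correction: the bound \eqref{eq:tag-10-232} does not exclude sequences as in \eqref{eq:tag-10-234} when \(N_\Lambda\to\infty\). Such a sequence only forces \(\mathfrak O^{\mathrm{tail}}_{\Lambda,\chi}\to\infty\), which a single-exponential majorant permits. You do not need that claim, however, because harmlessness only requires the absorbable lower bound on \(\operatorname{TailCost}_{\Lambda,\chi}\).
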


\begin{proof}
If the tail observation is separating and has controlled observability constant, the homogeneous-tail kernel test gives a positive tail cost for every nonzero cancellation. This cost is either visible in the selected trace/local energy channel or absorbable through the relaxed comparison mechanism. Hence no true phantom remains. Therefore any surviving phantom must violate one of the listed conditions.
\end{proof}

\begin{remark}[Interpretation]
The homogeneous-tail kernel is not a new free obstruction. A homogeneous caloric tail that cancels the vertical lift in the interior must come from somewhere: initial data, lateral boundary data, terminal data, or an admissible finite-stage tail of the Navier--Stokes deformation. If those data are observable, the cancellation is not invisible. If they are not observable, then the problem has been reduced to a precise boundary/trace observability defect. Thus the homogeneous-tail test converts the vague possibility of ``homogeneous cancellation'' into a concrete finite-dimensional observability test.
\end{remark}

\subsection{Trace-cost singular collapse test}

We now test the final possible escape mechanism: singular collapse of the active trace-cost map. The previous steps show that active Schur residuals are relaxed-visible, that vertical lift cost is coercive after harmonic-gauge cleaning, and that homogeneous-tail cancellation is observable unless a finite-dimensional tail observability defect remains. Therefore the last possible obstruction is not invisibility of the vertical pressure channel. It is the possibility that the selected-time trace-defect map has very small singular values, so that even a visible residual might require a prohibitively large trace correction.

Let
\[
\mathfrak W=(K,\Lambda,\chi)
\]
be a cleaned finite-stage localized window. Let
\[
A_{\mathfrak W}:H_{\mathfrak W}\to Y_{\mathfrak W}
\]
be the active selected-time trace-defect map, where \(H_{\mathfrak W}\) is the selected trace correction space and \(Y_{\mathfrak W}\) is the reduced active quotient. For a residual
\[
g_{\mathfrak W}\in Y_{\mathfrak W},
\]
the trace cost is
\[
\operatorname{Cost}^{\operatorname{tr}}_{\mathfrak W}(g_{\mathfrak W}) =
\inf\left\{ |\xi|_{H_{\mathfrak W}}^2 : A_{\mathfrak W}\xi=-g_{\mathfrak W} \right\}.
\]
If \(g_{\mathfrak W}\notin \operatorname{Range}A_{\mathfrak W}\), the cost is \(+\infty\).

Let
\[
A_{\mathfrak W} =
\sum_{j=1}^{r_{\mathfrak W}}\sigma_j u_j\otimes v_j
\]
be a singular-value decomposition on the active range, with
\[
\sigma_1\ge\cdots\ge\sigma_{r_{\mathfrak W}}>0.
\]
Here \(u_j\in Y_{\mathfrak W}\) are left singular vectors and \(v_j\in H_{\mathfrak W}\) are right singular vectors. The phantom cokernel is
\[
\ker A_{\mathfrak W}^* =
(\operatorname{Range}A_{\mathfrak W})^\perp.
\]

For a range-valued residual
\[
g_{\mathfrak W}=\sum_{j=1}^{r_{\mathfrak W}}g_j u_j,
\]
one has
\begin{equation}
\operatorname{Cost}^{\operatorname{tr}}_{\mathfrak W}(g_{\mathfrak W}) =
\sum_{j=1}^{r_{\mathfrak W}}
\frac{|g_j|^2}{\sigma_j^2}.
\label{eq:tag-10-235}
\end{equation}
Thus small singular values are dangerous only if the residual has non-negligible projection onto the corresponding left singular directions.

\begin{definition}[Trace-cost singular collapse]
A moving-window sequence
\[
\mathfrak W_n=(K_n,\Lambda_n,\chi_n)
\]
has trace-cost singular collapse if
\[
\sigma_{\min}^+(A_{\mathfrak W_n})
\to0
\]
so fast that
\[
|A_{\mathfrak W_n}^{\dagger}| =
\frac1{\sigma_{\min}^+(A_{\mathfrak W_n})}
\]
is not bounded by any logarithmically absorbable majorant, for example by any estimate of the form
\[
|A_{\mathfrak W_n}^{\dagger}|
\le
N_n^p\exp(CN_n^q)
\]
on logarithmically admissible windows.
\end{definition}

\begin{definition}[Residual alignment with small singular directions]
Let \(r_{\mathfrak W}\) be the branch-native residual scale. We say that \(g_{\mathfrak W}\) satisfies \emph{global residual alignment} with the trace singular structure if
\begin{equation}
|\langle g_{\mathfrak W},y\rangle|
\le
r_{\mathfrak W}|A_{\mathfrak W}^*y|_{H_{\mathfrak W}}
\qquad
\text{for every }y\in Y_{\mathfrak W}.
\label{eq:tag-10-236}
\end{equation}
If
\[
A_{\mathfrak W}=\sum_{j=1}^{r_{\mathfrak W}^{\operatorname{rank}}}\sigma_j u_j\otimes v_j,
\qquad
 g_{\mathfrak W}=\sum_j g_j u_j,
\]
then global residual alignment is equivalent to
\begin{equation}
\left(\sum_j\frac{|g_j|^2}{\sigma_j^2}\right)^{1/2}
\le r_{\mathfrak W}.
\label{eq:tag-10-237}
\end{equation}
The weaker componentwise condition
\[
|g_j|\le r_{\mathfrak W}\sigma_j
\qquad\text{for every }j
\]
will be called \emph{componentwise alignment}; it is not equivalent to \eqref{eq:tag-10-236} unless the rank is uniformly bounded.
\end{definition}

\begin{lemma}[Small singular values are harmless under residual alignment]\label{lem:singular-values-harmless-corrected}
If \(g_{\mathfrak W}\) satisfies global residual alignment, then
\[
\operatorname{Cost}^{\operatorname{tr}}_{\mathfrak W}(g_{\mathfrak W})
\le
r_{\mathfrak W}^2.
\]
If only the componentwise alignment condition
\[
|g_j|\le r_{\mathfrak W}\sigma_j
\]
holds for all singular directions, then the weaker bound
\[
\operatorname{Cost}^{\operatorname{tr}}_{\mathfrak W}(g_{\mathfrak W})
\le
r_{\mathfrak W}^2 r_{\mathfrak W}^{\operatorname{rank}},
\qquad
r_{\mathfrak W}^{\operatorname{rank}}=\operatorname{rank}A_{\mathfrak W},
\]
holds.
\end{lemma}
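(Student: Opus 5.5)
The plan is to reduce both claims to the singular-value formula \eqref{eq:tag-10-235}. By Lemma~\ref{lem:trace-cost-duality}, global residual alignment \eqref{eq:tag-10-236} with $r=r_{\mathfrak W}$ is exactly condition (ii) of that lemma. It therefore gives $g_{\mathfrak W}\in\Range A_{\mathfrak W}$ and $\Cost^{\trc}_{A_{\mathfrak W}}(g_{\mathfrak W})\le r_{\mathfrak W}^2$ directly, and the first claim needs no singular-value bookkeeping.

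For completeness, I would also verify the equivalence with \eqref{eq:tag-10-237} stated in the definition. First test \eqref{eq:tag-10-236} on $y\in\Ker A_{\mathfrak W}^*$: this kills any component of $g_{\mathfrak W}$ outside $\operatorname{span}\{u_j\}$, so $g_{\mathfrak W}=\sum_j g_ju_j$. The minimal-norm preimage of $-g_{\mathfrak W}$ is then $\xi_0=-\sum_j (g_j/\sigma_j)v_j$, because the $v_j$ are orthonormal and span $(\Ker A_{\mathfrak W})^\perp$. This yields \eqref{eq:tag-10-235} with $\|\xi_0\|^2=\sum_j|g_j|^2/\sigma_j^2$.

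For the componentwise claim, I would first observe that the hypothesis only concerns the coefficients $g_j$ along left singular directions. So one must assume, as the definition implicitly does, that $g_{\mathfrak W}=\sum_j g_ju_j$ lies in $\Range A_{\mathfrak W}$. Otherwise the cost is $+\infty$ and the statement fails, so I would state this range hypothesis explicitly. Granting it, \eqref{eq:tag-10-235} and $|g_j|\le r_{\mathfrak W}\sigma_j$ give
\[
\Cost^{\trc}_{\mathfrak W}(g_{\mathfrak W})=\sum_{j=1}^{\operatorname{rank}A_{\mathfrak W}}\frac{|g_j|^2}{\sigma_j^2}\le \sum_{j=1}^{\operatorname{rank}A_{\mathfrak W}} r_{\mathfrak W}^2=r_{\mathfrak W}^2\,r^{\operatorname{rank}}_{\mathfrak W}.
\]

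The only real obstacle is this range-membership caveat in the componentwise case. The rest is finite-dimensional linear algebra. The rank factor is sharp, as one sees by taking $|g_j|=r_{\mathfrak W}\sigma_j$ for all $j$. This explains why componentwise alignment is not equivalent to global alignment without a uniform rank bound.
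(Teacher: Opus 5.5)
Your proposal is correct. For the componentwise claim it is exactly the paper's argument: insert $|g_j|\le r_{\mathfrak W}\sigma_j$ into the singular-value formula \eqref{eq:tag-10-235} and bound each of the $\operatorname{rank}A_{\mathfrak W}$ summands by $r_{\mathfrak W}^2$.

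For the global claim the routes differ. The paper also just reads the bound off \eqref{eq:tag-10-235}, treating the global condition as \eqref{eq:tag-10-237}. That step rests on the equivalence \eqref{eq:tag-10-236}$\Leftrightarrow$\eqref{eq:tag-10-237}, which the definition asserts without proof. You instead apply Lemma~\ref{lem:trace-cost-duality} directly to \eqref{eq:tag-10-236}, so range membership and the cost bound come from a result already proved in the paper. Your side computation of the minimal-norm preimage $\xi_0=-\sum_j (g_j/\sigma_j)v_j$ also justifies both \eqref{eq:tag-10-235} and that equivalence. So your route is slightly more self-contained, while the paper's is shorter and handles both cases through one formula.

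Your range caveat for the componentwise case is right. The paper builds it in implicitly by writing $g_{\mathfrak W}=\sum_j g_j u_j$, a ``range-valued residual''; without it the cost could be $+\infty$ while every $|g_j|\le r_{\mathfrak W}\sigma_j$ holds. The sharpness example $|g_j|=r_{\mathfrak W}\sigma_j$ is a correct addition.
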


\begin{proof}
Using the singular expansion,
\[
\operatorname{Cost}^{\operatorname{tr}}_{\mathfrak W}(g_{\mathfrak W}) =
\sum_{j=1}^{r_{\mathfrak W}^{\operatorname{rank}}}
\frac{|g_j|^2}{\sigma_j^2}.
\]
The global condition \eqref{eq:tag-10-237} is exactly the assertion that this sum is at most \(r_{\mathfrak W}^2\).  Under the weaker componentwise condition, each summand is bounded by \(r_{\mathfrak W}^2\), hence the sum is bounded by \(r_{\mathfrak W}^2 r_{\mathfrak W}^{\operatorname{rank}}\).  This proves both claims.
\end{proof}

\begin{definition}[Unaligned trace-cost defect]
An unaligned trace-cost defect is a sequence
\[
(\mathfrak W_n,g_n,y_n)
\]
such that
\[
|y_n|_{Y_{\mathfrak W_n}}=1,
\]
\[
|A_{\mathfrak W_n}^*y_n|\to0,
\]
but
\begin{equation}
|\langle g_n,y_n\rangle|
\gg
r_n|A_{\mathfrak W_n}^*y_n|,
\label{eq:tag-10-238}
\end{equation}
where \(r_n\) is the branch-native residual scale. Thus \(g_n\) has too much mass in a nearly trace-invisible left singular direction.
\end{definition}

The next observation connects this definition with the previous phantom analysis.

\begin{proposition}[Unaligned defect implies a near-phantom direction]
Let
\[
(\mathfrak W_n,g_n,y_n)
\]
be an unaligned trace-cost defect. Then, after passing to a subsequence and projecting away range-visible directions, \(y_n\) determines a near-phantom dual direction:
\[
|A_{\mathfrak W_n}^*y_n|\to0.
\]
If the residual representation
\begin{equation}
\langle g_n,y_n\rangle =
\langle V_n,B_{\mathfrak W_n}^*y_n\rangle
+
\operatorname{Comm}_n(y_n)
\label{eq:tag-10-239}
\end{equation}
holds, then one of the following must occur:

\begin{enumerate}
\item the vertical-pressure image \(B_{\mathfrak W_n}^*y_n\) is relaxed-visible and its pairing is controlled by the small \(u_3\)-budget;
\item the commutator term is nonperturbative;
\item the homogeneous-tail or harmonic-gauge leakage is nonperturbative;
\item \(y_n\) generates a genuine relaxed-invisible true phantom direction.
\end{enumerate}
\end{proposition}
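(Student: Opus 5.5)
The argument is a case analysis on the residual representation \eqref{eq:tag-10-239}, evaluated along the normalized near-phantom sequence \(y_n\). Start by pinning down the geometry of \(y_n\). Because \(|y_n|=1\) and \(|A_{\mathfrak W_n}^*y_n|\to0\), the vector \(y_n\) is a near-phantom dual direction. In each finite window, write
\[
y_n=y_n^{\rm ph}+y_n^{\rm vis},\qquad y_n^{\rm ph}=P_{\ker A_{\mathfrak W_n}^*}y_n,
\]
and split the visible part along left singular vectors. The visible part then satisfies \(|y_n^{\rm vis}|\le|A_{\mathfrak W_n}^*y_n|/\sigma_{\min}^+\), together with the componentwise control \(\sigma_j|\langle y_n,u_j\rangle|\le|A^*y_n|\) on every range-visible direction.

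Next, project away the directions on which \(g_n\) is controlled. In the singular basis the pairing is
\[
\langle g_n,y_n\rangle=\sum_j g_j\overline{\langle y_n,u_j\rangle}+\langle g_n,y_n^{\rm ph}\rangle .
\]
Remove every index \(j\) with \(|g_j|\lesssim r_n\sigma_j\); this is exactly where Lemma~\ref{lem:singular-values-harmless-corrected} says alignment holds. On the remaining indices, \eqref{eq:tag-10-238} forces mass onto directions with \(\sigma_j\to0\). Pass to a subsequence and renormalize the component of \(y_n\) on those directions together with its phantom part. The result is a unit vector \(\tilde y_n\) with \(|A^*\tilde y_n|\to0\) whose pairing with \(g_n\) is still \(\gg r_n|A^*\tilde y_n|\). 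This \(\tilde y_n\) is the ``near-phantom dual direction'' of the statement.

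Now insert \(\tilde y_n\) into \eqref{eq:tag-10-239}. The pairing is non-negligible, so at least one of the two terms \(\langle V_n,B^*_{\mathfrak W_n}\tilde y_n\rangle\) and \(\operatorname{Comm}_n(\tilde y_n)\) carries it.

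\emph{Commutator term.} If the commutator carries the pairing, the commutator is not \(O(r_n|A^*\tilde y_n|)\). That contradicts the perturbative bounds of Lemma~\ref{lem:finite-window-commutator-bound} and Theorem~\ref{thm:localized-commutator-stability}, unless we are in a nonperturbative cutoff regime, which is alternative 2. If instead its failure comes from harmonic-pressure or homogeneous-tail leakage, we land in alternative 3, exactly as isolated by the gauge-cleaning dichotomy (Proposition~\ref{prop:gauge-cleaning-dichotomy}) and the homogeneous-tail kernel test.

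\emph{Vertical term.} Suppose the vertical term carries the pairing. Pass to the cleaned quotient using Lemma~\ref{lem:well-defined-cleaned-relaxed-observation} and look at \(\widehat J^{\rm rel}P_n\tilde y_n\). If its norm stays bounded below along a subsequence, Hölder's inequality as in \eqref{eq:Schur-rel-bound} controls the vertical pairing by \(\delta_n^{1/3}\) times finite-window constants; this is alternative 1. If its norm tends to zero, then \(\tilde y_n\) is simultaneously strict-invisible and cleaned relaxed-invisible. That is a relaxed-invisible direction in the sense of Definition~\ref{def:clean-localized-true-phantom} and the moving-window candidate definition, namely alternative 4.

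Since every branch ends in one of the four alternatives, the alternatives are exhaustive.

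The hard part is the renormalization step: making sure the discarded visible components do not themselves carry the pairing when \(\sigma_{\min}^+\) collapses and the rank grows. Lemma~\ref{lem:singular-values-harmless-corrected} warns that componentwise alignment loses a rank factor. My first attempt would be a dyadic grouping of singular values. On each dyadic block I would apply the global alignment criterion \eqref{eq:tag-10-237}, then choose the block that violates it maximally. Its normalized left-singular combination serves as \(\tilde y_n\), and this keeps the unalignment ratio intact up to a logarithmic factor in the rank.
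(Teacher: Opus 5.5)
Your three-way split of \eqref{eq:tag-10-239} is exactly the paper's proof. The commutator carries the pairing (alternatives 2 and 3), or the vertical term carries it with the cleaned relaxed observation bounded below (alternative 1), or the relaxed observation degenerates as well (alternative 4). The paper's proof consists of nothing more than this case split, applied to $y_n$ itself.

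Where you depart is the preliminary renormalization to $\tilde y_n$, and that step is both unnecessary and not established. It is unnecessary because the definition of an unaligned trace-cost defect already gives $|y_n|=1$, $|A_{\mathfrak W_n}^*y_n|\to0$ and \eqref{eq:tag-10-238}. So the near-phantom claim is a restatement of the hypothesis, and the alternatives, which are phrased in terms of $y_n$, can be tested on $y_n$ directly.

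It is not established, for three reasons:
\begin{itemize}
\item Discarding the componentwise-aligned indices changes the pairing by up to $r_n\bigl(\#\{\text{discarded }j\}\bigr)^{1/2}|A_{\mathfrak W_n}^*y_n|$ (Cauchy--Schwarz). This can absorb an unalignment ratio that diverges more slowly than $(\operatorname{rank}A_{\mathfrak W_n})^{1/2}$.
\item Your dyadic repair loses a factor governed by the number of blocks, that is, by $\log(\sigma_1/\sigma_{\min}^+)$ rather than by the rank. That quantity is unbounded exactly in the singular-collapse regime this subsection studies.
\item Renormalizing by the norm of the retained piece can destroy $|A_{\mathfrak W_n}^*\tilde y_n|\to0$ when that piece is small.
\end{itemize}
Drop the step and run your case analysis on $y_n$.

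A smaller point concerns the commutator branch. Lemma~\ref{lem:finite-window-commutator-bound} and Theorem~\ref{thm:localized-commutator-stability} bound the leakage operator $\mathcal C_{\Lambda,R}$ inside the relaxed observation map. They do not bound $\operatorname{Comm}_n(y)$ against $r_n|A_{\mathfrak W_n}^*y|$, so they supply no contradiction there. In that branch ``perturbative'' has to mean an error bound of the type \eqref{eq:tag-10-241}. The branch is then a definitional dichotomy, which is also how the paper's proof treats it.
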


\begin{proof}
The condition
\[
|A_{\mathfrak W_n}^*y_n|\to0
\]
says that \(y_n\) is asymptotically invisible to selected-time trace corrections. The residual representation expresses its residual pairing through the vertical-pressure/vertical-momentum channel, plus commutators. If the vertical channel is relaxed-visible and commutators are perturbative, then the pairing is bounded by the small \(u_3\)-budget and cannot violate the alignment condition. If the pairing does violate alignment, then either a perturbative estimate failed, or the vertical image is also invisible. The latter is exactly a relaxed true phantom direction.
\end{proof}

We now formulate the trace-cost singular collapse exclusion principle.

\begin{assumption}[Adjoint residual alignment]
For every cleaned finite-stage localized window \(\mathfrak W\), every NS-derived surviving residual \(g_{\mathfrak W}\), and every dual vector \(y\in Y_{\mathfrak W}\), one has
\begin{equation}
|\langle g_{\mathfrak W},y\rangle|
\le
r_{\mathfrak W}|A_{\mathfrak W}^*y|
+
\mathcal E_{\mathfrak W}(y),
\label{eq:tag-10-240}
\end{equation}
where \(\mathcal E_{\mathfrak W}\) consists only of relaxed-visible, commutator, harmonic-tail, and higher-order errors. Moreover, after Steps 1--13, these errors satisfy
\begin{equation}
|\mathcal E_{\mathfrak W}(y)|
\le
r_{\mathfrak W}|A_{\mathfrak W}^*y|
\label{eq:tag-10-241}
\end{equation}
in the controlled regime.
\end{assumption}

\begin{theorem}[Trace-cost singular collapse exclusion]
Assume adjoint residual alignment. Then trace-cost singular collapse of
\[
A_{\mathfrak W_n}
\]
does not produce a moving-window true phantom cascade.

More precisely, for every NS-derived surviving residual \(g_n\),
\begin{equation}
\operatorname{Cost}^{\operatorname{tr}}_{\mathfrak W_n}(g_n)
\le
C r_n^2.
\label{eq:tag-10-242}
\end{equation}
If one proves only the weaker componentwise alignment, the same conclusion holds with an additional factor \(\operatorname{rank}A_{\mathfrak W_n}\). Hence small singular values of the trace map do not by themselves create arbitrary-slow selected-time trace convergence; they are dangerous only when the NS residual is unaligned with the small singular directions.
\end{theorem}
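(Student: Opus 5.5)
The plan is to reduce the statement to finite-dimensional trace-cost duality, Lemma~\ref{lem:trace-cost-duality}, applied window by window. The assumption of adjoint residual alignment is already a dual bound with an error term. Once that error term is absorbed, the remaining work is uniformity in $n$, so that no constant depending on $\sigma_{\min}^+(A_{\mathfrak W_n})$ enters.

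\emph{Step 1.} Fix $n$ and an NS-derived surviving residual $g_n$. Combine \eqref{eq:tag-10-240} with the controlled-regime error bound \eqref{eq:tag-10-241} to get
\[
|\langle g_n,y\rangle|\le 2r_n\|A_{\mathfrak W_n}^*y\|_{H_{\mathfrak W_n}}\qquad\text{for every } y\in Y_{\mathfrak W_n}.
\]
The constant $2$ is independent of $n$ and of the singular values of $A_{\mathfrak W_n}$. This independence is the only point where the structure of the assumption matters.

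\emph{Step 2.} Apply Lemma~\ref{lem:trace-cost-duality} with $r=2r_n$. Taking $y\in\Ker A_{\mathfrak W_n}^*$ gives $\langle g_n,y\rangle=0$, so $g_n\in\Range A_{\mathfrak W_n}$. The lemma then gives $\Cost^{\trc}_{\mathfrak W_n}(g_n)\le 4r_n^2$, which is \eqref{eq:tag-10-242} with $C=4$.

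Equivalently, in the singular expansion \eqref{eq:tag-10-235}, testing against $y=u_j$ gives $|g_j|\le 2r_n\sigma_j$. Testing against $y=\sum_j (g_j/\sigma_j^2)u_j$ gives the global bound \eqref{eq:tag-10-237}. So the collapse of $\sigma_{\min}^+$ never appears in the final constant.

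\emph{Step 3.} If only componentwise alignment $|g_j|\le Cr_n\sigma_j$ is available, invoke the second half of Lemma~\ref{lem:singular-values-harmless-corrected}. This bounds the cost by $C^2r_n^2\operatorname{rank}A_{\mathfrak W_n}$, which is the stated weaker conclusion.

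\emph{Step 4.} Deduce that singular collapse cannot produce a cascade. An unaligned trace-cost defect \eqref{eq:tag-10-238} would require a sequence $y_n$ with
\[
|\langle g_n,y_n\rangle|\gg r_n\|A_{\mathfrak W_n}^*y_n\|,
\]
and Step 1 forbids this. By the proposition preceding the definition of adjoint residual alignment (unaligned defect implies a near-phantom direction), no near-phantom direction is excited. Hence the trace correction costs $O(r_n^2)$, and the selection argument of Theorem~\ref{thm:conditional-selection} goes through.

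The main obstacle is Step 1's reliance on \eqref{eq:tag-10-241}. One must ensure that the relaxed-visible, commutator and harmonic-tail errors are bounded by $r_n\|A^*y\|$ rather than by $r_n\|y\|$; the latter would reintroduce $\sigma_{\min}^{-1}$. I would treat this strictly as a hypothesis of the controlled regime and check that it is applied on the full dual space $Y_{\mathfrak W_n}$, including near-kernel directions. I would not try to derive it from the weaker bound \eqref{eq:final-relaxed-anti-phantom-estimate}.
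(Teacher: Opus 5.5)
Your proposal is correct and follows the paper's proof. You combine \eqref{eq:tag-10-240} with \eqref{eq:tag-10-241} to get $|\langle g_n,y\rangle|\le 2r_n\|A_{\mathfrak W_n}^*y\|$, Lemma~\ref{lem:trace-cost-duality} turns this into $\Cost^{\trc}_{\mathfrak W_n}(g_n)\le 4r_n^2$, and Lemma~\ref{lem:singular-values-harmless-corrected} gives the rank-factor version under componentwise alignment; one minor point is that in Step 4 the non-excitation of near-phantom directions follows directly from your Step 1 bound, not from the unaligned-defect proposition, whose implication runs the other way.
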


\begin{proof}
By adjoint residual alignment and the error estimate, for every \(y\in Y_{\mathfrak W_n}\),
\[
|\langle g_n,y\rangle|
\le
C r_n|A_{\mathfrak W_n}^*y|.
\]
By finite-dimensional trace-cost duality, this implies
\[
\operatorname{Cost}^{\operatorname{tr}}_{\mathfrak W_n}(g_n)
\le
C r_n^2.
\]
If the proof supplies only componentwise singular alignment, Lemma~\ref{lem:singular-values-harmless-corrected} gives the same estimate with the explicit factor \(\operatorname{rank}A_{\mathfrak W_n}\).  Thus even if
\[
\sigma_{\min}^+(A_{\mathfrak W_n})\to0
\]
very rapidly, the residual does not excite those small singular directions more strongly than their singular size. Therefore the actual trace cost remains controlled. This excludes trace-cost singular collapse as an independent phantom mechanism.
\end{proof}

\begin{corollary}[Necessary profile for trace-cost failures]
A trace-cost singular collapse failure must contain a sequence
\[
(\mathfrak W_n,g_n,y_n)
\]
such that

\[
|y_n|=1,
\qquad
|A_{\mathfrak W_n}^*y_n|\to0,
\]
but
\begin{equation}
|\langle g_n,y_n\rangle|
\gg
r_n|A_{\mathfrak W_n}^*y_n|.
\label{eq:tag-10-243}
\end{equation}
Moreover, after Steps 1--13, such a sequence must also satisfy all of the following:

\begin{enumerate}
\item it is not a finite-order strict obstruction;
\item it is not a finite-order Schur obstruction already removed from the visible quotient;
\item it is not relaxed-visible through \(J^{\mathrm{rel}}\);
\item it is not harmonic-gauge or vertical-zero leakage;
\item it is not killed by localized vertical coercivity;
\item it is not a homogeneous-tail artifact;
\item it is NS-realizable with \(C_3\)-compatible vertical budget.
\end{enumerate}

Thus a genuine trace-cost failure is exactly an NS-realizable, cleaned, relaxed-invisible, unaligned left-singular cascade.
\end{corollary}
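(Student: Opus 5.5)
The corollary is essentially a contrapositive of the trace-cost singular collapse exclusion theorem, followed by a sweep through the filters already established. The plan has two parts.

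\textbf{Extracting the dual sequence.} Suppose the trace-cost route fails along a moving-window sequence $\mathfrak W_n$. That is, $\Cost^{\trc}_{\mathfrak W_n}(g_n)/r_n^2\to\infty$, possibly with $g_n\notin\Range A_{\mathfrak W_n}$.

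\begin{itemize}
\item By Lemma~\ref{lem:trace-cost-duality}, the square root of the cost equals $\sup_{A^*y\neq0}|\langle g_n,y\rangle|/\norm{A^*_{\mathfrak W_n}y}{}$. The same lemma shows that infinite cost comes from some $y\in\Ker A^*_{\mathfrak W_n}$ with $\langle g_n,y\rangle\neq0$.
\item Either way, we pick near-extremal $y_n$ and normalize them so that $|y_n|=1$ and $|\langle g_n,y_n\rangle|\gg r_n|A^*_{\mathfrak W_n}y_n|$.
\item If $|A^*_{\mathfrak W_n}y_n|$ stayed bounded below along a subsequence, we would need $|\langle g_n,y_n\rangle|\gg r_n$. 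The branch-native size bound on $g_n$, which enters the residual hierarchy through $r_n$, forbids this. Hence $|A^*_{\mathfrak W_n}y_n|\to0$, giving \eqref{eq:tag-10-243}.
\end{itemize}

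This produces an unaligned trace-cost defect. Adjoint residual alignment must fail for it, since otherwise the preceding theorem gives \eqref{eq:tag-10-242} and the cost is controlled.

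\textbf{Checking the listed properties.} We verify properties (1)--(7) one at a time. Each time we replace $y_n$ by its component in the successively reduced dual space. This component still carries pairing $\gtrsim$ the original, because each removed piece has controlled pairing.

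\begin{itemize}
\item \emph{Properties (1) and (2).} Components along $Y_{\rm vis}^{(K)}$ are quotiented out by the annihilator description \eqref{eq:Yred-dual-str-Schur}. Proposition~\ref{prop:recursive-obstruction-removal} treats these directions by the finite-power mechanism.
\item \emph{Property (4).} Harmonic-gauge and vertical-zero pieces lie in $\mathcal G$. By Lemma~\ref{lem:well-defined-cleaned-relaxed-observation} and Theorem~\ref{thm:local-quotient-cleaning}, they do not affect the cleaned observation.
\item \emph{Property (3).} We invoke the near-phantom proposition applied to the unaligned defect. If $J^{\rm rel}P_ny_n$ is not $o(1)$, the pairing is bounded by $C\delta_n^{1/3}$ via \eqref{eq:Schur-rel-bound}. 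That is absorbed into $r_n$, which contradicts unalignment.
\item \emph{Property (5).} Localized coercivity and its vertical-budget corollary exclude directions whose vertical lift violates the $C_3$ budget.
\item \emph{Property (6).} The homogeneous-tail kernel test, with its observable cost \eqref{eq:tag-10-231}, excludes tail cancellations.
\item \emph{Property (7).} Theorem~\ref{thm:finite-stage-NS-realizability-filter} discards algebraically unrealizable or budget-violating candidates, so what survives is NS-realizable with $C_3$-compatible vertical budget.
\end{itemize}

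\textbf{Expected main obstacle.} The hard part is the bookkeeping that each projection removing a filtered sector keeps the pairing non-negligible and keeps $|A^*y_n|\to0$. This needs the removed pieces to have pairing $o(|\langle g_n,y_n\rangle|)$ uniformly along the moving windows. I would rely on the controlled-regime constants of Assumption~\ref{ass:benign-moving-window-constants} for this, and I would frame the conclusion as \emph{holding within the present framework}. The last sentence, that such a failure is exactly the final cascade, then matches the definition preceding Theorem~\ref{thm:final-anti-phantom-dichotomy} with $\mathfrak M_n$ absorbing the polynomial losses.
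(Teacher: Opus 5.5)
Your first half matches the paper's implicit argument. By Lemma~\ref{lem:trace-cost-duality}, failure of \eqref{eq:tag-10-242} is failure of adjoint residual alignment. The branch-native bound then forces $\norm{A^*_{\mathfrak W_n}y_n}{}\to0$, exactly as in the proof of the final local anti-phantom criterion.

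The step that fails is your derivation of property (3). A bound $|\langle g_n,y_n\rangle|\lesssim\delta_n^{1/3}$, even after absorbing it into $r_n$, does not contradict \eqref{eq:tag-10-243}. Unalignment is measured against $r_n\norm{A^*_{\mathfrak W_n}y_n}{}$, which tends to zero, so a pairing of size $r_n$ is compatible with it. Also, \eqref{eq:Schur-rel-bound} bounds the relaxed energy term $\eta^{K+1}\int u_3J^{\rel}s$, not $\langle g_n,y_n\rangle$. The link to the pairing is the residual representation \eqref{eq:tag-10-247}--\eqref{eq:tag-10-249}, which gives a bound by $Cr_n\norm{J^{\rel}_nP_ny_n}{}$, again not by $r_n\norm{A^*y_n}{}$. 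Relaxed-visible directions are excluded for a different reason. They are the first alternative of the near-phantom proposition: they are retained in the relaxed comparison and controlled through $u_3$, so they are not a failure of the route at all. Property (3) is part of what ``genuine failure'' means, not a consequence of unalignment.

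The projection bookkeeping you flag as the main obstacle is avoidable, and as proposed it is unsound. Replacing $y_n$ by its component in a smaller dual space need not keep $\norm{A^*y_n}{}$ small, nor keep the projected vector of unit size. In the paper, $\mathfrak W_n$ is already a cleaned window and $Y_{\mathfrak W_n}$ is the reduced active quotient. Every dual vector therefore already annihilates the strict--Schur visible span and the gauge and vertical-zero sectors, which gives (1), (2) and (4) for free. Properties (5)--(7) are properties of $g_n$ as an NS-derived surviving residual, not of $y_n$.

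Finally, your closing match with the definition preceding Theorem~\ref{thm:final-anti-phantom-dichotomy} does not follow. That definition needs $|\langle g_n,y_n\rangle|\gtrsim r_n$ and $\mathfrak M_n(\cdots)\to0$. Since $\mathfrak M_n\ge1$, it strengthens the invisibility requirement rather than absorbing losses. Cost blow-up alone does not give a pairing of order $r_n$. Take singular values $1$ and $\varepsilon_n^3$ and $g_n=r_n(u_1+\varepsilon_nu_2)$: the cost is $r_n^2(1+\varepsilon_n^{-4})$, yet every normalized $y$ with $\norm{A^*y}{}\to0$ pairs with $g_n$ at size $o(r_n)$. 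What you actually obtain is the profile \eqref{eq:tag-10-243}, i.e.\ the residual left-singular cascade defined right after the corollary. Passing to the quantified version needs a separate non-negligibility input; in Theorem~\ref{thm:final-anti-phantom-dichotomy} that role is played by the $o(r_n)\norm{y}{}$ slack in \eqref{eq:final-relaxed-anti-phantom-estimate}.
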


\begin{definition}[Residual left-singular true phantom cascade]
A residual left-singular true phantom cascade is a sequence
\[
(\mathfrak W_n,g_n,y_n)
\]
with
\[
|y_n|=1,
\qquad
|A_{\mathfrak W_n}^*y_n|\to0,
\]
such that

\[
|\langle g_n,y_n\rangle|
\gg
r_n|A_{\mathfrak W_n}^*y_n|,
\]
and the corresponding Schur/vertical-pressure image survives all cleaning and NS-realizability filters while remaining relaxed-invisible:
\[
J_{\mathfrak W_n}^{\mathrm{rel}}P_{\ker A_{\mathfrak W_n}^*}g_n
\to0
\]
relative to its cleaned quotient norm.

This is the final residual obstruction left by the present program.
\end{definition}

\begin{theorem}[Final local anti-phantom criterion]
Assume that no residual left-singular true phantom cascade exists. Then, after finite-order strict and Schur obstructions have been removed, every NS-derived surviving residual satisfies the reduced vertical-duality estimate
\begin{equation}
|\langle g_{\mathfrak W},y\rangle|
\le
C r_{\mathfrak W}|A_{\mathfrak W}^*y|
\qquad
\text{for all }y\in Y_{\mathfrak W}.
\label{eq:tag-10-244}
\end{equation}
Consequently,
\[
g_{\mathfrak W}\in\operatorname{Range}A_{\mathfrak W}
\]
with controlled trace cost:
\begin{equation}
\operatorname{Cost}^{\operatorname{tr}}_{\mathfrak W}(g_{\mathfrak W})
\le
C r_{\mathfrak W}^2.
\label{eq:tag-10-245}
\end{equation}
This is precisely the reduced anti-phantom closure needed for the conditional logarithmic selection theorem.
\end{theorem}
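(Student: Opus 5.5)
The proof will argue by contradiction and normalization, combined with finite-dimensional trace-cost duality (Lemma~\ref{lem:trace-cost-duality}). Suppose \eqref{eq:tag-10-244} fails for every constant \(C\) along some sequence of windows \(\mathfrak W_n\) and NS-derived residuals \(g_n\). Then there are dual vectors \(y_n\in Y_{\mathfrak W_n}\), which we normalize to \(\|y_n\|=1\), such that
\[
|\langle g_n,y_n\rangle|\ge n\, r_n\,|A_{\mathfrak W_n}^*y_n|.
\]
We then split into two cases according to whether \(|A_{\mathfrak W_n}^*y_n|\) stays bounded below or tends to zero along a subsequence.

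In the first case, where \(|A^*_{\mathfrak W_n}y_n|\ge c>0\), the residual pairing \(|\langle g_n,y_n\rangle|\) would exceed \(n r_n c\). The residual representation \eqref{eq:tag-10-239} bounds this pairing. It splits as a vertical-pressure term, which is controlled through the relaxed channel and the \(C_3\)-budget by \eqref{eq:rel-pairing-general}, plus commutator and higher-order errors of size \(O(r_n)\) in the controlled regime. Together these give \(|\langle g_n,y_n\rangle|\lesssim r_n\), which is a contradiction for large \(n\).

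In the second case, where \(|A^*_{\mathfrak W_n}y_n|\to0\), the triple \((\mathfrak W_n,g_n,y_n)\) is exactly an unaligned trace-cost defect in the sense of \eqref{eq:tag-10-238}. The proposition on unaligned defects gives four possibilities. Relaxed visibility is excluded because its pairing is budget-controlled and so cannot be unaligned. The nonperturbative commutator and tail/leakage alternatives are excluded by the cleaning filters: localized commutator stability (Theorem~\ref{thm:localized-commutator-stability}), local quotient cleaning (Theorem~\ref{thm:local-quotient-cleaning}), localized coercivity, and the homogeneous-tail kernel test. The only remaining alternative is that \(y_n\) generates a relaxed-invisible direction surviving all filters. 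The corollary on the necessary profile for trace-cost failures identifies this direction, after projecting \(g_n\) onto \(\ker A^*_{\mathfrak W_n}\), with a residual left-singular true phantom cascade. That contradicts the hypothesis, so \eqref{eq:tag-10-244} holds with a uniform \(C\).

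Once \eqref{eq:tag-10-244} is established, Lemma~\ref{lem:trace-cost-duality} (equivalently Proposition~\ref{prop:finite-window-antiphantom}) gives \(g_{\mathfrak W}\perp\ker A^*_{\mathfrak W}\). Hence \(g_{\mathfrak W}\in\Range A_{\mathfrak W}\), and the minimal preimage has norm at most \(Cr_{\mathfrak W}\), which is \eqref{eq:tag-10-245}.

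The main obstacle is the passage in the second case from "\(y_n\) is nearly trace-invisible and the pairing is large" to "the Schur projection \(P_{\ker A^*}g_n\) is relaxed-invisible relative to its cleaned norm", which the cascade definition requires. The plan is to decompose \(y_n=y_n^{\parallel}+y_n^{\perp}\) along \((\ker A^*)^\perp\oplus\ker A^*\). Because \(|A^*y_n|\to0\), the range part \(y_n^{\parallel}\) contributes at most its singular-value-weighted share. If \(J^{\rm rel}P y_n^\perp\) did not vanish relative to its norm, the relaxed bound would control \(\langle g_n,y_n^\perp\rangle\) by \(\delta_n^{1/3}\), which is absorbed into the branch-native scale \(r_n\). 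This absorption must be checked to be uniform in the moving windows. That uniformity is exactly where the controlled-regime constants (polynomial visibility, single-exponential trace cost, logarithmic admissibility) from Assumption~\ref{ass:benign-moving-window-constants} have to be invoked.
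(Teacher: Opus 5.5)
Your skeleton matches the paper's proof. You argue by contradiction with normalized \(y_n\) satisfying \(|\langle g_n,y_n\rangle|>n r_n|A^*_{\mathfrak W_n}y_n|\). The case \(|A^*_{\mathfrak W_n}y_n|\not\to0\) is killed by the branch-native bound; this only needs \(\|g_n\|\lesssim r_n\), so your detour through \eqref{eq:tag-10-239} and a \(\delta_n^{1/3}\) bound is unnecessary. The case \(|A^*_{\mathfrak W_n}y_n|\to0\) is an unaligned left-singular sequence which, having survived the filters, is the excluded cascade, and Lemma~\ref{lem:trace-cost-duality} finishes. The paper does not prove that last identification; it simply takes it from the preceding reductions. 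The two arguments you add to justify it do not work, and insofar as your proof rests on them it has a genuine gap.

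First, ``relaxed-visible, hence budget-controlled, hence not unaligned'' is a non sequitur. In the second case \(|A^*_{\mathfrak W_n}y_n|\to0\). So a bound \(|\langle g_n,y_n\rangle|\lesssim\delta_n^{1/3}\), even one absorbed into \(r_n\), is perfectly compatible with \(|\langle g_n,y_n\rangle|\ge n r_n|A^*_{\mathfrak W_n}y_n|\). On \(\ker A^*\) the target \eqref{eq:tag-10-244} demands \(\langle g,y\rangle=0\) exactly. For instance, take \(H=\mathbb R\), \(Y=\mathbb R^2\), \(Ah=(h,0)\), \(g=\delta^{1/3}e_2\) and \(J^{\rm rel}e_2\neq0\). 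The phantom component is then relaxed-visible and budget-controlled, hence not a relaxed-invisible cascade, yet \eqref{eq:tag-10-244} fails at \(y=e_2\).

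The \(u_3\)-budget only buys the relaxed estimate \eqref{eq:tag-10-253}. The strict estimate requires that relaxed-visible phantom components have already been removed. The paper builds this in as an input, not something the budget proves: the Schur-obstruction removal in the statement, item~3 of the corollary on the necessary profile of trace-cost failures, and cf.\ \eqref{eq:tag-10-240}--\eqref{eq:tag-10-241}.

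Second, the splitting \(y_n=y_n^\parallel+y_n^\perp\) cannot move the defect onto \(P_{\ker A^*}g_n\). From \(|A^*y_n|\to0\) you only get \(|y_n^\parallel|\le|A^*y_n|/\sigma^+_{\min}(A_{\mathfrak W_n})\), which is not small under singular collapse, the very scenario being tested. An unaligned \(y_n\) can lie entirely in \((\ker A^*)^\perp\), along left singular directions with tiny singular values. The defect is then carried by the range part of \(g_n\), about which the relaxed-invisibility condition on \(P_{\ker A^*}g_n\) says nothing.

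Drop the last paragraph, and state the identification with the excluded cascade as an input from the reductions, as the paper does.
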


\begin{proof}
If the estimate failed, there would exist a sequence of windows, residuals, and normalized dual vectors \(y_n\) such that
\[
|\langle g_n,y_n\rangle|
 >
n r_n|A_{\mathfrak W_n}^*y_n|.
\]
If \(|A_{\mathfrak W_n}^*y_n|\) does not tend to zero, then the right-hand side is comparable to \(r_n\), and the failure contradicts the branch-native residual bound. Hence, after passing to a subsequence,
\[
|A_{\mathfrak W_n}^*y_n|\to0.
\]
The failure sequence is therefore an unaligned left-singular cascade. By the preceding reductions, if it survives all visible, relaxed, gauge, tail, and NS-realizability filters, it is a residual left-singular true phantom cascade. Since no such cascade exists by assumption, the estimate must hold. The trace-cost conclusion follows from finite-dimensional trace-cost duality.
\end{proof}

\begin{remark}[Interpretation]
The trace-cost singular collapse test shows that small singular values are not automatically dangerous. They become dangerous only if the Navier--Stokes residual is not aligned with the singular geometry of the trace map. But any unaligned residual direction is precisely a near-phantom direction. After relaxed visibility, harmonic-gauge cleaning, localized vertical coercivity, homogeneous-tail observability, and NS-realizability filtering, the only remaining bad object is a residual left-singular true phantom cascade. Thus the whole obstruction theory has been reduced to a single final target.
\end{remark}

\subsection{Exclusion of residual left-singular true phantom cascades in the controlled regime}

We now prove the final local anti-phantom theorem in the controlled finite-window regime. The statement should be read with the following convention: all finite-order strict obstructions and finite-order Schur obstructions have already been removed; horizontal gauge modes, vertical-zero modes, harmonic-pressure leakage, and homogeneous-tail artifacts have already been quotiented out; and the remaining windows satisfy the localized visibility and coercivity estimates proved in the preceding subsections.

Let
\[
\mathfrak W_n=(K_n,\Lambda_n,\chi_n)
\]
be a moving sequence of cleaned localized finite-stage windows. Let
\[
A_n:H_n\to Y_n
\]
be the active selected-time trace-defect map, and let
\[
g_n\in Y_n
\]
be an NS-derived surviving residual. Let
\[
r_n>0
\]
be the branch-native residual scale.

Let
\[
\Phi_n^{\mathrm{cl}}
\]
denote the cleaned Schur phantom quotient after removal of strict-visible, relaxed-visible gauge, vertical-zero, and harmonic-pressure sectors. Let
\[
P_n:Y_n\to \Phi_n^{\mathrm{cl}}
\]
be the cleaned phantom projection. The relaxed vertical-pressure observation is
\[
J_n^{\mathrm{rel}}:\Phi_n^{\mathrm{cl}}\to W_n.
\]

We assume the following controlled-regime estimates.

\begin{assumption}[Controlled combined observability]
There exists a logarithmically absorbable constant \(M_n\) such that, for every \(y\in Y_n\),
\begin{equation}
|y|_{Y_n}
\le
M_n\Big(
|A_n^*y|_{H_n}
+
|J_n^{\mathrm{rel}}P_n y|_{W_n}
\Big).
\label{eq:tag-10-246}
\end{equation}
Here \(M_n\) may grow polynomially or single-exponentially in the window scale, but not faster than the logarithmic optimization can absorb.
\end{assumption}

\begin{assumption}[NS residual representation]
For every \(y\in Y_n\), the residual pairing admits the decomposition
\begin{equation}
\langle g_n,y\rangle =
\mathcal T_n(y)
+
\mathcal R_n(y),
\label{eq:tag-10-247}
\end{equation}
where \(\mathcal T_n(y)\) is the trace-visible part and \(\mathcal R_n(y)\) is the relaxed vertical-pressure remainder. These obey
\begin{equation}
|\mathcal T_n(y)|
\le
C r_n|A_n^*y|_{H_n},
\label{eq:tag-10-248}
\end{equation}
and
\begin{equation}
|\mathcal R_n(y)|
\le
C r_n|J_n^{\mathrm{rel}}P_n y|_{W_n}
+
o(r_n)|y|_{Y_n}.
\label{eq:tag-10-249}
\end{equation}
The \(o(r_n)\)-term contains cutoff commutators, gauge-cleaned harmonic leakage, homogeneous-tail errors, and higher-order truncation errors after the preceding reductions.
\end{assumption}

\begin{definition}[Residual left-singular true phantom cascade]
A residual left-singular true phantom cascade in the controlled regime is a sequence of normalized dual vectors
\[
|y_n|_{Y_n}=1
\]
such that the invisibility is strong relative to the combined-observability amplification:
\begin{equation}
M_n\Big(
|A_n^*y_n|_{H_n}
+
|J_n^{\mathrm{rel}}P_n y_n|_{W_n}
\Big)
\to0,
\label{eq:tag-10-250}
\end{equation}
and yet
\begin{equation}
|\langle g_n,y_n\rangle|
\ge c_0r_n
\label{eq:tag-10-252}
\end{equation}
for some fixed \(c_0>0\). Thus \(y_n\) is simultaneously invisible to selected-time trace correction and to the relaxed vertical-pressure channel at the controlled scale, while still carrying a non-negligible NS-derived residual pairing.
\end{definition}

\begin{theorem}[No residual left-singular true phantom in the controlled regime]
Under controlled combined observability and the NS residual representation, no residual left-singular true phantom cascade exists.

Moreover, every NS-derived surviving residual satisfies the relaxed anti-phantom estimate
\begin{equation}
|\langle g_n,y\rangle|
\le
C r_n
\Big(
|A_n^*y|_{H_n}
+
|J_n^{\mathrm{rel}}P_n y|_{W_n}
\Big)
+
o(r_n)|y|_{Y_n}
\label{eq:tag-10-253}
\end{equation}
for all \(y\in Y_n\). In particular, if the relaxed component is kept in the comparison class and controlled by the small vertical component \(u_3\), then the residual has no uncontrolled true phantom projection.

If, in addition, the relaxed component vanishes after cleaning,
\begin{equation}
J_n^{\mathrm{rel}}P_n y=0
\quad\Longrightarrow\quad
P_ny=0,
\label{eq:tag-10-254}
\end{equation}
then the strict vertical-duality estimate follows:
\begin{equation}
|\langle g_n,y\rangle|
\le
C r_n|A_n^*y|_{H_n}
+
o(r_n)|y|_{Y_n}.
\label{eq:tag-10-255}
\end{equation}
Consequently,
\[
g_n\in \operatorname{Range}A_n
\]
up to the \(o(r_n)\) error, and
\begin{equation}
\operatorname{Cost}^{\operatorname{tr}}_{A_n}(g_n)
\le
C r_n^2
\label{eq:tag-10-256}
\end{equation}
after absorbing the error at the branch-native scale.
\end{theorem}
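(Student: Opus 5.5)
The estimate \eqref{eq:tag-10-253} comes straight from the NS residual representation. For any \(y\in Y_n\), I would split \(\langle g_n,y\rangle=\mathcal T_n(y)+\mathcal R_n(y)\) as in \eqref{eq:tag-10-247}. Bounding \(\mathcal T_n\) by \eqref{eq:tag-10-248} and \(\mathcal R_n\) by \eqref{eq:tag-10-249} and adding gives
\[
|\langle g_n,y\rangle|\le Cr_n\big(|A_n^*y|_{H_n}+|J_n^{\mathrm{rel}}P_ny|_{W_n}\big)+o(r_n)|y|_{Y_n}.
\]
This is \eqref{eq:tag-10-253}, and it does not use combined observability at all.

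Non-existence of a cascade uses both assumptions. Take a normalized sequence \(|y_n|=1\) satisfying \eqref{eq:tag-10-250}. By \eqref{eq:tag-10-246}, \(1\le M_n(\cdots)\), so the cascade condition contradicts itself outright once \(M_n(\cdots)<1\).

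I would also record a second argument that survives even if \(M_n\) is only an upper amplification. Apply \eqref{eq:tag-10-253} to \(y_n\). Since \(M_n\ge1\), \eqref{eq:tag-10-250} forces \(|A_n^*y_n|+|J_n^{\mathrm{rel}}P_ny_n|\to0\). Hence \(|\langle g_n,y_n\rangle|\le Cr_n\cdot o(1)+o(r_n)=o(r_n)\), which contradicts \eqref{eq:tag-10-252}. Either route excludes the cascade.

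For the strict vertical-duality estimate, assume \eqref{eq:tag-10-254}. On the finite-dimensional space \(\Phi_n^{\mathrm{cl}}\), injectivity of \(J_n^{\mathrm{rel}}\) gives \(P_ny=0\) whenever \(J_n^{\mathrm{rel}}P_ny=0\). The real task, and the main obstacle, is to bound the relaxed term \(|J_n^{\mathrm{rel}}P_ny|\) by \(|A_n^*y|\) up to a controlled constant. The cleaned phantom quotient is \(\Ker A_n^*\) after cleaning, so I would decompose \(y=y^{\mathrm{vis}}+y^{\mathrm{ph}}\) with \(y^{\mathrm{ph}}\in\Ker A_n^*\). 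The hypothesis then forces \(P_ny^{\mathrm{ph}}=0\), i.e. \(y^{\mathrm{ph}}\) lies in the removed (gauge or strict-visible) sectors. Once this decomposition holds, \(P_ny\) is controlled by the trace-visible part: \(|J_n^{\mathrm{rel}}P_ny|\le\|J_n^{\mathrm{rel}}\|\,\|P_n\|\,|y^{\mathrm{vis}}|\le C\,\sigma_{\min}^+(A_n)^{-1}|A_n^*y|\). I would absorb these constants into the logarithmically admissible \(M_n\), as in the growth-profile analysis; this produces the constant in \eqref{eq:tag-10-255}. I expect this absorption step to be delicate and would state it with an explicit constant depending on \(M_n\).

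The range inclusion and the cost bound follow from \eqref{eq:tag-10-255} and Lemma~\ref{lem:trace-cost-duality}. I would split \(g_n=g_n^{\mathrm{rg}}+e_n\) with \(|e_n|=o(r_n)\); the \(o(r_n)|y|\) term is exactly the pairing with such an \(e_n\), by Riesz representation. The remaining piece satisfies the exact dual bound, so \(g_n^{\mathrm{rg}}\in\Range A_n\) with \(\Cost^{\trc}_{A_n}(g_n^{\mathrm{rg}})\le Cr_n^2\). This gives \eqref{eq:tag-10-256} after absorbing \(e_n\) at the branch-native scale.
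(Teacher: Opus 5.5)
Your proof of \eqref{eq:tag-10-253} and your first exclusion argument (normalize $y_n$ and contradict \eqref{eq:tag-10-246}) are exactly the paper's. Your second exclusion argument is a valid extra: it shows that \eqref{eq:tag-10-248}--\eqref{eq:tag-10-249} alone exclude the cascade once $M_n\ge1$.

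The gap is in the strict part. Hypothesis \eqref{eq:tag-10-254} says only that $J_n^{\mathrm{rel}}$ is injective on $\Phi_n^{\mathrm{cl}}$. It says nothing about $P_n$ annihilating $\Ker A_n^*$. Since $\Phi_n^{\mathrm{cl}}$ is by construction the cleaned strict trace-invisible cokernel, $P_ny^{\mathrm{ph}}$ is in general nonzero for $y^{\mathrm{ph}}\in\Ker A_n^*$. Injectivity then gives $J_n^{\mathrm{rel}}P_ny^{\mathrm{ph}}\neq0$, i.e.\ relaxed visibility, which is the opposite of what you need. So ``the hypothesis then forces $P_ny^{\mathrm{ph}}=0$'' is a non sequitur, and no argument can repair it. Take $Y_n=\mathbb R^2$, $H_n=W_n=\mathbb R$, $A_n^*y=y_1$, $\Phi_n^{\mathrm{cl}}=\operatorname{span}\{e_2\}$, $P_ny=y_2e_2$, $J_n^{\mathrm{rel}}(te_2)=t$, $M_n=1$, $g_n=r_ne_2$, $\mathcal T_n=0$, $\mathcal R_n(y)=r_ny_2$. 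Then \eqref{eq:tag-10-246}--\eqref{eq:tag-10-249} and \eqref{eq:tag-10-254} all hold. Yet \eqref{eq:tag-10-255} fails at $y=e_2$, and $\dist(g_n,\Range A_n)=r_n$.

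The paper's proof does not extract the strict estimate from injectivity either. It reads \eqref{eq:tag-10-254} as saying the cleaned phantom component vanishes on the surviving quotient, $P_ny=0$. Then the term $J_n^{\mathrm{rel}}P_ny$ in \eqref{eq:tag-10-253} drops out and \eqref{eq:tag-10-255} is immediate. That stronger hypothesis is what you must assume, or alternatively an explicit bound $|J_n^{\mathrm{rel}}P_ny|\le C|A_n^*y|$ with $C$ independent of $n$. With it, no singular-value estimate is needed.

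Your $\sigma_{\min}^+(A_n)^{-1}$ bound should be dropped regardless.
\begin{itemize}
\item If $P_n$ factors through the orthogonal projection onto $\Ker A_n^*$, as in the Schur construction, then $P_ny^{\mathrm{vis}}=0$ and the bound is vacuous.
\item Otherwise \eqref{eq:tag-10-246} gives no control of $\sigma_{\min}^+(A_n)^{-1}$, since a small singular direction of $A_n$ can be seen by $J_n^{\mathrm{rel}}P_n$. Absorbing it into $M_n$ makes the constant in \eqref{eq:tag-10-255} $n$-dependent. That reintroduces exactly the trace-cost singular collapse the estimate is meant to exclude.
\end{itemize}

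Your final splitting $g_n=g_n^{\mathrm{rg}}+e_n$ is correct and sharper than the paper's ``absorbing the error''. Its justification, however, is not Riesz representation but the Hahn--Banach sum rule for the sublinear majorant $Cr_n|A_n^*y|+o(r_n)|y|$. The linear functionals dominated by the first term are exactly $y\mapsto\langle A_n\xi,y\rangle$ with $|\xi|\le Cr_n$, by Lemma~\ref{lem:trace-cost-duality}.
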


\begin{proof}
The relaxed anti-phantom estimate \eqref{eq:tag-10-253} is immediate from the residual representation. Indeed, combining \eqref{eq:tag-10-247}, \eqref{eq:tag-10-248}, and \eqref{eq:tag-10-249} gives
\[
|\langle g_n,y\rangle|
\le
C r_n|A_n^*y|_{H_n}
+
C r_n|J_n^{\mathrm{rel}}P_n y|_{W_n}
+
o(r_n)|y|_{Y_n}.
\]
This proves \eqref{eq:tag-10-253}.

We now prove that no residual left-singular true phantom cascade exists. Suppose, for contradiction, that such a sequence \(y_n\) exists. Then
\[
|y_n|_{Y_n}=1,
\qquad
M_n\Big(|A_n^*y_n|_{H_n}+|J_n^{\mathrm{rel}}P_n y_n|_{W_n}\Big)\to0.
\]
By the combined observability estimate,
\begin{equation}
1 =
|y_n|_{Y_n}
\le
M_n\Big(
|A_n^*y_n|_{H_n}
+
|J_n^{\mathrm{rel}}P_n y_n|_{W_n}
\Big).
\label{eq:tag-10-257}
\end{equation}
The right-hand side tends to \(0\) by the weighted invisibility condition in the definition of a controlled true phantom cascade. This contradicts \(|y_n|_{Y_n}=1\). Therefore no such cascade exists.

Equivalently, the only way a normalized dual direction can make both
\[
A_n^*y_n
\]
and
\[
J_n^{\mathrm{rel}}P_ny_n
\]
small is by leaving the controlled regime: gauge cleaning fails, localization becomes nonperturbative, homogeneous-tail observability collapses, or the combined observability constant grows faster than the admissible logarithmic scale. These cases have already been isolated as separate failure mechanisms.

Finally assume the stronger cleaned strict condition \eqref{eq:tag-10-254}. If
\[
J_n^{\mathrm{rel}}P_ny=0,
\]
then \(P_ny=0\), so the cleaned phantom component vanishes. Therefore every surviving dual direction is trace-visible, modulo the \(o(r_n)\) controlled errors. From \eqref{eq:tag-10-253} we obtain
\[
|\langle g_n,y\rangle|
\le
C r_n|A_n^*y|_{H_n}
+
o(r_n)|y|_{Y_n}.
\]
After absorbing the \(o(r_n)\)-term into the branch-native scale, this gives
\[
|\langle g_n,y\rangle|
\le
C r_n|A_n^*y|_{H_n}.
\]
By finite-dimensional trace-cost duality,
\[
\operatorname{Cost}^{\operatorname{tr}}_{A_n}(g_n)
\le
C r_n^2.
\]
Equivalently, \(g_n\) lies in \(\operatorname{Range}A_n\) with controlled minimal trace preimage. This is precisely the strict vertical-duality estimate.
\end{proof}

\begin{corollary}[Controlled-regime anti-phantom closure]
In the controlled localized finite-window regime, every NS-derived surviving residual is either

\begin{enumerate}
\item selected-time trace-projectable with trace cost \(O(r_n^2)\), or
\item relaxed-visible through the vertical-pressure channel and controlled by the smallness of \(u_3\).
\end{enumerate}

There is no third possibility corresponding to a cleaned, NS-realizable, relaxed-invisible true phantom.
\end{corollary}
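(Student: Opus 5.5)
The corollary is a direct repackaging of the theorem just proved (No residual left-singular true phantom in the controlled regime). I will fix a controlled localized finite-window sequence \(\mathfrak W_n\) and an NS-derived surviving residual \(g_n\). I then split each dual direction \(y\in Y_n\) according to its cleaned phantom projection \(P_ny\). The two alternatives in the corollary correspond to the two terms on the right of the relaxed anti-phantom estimate \eqref{eq:tag-10-253}. The missing third alternative corresponds to the nonexistence of a residual left-singular true phantom cascade.

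\emph{Step 1: the pairing estimate.} Invoke \eqref{eq:tag-10-253}: for every \(y\), \(|\langle g_n,y\rangle|\) is bounded by a trace-visible part \(Cr_n|A_n^*y|\), a relaxed part \(Cr_n|J_n^{\rm rel}P_ny|\), and an \(o(r_n)|y|\) error. This follows from the NS residual representation \eqref{eq:tag-10-247}--\eqref{eq:tag-10-249}.

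\emph{Step 2: the trace-projectable case.} On the subspace where \(P_ny=0\), only the trace-visible term survives, up to the \(o(r_n)\) error. Lemma~\ref{lem:trace-cost-duality} then gives trace cost \(O(r_n^2)\); this is alternative (1). More generally, I would decompose \(g_n\) orthogonally as \((I-\Pi_n)g_n+\Pi_ng_n\), where \(\Pi_n\) is the projection onto \(\Ker A_n^*\). The first piece lies in \(\Range A_n\), and its minimal preimage is controlled by testing \eqref{eq:tag-10-253} against \(y\perp\Ker A_n^*\).

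\emph{Step 3: the phantom component.} The second piece \(\Pi_ng_n\) pairs only with directions \(y\) having \(A_n^*y=0\). For those directions, \eqref{eq:tag-10-253} reduces to \(|\langle g_n,y\rangle|\le Cr_n|J_n^{\rm rel}P_ny|+o(r_n)|y|\). This is relaxed visibility. Keeping this component in the relaxed comparison class, the H\"older bound \eqref{eq:rel-pairing-general} (equivalently \eqref{eq:Schur-rel-bound}) controls its energy contribution by \(C\delta^{1/3}\). That is alternative (2).

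\emph{Step 4: excluding a third case.} A third possibility would be a normalized \(y_n\) with both \(A_n^*y_n\) and \(J_n^{\rm rel}P_ny_n\) small at the scale \(M_n^{-1}\), while \(|\langle g_n,y_n\rangle|\gtrsim r_n\). This is exactly a residual left-singular true phantom cascade, which the theorem excludes via the combined observability bound \eqref{eq:tag-10-246}.

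The main obstacle is that ``either/or'' is literally per-component, not per-residual. A single \(g_n\) generally has both a range part and a phantom part. I would therefore state the conclusion as a decomposition of \(g_n\) into the two types. I would also check that the \(o(r_n)\) error is absorbed uniformly along the sequence. The natural worry is that the splitting via \(\Pi_n\) might interact badly with the cleaned projection \(P_n\). Since \(P_n\) is defined only modulo gauge sectors already removed, I expect this compatibility to be the one point needing care.
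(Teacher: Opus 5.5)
Your proposal follows the paper's own argument: trace-cost duality (Lemma~\ref{lem:trace-cost-duality}) handles the trace-projectable part, the cleaned Schur phantom component is relaxed-visible through \(J_n^{\mathrm{rel}}\) and controlled by the H\"older \(u_3\)-bound, and the preceding theorem's combined-observability argument excludes a component invisible to both \(A_n^*\) and \(J_n^{\mathrm{rel}}P_n\). Your splitting of \(g_n\) along \(\Ker A_n^*\) makes explicit what the paper's per-residual case split leaves implicit, and both points you flag are real but shared with the paper: it takes for granted that \(P_n\) annihilates \(\Range A_n\), and it absorbs the \(o(r_n)\) term into the branch-native scale (strictly this needs the error to be \(O(r_n/M_n)\), since on \(\Range A_n\) combined observability only gives \(\norm{y}{Y_n}\le M_n\norm{A_n^*y}{H_n}\)).
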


\begin{proof}
If the residual is trace-projectable, the first alternative holds by trace-cost duality. If it is not strictly trace-projectable, its obstruction lies in the cleaned Schur phantom quotient. By the controlled visibility estimates, every nonzero cleaned Schur component is seen by \(J_n^{\mathrm{rel}}\). Hence it belongs to the relaxed-visible alternative. The theorem rules out the possibility that a nonzero cleaned component is invisible to both \(A_n^*\) and \(J_n^{\mathrm{rel}}\).
\end{proof}

\begin{remark}[What remains outside the theorem]
This theorem does not assert an unconditional Navier--Stokes regularity theorem. It proves the anti-phantom closure inside the controlled regime. A genuine failure of quantitative one-component selection must therefore violate at least one of the controlled-regime hypotheses: perturbative localization, harmonic-gauge cleaning, homogeneous-tail observability, combined observability, or NS residual representation. In this sense the proof identifies the only remaining places where a true phantom cascade could hide.
\end{remark}

\subsection{Model possibility of combined observability failure}\label{subsec:model-combined-observability-failure}

We now show that combined observability can fail as a matter of finite-dimensional obstruction geometry. This does not yet produce a Navier--Stokes counterexample. Rather, it proves that the only possible failure has a precise form: a normalized dual direction that is invisible both to the selected trace map and to the relaxed vertical-pressure channel.

Let
\[
A_n:H_n\to Y_n
\]
be a sequence of finite-dimensional trace-defect maps, and let
\[
J_n^{\mathrm{rel}}:\Phi_n\to W_n
\]
be the relaxed vertical-pressure observation on the cleaned phantom quotient
\[
\Phi_n\subset Y_n.
\]
Let
\[
P_n:Y_n\to \Phi_n
\]
be the cleaned phantom projection. The combined observability estimate is
\begin{equation}\label{eq:combined-observability-model}
\norm{y}{Y_n}
\le
M_n\left(
\norm{A_n^*y}{H_n}
+
\norm{J_n^{\mathrm{rel}}P_ny}{W_n}
\right)
\qquad
\text{for all }y\in Y_n.
\end{equation}

\begin{definition}[Combined-observability failure at a specified scale]
Fix a choice of admissible constants \(M_n\) in \eqref{eq:combined-observability-model}. We say that combined observability fails strongly at this scale if there exist normalized vectors
\[
\norm{y_n}{Y_n}=1
\]
such that
\begin{equation}\label{eq:combined-failure-weighted}
M_n\left(
\norm{A_n^*y_n}{H_n}
+
\norm{J_n^{\mathrm{rel}}P_ny_n}{W_n}
\right)
\to0.
\end{equation}
Equivalently, the two observed quantities vanish faster than the reciprocal of the allowed observability amplification.  If \(P_ny_n\neq0\) in the cleaned quotient, this is a genuine cleaned relaxed-invisible phantom direction at the specified scale.
\end{definition}

\begin{proposition}[Specified combined-observability failure and invisible sequences]
For a specified admissible sequence \(M_n\), failure of \eqref{eq:combined-observability-model} with ratios tending to infinity is equivalent to the existence of normalized vectors satisfying \eqref{eq:combined-failure-weighted}.  Bare convergence
\[
\norm{A_n^*y_n}{H_n}\to0,
\qquad
\norm{J_n^{\mathrm{rel}}P_ny_n}{W_n}\to0
\]
is sufficient only when the decay is faster than \(M_n^{-1}\), or when the constants \(M_n\) remain bounded.
\end{proposition}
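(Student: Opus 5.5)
The statement to prove is the last proposition: for a fixed admissible sequence \(M_n\), failure of \eqref{eq:combined-observability-model} with ratios tending to infinity is equivalent to the existence of normalized \(y_n\) satisfying \eqref{eq:combined-failure-weighted}. We also need to justify the remark about bare convergence. The plan is a direct unwinding of definitions using homogeneity and finite dimensionality.

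\textbf{Setup.} Define the observation seminorm
\[
\mathcal N_n(y)=\norm{A_n^*y}{H_n}+\norm{J_n^{\mathrm{rel}}P_ny}{W_n}.
\]
It is positively homogeneous of degree one. Also set the failure ratio
\[
\rho_n=\sup_{y\ne0}\frac{\norm{y}{Y_n}}{M_n\mathcal N_n(y)}\in[0,\infty],
\]
with the convention \(1/0=\infty\). "Failure with ratios tending to infinity" means \(\rho_n\to\infty\).

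\textbf{Failure implies a normalized sequence.} Assume \(\rho_n\to\infty\). For each \(n\) pick \(y\ne0\) with ratio at least \(\min(\rho_n,n)/2\). If \(\mathcal N_n(y)=0\), any such \(y\) already gives ratio \(\infty\). Normalize to \(y_n=y/\norm{y}{Y_n}\). Homogeneity preserves the ratio, so
\[
M_n\mathcal N_n(y_n)=\frac{1}{\text{ratio}}\to0,
\]
which is \eqref{eq:combined-failure-weighted}. Finite dimensionality is not needed here. It would only let us replace the near-supremum choice by an attained minimizer on the unit sphere, which is optional.

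\textbf{A normalized sequence implies failure.} Conversely, if normalized \(y_n\) satisfy \eqref{eq:combined-failure-weighted}, then the ratio at \(y_n\) equals \(1/(M_n\mathcal N_n(y_n))\to\infty\). Hence \(\rho_n\to\infty\), and \eqref{eq:combined-observability-model} fails with diverging ratios.

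\textbf{The remark on bare convergence.} If \(\mathcal N_n(y_n)\to0\) with \(\mathcal N_n(y_n)=o(M_n^{-1})\), then \(M_n\mathcal N_n(y_n)\to0\), and the previous step applies. If instead \(\sup_n M_n<\infty\), bare convergence already forces \(M_n\mathcal N_n(y_n)\to0\).

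To show sufficiency genuinely needs one of these conditions, I would give a one-dimensional example. Take \(Y_n=\R\), \(A_n^*y=n^{-1}y\), \(J_n^{\mathrm{rel}}=0\), and \(M_n=n\). Then \(\mathcal N_n(y_n)\to0\), yet \eqref{eq:combined-observability-model} holds with equality, so there is no failure.

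\textbf{Main obstacle.} There is no real analytic obstacle. The only care needed is bookkeeping: the case \(\mathcal N_n(y)=0\) (infinite ratio), and the exact meaning of "ratios tending to infinity" as a statement about \(\rho_n\) along the whole sequence rather than along a subsequence. If only a subsequence diverges, the argument yields the normalized sequence along that subsequence, and I would state the result that way.
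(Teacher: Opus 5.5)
Your argument is correct and is essentially the paper's proof. In one direction you use homogeneity to normalize a vector with large ratio, which gives \(M_n(\norm{A_n^*y_n}{H_n}+\norm{J_n^{\mathrm{rel}}P_ny_n}{W_n})\to0\); in the other you note that the right-hand side of \eqref{eq:combined-observability-model} tends to zero while the left-hand side is one. Your extra bookkeeping (the zero-observation case and the subsequence caveat) and the one-dimensional example with \(A_n^*y=n^{-1}y\), \(M_n=n\) make concrete the ``only when'' remark, which the paper simply calls immediate.
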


\begin{proof}
If \eqref{eq:combined-observability-model} fails with ratios tending to infinity for the specified \(M_n\), then there are \(y_n\neq0\) such that
\[
\norm{y_n}{Y_n}
>
R_nM_n\left(
\norm{A_n^*y_n}{H_n}
+
\norm{J_n^{\mathrm{rel}}P_ny_n}{W_n}
\right),
\qquad R_n\to\infty.
\]
Normalize so that \(\norm{y_n}{Y_n}=1\). Then
\[
M_n\left(
\norm{A_n^*y_n}{H_n}
+
\norm{J_n^{\mathrm{rel}}P_ny_n}{W_n}
\right)<R_n^{-1}\to0,
\]
which is \eqref{eq:combined-failure-weighted}.

Conversely, if \eqref{eq:combined-failure-weighted} holds for normalized \(y_n\), then the right-hand side of \eqref{eq:combined-observability-model} tends to zero while the left-hand side equals one. Hence the estimate fails at the specified scale. The final sentence follows immediately from comparing the bare invisibility rate with \(M_n^{-1}\).
\end{proof}

\begin{proposition}[A finite-dimensional model of true failure]
There exists a finite-dimensional model in which combined observability fails. Let
\[
Y_n=\mathbb R^3
=
\operatorname{span}\{e_1,e_2,e_3\},
\qquad
H_n=\mathbb R,
\qquad
W_n=\mathbb R.
\]
Define
\[
A_n^*e_1=1,
\qquad
A_n^*e_2=0,
\qquad
A_n^*e_3=0,
\]
and define the cleaned relaxed observation by
\[
J_n^{\mathrm{rel}}P_ne_1=0,
\qquad
J_n^{\mathrm{rel}}P_ne_2=1,
\qquad
J_n^{\mathrm{rel}}P_ne_3=0.
\]
Then the vector
\[
y_n=e_3
\]
satisfies
\[
\norm{y_n}{}=1,
\qquad
A_n^*y_n=0,
\qquad
J_n^{\mathrm{rel}}P_ny_n=0.
\]
Therefore combined observability fails.

Moreover, if one chooses an active residual
\[
g_n=r_ne_3,
\]
then
\[
|\langle g_n,y_n\rangle|=r_n,
\]
while
\[
r_n\norm{A_n^*y_n}{}
+
r_n\norm{J_n^{\mathrm{rel}}P_ny_n}{}
=0.
\]
Thus the residual is completely invisible to both the selected trace channel and the relaxed vertical-pressure channel. This is an abstract residual left-singular true phantom.
\end{proposition}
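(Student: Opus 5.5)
The plan is a direct verification in the explicit three-dimensional model, with every claim reduced to evaluating the two linear maps on the basis \(e_1,e_2,e_3\). First I will fix the conventions so that the objects are well-defined. I take \(Y_n=\mathbb R^3\) with the Euclidean inner product and \(H_n=W_n=\mathbb R\). Then \(A_n^*\) is the row vector \((1,0,0)\), so \(A_n\xi=(\xi,0,0)\), \(\Range A_n=\operatorname{span}\{e_1\}\) and \(\Ker A_n^*=\operatorname{span}\{e_2,e_3\}\). For the phantom projection I take the orthogonal projection \(P_n\) onto \(\Phi_n:=\Ker A_n^*\). I then define \(J_n^{\mathrm{rel}}:\Phi_n\to\mathbb R\) on the basis by \(J_n^{\mathrm{rel}}e_2=1\) and \(J_n^{\mathrm{rel}}e_3=0\). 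With these choices \(J_n^{\mathrm{rel}}P_ne_1=J_n^{\mathrm{rel}}0=0\), and the three prescribed values of the composite \(J_n^{\mathrm{rel}}P_n\) are reproduced consistently. This check is the only place that needs care: the prescription is stated for the composite, so I must confirm that it comes from a genuine linear \(J_n^{\mathrm{rel}}\) on \(\Phi_n\), and it does.

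Next I evaluate everything on \(y_n=e_3\). It is a unit vector, \(A_n^*e_3=0\), and \(P_ne_3=e_3\), so \(J_n^{\mathrm{rel}}P_ne_3=0\). Hence the right-hand side of \eqref{eq:combined-observability-model} vanishes while the left-hand side equals \(1\), and this holds for every choice of \(M_n\). The weighted condition \eqref{eq:combined-failure-weighted} is therefore satisfied trivially, since its expression is identically \(0\). By the preceding proposition on specified combined-observability failure, this is exactly failure of combined observability, for any admissible amplification sequence. I also note that \(P_ny_n=e_3\neq0\), so \(y_n\) is a genuine nonzero phantom direction and not gauge-null.

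For the residual I set \(g_n=r_ne_3\). Then \(\langle g_n,y_n\rangle=r_n\), and the combined right-hand side \(r_n\|A_n^*y_n\|+r_n\|J_n^{\mathrm{rel}}P_ny_n\|\) equals \(0\). So the relaxed anti-phantom estimate \eqref{eq:final-relaxed-anti-phantom-estimate} fails, because \(r_n\) is not \(o(r_n)\). In addition, \(g_n\notin\Range A_n\), so by Lemma~\ref{lem:trace-cost-duality} its trace cost is \(+\infty\). These facts match the definition of a residual left-singular true phantom cascade: \(\|y_n\|=1\), the invisibility condition \eqref{eq:tag-10-250} holds, and the pairing \eqref{eq:tag-10-252} holds with \(c_0=1\).

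No step is analytically hard. The only point needing attention is the well-definedness of \(J_n^{\mathrm{rel}}\) on the cleaned quotient, handled above by choosing \(\Phi_n=\Ker A_n^*\). I will close with a remark that the model satisfies none of the NS residual representation or realizability hypotheses, so it shows only abstract possibility, consistent with the existence-status remark.
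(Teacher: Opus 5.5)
Your proposal is correct and follows the paper's own argument: the paper simply observes that \(e_3\in\Ker A_n^*\cap\Ker(J_n^{\mathrm{rel}}P_n)\), so the right-hand side of the combined observability estimate vanishes at a unit vector for every \(M_n\), and that \(g_n=r_ne_3\) pairs to \(r_n\) against this invisible direction. Your extra step realizing the prescribed composite as an actual \(J_n^{\mathrm{rel}}\) on \(\Phi_n=\Ker A_n^*\), with \(P_n\) the orthogonal projection, is a harmless consistency check that the paper leaves implicit.
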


\begin{proof}
The computation is immediate. The vector \(e_3\) belongs to the common kernel of the two observation maps:
\[
e_3\in \ker A_n^*
\cap
\ker(J_n^{\mathrm{rel}}P_n).
\]
Hence no estimate of the form
\[
\norm{y}{}
\le
M_n\left(\norm{A_n^*y}{}+\norm{J_n^{\mathrm{rel}}P_ny}{}\right)
\]
can hold, because the right-hand side vanishes at \(y=e_3\) while the left-hand side equals one. Taking \(g_n=r_ne_3\) gives a residual pairing of size \(r_n\) against the invisible direction, proving the claimed unaligned phantom behavior.
\end{proof}

\begin{remark}[Why this is not yet a Navier--Stokes counterexample]
The model proves the logical possibility of combined observability failure, but it does not prove that Navier--Stokes realizes the vector \(e_3\). To become a Navier--Stokes obstruction, the invisible direction must pass all of the following filters:
\[
\text{finite-order strict obstruction removal},
\]
\[
\text{Schur obstruction cleaning},
\]
\[
\text{harmonic-pressure quotient cleaning},
\]
\[
\text{localized vertical coercivity},
\]
\[
\text{homogeneous-tail observability},
\]
\[
\text{NS deformation-complex realizability},
\]
and
\[
C_3\text{-budget admissibility}.
\]
Thus the finite-dimensional example proves that the mechanism is mathematically possible, but it does not yet prove that it is present in the true Navier--Stokes image.
\end{remark}

\begin{theorem}[Necessary and sufficient condition for a true NS failure]
A genuine localized Navier--Stokes failure of combined observability exists if and only if there is a sequence
\[
(\mathfrak W_n,g_n,y_n)
\]
such that
\[
\norm{y_n}{}=1,
\]
\[
M_n\left(
\norm{A_n^*y_n}{}
+
\norm{J_n^{\mathrm{rel}}P_ny_n}{}
\right)\to0,
\]
\[
|\langle g_n,y_n\rangle|\gtrsim r_n,
\]
and the residuals \(g_n\) are NS-derived surviving residuals after all strict, Schur, gauge, harmonic, tail, and localization artifacts have been removed.

Equivalently, a true failure exists if and only if there is an NS-realizable, cleaned, relaxed-invisible, unaligned left-singular cascade.
\end{theorem}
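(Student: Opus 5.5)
The proof will be a two-way logical identification. There is essentially no new estimate to prove. The work is to show that the phrase ``genuine localized Navier--Stokes failure of combined observability'' coincides with the notion of an NS-realizable cleaned relaxed-invisible unaligned left-singular cascade, where combined observability means the estimate \eqref{eq:combined-observability-model} at the admissible scale $M_n$. The key tool is the preceding proposition on specified combined-observability failure, which already makes the failure of \eqref{eq:combined-observability-model} with ratios tending to infinity equivalent to normalized $y_n$ satisfying \eqref{eq:combined-failure-weighted}. What that proposition lacks is the residual pairing and the NS filters, and the theorem adds exactly those two ingredients.

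\emph{Sufficiency.} Suppose a sequence $(\mathfrak W_n,g_n,y_n)$ as in the statement exists. The weighted invisibility together with $\norm{y_n}{}=1$ violates \eqref{eq:combined-observability-model} at scale $M_n$, by the converse half of the specified-failure proposition. Because $|\langle g_n,y_n\rangle|\gtrsim r_n$ and the $g_n$ are NS-derived surviving residuals, this failure is excited by an actual residual. It is therefore not a vacuous kernel direction of the finite-dimensional model type, where the residual could have been chosen arbitrarily. I would also record unalignment explicitly: since $r_n\norm{A_n^*y_n}{}\le M_n r_n\norm{A_n^*y_n}{}=o(r_n)$, the pairing $|\langle g_n,y_n\rangle|$ is $\gg r_n\norm{A_n^*y_n}{}$, which is \eqref{eq:tag-10-243}. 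So the sequence is an unaligned left-singular cascade in the sense of the definition preceding Theorem~\ref{thm:final-anti-phantom-dichotomy}.

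\emph{Necessity.} Suppose a genuine localized NS failure exists, meaning the relaxed anti-phantom estimate \eqref{eq:tag-10-253} (equivalently combined observability applied to NS residuals) fails at the branch-native scale. I will follow the contradiction argument in the proof of Theorem~\ref{thm:final-anti-phantom-dichotomy}. Choose normalized $y_n$ along which the pairing exceeds $n\,r_n(\norm{A_n^*y_n}{}+\norm{J_n^{\rm rel}P_ny_n}{})+o(r_n)$. There are two cases. If $M_n(\norm{A_n^*y_n}{}+\norm{J_n^{\rm rel}P_ny_n}{})$ stays bounded below, then the residual representation \eqref{eq:tag-10-247}--\eqref{eq:tag-10-249} bounds the pairing, which is a contradiction. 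Hence, along a subsequence, the weighted quantity tends to zero. Since the failure is nontrivial, the pairing stays $\gtrsim r_n$ after rescaling, using the branch-native normalization of $g_n$. Finally, ``genuine'' means by definition that the $g_n$ have passed the filters listed in Corollary~\ref{cor:what-remains-after-step5} and in the homogeneous-tail and coercivity reductions. Those are precisely the filter conditions in the cascade definition.

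\emph{Main obstacle.} The delicate step is the lower bound $|\langle g_n,y_n\rangle|\gtrsim r_n$ in the necessity direction. A failure of the estimate gives a pairing large \emph{relative to} the observations, not relative to $r_n$. To bridge this I plan to normalize $r_n$ as the branch-native scale at which $\sup_{\norm{y}{}=1}|\langle g_n,y\rangle|\sim r_n$. I will then use the $o(r_n)$ absorption convention to discard dual directions whose pairing is $o(r_n)$, since those do not constitute a genuine failure. If that normalization cannot be justified, I would state the theorem with ``genuine failure'' defined to include the non-negligible-pairing requirement, which makes the equivalence definitional.
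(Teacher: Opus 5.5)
Your sufficiency direction is the paper's argument. The converse half of the specified-scale proposition gives failure of \eqref{eq:combined-observability-model}, and the pairing condition shows that this failure is excited by an NS-derived residual. Your explicit unalignment check, which uses $M_n\ge1$, is correct and is a step the paper leaves implicit.

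The necessity direction does not work as you primarily plan it, for three reasons.
\begin{itemize}
\item \textbf{Circularity.} You take a genuine failure to mean failure of \eqref{eq:tag-10-253}. You then use the residual representation \eqref{eq:tag-10-247}--\eqref{eq:tag-10-249} to dispose of the case where $M_n(\norm{A_n^*y_n}{}+\norm{J_n^{\mathrm{rel}}P_ny_n}{})$ stays bounded below. But that representation implies \eqref{eq:tag-10-253} for every $y$ outright. When it holds there is no failure to analyze, and when it does not hold your first case has no contradiction.
\item \textbf{Bare versus weighted invisibility.} The estimate \eqref{eq:tag-10-253} is not equivalent to combined observability. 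The estimate \eqref{eq:combined-observability-model} is a lower bound on the observation maps alone; together with $\norm{g_n}{}\lesssim r_n$ it yields \eqref{eq:tag-10-253} only with $C$ replaced by $CM_n$. So choosing $y_n$ whose pairing exceeds $n r_n(\norm{A_n^*y_n}{}+\norm{J_n^{\mathrm{rel}}P_ny_n}{})$ gives at best bare invisibility of order $1/n$. It does not give the weighted condition \eqref{eq:combined-failure-weighted}, because $M_n$ may grow polynomially or single-exponentially. This is exactly the bare-versus-weighted distinction that the specified-scale proposition records.
\item \textbf{The pairing lower bound.} As you suspected, $|\langle g_n,y_n\rangle|\gtrsim r_n$ cannot be derived. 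Normalizing $r_n\sim\sup_{\norm{y}{}=1}|\langle g_n,y\rangle|$ controls the pairing in the direction $g_n/\norm{g_n}{}$, not along the nearly invisible $y_n$. A diagonal argument over the constants only produces a lower bound that may degenerate.
\end{itemize}

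The paper's proof is your fallback. It starts from failure of \eqref{eq:combined-observability-model} in the NS image at the specified admissible scale $M_n$. It applies the forward half of the specified-scale proposition to obtain normalized $y_n$ satisfying \eqref{eq:combined-failure-weighted}. It then argues that a failure with no non-negligible pairing against NS-derived residuals is irrelevant to trace-cost exactification. Hence a \emph{genuine} failure must carry residuals with $|\langle g_n,y_n\rangle|\gtrsim r_n$, and after the listed filters this is the cascade. That direction is definitional, and you should present it that way rather than routing it through \eqref{eq:tag-10-253} and the residual representation.
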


\begin{proof}
If such a sequence exists, then the combined observation of \(y_n\) tends to zero after multiplication by the specified admissible amplification \(M_n\), while \(\norm{y_n}{}=1\). Hence combined observability fails at that specified scale. The residual pairing condition shows that the failure is not merely geometric but is actually excited by an NS-derived residual.

Conversely, suppose combined observability fails in the true localized NS image at a specified admissible scale \(M_n\). Then by the first proposition there are normalized dual directions \(y_n\) satisfying
\[
M_n\left(
\norm{A_n^*y_n}{}
+
\norm{J_n^{\mathrm{rel}}P_ny_n}{}
\right)\to0.
\]
If no NS-derived residual has non-negligible pairing with such directions, then the failure is irrelevant to the NS image and cannot obstruct trace-cost exactification. Therefore a genuine failure must include residuals \(g_n\) satisfying
\[
|\langle g_n,y_n\rangle|\gtrsim r_n.
\]
After removing all previously classified artifacts, this is exactly an NS-realizable, cleaned, relaxed-invisible, unaligned left-singular cascade.
\end{proof}

\end{document}